\documentclass{CSML}

\def\dOi{11(4:20)2015}
\lmcsheading%
{\dOi}
{1--44}
{}
{}
{Dec.~\phantom08, 2013}
{Dec.~29, 2015}
{}

\ACMCCS{[{\bf Mathematics of computing}]: Mathematical analysis; Continuous mathematics; [{\bf Theory of computation}]: Logic---Constructive mathematics}

\usepackage[utf8]{inputenc}
\usepackage[english]{babel}
\usepackage{hyperref}
\usepackage{amsmath, amssymb, amstext, amsthm}
\usepackage{amscd}
\usepackage{graphicx}
\usepackage{calrsfs}

\theoremstyle{plain}
\newtheorem{theorem}{Theorem}[section]
\newtheorem{proposition}[theorem]{Proposition}
\newtheorem{corollary}[theorem]{Corollary}
\newtheorem{lemma}[theorem]{Lemma}
\newtheorem{conjecture}[theorem]{Conjecture}
\newtheorem{fct}[theorem]{Fact}

\theoremstyle{definition}
\newtheorem{Definition}[theorem]{Definition}
\theoremstyle{remark}
\newtheorem{Remark}[theorem]{Remark}
\newtheorem{caveat}[theorem]{Caveat}

\newcommand{\demph}[1]{\emph{#1}}
\newcommand{\product}[2]{(#1,#2)}
\newcommand{\tuple}[2]{\langle #1,#2\rangle}
\newcommand{\Fix}{\operatorname{Fix}}

\newcommand{\R}{\mathbb{R}}
\newcommand{\N}{\mathbb{N}}
\newcommand{\Q}{\mathbb{Q}}
\newcommand{\comp}[1]{{#1}^C}
\newcommand{\intr}[1]{{#1}^\circ}
\newcommand{\norm}[1]{||#1||}
\newcommand{\bignorm}[1]{\Big|\Big|#1\Big|\Big|}

\newcommand{\clos}[1]{\overline{#1}}
\newcommand{\spann}{\operatorname{span}}
\renewcommand{\epsilon}{\varepsilon}
\newcommand{\dom}{\operatorname{dom}}
\newcommand{\Co}{\operatorname{\mathcal{C}}}
\newcommand{\RCo}[2]{[#1\rightarrow #2]}

\renewcommand{\d}{\operatorname{d}}

\newcommand{\K}{\mathcal{K}}

\newcommand{\Field}{\mathbb{F}}
\newcommand{\C}{\operatorname{C}}
\newcommand{\Ne}{\operatorname{\mathcal{N}}}
\newcommand{\id}{\operatorname{id}}
\newcommand{\LLPO}{\operatorname{LLPO}}
\newcommand{\XC}{\operatorname{ConvC}}
\renewcommand{\O}{\mathcal{O}}

\newcommand{\dist}{\operatorname{dist}}

\newcommand{\HB}{\operatorname{HB}}
\newcommand{\A}{\mathcal{A}}
\newcommand{\UC}{\operatorname{UC}}

\renewcommand{\conv}{\operatorname{co}}

\renewcommand{\S}{\mathbb{S}}

\newcommand{\V}{\mathcal{V}}

\newcommand{\BGK}{\operatorname{BGK}}

\newcommand{\BFT}{\operatorname{BFT}}

\newcommand{\IVT}{\operatorname{IVT}}

\newcommand{\Conv}{\operatorname{Conv}}

\newcommand{\WKL}{\operatorname{WKL}}

\newcommand{\im}{\operatorname{im}}

\newcommand{\WBGK}{\operatorname{WBGK}}
\newcommand{\CC}{\operatorname{CC}}
\newcommand{\w}{\operatorname{w}}
\newcommand{\n}{\operatorname{n}}

\newcommand{\Proj}{\operatorname{Proj}}
\newcommand{\re}{\operatorname{enum}}

\makeindex
\begin{document}

\author[E.~Neumann]{Eike Neumann}	
\address{Technische Universit\"at Darmstadt, Germany}
\email{eike.neumann@stud.tu-darmstadt.de}
\thanks{The author was partly supported by the German
   Research Foundation (DFG) with Project Zi 1009/4-1 and by the Royal Society International Exchange Grant IE111233}
\titlecomment{{\lsuper*}This paper is essentially a condensed version of the author's master's thesis \cite{thesis}, written under the supervision of Ulrich Kohlenbach at Technische Universit\"at Darmstadt.}

\title[Computational Problems in Metric Fixed Point Theory]{Computational Problems in Metric Fixed Point Theory and their Weihrauch Degrees\rsuper*}
\keywords{computable analysis, functional analysis, nonexpansive mappings, fixed point theory, Weihrauch degrees}
\begin{abstract}
We study the computational difficulty of the problem of finding fixed points of nonexpansive mappings in uniformly convex Banach spaces. We show that the fixed point sets of computable nonexpansive self-maps of a nonempty, computably weakly closed, convex and bounded subset of a computable real Hilbert space are precisely the nonempty, co-r.e.\ weakly closed, convex subsets of the domain. A uniform version of this result allows us to determine the Weihrauch degree of the Browder-G\"ohde-Kirk theorem in computable real Hilbert space: it is equivalent to a closed choice principle, which receives as input a closed, convex and bounded set via negative information in the weak topology and outputs a point in the set, represented in the strong topology. While in finite dimensional uniformly convex Banach spaces, computable nonexpansive mappings always have computable fixed points, on the unit ball in infinite-dimensional separable Hilbert space the Browder-G\"ohde-Kirk theorem becomes Weihrauch-equivalent to the limit operator, and on the Hilbert cube it is equivalent to Weak K\H{o}nig's Lemma. In particular, computable nonexpansive mappings may not have any computable fixed points in infinite dimension. We also study the computational difficulty of the problem of finding rates of convergence for a large class of fixed point iterations, which generalise both Halpern- and Mann-iterations, and prove that the problem of finding rates of convergence already on the unit interval is equivalent to the limit operator. 
\end{abstract}
\maketitle

\section{Introduction}
Metric fixed point theory is the study of fixed point properties of mappings that arise from the geometric structure of the underlying space or the geometric properties of the mappings themselves. An important classical framework for metric fixed point theory is the study of nonexpansive mappings in uniformly convex Banach spaces. A Banach space $E$ is called \emph{strictly convex}, if for all $x, y \in E$ with $x \neq y$ and $\norm{x} = \norm{y} = 1$, we have $\norm{\tfrac{x + y}{2}} < 1$.
It is called \emph{uniformly convex}, if
\[\forall \varepsilon \in(0,2]. \exists \delta \in (0,1].\forall x,y \in B_E.\left(\norm{x - y} \geq \varepsilon \rightarrow \bignorm{\frac{x + y}{2}} \leq 1 - \delta \right) .\]
Here, $B_E$ denotes the closed unit ball of $E$. Clearly, every uniformly convex Banach space is strictly convex. A function $\eta_E\colon(0,2]\to(0,1]$ witnessing the existential quantifier is called a \emph{modulus of convexity} for $E$. By the parallelogram law, every Hilbert space $H$ is uniformly convex with computable modulus of convexity $\eta_H(\varepsilon) = 1 - \sqrt{1 - \frac{\varepsilon^2}{4}}$. More generally, all $L^p$-spaces with $1 < p < \infty$ are uniformly convex with a computable modulus of uniform convexity. A mapping $f\colon\subseteq E \to E$ is called \emph{nonexpansive} if it is Lipschitz-continuous with Lipschitz-constant one, i.e. if
\[\norm{f(x) - f(y)} \leq \norm{x - y} \;\text{ for all } x,y\in \dom f.\]
We have the following existence result:
\begin{theorem}[Browder-Göhde-Kirk]\label{thm: Browder-Goehde-Kirk Theorem}
Let $E$ be a uniformly convex Banach space, let ${K \subseteq E}$ be nonempty, bounded, closed, and convex, and let $f\colon K \to K$ be nonexpansive. Then $f$ has a fixed point.\qed
\end{theorem}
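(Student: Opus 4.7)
The plan is to combine Banach's contraction principle with a geometric ``asymptotic center'' argument driven by uniform convexity. First, I produce an approximate fixed point sequence: fix any $x_0 \in K$ and for each $n \geq 1$ define $T_n\colon K \to K$ by $T_n(x) = (1 - 1/n)f(x) + (1/n)x_0$. This map sends $K$ into itself by convexity, and is a strict contraction with constant $1 - 1/n$ since $f$ is nonexpansive. The Banach contraction principle produces a unique fixed point $x_n = T_n(x_n) \in K$. Rearranging gives $x_n - f(x_n) = (1/n)(x_0 - f(x_n))$, so $\norm{x_n - f(x_n)} \leq \operatorname{diam}(K)/n \to 0$.

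Next, I introduce the displacement functional $\phi\colon K \to [0,\infty)$, $\phi(y) = \limsup_n \norm{x_n - y}$; it is convex and $1$-Lipschitz, and I set $r = \inf_{y \in K} \phi(y)$. The central step is to show that $\phi$ attains this infimum at a \emph{unique} point $z \in K$. Let $(y_k)$ be a minimizing sequence. The case $r = 0$ forces, via the triangle inequality, that $(y_k)$ is Cauchy with a limit $z \in K$ to which $(x_n)$ converges, and then $f(z) = z$ by continuity of $f$ together with $\norm{x_n - f(x_n)} \to 0$. When $r > 0$ and $\norm{y_k - y_l} \geq \varepsilon$, applying uniform convexity to $(x_n - y_k)/R_n$ and $(x_n - y_l)/R_n$, where $R_n = \max(\norm{x_n - y_k}, \norm{x_n - y_l})$, yields
\[\bignorm{x_n - \tfrac{y_k + y_l}{2}} \leq R_n\bigl(1 - \eta_E(\varepsilon/R_n)\bigr).\]
Taking $\limsup_n$ and then letting $k,l \to \infty$ produces $\phi(\tfrac{y_k+y_l}{2}) \leq r(1 - \eta_E(\varepsilon/r)) < r$, a contradiction. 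Hence $(y_k)$ is Cauchy, its limit $z \in K$ satisfies $\phi(z) = r$ by continuity of $\phi$, and the same inequality rules out any second minimizer.

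Finally, $z$ is a fixed point: by nonexpansiveness,
\[\phi(f(z)) \leq \limsup_n \bigl(\norm{x_n - f(x_n)} + \norm{f(x_n) - f(z)}\bigr) \leq \limsup_n \norm{x_n - z} = \phi(z) = r,\]
so $f(z)$ is also a minimizer of $\phi$, forcing $f(z) = z$ by uniqueness. The main obstacle in the plan is the uniqueness-of-minimizer step: one has to control the modulus $\eta_E$ as the scale $R_n$ varies with all three indices $n$, $k$, $l$, and pass to the nested limits carefully. This is precisely the place where uniform (as opposed to merely strict) convexity is indispensable, since one needs a quantitative bound that survives a limsup rather than a pointwise inequality.
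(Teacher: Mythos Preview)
Your argument is essentially correct and follows the well-known \emph{asymptotic center} route: build an approximate fixed point sequence via Banach's contraction principle applied to $T_n = (1-\tfrac{1}{n})f + \tfrac{1}{n}x_0$, then show that the functional $\phi(y)=\limsup_n\norm{x_n-y}$ has a unique minimizer which must be fixed by $f$. One small point to tighten: in the $r>0$ case your displayed conclusion ``$\phi(\tfrac{y_k+y_l}{2})\le r(1-\eta_E(\varepsilon/r))$'' is not literally what the nested limits yield. What you actually obtain, after bounding $R_n\le D:=\operatorname{diam}(K)$ and using a monotone modulus, is $r\le\phi(\tfrac{y_k+y_l}{2})\le\max(\phi(y_k),\phi(y_l))\bigl(1-\eta_E(\varepsilon/D)\bigr)$, and \emph{then} $k,l\to\infty$ gives $r\le r(1-\eta_E(\varepsilon/D))$, the desired contradiction. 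You flagged exactly this difficulty yourself, so this is a matter of presentation rather than a gap.

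The paper itself does not prove Theorem~\ref{thm: Browder-Goehde-Kirk Theorem} directly (it is cited with a {\qed}), but it does embed Goebel's proof in Lemmas~\ref{lem: goebels lemma} and~\ref{lem: infimum lemma}. That argument is organised differently from yours: rather than minimising an asymptotic radius, one shows that the set of $\varepsilon$-fixed points is ``almost convex'' (Lemma~\ref{lem: goebels lemma}) and then, given $\inf_{x\in K}\norm{f(x)-x}=0$, minimises the \emph{norm} among near-fixed points and proves the resulting sequence is Cauchy (Lemma~\ref{lem: infimum lemma}). Goebel's decomposition has the advantage, exploited later in the paper, that Lemma~\ref{lem: infimum lemma} applies to any closed convex $A\subseteq K$ and thus gives a semi-decidability criterion for $A\cap\Fix(f)\neq\emptyset$, which is precisely what is needed for the computability analysis. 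Your asymptotic-center argument is arguably cleaner as a pure existence proof and makes the role of uniform convexity very transparent, but it does not immediately localise to subsets $A$ in the same way.
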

Theorem \ref{thm: Browder-Goehde-Kirk Theorem} was proved independently by Browder \cite{BrowderFixedPointB}, Göhde \cite{Goehde}, and Kirk \cite{KirkFixedPoint} in 1965 (Kirk's version is even more general than the version stated here). Throughout this paper we denote the fixed point set of a mapping $f$ by $\Fix(f)$. A considerable amount of attention is dedicated to the study of so-called fixed point iterations, which start with an initial guess $x_0$ for a fixed point of $f$ and successively improve the guess by applying a computable operation, which yields a sequence $(x_n)_n$ of points in $K$ that is then shown to converge (weakly or strongly) to a fixed point. Many of these results are modifications of either of two classical theorems.
\begin{theorem}[Wittmann, \cite{Wittmann}]\label{thm: Halpern's theorem}
Let $H$ be a Hilbert space, let $K \subseteq H$ be nonempty, bounded, closed, and convex, and let $f\colon K \to K$ be nonexpansive. Choose a starting point $x \in K$ and an ``anchor point'' $y \in K$ and consider the sequence $(x_n)_n$, where $x_0 = x$ and $x_{n+1} = \tfrac{1}{n + 2} y + (1 - \tfrac{1}{n + 2})f(x_n)$. Then the sequence $(x_n)_n$ converges to the uniquely defined fixed point of $f$ which is closest to the anchor point $y$.\qed
\end{theorem}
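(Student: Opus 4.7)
The plan is to carry out the now-standard strategy for Halpern-type iterations: establish boundedness and asymptotic regularity of the iterates, identify the target fixed point as the metric projection of the anchor onto $\Fix(f)$, and then combine a weak-limit argument with a recursive inner-product estimate to get strong convergence.

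First, since $K$ is bounded and closed under the iteration (each $x_{n+1}$ is a convex combination of points in $K$), the sequence $(x_n)_n$ stays in $K$ and is bounded, and hence so is $(f(x_n))_n$. Writing $\alpha_n = 1/(n+2)$, I would next show asymptotic regularity $\norm{x_n - f(x_n)} \to 0$. This uses the three standard conditions $\alpha_n \to 0$, $\sum_n \alpha_n = \infty$, and $\sum_n |\alpha_{n+1} - \alpha_n| < \infty$, which all hold for $\alpha_n = 1/(n+2)$, together with the nonexpansivity of $f$; one first bounds $\norm{x_{n+1} - x_n}$ via a telescoping-type estimate and then infers $\norm{x_n - f(x_n)} \to 0$ from $x_{n+1} - f(x_n) = \alpha_n(y - f(x_n))$.

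Next, let $z = P_{\Fix(f)}(y)$ be the nearest-point projection of the anchor $y$ onto the fixed point set; this is well-defined and unique because $\Fix(f)$ is nonempty (by Theorem \ref{thm: Browder-Goehde-Kirk Theorem}), closed (by continuity of $f$), and convex (by strict convexity of $H$ together with nonexpansivity of $f$, which forces the midpoint of two fixed points to be fixed). The key Hilbert space step is to prove
\[\limsup_{n\to\infty} \langle y - z,\, x_n - z\rangle \leq 0.\]
For this I would pass to a subsequence along which the $\limsup$ is attained and along which $x_{n_k}$ converges weakly to some $x^* \in K$; by asymptotic regularity and the demiclosedness of $I - f$ in Hilbert space, $x^* \in \Fix(f)$, and then the variational inequality $\langle y - z, v - z\rangle \leq 0$ for every $v \in \Fix(f)$ gives the claim.

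Finally, expanding $\norm{x_{n+1} - z}^2$ using the inner product and that $z$ is a fixed point of $f$ (so that $\norm{f(x_n) - z} \leq \norm{x_n - z}$), I would obtain a recursion of the form
\[\norm{x_{n+1} - z}^2 \leq (1 - \alpha_n)\norm{x_n - z}^2 + 2\alpha_n \langle y - z,\, x_{n+1} - z\rangle,\]
to which a standard lemma on real sequences (e.g.\ Xu's lemma) applies: since $\sum_n \alpha_n = \infty$ and the second factor has nonpositive $\limsup$ by the previous step, $\norm{x_n - z} \to 0$. Uniqueness of the limit is then clear from the explicit identification with $P_{\Fix(f)}(y)$. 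The main obstacle is the weak-limit step, which genuinely exploits the Hilbert space structure via the demiclosedness principle and weak sequential compactness of bounded sets; everything else is bookkeeping once the conditions on $(\alpha_n)$ are in hand.
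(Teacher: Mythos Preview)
The paper does not prove this statement: Theorem~\ref{thm: Halpern's theorem} is quoted from Wittmann's original paper and marked with \qed, so there is no in-paper proof to compare against. Your outline is the standard modern argument (essentially Xu's treatment of Halpern iterations specialised to $\alpha_n = 1/(n+2)$) and is correct as stated: boundedness is trivial, the three conditions on $(\alpha_n)$ yield $\norm{x_{n+1}-x_n}\to 0$ and hence asymptotic regularity, Browder's demiclosedness principle together with weak sequential compactness of $K$ gives $\limsup_n \product{y-z}{x_n-z}\leq 0$ via the variational characterisation of $z = P_{\Fix(f)}(y)$, and the inequality $\norm{a+b}^2 \leq \norm{a}^2 + 2\product{b}{a+b}$ applied to $a = (1-\alpha_n)(f(x_n)-z)$, $b = \alpha_n(y-z)$ produces exactly the recursion you wrote, to which the standard real-sequence lemma applies. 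Wittmann's original argument differs in detail (it predates Xu's lemma and works more directly with the specific sequence $1/(n+2)$), but your route is a legitimate and arguably cleaner proof.
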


\begin{theorem}[Krasnoselski, \cite{Krasnoselski}]\label{thm: Krasnoselski-Mann theorem}
Let $E$ be a uniformly convex Banach space, let ${K \subseteq E}$ be nonempty, bounded, closed, and convex, and let $f\colon K \to K$ be nonexpansive and $f(K)$ be compact. Then for any $x \in K$ the sequence $(x_n)_n$, where $x_0 = x$ and $x_{n+1} = (f(x_n) + x_n)/2$, converges to a fixed point of $f$.\qed
\end{theorem}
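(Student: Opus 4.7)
The plan is to prove the theorem in three stages: (i) establish asymptotic regularity, $d_n := \norm{x_n - f(x_n)} \to 0$; (ii) use compactness of $f(K)$ to extract a subsequential limit of $(x_n)_n$ and identify it as a fixed point of $f$; (iii) promote subsequential convergence at that fixed point to convergence of the whole sequence.

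Stage (i) is the heart of the argument and the main obstacle. First, I would show that $(d_n)_n$ is non-increasing. From the identity
\[
x_{n+1} - f(x_{n+1}) = \tfrac{1}{2}(x_n - f(x_{n+1})) + \tfrac{1}{2}(f(x_n) - f(x_{n+1})),
\]
the triangle inequality together with the nonexpansive bound $\norm{f(x_n) - f(x_{n+1})} \leq \norm{x_n - x_{n+1}} = d_n/2$ gives $d_{n+1} \leq d_n$, so $d_n \downarrow d \geq 0$. Since $f(K)$ is compact, Mazur's theorem makes $\overline{\conv}(\{x_0\} \cup f(K))$ compact, and an easy induction shows that every $x_n$ lies in this set. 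A diagonal argument then produces a subsequence $(n_k)_k$ along which $x_{n_k + j} \to z_j$ for every $j \geq 0$, and continuity of $f$ yields both $z_{j+1} = (z_j + f(z_j))/2$ and $\norm{z_j - f(z_j)} = d$. Assume for contradiction that $d > 0$. For each $j \geq 1$,
\[
d = \norm{z_j - f(z_j)} \leq \norm{z_j - f(z_{j-1})} + \norm{f(z_{j-1}) - f(z_j)} \leq \tfrac{d}{2} + \tfrac{d}{2} = d,
\]
so all inequalities are equalities, and strict convexity of $E$ (which follows from uniform convexity) forces $z_j - f(z_{j-1}) = f(z_{j-1}) - f(z_j)$, whence $z_{j+1} = (z_j + f(z_j))/2 = f(z_{j-1})$. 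A short calculation then gives $z_{j+1} - z_j = z_j - z_{j-1} = (f(z_0) - z_0)/2$, so $(z_j)_j$ is an arithmetic progression with common difference of norm $d/2 > 0$ inside the bounded set $K$, which is absurd. Therefore $d = 0$.

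Stage (ii) is immediate: any subsequential limit $y$ of $(x_n)_n$ satisfies $\norm{y - f(y)} = \lim_n d_n = 0$, so $y \in \Fix(f)$. For stage (iii), with $y \in \Fix(f)$ in hand,
\[
\norm{x_{n+1} - y} \leq \tfrac{1}{2}\norm{x_n - y} + \tfrac{1}{2}\norm{f(x_n) - f(y)} \leq \norm{x_n - y},
\]
so $(\norm{x_n - y})_n$ is non-increasing and convergent; since a subsequence tends to zero by stage (ii), so does the whole sequence, and $x_n \to y$.
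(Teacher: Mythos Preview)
The paper does not prove this theorem; it is stated as a classical result from \cite{Krasnoselski} and closed with \qed. So there is no ``paper's own proof'' to compare against, and the relevant question is simply whether your argument is correct. It is.

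A few small remarks on exposition. In Stage~(i), after the displayed identity you invoke the triangle inequality and the bound $\norm{f(x_n)-f(x_{n+1})}\leq d_n/2$, but you also silently use a bound on $\norm{x_n - f(x_{n+1})}$; it would be cleaner to note that $\norm{x_n - f(x_{n+1})} \leq \norm{x_n - f(x_n)} + \norm{f(x_n)-f(x_{n+1})} \leq d_n + d_n/2$, which then gives $d_{n+1} \leq \tfrac{3d_n}{4} + \tfrac{d_n}{4} = d_n$. (Alternatively, the one-line estimate $d_{n+1} \leq \norm{x_{n+1}-f(x_n)} + \norm{f(x_n)-f(x_{n+1})} = d_n/2 + d_n/2$ avoids the identity altogether.) The strict-convexity step is fine: equality in the chain forces $\norm{f(z_{j-1})-f(z_j)} = d/2$ as well, so both summands have equal norm and are positively proportional, hence equal. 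The arithmetic-progression contradiction and Stages~(ii)--(iii) are standard and correctly executed; note that Stage~(i) already placed $(x_n)_n$ in the compact set $\overline{\conv}(\{x_0\}\cup f(K))$, so the subsequential limit needed in Stage~(ii) exists.

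Your route---monotonicity of $d_n$, a compactness/diagonal argument to pass to a limiting iteration, and a strict-convexity rigidity argument ruling out $d>0$---is exactly the classical one and is self-contained; the paper's surrounding machinery (e.g.\ Lemma~\ref{lem: goebels lemma}) is aimed at the general Browder--G\"ohde--Kirk setting and is not needed here.
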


The iteration employed in Theorem \ref{thm: Halpern's theorem} is a special case of a general iteration scheme, typically referred to as \emph{Halpern iteration}, as it was first introduced by Halpern \cite{Halpern}. It has the general form $x_{n+1} = (1 - \alpha_n) y + \alpha_nf(x_n)$, where $\alpha_n \in (0,1)$, and the iteration can be shown to converge if certain conditions are imposed on $(\alpha_n)_n$. The iteration used in Theorem \ref{thm: Krasnoselski-Mann theorem} can be similarly generalised to the scheme $x_{n+1} = (1 - \alpha_n)x_n + \alpha_nf(x_n)$, and again there are certain conditions that guarantee convergence. This iteration scheme is typically called \emph{Krasnoselski-Mann iteration} or simply \emph{Mann iteration}. In Hilbert space, Krasnoselski's iteration converges weakly to a fixed point, even in the absence of compactness (cf.~\cite{KrasnoselskiWeakConvergence}). While these iterations do allow us to compute a sequence of approximations which is guaranteed to eventually converge to a fixed point, it is well known that the requirement of mere convergence is too weak to constitute a satisfactory notion of effective approximation, as there exist for instance computable sequences of rational numbers whose limit encodes the special halting problem (cf.~\cite{SpeckerSequence}). It is hence important to understand the quantitative convergence behaviour of the approximation sequence. Quantitative aspects of metric fixed point theory have been very successfully studied within the programme of \emph{proof mining} (the standard reference is \cite{Kohlenbook}, see also e.g.~\cite{Kohlenbach, KohlenbachMetric, KohlenbachBRS, Leustean, KohlenbachLeustean, KoernleinKohlenbach,SchadeKohlenbach}), which is concerned with the extraction of hidden effective data from non-effective proofs. Most of the applications of proof mining in fixed point theory focus on the extraction of either of two types of effective data. Firstly, one considers \emph{rates of asymptotic regularity} of the iteration, which in this context mean rates of convergence of the sequence $(\norm{f(x_n) - x_n})_n$ towards zero. These allow us to compute arbitrarily good $\varepsilon$-fixed points, i.e. points $x_{\varepsilon}$ satisfying ${\norm{f(x_{\varepsilon}) - x_{\varepsilon}} < \varepsilon}$, up to arbitrary precision with an a-priori running time estimate. Secondly, one considers so-called \emph{rates of metastability} (see also \cite{KreiselNoCounterexample1,KreiselNoCounterexample2,TaoMetastability1,TaoMetastability2}), which constitute a more refined quantitative measure of approximation quality. A function $\Phi\colon\N^{\N}\times\N\to\N$ is called a rate of metastability for the sequence $(x_n)_n$ if it satisfies
\begin{equation}\label{eq: metastability}\forall n\in \N.\forall g\colon\N\to\N.\exists k \leq \Phi(g,n).\forall i,j\in[k;k + g(k)]\left(\norm{x_{i} - x_j} < 2^{-n} \right).
\end{equation}
Note that \eqref{eq: metastability} is classically (but not constructively) equivalent to the statement that $(x_n)_n$ is a Cauchy sequence, so that metastability can be viewed as a finitary version of convergence. Also note that in the case of Krasnoselski's iteration, asymptotic regularity is the special case of metastability where $g(k) = 1$ for all $k\in\N$. Of course, both types of information are strictly weaker than actual rates of convergence. In fact, effective uniform rates of convergence cannot exist, as the existence result fails to be computably realisable already in the case where $K = [0,1]$.
\begin{theorem}[\cite{Kohlenbach}]\label{uniform uncomputability of fixed points}
The multi-valued operator which receives as input a nonexpansive self map $f$ of the compact unit interval $[0,1]$ and returns some fixed point of $f$ is not computable.\qed
\end{theorem}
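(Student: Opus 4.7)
The plan is to construct a computable one-parameter family of nonexpansive self-maps of $[0,1]$ whose fixed-point sets change discontinuously with the parameter, and then derive a contradiction from the continuity inherent in any computable realizer.

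Concretely, for $a \in [-1, 1]$ I would consider the map $f_a \colon [0,1] \to [0,1]$ given by $f_a(x) = \min\{1, \max\{0, x + a\}\}$. As the composition of the translation $x \mapsto x+a$ with the metric projection onto $[0,1]$, $f_a$ is nonexpansive, self-maps $[0,1]$, and the assignment $a \mapsto f_a$ is computable in $a$. A short case analysis yields
\[\Fix(f_a) = \{1\} \text{ if } a > 0, \qquad \Fix(f_0) = [0,1], \qquad \Fix(f_a) = \{0\} \text{ if } a < 0,\]
so the fixed-point set jumps at $a = 0$.

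Next, I would suppose for contradiction that there is a computable realizer $F$ which, on a name of a nonexpansive $f\colon[0,1]\to[0,1]$, returns a name of some point of $\Fix(f)$. Since computable functionals on Baire space are continuous, for any name $\sigma_0$ of $f_0$ some finite prefix already forces $F$ to output a name confined to a proper subinterval $J \subsetneq [0,1]$ of length strictly less than $1$. Picking $\sigma_0$ with enough slack in its approximations (e.g., a Cauchy-type name whose $n$-th entry is a $2^{-n-1}$-approximation), one can extend that prefix to full names of $f_a$ for $|a|$ arbitrarily small, exploiting that $\|f_a - f_0\|_\infty \leq |a|$ so the same early approximations remain valid for $f_a$. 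Consequently $F$ would have to return, for some $a > 0$ and some $a < 0$, respectively a name of $1$ and a name of $0$ both lying in $J$, forcing $\{0,1\} \subseteq J$ and contradicting $|J| < 1$.

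The main obstacle is technical rather than conceptual: one needs to pin down the precise representation of nonexpansive self-maps of $[0,1]$ used in the paper and verify that a name of $f_0$ with sufficient built-in slack admits the above extension to names of nearby $f_a$. Modulo this routine bookkeeping in computable analysis, the pigeonhole contradiction above closes the proof, and no essentially new idea is required.
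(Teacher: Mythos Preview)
The paper does not give its own proof of this theorem; it is cited from \cite{Kohlenbach} and closed with a \qed. Your approach---a one-parameter family $f_a(x) = \min\{1,\max\{0,x+a\}\}$ whose fixed-point set jumps from $\{0\}$ to $\{1\}$ as $a$ crosses $0$, combined with the continuity of any computable realiser---is correct and is the standard way to see this. The technical caveat you flag (choosing a name of $f_0$ with enough slack that the same finite prefix is also a prefix of a name of nearby $f_a$) is indeed routine: since $\norm{f_a - f_0}_\infty \leq |a|$, any name of $f_0$ in $[\delta_{[0,1]}\to\delta_{[0,1]}]$ built from approximations with a margin of, say, $2^{-n-1}$ at stage $n$ already has this property.

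It is worth noting that the paper's later machinery recovers the result by a different route: Theorem~\ref{thm: main theorem on [0,1]} shows that $\Fix$ and $\Fix^{-1}$ are computable between $\Ne([0,1])$ and $\A^{\conv}([0,1])\setminus\{\emptyset\}$, whence $\BGK_{[0,1]} \equiv_W \XC_{[0,1]} \equiv_W \IVT$ (Proposition~\ref{prop: BGK in [0,1]}), and $\IVT$ is known to be uncomputable. That argument passes through a full characterisation of fixed-point sets, whereas your direct discontinuity construction is more elementary and self-contained for the bare uncomputability statement.
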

While Theorem \ref{uniform uncomputability of fixed points} already shows that there exists no algorithm for computing a rate of convergence for Krasnoselski's or Halpern's iteration uniformly in the input function and the starting point, it leaves several questions open: whether every computable nonexpansive mapping has a computable fixed point, whether there exist non-uniformly computable rates of convergence for the Mann- or Halpern-iteration for every computable nonexpansive mapping, at least for certain suitable starting points, whether fixed points are uniformly computable relative to discrete advice, what the exact relation between the computational content of the three theorems is, and how their computational content relates to the computational content of other mathematical theorems, such as Brouwer's fixed point theorem. 

In this paper, we study the computational content of Theorems \ref{thm: Browder-Goehde-Kirk Theorem}, \ref{thm: Krasnoselski-Mann theorem}, and \ref{thm: Halpern's theorem}, as well as related computational problems in terms of \emph{Weihrauch degrees}, which have been proposed by Gherardi and Brattka \cite{WeihrauchDegrees} as a framework for classifying mathematical theorems according to their computational content. Many classical mathematical theorems have been classified over the recent years. Recently, Brattka, Le Roux and Pauly \cite{ConnectedChoice} have shown that Brouwer's fixed point theorem in dimension $n$ is equivalent to the closed choice principle on the closed unit ball in $\R^n$ restricted to connected sets, and that it is equivalent to Weak K\H{o}nig's Lemma from dimension three upwards. Their work is based on a characterisation of the fixed point sets of computable self-maps of the unit ball in $\R^n$, due to Miller \cite{Miller}. We provide a similar characterisation for the fixed point sets of computable nonexpansive self-maps of nonempty, convex, closed, and bounded subsets of computable Hilbert space, which we can use to determine the Weihrauch degree of the Browder-G\"ohde-Kirk theorem. This will in particular allow us to compare the computational content of the Browder-Göhde-Kirk theorem and the problem of finding rates of convergence for fixed point iterations to Brouwer's classic result. 
\section{Preliminaries}
Here we review some basic notions from computable and functional analysis and the theory of Weihrauch reducibility. Most of the results in this section are more or less folklore, and none of them are original, except maybe Proposition \ref{prop: equivalence lower semi-located and computably closed implies locally compact}. Standard references in computable analysis are \cite{PourElRichards} and \cite{Weih}. A more general treatment of the theory of computable metric spaces can be found in \cite{BrattkaGherardi} and
\cite{BrattkaPresser}. The results in functional analysis reviewed here can for instance be found in \cite{Megginson} or \cite{Werner}. We will closely follow the approach to computable analysis taken by Matthias Schr\"oder \cite{SchroederPhD}, and more recently by Arno Pauly \cite{PaulyRepresented}, particularly concerning the canonical constructions of hyperspaces. Also, we adopt most of the notation and terminology from \cite{PaulyRepresented}, which differs from standard terminology at certain points (see Caveat \ref{caveat: terminology}). 

A \emph{numbering} of a nonempty countable set $S$ is a surjective partial mapping ${\nu\colon\subseteq \N \to S}$. A \emph{representation} of a nonempty set $X$ is a surjective partial mapping ${\delta\colon\subseteq \N^\N \to X}$. If $\delta$ is a representation of $X$, we call the tuple $(X,\delta)$ a \emph{represented space}. If the underlying representation is clear from context, we will often simply write $X$ for $(X,\delta)$ and by convention denote the underlying representation $\delta$ of $X$ by $\delta_X$. If $\delta$ and $\varepsilon$ are representations of the same set $X$, we denote \emph{continuous reduction} by $\delta \leq_t \varepsilon$ and \emph{computable reduction} by $\delta \leq \varepsilon$. A representation $\delta\colon\subseteq\N^\N\to X$ is \emph{admissible} if it is continuous and maximal with respect to continuous reduction. A \emph{represented topological space}\footnote{Like Schr\"oder, and as opposed to Pauly, we will mostly work with a specific topology for $X$ in mind. To emphasize this, we call the spaces of interest represented \emph{topological} spaces, rather than ``represented spaces''. In general, the topology on $X$ will \emph{not} be the final topology of its representation, and topological continuity may differ from realiser-continuity if the topology of $X$ is not sequential.} is a tuple $(X,\delta)$, where $X$ is a topological space and $\delta$ is an admissible representation for $X$. If the representation is clear from the context, we will simply write $X$ for $(X,\delta)$ and by convention denote the underlying admissible representation $\delta$ by $\delta_X$. We say that a partial mapping $F\colon\subseteq \N^\N \to \N^\N$ is a \emph{realiser} for a partial multi-valued mapping (or ``multimapping'') $f\colon\subseteq X \rightrightarrows Y$ between represented spaces and write $F\vdash f$ if $\delta_Y(F(p)) \in f(x)$, whenever $\delta_X(p) = x$. We call $f$ \emph{computable} if it has a computable realiser, and \emph{realiser-continuous} if it has a continuous realiser. If we want to emphasise the underlying representations, we will write that $f$ is $(\delta_X,\delta_Y)$-computable or $(\delta_X,\delta_Y)$-continuous respectively. We denote by $\rho$ the standard representation of real numbers. If $\delta\colon \subseteq \N^\N\to X$ and $\varepsilon\colon\subseteq \N^\N \to Y$ are representations (or numberings), we denote by $[\delta\to\varepsilon]$ the canonical representation of the space $\RCo{X}{Y}$ of functions with continuous realiser. If $X$ and $Y$ are represented topological spaces, then $\RCo{X}{Y}$ coincides with the space $\Co_{\operatorname{seq}}(X,Y)$ of sequentially continuous functions from $X$ to $Y$. If $X$ is first-countable, $\RCo{X}{Y}$ furthermore coincides with the space $\Co(X,Y)$ of continuous functions from $X$ to $Y$. Moreover, we denote by $\delta\times\varepsilon$ the canonical representation of the product space $X\times Y$, by $\delta^\omega$ the canonical representation of the space $X^\N$ and by $\delta^*$ the canonical representation of ${X^* = \bigcup_{n \in \N} X^n}$. If $\delta$ and $\varepsilon$ are representations of the same space $X$, we let $\delta\sqcap\varepsilon = \pi_0(\delta\times\varepsilon)|^{\Delta(X)}$, where $\Delta(X) = \{(x,y) \in X^2\;|\; x = y\}$ and $\pi_0$ is the projection onto the first coordinate. If $K \subset X$ is a subset of $X$, we sometimes write $\delta_K$ for $\delta_X|^K$.

As already mentioned, the following constructions are essentially due to \cite{SchroederPhD} and \cite{PaulyRepresented}. For any represented space $X$, the canonical function-space construction gives rise to canonical representations of the hyperspaces of ``open'' and ``closed'' subsets of $X$, by postulating that openness corresponds to semi-decidability. Let $\S = \{0,1\}$ denote Sierpi\'nski space with topology $\{\emptyset, \{0,1\}, \{1\} \}$ and representation 
\[\sigma(p) = 0 \;:\Leftrightarrow\; p = 0.\]
The characteristic function $\chi_U\colon X \to \{0,1\}$ of a set $U$ is defined as $\chi_U(x) = 1 :\Leftrightarrow x \in U$. 
\begin{Definition}\label{Def: canonical representation of open and closed sets}
Let $X$ be a represented space. We call a set $U \subseteq X$ \emph{open}, if its characteristic function $\chi_U\colon X \to \S$ is realiser-continuous, i.e. $\chi_U \in \RCo{X}{\S}$. A $\theta^X$-name of an open set $U\subseteq X$ is a $[\delta_X\to\sigma]$-name of its characteristic function $\chi_U$. The set of all open subsets of $X$ with representation $\theta^X$ defines the represented space $\O(X)$. Dually, we define the represented space $\A(X)$ of closed subsets of $X$ by identifying a closed set $A \subseteq X$ with its complement in $\O(X)$ and call the underlying representation $\psi^X$.
\end{Definition}
We will often just write $\psi$ for $\psi^X$ if the underlying space is clear from context. We call the computable points of $\O(X)$ \emph{semi-decidable} and the computable points in $\A(X)$ \emph{co-semi-decidable}. Note that, just like the notion of realiser continuity may differ from topological continuity, the notions of closedness and openness for subsets of represented spaces are a-priori different from the notions of topological openness and closedness. If $X$ is an admissibly represented topological space, then the set $\O(X)$ coincides with the set of all sequentially open subsets of $X$ and $\A(X)$ coincides with the set of all sequentially closed subsets of $X$. If in addition $X$ is second-countable then $\O(X)$ is the set of open subsets of $X$ and $\A(X)$ is the set of all closed subsets of $X$.

\begin{caveat}\label{caveat: terminology}
Note that the terminology introduced here, which is mainly due to \cite{PaulyRepresented}, is different from the usual terminology used in computable analysis, which is for instance used in Weihrauch's book \cite{Weih}. In \cite{Weih}, the space $\A(X)$ is denoted by $\A_{>}(X)$ and its computable elements are called co-r.e.\ closed, rather than co-semi-decidable. Although we have introduced our $\A(X)$ as ``the space of closed subsets'' of $X$, we deliberately refrain from referring to its computable points as ``computably closed'', so as to avoid confusion with topological closedness on one hand, and with Weihrauch's terminology on the other. The symbol $\A(X)$ is used in \cite{Weih} to denote the space of closed and overt subsets of $X$, to be introduced below. Also note that in the abstract we used \emph{Weihrauch's} terminology.
\end{caveat}

The space $\A(X)$ can be thought of as ``the space of closed sets encoded via negative information''. The following definition provides in a certain sense a notion of ``closed sets encoded via \emph{positive} information''.

\begin{Definition}\label{Def: canonical representation of overt sets}
Let $X$ be a represented space. We define the \demph{represented space} $\V(X)$ \demph{of overt closed subsets of} $X$ to be the represented space of closed subsets of $X$, where a closed set $A \subseteq X$ is represented by a $[\theta^X \to \S]$-name of the function
\[\operatorname{intersects?}_{A}\colon \O(X) \to \S,\; U \mapsto \begin{cases}1 &\text{if }U\cap A \neq \emptyset\text{,}  \\ 0 &\text{otherwise.} \end{cases}\]
\end{Definition}

We denote the standard representation of $\V(X)$ by $\upsilon^X$ or simply $\upsilon$ and call the computable points of $\V(X)$ \demph{computably overt}. Computably overt closed sets are those, for which intersection with an open set can be effectively verified. The space $\V(X)$ hence corresponds to the space $\A_{<}(X)$ in \cite{Weih}, and hence is sometimes called the ``space of closed sets, represented with positive information''. In \cite{PaulyRepresented} it is argued that from an intrinsic perspective, the word ``closed'' is rather misleading, because the closure properties of the space $\V(X)$ differ significantly from the closure properties of closed sets (e.g. union is computable but intersection is not, the image of a computably overt set under a computable function is computably overt, but the preimage is not), and we agree with this position. Overtness is related to effective separability, which yields a convenient criterion for computable overtness (see e.g.~\cite[Theorem 3.8 (1)]{BrattkaPresser}).

\begin{proposition}\label{prop: overtness and recursive enumerability}
Let $X$ be a separable represented topological space. Define a representation $\delta_{\re}$ of the set of nonempty closed subsets of $X$ as follows:
\[\delta_{\re}(p) = A \; :\Leftrightarrow \delta_X^{\omega}(p) \text{ is dense in } A.\]
Then $\delta_{\re} \leq \upsilon\big|^{\V(X)\setminus\{\emptyset\}}$.
\end{proposition}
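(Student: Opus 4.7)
Plan: Given a $\delta_{\re}$-name $p$ of a nonempty closed set $A$, the goal is to produce a $\upsilon$-name of $A$, i.e., a $[\theta^X \to \sigma]$-name of the membership test $\operatorname{intersects?}_A \colon \O(X) \to \S$. Write $(x_n)_{n\in\N} := \delta_X^\omega(p)$. Since $(x_n)$ is dense in the closed set $A$, its topological closure coincides with $A$, so in particular every term $x_n$ lies in $A$.

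The reduction rests on the following equivalence, which I claim holds for every $U \in \O(X)$:
\[U \cap A \neq \emptyset \;\Longleftrightarrow\; \exists n \in \N.\; x_n \in U .\]
The $(\Leftarrow)$ direction is immediate from $\{x_n : n \in \N\} \subseteq A$. For $(\Rightarrow)$, if $a \in U \cap A$, then since $(x_n)$ is dense in $A$ some subsequence of $(x_n)$ converges to $a$; as $U$ is sequentially open in $X$ and $a \in U$, this subsequence is eventually contained in $U$, hence some $x_n \in U$.

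With this in hand the algorithm is immediate: a $\theta^X$-name of $U$ is a $[\delta_X \to \sigma]$-name of $\chi_U$, i.e., a continuous realiser that, given a $\delta_X$-name of any point $x$, semi-decides whether $x \in U$. Dovetail this procedure over all $n$, applying the realiser to the $n$-th component of the sequence encoded by $p$ in parallel, and output $1$ as soon as any branch halts with $1$. By the displayed equivalence this is a $\sigma$-name of $\operatorname{intersects?}_A(U)$, and the whole construction is computable uniformly in $p$ and in the $\theta^X$-name of $U$, yielding the required computable reduction $\delta_{\re} \leq \upsilon|^{\V(X)\setminus\{\emptyset\}}$. There is no real obstacle here; the only subtlety worth flagging is to rely on \emph{sequential} openness rather than topological openness in the density argument, which is exactly the notion that the representation $\theta^X$ supplies via realiser-continuity of $\chi_U$ on names of the $x_n$.
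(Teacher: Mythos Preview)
Your proposal is correct and follows exactly the same approach as the paper's proof: given the dense sequence $(x_n)_n$ in $A$, semi-decide $U \cap A \neq \emptyset$ by searching for some $n$ with $x_n \in U$. You have simply spelled out in more detail the equivalence the paper leaves implicit, and your remark about relying on sequential openness is a welcome clarification given the paper's conventions on $\O(X)$.
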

\begin{proof}
Suppose we are given a dense sequence $(x_n)_n$ in a closed set $A$, and an open set $U \in \O(X)$. In order to verify if $A\cap U$, check if there exists $n \in \N$ such that $x_n \in U$. This proves the claim.
\end{proof}

Our representation $\delta_{\re}$ is called $\delta_{\operatorname{range}}$ in \cite{BrattkaGherardi} and \cite{BrattkaPresser}. Next we define the canonical representation of the hyperspace of compact subsets of a Hausdorff represented topological space $X$. In a countably based $T_1$ space, a compact set can be represented as a list of all its finite open covers by basic neighbourhoods. It is easy to see that this representation is characterised by the property that containment in an open set is semi-decidable. This can be used to generalise the definition to arbitrary represented topological spaces, and in fact to arbitrary represented spaces. For the sake of simplicity we restrict ourselves to the case of Hausdorff represented topological spaces.
\begin{Definition}\label{Def: canonical representations of compact sets}
Let $X$ be a Hausdorff represented topological space. The represented space $\K(X)$ of compact subsets of $X$ is the set of all compact subsets of $X$, where a compact set $K \in \K(X)$ is represented as a $[\theta^X\to\sigma]$-name of the function $\operatorname{contained?}_K\colon \O(X) \to \S$, 
\[\operatorname{contained?}_K(U) = 1 \Leftrightarrow K \subseteq U.\]
\end{Definition}

We denote the canonical representation of $\K(X)$ by $\kappa$ and call the computable points of $\K(X)$ \emph{computably compact}. Note that, like $\A(X)$, our space $\K(X)$ only encodes ``negative'' information on compact sets. Weihrauch \cite{Weih} hence uses the notation ``$\kappa_{>}$'' for our $\kappa$. Similarly as in the case of $\A(X)$, computable points in our $\K(X)$ are called ``co-r.e.\ compact'' by some authors. Definition \ref{Def: canonical representations of compact sets} can be generalised to arbitrary represented spaces, essentially by using the same approach as in Definition \ref{Def: canonical representation of open and closed sets}, and \emph{calling} a subset $K$ of a represented space $X$ compact if the function $\operatorname{contained?}_K$ is an element of $\RCo{\O(X)}{\S}$. In general this will only yield a representation of the space of \emph{saturated} compact sets (cf.~\cite{PaulyRepresented}), or a multi-valued representation of the space of compact sets (cf.~\cite{SchroederPhD}).  If $X$ is a $T_1$ represented topological space, then the thus obtained space $\K(X)$ coincides with the set of all compact subsets of the \emph{sequentialisation} $\operatorname{seq}(X)$ of $X$, whose open sets are the sequentially open sets of $X$. For details see \cite{SchroederPhD}. By Proposition 3.3.2 (3) in \cite{SchroederPhD}, the notions of compactness, sequential compactness, and compactness in the sequentialisation coincide for Hausdorff represented topological spaces, so we obtain Definition \ref{Def: canonical representations of compact sets}. 

It will sometimes be convenient to work with an intrinsic notion of computable compactness for represented spaces, which we introduce next.
\begin{Definition}\label{Def: compact space}
A Hausdorff represented topological space $X$ is called \demph{computably compact}, if the mapping
\[\operatorname{empty?}\colon \A(X) \to \S,\; A \mapsto \begin{cases}1 &\text{ if }A = \emptyset, \\0 &\text{ otherwise.} \end{cases} \]
is computable.
\end{Definition}

Note that the terminology used in Definitions \ref{Def: compact space} and \ref{Def: canonical representations of compact sets} is consistent in the sense that $X$ is a computably compact space if and only if $X$ is a computable point in $\K(X)$. The next proposition is a converse to this in some sense.

\begin{proposition}\label{prop: compact subspace of represented space is compact set}
Let $X$ be a Hausdorff represented topological space and $K \subseteq X$ be a nonempty co-semi-decidable subset, such that the represented space $(K,\delta_X\big|^K)$ is computably compact. Then $K$ is a computably compact subset of $X$, i.e.\ a computable point in $\K(X)$.
\end{proposition}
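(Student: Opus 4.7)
The plan is to produce a $\kappa$-name for $K$, i.e.\ a $[\theta^X\to\sigma]$-realiser for the map $\operatorname{contained?}_K\colon \O(X)\to \S$, by factoring this map through the hyperspaces of the represented subspace $(K,\delta_X|^K)$. The decomposition is: restrict the input open set to $K$, dualise to view it as a closed subset of $K$, and then test emptiness using the computable compactness of $K$.

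Concretely, given $U\in \O(X)$ via a $\theta^X$-name, observe that a $[\delta_X\to\sigma]$-realiser for $\chi_U\colon X\to\S$ is at the same time a $[\delta_K\to\sigma]$-realiser for the restriction $\chi_U|_K = \chi_{U\cap K}\colon K\to\S$, simply because $\delta_K = \delta_X|^K$ and a $\delta_K$-name of $x\in K$ is a fortiori a $\delta_X$-name of $x$. Hence the first step, the restriction map $\O(X)\to\O(K)$, $U\mapsto U\cap K$, is computable (in fact its realiser is the identity on Baire space). The second step, the identification $\O(K)\to\A(K)$, $V\mapsto K\setminus V$, is computable by Definition~\ref{Def: canonical representation of open and closed sets}, which defines $\psi^K$ so that passing from the open set $V$ to its complementary closed set is the identity on representations. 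The third step, $\operatorname{empty?}\colon\A(K)\to\S$, is computable by Definition~\ref{Def: compact space} and the hypothesis that $(K,\delta_X|^K)$ is computably compact.

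Composing the three computable operations produces a computable map $\O(X)\to\S$ which sends $U$ to the truth value of $K\setminus U=\emptyset$, that is, to $\operatorname{contained?}_K(U)$. A realiser of this composite map is, by definition, a $\kappa$-name of $K\in \K(X)$. I do not foresee a serious obstacle, as all three building blocks are essentially tautological from the relevant definitions; the only small care needed is to verify that ``$\delta_K$-name equals $\delta_X$-name restricted'' really makes step~1 an identity on realisers rather than a genuine computation.

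The role of the co-semi-decidability hypothesis is not in the construction above (which only uses compactness of $K$ as a represented space), but rather in witnessing that $K$ is indeed a closed subset of the Hausdorff space $X$, so that it is a bona fide element of $\K(X)$; once this is established, the proof outlined above gives the desired $\kappa$-name.
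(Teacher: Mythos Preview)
Your argument is correct and follows exactly the paper's proof: restrict $U$ to $K$, pass to the complement in $\A(K)$, and apply $\operatorname{empty?}$ using the computable compactness of $(K,\delta_X|^K)$. Your closing remark on the role of co-semi-decidability is slightly off, though: membership in $\K(X)$ requires \emph{compactness}, not merely closedness, and that already follows from the hypothesis that $(K,\delta_X|^K)$ is computably compact (cf.\ the consistency remark after Definition~\ref{Def: compact space}); the co-semi-decidability assumption is in fact not used in the paper's proof either.
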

\begin{proof}
It follows immediately from the definition of $\O(X)$ that the mapping
\[\cap_K \colon \O(X) \to \O(K),\; U \mapsto U \cap K\]
is computable. Now, $U \supseteq K$ if and only if $K \setminus (U \cap K) = \emptyset$. It again follows from the definition, that the mapping
\[\O(K) \to \A(K), \; U \mapsto K\setminus U \]
is computable. Since $K$ is a computably compact represented space, the mapping
\[\A(K) \to \S, \; A \mapsto \begin{cases}1 &\text{ if }A = \emptyset, \\ 0 &\text{ if }A \neq \emptyset\end{cases}\]
is computable. It follows that the set of open subsets of $X$ containing $K$ is computably open, i.e. $K$ is computably compact.
\end{proof}

A closed subset of a compact space is compact, and in a Hausdorff space, every compact set is closed. We have an effective counterpart of this in the theory of represented spaces. A represented space $X$ is called \emph{effectively Hausdorff} if the mapping $X \to \A(X),\; x \mapsto \{x\}$ is computable.

\begin{proposition}\label{prop: closed and compact sets, (i) closed set is compact, (ii) compact set in Hausdorff space is closed}
Let $X$ be a Hausdorff computably compact represented topological space.
\begin{enumerate}[label=(\roman*)]
\item The mapping $\id\colon \A(X) \to \K(X)$ is well-defined and computable.
\item If $X$ is effectively Hausdorff, then the mapping $\id\colon \K(X) \to \A(X)$ is well-defined and computable.
\end{enumerate}
\end{proposition}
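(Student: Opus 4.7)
The plan is to treat each direction as the composition of a well-definedness claim (pure topology) with a semi-decision procedure, and to reduce each semi-decision to the primitives already on hand: computable emptiness in $\A(X)$, granted by the hypothesis of computable compactness, and semi-decidable containment $K \subseteq U$, granted by the very definition of $\kappa$. Throughout I will use without comment that the complement bijections $\O(X) \leftrightarrow \A(X)$, $V \mapsto X \setminus V$, are computable in both directions (this is essentially the definition of $\psi^X$ in terms of $\theta^X$) and that binary union in $\O(X)$ is computable (the computable ``or'' on $\S$).

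For part (i), well-definedness is the classical fact that a closed subset of a compact space is compact. For computability, given names of $A \in \A(X)$ and $U \in \O(X)$, I will semi-decide $A \subseteq U$ via the rewriting
\[A \subseteq U \;\Longleftrightarrow\; U \cup (X \setminus A) = X \;\Longleftrightarrow\; X \setminus \bigl(U \cup (X \setminus A)\bigr) = \emptyset.\]
Concretely, I first pass from the $\psi^X$-name of $A$ to a $\theta^X$-name of $X \setminus A$, then compute the open union $U \cup (X \setminus A) \in \O(X)$, then its complement as an element of $\A(X)$, and finally apply the computable map $\operatorname{empty?}$ provided by computable compactness. Composing these steps yields a $\kappa$-name of $A$.

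For part (ii), well-definedness is the classical fact that a compact subset of a Hausdorff space is closed. For computability, I must produce a $\psi^X$-name of $K$, i.e.\ a $\theta^X$-name of $X \setminus K$, i.e.\ a realiser that, uniformly in $x$, semi-decides $x \notin K$ from any $\delta_X$-name of $x$. Effective Hausdorffness supplies a computable map $x \mapsto \{x\} \in \A(X)$, hence also a computable map $x \mapsto X \setminus \{x\} \in \O(X)$. Since
\[x \notin K \;\Longleftrightarrow\; K \subseteq X \setminus \{x\},\]
I can then feed $X \setminus \{x\}$ into the $\kappa$-name of $K$ to semi-decide the right-hand side, uniformly in both inputs.

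There is no serious obstacle in either direction; the entire content is the translation of the two displayed elementary set-theoretic identities into compositions of the standard hyperspace operations. The only things that genuinely consume the hypotheses of the proposition are the two terminal primitive calls: $\operatorname{empty?}$ in (i) uses computable compactness, and the point-to-singleton map in (ii) uses effective Hausdorffness.
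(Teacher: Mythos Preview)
Your proof is correct and follows essentially the same approach as the paper's: for (i) you reduce $A \subseteq U$ to emptiness of $X \setminus (U \cup A^C)$ via $\operatorname{empty?}$, and for (ii) you reduce $x \notin K$ to $K \subseteq \{x\}^C$ via the $\kappa$-name, exactly as the paper does. The only cosmetic difference is that you explicitly invoke $\operatorname{empty?}$ in (i) where the paper simply says ``check if $U \cup A^C$ covers $X$, using that $X$ is computably compact,'' but this is the same step unpacked.
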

\begin{proof}\hfill
\begin{enumerate}[label=(\roman*)]
\item We are given a closed set $A \in \A(X)$ which we want to compute as a compact set $A \in \K(X)$. Given an open set $U \in \O(X)$ we want to verify if $U \supseteq A$. In order to do so, check if $U \cup A^C$ covers $X$, using that $X$ is computably compact.
\item We are given a compact set $K \in \K(X)$ which we want to compute as a closed set $K \in \A(X)$. Given a point $x \in X$ we want to verify if $x \notin K$. In order to do so, compute $\{x\} \in \A(X)$, using that $X$ is effectively Hausdorff, and verify if $\{x\}^C \supseteq K$, using the compactness information on $K$.\qedhere
\end{enumerate}
\end{proof}

\noindent It is easy to see that the computability and well-definedness of the mapping \[{\id\colon \A(X) \to \K(X)}\] \emph{characterises} computably compact represented spaces (cf.~also \cite{PaulyRepresented}).

\begin{theorem}\label{thm: supremum on compact space}
Let $K$ be a computably compact represented topological space, containing a computable dense sequence. Then the mapping
\[\max\colon \RCo{K}{\R} \to \R,\; f \mapsto \max\{f(x)\;\big|\; x \in K\}\]
is well-defined and computable.
\end{theorem}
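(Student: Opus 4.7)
The plan is to verify well-definedness by a standard classical argument and then compute $\max f$ as a real number by producing both a lower and an upper Dedekind cut, using the dense sequence for the lower bounds and the computable compactness of $K$ for the upper bounds.

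Well-definedness is essentially classical. Since $K$ is Hausdorff and computably compact, it is compact as a topological space (using that for Hausdorff represented topological spaces the various notions of compactness coincide, via Proposition 3.3.2 of \cite{SchroederPhD} as recalled after Definition \ref{Def: canonical representations of compact sets}), so any $f \in \RCo{K}{\R}$ attains its maximum on $K$.

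For the lower cut, given $f\in\RCo{K}{\R}$ and a rational $q$, I would semi-decide $q<\max f$ as follows. By continuity of $f$, the set $f^{-1}((q,\infty))$ is sequentially open in $K$; by density of $(x_n)_n$, it is nonempty if and only if it contains some $x_n$; and $q<\max f$ iff this preimage is nonempty. Hence, dovetailing over all $n$ the computation of $f(x_n)$ (which is available because evaluation $\RCo{K}{\R}\times K\to \R$ is computable and $(x_n)_n$ is a computable sequence in $K$), I can enumerate all rationals $q$ with $q<\max f$ by reporting $q$ whenever the current approximation to some $f(x_n)$ witnesses $q<f(x_n)$.

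For the upper cut, given $f$ and a rational $q$, I would semi-decide $\max f<q$ by composing $f$ with the computable map $\R\to \S$ sending $r$ to $1$ iff $r<q$. This yields the characteristic function of $f^{-1}((-\infty,q))$ in $\RCo{K}{\S}$, i.e., exhibits $f^{-1}((-\infty,q))$ computably as an element of $\O(K)$. Its complement is then available as an element of $\A(K)$, and $\max f<q$ holds iff $K\subseteq f^{-1}((-\infty,q))$, iff this complement is empty. By the assumption that $K$ is computably compact, the predicate $\operatorname{empty?}\colon \A(K)\to \S$ is computable, so this is semi-decidable. Combining the two enumerations produces $\max f$ as a $\rho$-name of a real number. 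The only real idea is the familiar duality: compactness supplies the upper cut through emptiness of closed sets, while overtness, here embodied by the dense sequence, supplies the lower cut; I do not anticipate a genuine obstacle beyond packaging the types correctly.
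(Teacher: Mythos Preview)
Your proposal is correct and follows essentially the same approach as the paper: the paper also obtains the lower approximation via $\sup_n f(x_n)$ over the given dense sequence and the upper approximation by using computable compactness to semi-decide $\forall x\in K.\,f(x)<b$ (which is exactly your emptiness test on the complement of $f^{-1}((-\infty,q))$). The only difference is cosmetic---you phrase things as enumerating the two Dedekind cuts, while the paper phrases it as producing approximating sequences from below and above.
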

\begin{proof}
Since $K$ is adequately represented, we have $\RCo{K}{\R} = \Co_{\operatorname{seq}}(K,\R)$, and $K$ is sequentially compact thanks to Proposition 3.3.2 (3) in \cite{SchroederPhD}. It follows that for any $f \in \RCo{K}{\R}$, the set $f(K)$ is sequentially compact in $\R$ and thus compact. This shows that $\max$ is well-defined. It remains to show that $\max(f)$ is computable relative to $f$. Let $(x_n)_n$ be a computable dense sequence in $K$. Then the sequence $(f(x_n))_n$ is computable relative to $f$, with $\sup_{n \in \N} f(x_n) = \max(f)$. On the other hand, for every computable $b \in \R$, the set $U_b = \{x \in K\;\big|\; f(x) < b\}$ is semi-decidable relative to $f$, so that by the computable compactness of $K$, the predicate $\forall x \in K.\left(f(x) < b\right)$ is semi-decidable relative to $f$ for all $b \in \Q$. We can use this to construct a sequence $(b_n)_n$ of real numbers which is computable relative to $f$ and satisfies $\max(f) = \inf_{n \in \N} b_n$. Since $\max(f)$ can hence be approximated arbitrarily well ``from above'' as well as ``from below'', it is computable relative to $f$.
\end{proof}

It follows from Theorem \ref{thm: supremum on compact space} that every finite dimensional uniformly convex computable Banach space $E$ has a computable modulus of uniform convexity $\eta_E$, since we may put
\[\eta_E(\varepsilon) = \inf\Big\{1 - \bignorm{\frac{x + y}{2}} \;\big|\; x,y \in B_E, \norm{x - y} \geq \varepsilon  \Big\}, \]
and the set $\{(x,y) \in B_E\times B_E \;\big|\; \norm{x - y}\geq \varepsilon\}$ is computably compact relative to $\varepsilon$ and contains a computable, dense sequence relative to $\varepsilon$. Since the proof of Theorem \ref{thm: supremum on compact space} is uniform in $K$, the claim follows. 
\begin{theorem}[Kreinovich's theorem, \cite{KreinovichPhd}]\label{thm: Kreinovich's theorem}
Let $K$ be a computably compact represented topological space. Then the mapping
\[\UC_K\colon\subseteq \A(K) \to K, \; \{x\}\mapsto x \]
is computable.\qed
\end{theorem}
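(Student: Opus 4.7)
The strategy is to first upgrade the given $\psi^K$-name of $\{x\}\in\A(K)$ to a $\kappa$-name of $\{x\}\in\K(K)$, and then to use the admissibility of $\delta_K$ to extract $x$ from the compactness information.

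The first step is immediate from Proposition~\ref{prop: closed and compact sets, (i) closed set is compact, (ii) compact set in Hausdorff space is closed}(i): since $K$ is computably compact, the identity $\A(K)\to\K(K)$ is computable. Having $\{x\}\in\K(K)$, we possess a $[\theta^K\to\sigma]$-name of the functional $\operatorname{contained?}_{\{x\}}\colon\O(K)\to\S$, which, since $\{x\}$ is a singleton, returns $1$ precisely on those open sets $U$ with $x\in U$. Thus this name is simultaneously a $[\theta^K\to\sigma]$-name of the evaluation functional $\operatorname{ev}_x\colon\O(K)\to\S,\;U\mapsto\chi_U(x)$.

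The second step is to transform a name of $\operatorname{ev}_x$ into a $\delta_K$-name of $x$. The assignment $x\mapsto\operatorname{ev}_x$ is itself a representation of the underlying set of $K$, valued in $\RCo{\O(K)}{\S}$; it is continuous, and injective because $K$ is Hausdorff. By the maximality of $\delta_K$ in the family of continuous representations of $K$, this ``second-order'' representation reduces to $\delta_K$. Appealing to the effective version of admissibility that is built into the Schr\"oder/Pauly framework adopted in the paper, this reduction is in fact computable, so composing it with the $\A(K)\to\K(K)$ conversion of the first step yields a computable realiser of $\UC_K$.

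The main obstacle is the computability, rather than mere topological continuity, of the translation $\operatorname{ev}_x\mapsto x$: admissibility as defined above only guarantees a continuous realiser of this reduction a priori, so one must justify the uniform computability by unfolding the canonical construction of $\delta_K$ from its associated $[\theta^K\to\sigma]$-representation. In concrete settings such as computable metric spaces, one can bypass this abstraction entirely and argue by hand: enumerate the indices $n$ of basic open balls $B_n$ of radius $2^{-n}$ for which the semi-decidable predicate $\{x\}\subseteq B_n$ fires, and from this enumeration assemble a fast Cauchy-name of $x$, exploiting that any open cover of $K$ has a finite subcover certified by computable compactness.
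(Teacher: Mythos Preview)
The paper does not prove this theorem: it is stated with a \qed and attributed to Kreinovich's thesis, so there is no argument in the paper to compare against.

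Your outline is the standard modern proof and is correct in structure: convert $\{x\}\in\A(K)$ to $\{x\}\in\K(K)$ via Proposition~\ref{prop: closed and compact sets, (i) closed set is compact, (ii) compact set in Hausdorff space is closed}(i), observe that $\operatorname{contained?}_{\{x\}}$ coincides with the neighbourhood-filter map $\operatorname{ev}_x\colon U\mapsto\chi_U(x)$, and then recover $x$ from $\operatorname{ev}_x$. You also correctly isolate the one genuine subtlety. The paper's definition of admissibility is purely topological (maximality with respect to \emph{continuous} reduction), so from the text alone you only get $\delta'\leq_t\delta_K$ for the neighbourhood-filter representation $\delta'$, not the computable reduction $\delta'\leq\delta_K$ that you need. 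This is not a flaw in your reasoning so much as a gap between what the paper states and what its cited framework actually provides: in Pauly's \cite{PaulyRepresented}, which the paper explicitly adopts, the canonical representations are \emph{computably} admissible, and the map $\{x\}\mapsto x$ from singleton compacts to points is computable by construction. So your appeal to ``the effective version of admissibility built into the framework'' is legitimate, though it would be cleaner to cite that result directly rather than leave it as an unfolding exercise.

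Your fallback argument for computable metric spaces is correct and self-contained, and is essentially Kreinovich's original argument: use computable compactness to confirm, for each $n$, that some basic ball of radius $2^{-n}$ covers $\{x\}$, and search for one.
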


\begin{Definition}
A \emph{computable metric space} is a triple $(M,d,\nu_M)$, where $(M,d)$ is a metric space and $\nu_M\colon\N\to A$ is a numbering of a dense subset $A\subseteq M$, such that $d\colon A\times A \to \R$ is $(\nu_M\times\nu_M,\rho)$-computable. With a computable metric space we associate the represented space $(M,\delta_M)$, where 
\[\delta_M(p) = x \;:\Leftrightarrow d(\nu_M(p(n)),x) \leq 2^{-n} \;\text{ for all }n \in \N.\]
\end{Definition}
One can show that the above defined canonical representation of a computable metric space $M$ is admissible and that $d\colon M\times M\to\R$ is computable. This canonical representation $\delta_M$ is also called the \emph{Cauchy representation} induced by $\nu_M$. We will refer to the points in $\im \nu_M$ as the \emph{rational points} of the represented space $M$. Note that any computable metric space is separable, and hence Hausdorff, by definition. In fact, every computable metric space is effectively Hausdorff, since the predicate $d(x,y) > 0$ is a semi-decidable relative to $x$ and $y$. In any metric space $M$ we denote by $B(x,r) = \{y \in M \;\big|\; d(x,y) < r\}$ the open ball of radius $r$ centred at $x$, and by $\clos{B}(x,r) = \{y \in M \;\big|\; d(x,y) \leq r\}$ the closed ball of radius $r$ centred at $x$.

The following result is more or less folklore, and justifies the more abstract Definition \ref{Def: canonical representations of compact sets} of computable compactness.
\begin{proposition}\label{prop: computable total boundedness equivalent computable compactness}
A computable metric space is computably compact if and only if it is complete and \emph{computably totally bounded}, i.e. if and only if there exists a function $\alpha\colon \N\to M^{*}$ such that 
\[\forall n \in \N. \forall x \in M.\exists k \leq \operatorname{lth}(\alpha(n)).\left(d(x,\alpha(n)_k) < 2^{-n} \right). \]
We call $\alpha(n)$ a $2^{-n}$-net in $M$.
\end{proposition}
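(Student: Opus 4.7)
The plan is to prove the two directions separately, with the backward direction being the more delicate.

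Forward direction: Suppose $M$ is computably compact. By the remark following Definition \ref{Def: compact space}, $M$ is itself a computable point in $\K(M)$, so $M$ is a compact metric space in the classical sense; hence it is complete and classically totally bounded. To make total boundedness computable, I would enumerate in parallel all finite tuples $(a_1, \ldots, a_k)$ of rational points of $M$. For each candidate, the open set $\bigcup_j B(a_j, 2^{-n})$ and its complement in $\A(M)$ are computable uniformly in the tuple, and the emptiness test furnished by Definition \ref{Def: compact space} halts precisely when the tuple is a (strict) $2^{-n}$-net. Classical total boundedness together with the density of rational points ensures that such a rational $2^{-n}$-net exists, so the search terminates; $\alpha(n)$ is then defined to be the first successful tuple.

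Backward direction: Suppose $M$ is complete and computably totally bounded. Given $A \in \A(M)$, I need to semi-decide $A = \emptyset$. I would invoke the folklore fact that on a computable metric space, $\theta^M$-names of $U \in \O(M)$ are computably equivalent to enumerations of pairs $(x_i, r_i) \in \im(\nu_M) \times \Q_{>0}$ with $U = \bigcup_i B(x_i, r_i)$; applying this to $M \setminus A$ yields a computable enumeration of rational balls whose union equals $M \setminus A$. The algorithm then runs a parallel search over pairs $(F, n)$, where $F$ is a finite subset of $\N$ and $n \in \N$: for each such pair it computes $\alpha(n) = (a_1, \ldots, a_k)$ and tests the decidable rational-arithmetic condition
\[\forall j \leq k \; \exists i \in F \colon d(a_j, x_i) + 2^{-n} < r_i,\]
halting with output $1$ as soon as some pair passes.

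If the condition holds for $(F, n)$, then any $y \in M$ lies within $2^{-n}$ of some $a_j$, and the slack places $y$ inside some $B(x_i, r_i)$ with $i \in F$; hence $M \setminus A$ is already covered by the finite subfamily and $A = \emptyset$. The main obstacle is the converse: given $A = \emptyset$, I must exhibit some $(F, n)$ that passes the test. Here I would use that completeness plus total boundedness implies classical compactness of $M$, extract a finite subcover $F$ of the open cover $\{B(x_i, r_i)\}_i$, and apply the Lebesgue number lemma to obtain $\delta > 0$ such that every ball $B(y, \delta) \subseteq M$ lies inside some $B(x_i, r_i)$ with $i \in F$; choosing $n$ with $2^{-n} < \delta$ then forces each $\overline{B}(a_j, 2^{-n})$ to fit strictly into a chosen $B(x_{i(j)}, r_{i(j)})$, which is exactly the slack inequality displayed above.
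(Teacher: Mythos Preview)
Your proof is correct and follows essentially the same route as the paper: in both directions you and the paper reduce to the semi-decidable test ``does there exist $n$ such that every point of the $2^{-n}$-net $\alpha(n)$ lies in some ball $B(x_i,r_i)$ with slack $2^{-n}$''. The paper phrases the backward direction as verifying that a given \emph{finite} family of rational balls covers $M$ (citing a ``standard argument'' that this suffices for computable compactness), whereas you work directly with the $\A(M)$-name of $A$ and search over pairs $(F,n)$; these are the same computation.

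One small imprecision: the standard Lebesgue number lemma gives you $\delta>0$ with $B(y,\delta)\subseteq B(x_{i(y)},r_{i(y)})$, but in a general metric space this containment does \emph{not} imply the arithmetic inequality $d(y,x_{i(y)})+\delta\leq r_{i(y)}$ (think of a one-point space covered by a ball of tiny radius --- any $\delta$ is a Lebesgue number). What you actually need is $\delta=\min_{y\in M}\max_{i\in F}(r_i-d(y,x_i))$, which is positive by compactness and continuity and directly yields $d(a_j,x_i)+2^{-n}<r_i$ once $2^{-n}<\delta$. The paper simply asserts the ``if and only if'' without proving this direction, so your write-up is in fact more explicit; just replace the Lebesgue invocation by this direct compactness argument.
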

\begin{proof}
Suppose that $M$ is computably compact. Then $M$ is compact and thus complete. Let $(a_k)_k$ be a dense computable sequence in $M$. Since $M$ is computably compact, we can verify for all $n,m \in \N$ if the open set $B(\tilde{a}_1,2^{-n - 1})\cup\dots\cup B(\tilde{a}_m,2^{-n - 1})$ is equal to all of $M$, where $\tilde{a}_k$ is a rational approximation to $a_k$ to up error $2^{-n-1}$. In that case, $a_1,\dots,a_m$ is a $2^{-n}$-net in $M$. On the other hand, since $(a_k)_k$ is dense in $M$ and $M$ is compact, this process has to finish after a finite number of steps for each $n\in\N$. It follows that $M$ is computably totally bounded.

Suppose now that $M$ is complete and computably totally bounded. Let \[S = \{B(c_1,r_1),\dots,B(c_k,r_k)\}\] be a collection of rational balls, i.e.~balls whose radii are rational numbers and whose centres are rational points in $M$. We show that we can verify if $S$ is a cover of $A$. It is a standard argument that this suffices in order to establish that $M$ is computably compact. Let $(\langle a^n_1,\dots,a^n_{l(n)}\rangle)_{n \in \N}$ be a computable sequence of $2^{-n}$-nets in $M$. Then $S$ covers $M$ if and only if there exists $n\in \N$ such that for all $i \in \{1,\dots,l(n)\}$ there exists $j \in \{1,\dots,k\}$ such that 
$d(a^n_i,c_j) < r_j - 2^{-n}$. This property is semi-decidable, so $M$ is computably compact.
\end{proof}

In complete computable metric spaces, overtness is characterised by separability, in the sense that Proposition \ref{prop: overtness and recursive enumerability} admits a converse (cf. \cite[Theorem 3.8 (2)]{BrattkaPresser}).
\begin{proposition}\label{prop: overtness and recursive enumerability, metric case}
Let $M$ be a complete computable metric space. Define the representation $\delta_{\re}$ of the set of nonempty closed subsets of $M$ as in Proposition \ref{prop: overtness and recursive enumerability}. Then ${\delta_{\re} \equiv \left(\upsilon\big|^{\V(M)\setminus\{\emptyset\}}\right)}$.
\end{proposition}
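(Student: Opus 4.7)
The forward reduction $\delta_{\re} \leq \upsilon\big|^{\V(M)\setminus\{\emptyset\}}$ is exactly Proposition \ref{prop: overtness and recursive enumerability}, so only the reverse reduction requires work. The plan is to use the completeness of $M$ to convert overtness information (the ability to semi-decide ``$U \cap A \neq \emptyset$'' for open $U$) into a computable sequence of \emph{genuine} points of $A$ that is dense in $A$.

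Given a $\upsilon$-name of a nonempty closed set $A \subseteq M$, I would effectively enumerate all rational open balls $B_k = B(c_k,r_k)$ (with $c_k \in \im \nu_M$ and $r_k \in \Q_{>0}$) and in parallel semi-decide $B_k \cap A \neq \emptyset$. For each $k$ whose predicate gets confirmed, I construct a fast Cauchy sequence of rational points $(y_k^{(n)})_n$ together with rational radii $(\rho_k^{(n)})_n$ satisfying $y_k^{(0)} = c_k$, $\rho_k^{(0)} = r_k$, $\rho_k^{(n+1)} \leq \rho_k^{(n)}/2$, $\clos{B}(y_k^{(n+1)},\rho_k^{(n+1)}) \subseteq B(y_k^{(n)},\rho_k^{(n)})$, and $B(y_k^{(n)},\rho_k^{(n)}) \cap A \neq \emptyset$ for all $n$. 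The heart of the construction is the shrinking step: at stage $n$ I search in parallel over rational pairs $(z,\rho)$ for one that strictly fits inside $B(y_k^{(n)},\rho_k^{(n)})$ (semi-decidable, since $d$ is computable on the dense subset) and whose open ball meets $A$ (semi-decidable by overtness). Such a pair exists, because any $a \in B(y_k^{(n)},\rho_k^{(n)}) \cap A$ admits nearby rational centres around which a sufficiently small rational ball still meets $A$ and sits strictly inside the previous ball.

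By completeness the sequence $(y_k^{(n)})_n$ converges to some $y_k^{\ast} \in M$. Picking $a_n \in A \cap B(y_k^{(n)},\rho_k^{(n)})$ for each $n$ one has $d(a_n, y_k^{\ast}) \leq d(a_n,y_k^{(n)}) + d(y_k^{(n)},y_k^{\ast}) \leq 2\rho_k^{(n)} \to 0$, so closedness of $A$ places $y_k^{\ast} \in A \cap B_k$. The Cauchy sequence $(y_k^{(n)})_n$ itself furnishes a $\delta_M$-name of $y_k^{\ast}$, and dovetailing over $k$ yields a $\delta_M^\omega$-name of a sequence in $A$ which is dense in $A$: any $a \in A$ and $\varepsilon > 0$ are witnessed by some $B_k$ with $a \in B_k$ and $r_k < \varepsilon/2$, forcing $d(y_k^{\ast},a) < \varepsilon$. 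The main obstacle I foresee is the bookkeeping of the dovetailing, ensuring that the Cauchy-sequence construction for $B_k$ advances only once the confirmation $B_k \cap A \neq \emptyset$ has actually arrived and that the overall procedure produces a single uniform $\delta_M^\omega$-name; this is handled by a standard staged enumeration scheme, giving the reduction uniformly in the input.
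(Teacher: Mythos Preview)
Your proposal is correct and follows essentially the same approach as the paper: enumerate the rational balls that meet $A$ using the overtness information, then for each such ball run a halving refinement that stays inside the previous ball while still meeting $A$, and take the Cauchy limit (using completeness) to land in $A$; density follows because every point of $A$ is hit by arbitrarily small rational balls. The only cosmetic differences are that the paper restricts the initial balls to radius at most $1$ and phrases the refinement step slightly differently, while you are a bit more explicit about the closed-ball containment and the dovetailing bookkeeping.
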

\begin{proof}
The direction $\delta_{\re} \leq \upsilon$ was already proved in Proposition \ref{prop: overtness and recursive enumerability}, so it remains to prove $\upsilon \leq \delta_{\re}$. Suppose we are given a closed set $A \in \V(M)$. We can compute an enumeration $(B_m)_m$ of all open rational balls (i.e. balls with rational centre and radius) with radius at most $1$ intersecting $A$. We use this to construct a dense sequence $(x_m)_m$ in $A$. The $m^{th}$ element in the sequence is computed as follows: the first approximation $x_m^{(0)}$ to $x_m$ is the centre of $B_m$. Let $1 \geq \varepsilon > 0$ denote the radius of $B_m$. We claim that we can find an open rational ball $B_m^{(1)}$ with radius at most $\tfrac{\varepsilon}{2}$ which is contained in $B_m$ and intersects $A$. Let $a \in B_m\cap A$. Then there exists rational $\tfrac{\varepsilon}{2} > \delta > 0$ such that $d(a,x_m^{(0)}) < \varepsilon - \delta$. Let $\tilde{a}$ be a rational approximation of $a$ up to error $\delta/2$. Then $d(\tilde{a},x_m^{(0)}) < \varepsilon - \delta/2$. In particular, $B(\tilde{a},\delta/2)\cap A \neq \emptyset$ and $B(\tilde{a},\delta/2) \subseteq B(x_m^{(0)},\varepsilon)$. On the other hand, we can verify for a given rational $a$ and $\delta < \tfrac{\varepsilon}{2}$ that $d(a,x_m^{(0)}) < \varepsilon - \delta$ and that $B(a,\delta) \cap A \neq \emptyset$. We may hence search for such $a$ and $\delta$, and put $B_m^{(1)} = B(a,\delta)$ and $x_m^{(1)} = a$. Continuing in this manner, we obtain a Cauchy sequence $x_m^{(n)}$ with $d(x_m^{(n)},x_m^{(n + k)}) < 2^{-n}$ for all $k,n \in \N$. Since $M$ is complete, the sequence $x_m^{(n)}$ converges to some element $x_m \in A$ with $d(x_m^{(n)},x_m) \leq 2^{-n}$. Applying this to all $(B_m)_m$ in parallel, we obtain a computable sequence $(x_m)_m$. It remains to show that $(x_m)_m$ is dense in $A$. Let $a \in A$, and let $\varepsilon$ be a rational number satisfying $1 > \varepsilon > 0$. There exists a rational point $x$ satisfying $d(a,x) < \tfrac{\varepsilon}{2}$. In particular, $B(x,\tfrac{\varepsilon}{2}) \cap A \neq \emptyset$, so $B(x,\tfrac{\varepsilon}{2}) = B_k$ for some $k \in \N$. It follows from the construction of $x_k$ that $d(x_k,a) < \varepsilon$.
\end{proof}

If $M$ is a computable metric space, we have another natural notion of computability for closed sets, by identifying a closed set $A \subseteq M$ with its \emph{distance function}
\begin{equation}\label{eq: distance}
d_A\colon M\to \R, \; d_A(x) = \inf\{d(x,y) \;|\; y \in A\}.
\end{equation}

\noindent Define represented spaces $\R_{<} = (\R,\rho_{<})$ and $\R_{>} = (\R,\rho_{>})$ via
\[\rho_{<}(p) = x\; :\Leftrightarrow\; \sup_{n \in \N}\nu_{\Q}^{\omega}(p)(n) = x \;\;\text{ and }\;\;\rho_{>}(p) = x\; :\Leftrightarrow\; \inf_{n \in \N}\nu_{\Q}^{\omega}(p)(n) = x.\]
Computable elements of $\R_{<}$ are called \emph{left-r.e.\ numbers}, and computable elements of $\R_{>}$ are called \emph{right-r.e.\ numbers}. Obviously, a number is computable if and only if it is both right- and left-r.e., whereas a classic result due to Specker \cite{SpeckerSequence} asserts the existence of both uncomputable left-r.e.- and uncomputable right-r.e.\ numbers.

\begin{Definition}\label{Def: distance representations}
Let $M$ be a computable metric space.
\begin{enumerate}[label=(\roman*)]
\item The represented space $\A_{\dist}(M)$ is the space of nonempty closed subsets of $M$, where a closed subset $A\subseteq M$ is represented via a $[\delta_M\to\rho]$-name of its distance function \eqref{eq: distance}.
\item The represented space $\A_{\dist_{<}}(M)$ is the space of nonempty closed subsets of $M$, where a closed subset $A\subseteq M$ is represented via a $[\delta_M\to\rho_<]$-name of its distance function \eqref{eq: distance}.
\item The represented space $\A_{\dist_{>}}(M)$ is the space of nonempty closed subsets of $M$, where a closed subset $A\subseteq M$ is represented via a $[\delta_M\to\rho_>]$-name of its distance function \eqref{eq: distance}.
\end{enumerate}
\end{Definition}

\noindent Computable points of $\A_{\dist}(M)$ are called \demph{located}, computable points of $\A_{\dist_<}(M)$ are called \demph{lower semi-located}, and computable points of $\A_{\dist_>}(M)$ are called \demph{upper semi-located}. Using Proposition \ref{prop: overtness and recursive enumerability, metric case}, it is easy to see that for any complete computable metric space $M$, the canonical representations of the spaces $\A_{\dist_>}(M)$ and $\V(M)\setminus\{\emptyset\}$ are equivalent (see also \cite[Theorem 3.7]{BrattkaPresser}). It is also easy to see that $\id\colon \A_{\dist_{<}}(M) \to \A(M)$ is computable (see \cite[Theorem 3.11 (1)]{BrattkaPresser}). In \cite[Lemma 5.1.7]{Weih} it is proved, that for $M = \R^d$, the canonical representations of $\A_{\dist_<}(M)$ and $\A(M)\setminus\{\emptyset\}$ are equivalent, and the argument readily generalises to any complete computable metric space with (effectively) compact closed balls (see \cite[Theorem 3.11 (3)]{BrattkaPresser}). Any such space is locally compact. Local compactness is in fact necessary for the reduction to hold:

\begin{proposition}\label{prop: equivalence lower semi-located and computably closed implies locally compact}
Let $M$ be a complete computable metric space. If the identity mapping $\id\colon \A(M)\setminus\{\emptyset\} \to \A_{\dist_<}(M)$ is computable, then $M$ is locally compact.
\end{proposition}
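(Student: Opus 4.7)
The plan is to establish the contrapositive: assuming $M$ is not locally compact, I will derive a contradiction from the computability hypothesis. In fact I will prove the stronger statement that, under the hypothesis, every closed ball $\clos{B}(y,r)$ of $M$ is compact; since closed balls form a neighbourhood basis, this is equivalent to local compactness of $M$.

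First I unravel the hypothesis. A $\psi^M$-name of a nonempty closed $A\subseteq M$ is exactly a $\theta^M$-name of the complement $U := M\setminus A$, and a $\rho_<$-name of $d_A(y)$ permits us to semi-decide each inequality $d_A(y)>q$ for rational $q>0$. Since $d_A(y)>q$ is equivalent to $\clos{B}(y,q)\cap A=\emptyset$, i.e.\ $\clos{B}(y,q)\subseteq U$, the computability of $\id\colon\A(M)\setminus\{\emptyset\}\to\A_{\dist_<}(M)$ produces a computable procedure $P$ which, uniformly in $y\in M$, $q\in\Q_{>0}$, and a $\theta^M$-name of any open $U\subsetneq M$, semi-decides whether $\clos{B}(y,q)\subseteq U$.

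Suppose for contradiction that some closed ball $\clos{B}(y_0,r_0)$ is not compact. The one-point space is trivially locally compact, so $M$ has at least two points; moreover $y_0$ is not isolated, for otherwise $\{y_0\}$ would be a compact neighbourhood. Shrinking $r_0$ if necessary I fix a rational $s>0$ and a point $y_1\in M$ with $d(y_0,y_1)>r_0+s$. Since $\clos{B}(y_0,r_0)$ is non-compact, there is an open cover $\{W_i\}_{i\in\N}$ without finite subcover; replacing each $W_i$ by $W_i\cap B(y_0,r_0+s)$ ensures that $y_1\notin W_i$ for every $i$. Setting $U_N:=\bigcup_{i\le N}W_i$ and $U:=\bigcup_{i\in\N}W_i$, one has $U_N, U\subsetneq M$ (as $y_1$ is omitted), $\clos{B}(y_0,r_0)\subseteq U$, and $\clos{B}(y_0,r_0)\not\subseteq U_N$ for any $N$. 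I then construct a $\theta^M$-name of $U$ by dovetailing $\theta^M$-names of the $W_i$ so that any finite prefix of the resulting Baire-space sequence encodes only positive membership information about $W_1,\dots,W_{N'}$ for some $N'$, and is therefore extendible to a $\theta^M$-name of $U_{N'}$. Feeding this name to $P(\cdot,y_0,r_0)$ forces a non-zero output at some finite stage, since $\clos{B}(y_0,r_0)\subseteq U$; by continuity of every computable realiser on Baire space, this decision depends only on a finite prefix of the input. The same non-zero output is therefore produced when $P$ is applied to a $\theta^M$-name of $U_{N'}$ extending that prefix, contradicting $\clos{B}(y_0,r_0)\not\subseteq U_{N'}$. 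Thus every closed ball of $M$ is compact and $M$ is locally compact.

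The main obstacle I expect is to make the dovetailing construction precise enough that the Baire-space continuity argument passes through cleanly: one must verify that the function-space representation $[\delta_M\to\sigma]$ of Definition~\ref{Def: canonical representation of open and closed sets} really admits the coarse-to-fine approximations on which the continuity argument depends. A more conceptual alternative would be to note that $P$ already furnishes each $\clos{B}(y,q)$ with the data of a $\kappa$-name, and to conclude compactness of $\clos{B}(y,q)$ from the fact that in Hausdorff represented topological spaces the abstract notion of compactness agrees with the classical one (Schr\"oder's Proposition 3.3.2\,(3), as cited after Definition~\ref{Def: canonical representations of compact sets}).
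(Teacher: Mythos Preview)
Your argument contains a subtle but genuine error. The claimed equivalence ``$d_A(y)>q$ is equivalent to $\clos{B}(y,q)\cap A=\emptyset$'' is false: only the forward implication holds. If every point of $A$ lies at distance strictly greater than $q$ from $y$, the infimum $d_A(y)$ may still equal $q$; for instance take $M=\ell^2$, $y=0$, $q=1$, and $A=\{(1+\tfrac1n)e_n:n\geq 1\}$. Consequently your procedure $P$ does not semi-decide the containment $\clos{B}(y,q)\subseteq U$; it only semi-decides the strictly stronger condition $d_{M\setminus U}(y)>q$.

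This breaks the core of your contradiction. From $\clos{B}(y_0,r_0)\subseteq U$ you may only conclude $d_{M\setminus U}(y_0)\geq r_0$, and if equality holds then $P(\cdot,y_0,r_0)$ never fires, so no finite-stage commitment is ever made and nothing contradicts the absence of a finite subcover. Your alternative route via a $\kappa$-name fails for the same reason: $P$ does not realise $\operatorname{contained?}_{\clos{B}(y,q)}$ but a strictly smaller function. The paper's proof sidesteps the issue by applying the machine to the singleton $\{y\}$ with $d(x,y)>r$ \emph{strictly}; then the $\rho_<$-output is guaranteed eventually to exceed $r$, and by choosing a $\psi$-name of $\{y\}$ that enumerates only balls of radius at most $2^{-k}$, the finite prefix read at that moment is forced to be a $2^{-k}$-cover of $\clos{B}(x,r)$, yielding total boundedness directly. (A minor aside: your claim that $y_0$ cannot be isolated is unjustified, since an isolated point can still sit inside a large non-compact closed ball, though you do not actually use this.)
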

\begin{proof}
If $M$ is a singleton, the claim is trivial, so we may assume that $M$ consists of at least two points. Given $x \in M$ we show that we can compute $\clos{B}(x,r)$ as a compact subset of $M$, for $r$ sufficiently small. We search for a rational $y \in M$ and $r\in\Q_{+}$ with $d(y,x) > r$. By an argument similar to Proposition \ref{prop: computable total boundedness equivalent computable compactness}, it suffices to compute for every $k \in \N$ a cover of $\clos{B}(x,r)$ by balls of radius $2^{-k}$, whose centres are rational points in $M$. Since $y$ is computable, the singleton $\{y\}$ is co-semi-decidable, so we can compute an enumeration of balls with rational centres and radii exhausting the complement of $\{y\}$ such that every ball has radius at most $2^{-k}$. We feed this enumeration into the machine computing the identity $\id\colon \A(M)\setminus\{\emptyset\} \to \A_{\dist_<}(M)$ and use the $[\delta_M\to\rho_<]$-name provided by the machine to compute $\d(x,\{y\})$ from below. After having processed finitely many balls, the machine will output the lower bound $r$ on $\d(x,\{y\}) = d(x,y)$. This means that $\clos{B}(x,r)$ is covered by these finitely many balls, since otherwise we could force the machine computing the identity to err.
\end{proof}

The proof of Proposition \ref{prop: equivalence lower semi-located and computably closed implies locally compact} shows that we can even compute a witness for the local compactness of $M$, namely a function $f\colon M \to \K(M)$ which maps a point $x$ to a compact closed ball containing $x$. 

\begin{Definition}
A \emph{(real) computable normed space} is a normed real vector space $E$ together with a numbering $e\colon\N\to E$ such that $\spann\{e(n) \;|\; n \in \N\}$ is dense in $E$ and $(E,d,\nu_E)$ is a computable metric space, where $d(x,y) = \norm{x - y}$ and $\nu_E$ is a canonical notation of all (finite) $\Q$-linear combinations of $\im e$.
\end{Definition}

A complete computable normed space is called a computable Banach space. A computable normed space which is also a Hilbert space is called a computable Hilbert space. The inner product in a computable Hilbert space is computable by the polarisation identity. A computable normed space becomes a represented space when endowed with the Cauchy representation induced by the numbering $\nu_E$. In this representation the vector space operations and the norm are computable functions and $0 \in E$ is a computable point of the represented space $E$. An important feature of (infinite dimensional) computable normed spaces is that without loss of generality the fundamental sequence is linearly independent (cf.~\cite[p.~142]{PourElRichards}).
\begin{lemma}[Effective independence lemma]\label{lem: effective independence lemma}
Let $(E,e)$ be an infinite dimensional computable normed space. Then there exists a computable function $f\colon \N \to \N$ such that $e\circ f\colon\N\to E$ has dense span in $E$ and consists of linearly independent vectors.\qed
\end{lemma}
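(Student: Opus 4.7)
The plan is to build $f$ by a greedy inductive construction based on dovetailed search, exploiting that linear independence of a finite tuple of vectors is semi-decidable in the Cauchy representation of $E$. To see this, consider the linear map $\phi\colon\R^j\to E$, $\lambda\mapsto\sum_{i=1}^j\lambda_iv_i$ associated with a tuple $v_1,\dots,v_j\in E$. Linear independence of the $v_i$ is equivalent to injectivity of $\phi$, which, by compactness of the sup-norm unit sphere of $\R^j$, is equivalent to the existence of a rational $c>0$ with $\norm{\phi(\lambda)}\geq c$ for all $\lambda$ of sup-norm one. Such a bound is certifiable from a finite rational $\delta$-net $\{\mu_1,\dots,\mu_N\}$ of the sphere together with a computable Lipschitz constant $L$ for $\phi$ (readily available from $\norm{v_1},\dots,\norm{v_j}$): if $\norm{\phi(\mu_l)}>c+\delta L$ for every $l$, then $c$ is a valid lower bound on $\inf_{\norm{\lambda}_\infty=1}\norm{\phi(\lambda)}$. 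Searching in parallel over rational $c,\delta>0$ and finite nets therefore gives a semi-decision procedure.

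Using this, I would define $f$ inductively: set $f(0)$ to be the first $n$ for which $\norm{e(n)}>0$ is witnessed (such $n$ exists since $\overline{\spann}(\im e)=E\neq\{0\}$, and positivity of a computable real is semi-decidable). Given $f(0),\dots,f(k-1)$ with $e(f(0)),\dots,e(f(k-1))$ already verified linearly independent, let $f(k)$ be the index $n$ produced by dovetailing the above semi-decision procedure over $n\in\N$ applied to the augmented tuple $e(f(0)),\dots,e(f(k-1)),e(n)$, under a priority rule favouring small $n$. Such $n$ exists because the $k$-dimensional subspace $V_k:=\spann\{e(f(0)),\dots,e(f(k-1))\}$ cannot contain the infinite-dimensional subspace $\spann(\im e)$, so $e(n)\notin V_k$ for infinitely many $n$ and the procedure halts on each such $n$. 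Hence $f$ is total computable and $(e(f(n)))_n$ is linearly independent by construction.

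It remains to verify $\overline{\spann}\{e(f(n))\mid n\in\N\}=E$. Since $\spann(\im e)$ is dense in $E$, it suffices to show every $e(m)$ lies in $V_\infty:=\overline{\spann}\{e(f(n))\}$. Were some $e(m)\notin V_\infty$, then $e(m)\notin V_k$ for every $k$, so the independence check for $m$ would halt at every stage; a careful priority argument on the dovetailing, as carried out by Pour-El and Richards \cite[p.~142]{PourElRichards}, then forces $m$ to be selected as $f(k)$ for some $k$, contradicting $m\notin\im f$. The main obstacle is precisely this fairness bookkeeping: a naïve ``first halt wins'' strategy may repeatedly favour indices that happen to halt quickly and thereby starve an index $m$ whose halting time grows with the span, so the priority scheme of Pour-El and Richards is needed to guarantee density.
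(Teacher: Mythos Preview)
The paper does not prove this lemma at all: it is stated with a trailing \qed and attributed to Pour-El and Richards \cite[p.~142]{PourElRichards}. Your sketch is a faithful outline of precisely that classical argument --- semi-decidability of linear independence via a rational lower-bound certificate on the compact unit sphere of $\R^j$, followed by a dovetailed greedy selection with a priority scheme to secure density --- and you correctly isolate the one genuinely delicate point (fairness of the dovetailing) and defer it to the same source. So your proposal and the paper's treatment coincide: both ultimately rest on the reference for the details.
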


\begin{corollary}\hfill
\begin{enumerate}[label=(\roman*)]
\item Every computable real Hilbert space $H$ has a computable orthonormal basis, i.e. an orthonormal basis which is a (potentially finite) computable sequence in $H$.
\item Every finite dimensional computable real Hilbert space is computably isometrically isomorphic to $\R^d$ for some $d \in \N$. Every infinite dimensional real Hilbert space is computably isometrically isomorphic to $\ell^2$.\qed
\end{enumerate}
\end{corollary}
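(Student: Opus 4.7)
The plan is to prove (i) by an effective Gram--Schmidt construction applied to the linearly independent fundamental sequence provided by Lemma \ref{lem: effective independence lemma}, and then derive (ii) by using such an orthonormal basis to set up the Fourier-coefficient isomorphism explicitly as a computable map.

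For (i), start with a computable sequence $(v_n)_n$ in $H$ which is linearly independent and has dense span, whose existence is guaranteed by Lemma \ref{lem: effective independence lemma}. Recall that the inner product on a computable Hilbert space is computable via the polarisation identity. Define inductively
\[u_n = v_n - \sum_{k < n}\langle v_n, e_k\rangle e_k, \qquad e_n = u_n/\norm{u_n}.\]
Linear independence of $(v_n)_n$ ensures $u_n \neq 0$, so $\norm{u_n} > 0$ is a computable positive real and the division is computable. By induction $(e_n)_n$ is a computable orthonormal sequence with $\spann\{e_1,\ldots,e_n\} = \spann\{v_1,\ldots,v_n\}$ for every $n$, hence dense span in $H$; this yields a computable orthonormal basis, finite or infinite according to whether $H$ is finite or infinite dimensional.

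For (ii), fix the computable orthonormal basis $(e_n)_n$ from (i). In the finite dimensional case of cardinality $d$, define $T\colon H \to \R^d$ by $T(x) = (\langle x, e_1\rangle,\ldots,\langle x, e_d\rangle)$; computability is immediate from computability of the inner product, and $T^{-1}(a_1,\ldots,a_d) = \sum_{k=1}^d a_k e_k$ is a computable finite linear combination. In the infinite dimensional case, define $T\colon H \to \ell^2$ by $T(x) = (\langle x, e_n\rangle)_n$. Classically $T$ is an isometric isomorphism by Parseval's identity. For computability, one produces the Cauchy representative in $\ell^2$ from that of $x$ as follows: compute the sequence of inner products $a_n = \langle x, e_n\rangle$, and use Bessel's identity $\sum_{n \geq N} a_n^2 = \norm{x}^2 - \sum_{n < N} a_n^2$ to obtain an effective tail bound from the computable real $\norm{x}^2$. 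The inverse sends $(a_n)_n \in \ell^2$ to $\sum_n a_n e_n$; the partial sums $s_N = \sum_{n < N} a_n e_n$ satisfy $\norm{s_M - s_N}^2 = \sum_{N \leq n < M} a_n^2$, which is controlled by the effective tail information coming with the standard representation of $\ell^2$, so the limit is computable in $H$.

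The only point that requires care is the coordination between the Cauchy representations of $H$ and of $\ell^2$: both directions of the isomorphism demand an effective rate of convergence for the relevant partial sums, and in both cases this rate is supplied by Bessel's identity together with the computability of the norm. The orthonormality of $(e_n)_n$ makes this quantitative control transparent, so no genuine obstacle arises, and the stated computable isometric isomorphism is obtained.
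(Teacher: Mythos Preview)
Your argument is the natural one and is what the paper intends (the corollary is stated without proof, as an immediate consequence of the effective independence lemma); the Gram--Schmidt construction for (i) and the Fourier-coefficient isomorphism for (ii), with tail control via Bessel's identity, are exactly right.

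One small point to tighten: Lemma~\ref{lem: effective independence lemma} is stated only for \emph{infinite} dimensional spaces, so you cannot literally invoke it when $H$ is finite dimensional. In that case an easier ad hoc argument suffices: if $\dim H = d$, search among the rational points for a $d$-tuple whose Gram determinant is positive (this is semi-decidable and terminates, since the rational points span $H$), then apply Gram--Schmidt to that finite tuple. With this remark added your proof is complete.
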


\noindent Let us now introduce some basic notions from the theory of Weihrauch degrees. We will treat this paragraph somewhat informally, as we will not need to develop the theory very far. A formal and comprehensive treatment of everything stated here can be found in \cite{WeihrauchDegrees, ClosedChoice, EffectiveChoice}, and in \cite{ConnectedChoice}, where the Weihrauch degree of Brouwer's fixed point theorem is determined.
\begin{Definition}
Let ${\tuple{\cdot}{\cdot}\colon \N^\N\times\N^\N \to \N^\N}$ denote some computable pairing function on Baire Space. A multimapping ${g\colon X \rightrightarrows Y}$ between represented spaces $X$ and $Y$ is said to \emph{Weihrauch reduce} to ${h\colon Z \rightrightarrows W}$, in symbols ${g \leq_W h}$, if there exist computable functions ${K,N\colon\subseteq \N^\N \to \N^\N}$ such that ${K\tuple{HN}{\id}}$ is a realiser of $g$, whenever $H$ is a realiser of $h$. If $f \leq_W g$ and $g \leq_Wf$ we say that $g$ and $f$ are \emph{Weihrauch equivalent} and write $f \equiv_W g$. The equivalence classes with respect to $\equiv_W$ are called \emph{Weihrauch degrees}.
\end{Definition}
The Weihrauch degrees together with the $\leq_W$-relation are known to form a bounded lattice. A very important and useful tool for studying Weihrauch degrees are so-called \emph{closed choice principles} on represented spaces.
\begin{Definition}
Let $X$ be a represented space. 
\begin{enumerate}[label=(\roman*)]
\item The \emph{closed choice principle} on $X$ is the multimapping
\[\C_X\colon \A(X)\setminus\{\emptyset\} \rightrightarrows X,\; A \mapsto A.\]
The \emph{unique choice principle} $\UC_X$ on $X$ is $\C_X$ restricted to singleton sets and the \emph{connected choice principle} $\CC_X$ is $\C_X$ restricted to connected sets.
\item Let $X$ additionally be a closed subset of a computable Banach space $E$. We define the \emph{convex choice principle} $\XC_X$ as the restriction of $\C_X$ to the space $\A^{\conv}(X)$ of convex closed subsets of $X$.
\end{enumerate}
\end{Definition}

\noindent Let us now introduce some concrete Weihrauch degrees that will be useful in our further studies. The limit operator $\lim\colon\subseteq\N^\N\to\N^\N$ takes as input a (suitably encoded) convergent sequence $(p_n)_n \in (\N^\N)^\N \simeq \N^\N$ and outputs its limit. Weak K\H{o}nig's Lemma, $\WKL$, takes as input an infinite binary tree and outputs an infinite path. The intermediate value theorem, $\IVT$, takes as input a continuous function $f\colon [0,1] \to \R$ with $f(0)\cdot f(1) < 0$ and outputs some point $x \in [0,1]$ such that $f(x) = 0$. Brouwer's fixed point theorem in $n$-dimensional space, $\BFT_n$, takes as input a continuous function $f\colon [0,1]^n\to[0,1]^n$ and outputs some fixed point of $f$. Their relation is summarised in the following
\begin{fct}\label{fact on Weihrauch degrees}\hfill
\begin{enumerate}[label=(\roman*)]
\item $\WKL \equiv_W \C_{\{0,1\}^\omega}$.
\item $\CC_{[0,1]^n} \equiv_W \BFT_n \leq_W \WKL$ for all $n$.
\item $\IVT \equiv_W \XC_{[0,1]} \equiv_W \CC_{[0,1]} \equiv_W \BFT_1 <_W \BFT_2 \leq_W \BFT_3 \equiv_W \WKL$.
\item $\WKL <_W \lim <_W \C_{\N^\N}$.\qed
\end{enumerate}
\end{fct}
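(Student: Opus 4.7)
Each item is known from the literature referenced in the preceding paragraph, so the plan is to sketch the key reductions rather than grind through the details.

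For (i), the equivalence $\WKL \equiv_W \C_{\{0,1\}^\omega}$ is obtained by two direct reductions. To see $\WKL \leq_W \C_{\{0,1\}^\omega}$, observe that from a name of an infinite binary tree $T$ one can compute the set $[T]$ of infinite branches as a point of $\A(\{0,1\}^\omega)$: a basic cylinder $[\sigma]$ meets $[T]^C$ iff no extension of $\sigma$ lies in $T$, which is semi-decidable. Conversely, given $A \in \A(\{0,1\}^\omega)$ via its complement enumeration, build a tree $T$ by putting $\sigma \in T$ unless at some stage the enumeration certifies that $[\sigma]\cap A = \emptyset$; then $[T] = A$ and any path is an element of $A$.

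For (ii), the equivalence $\CC_{[0,1]^n} \equiv_W \BFT_n$ is the main result of \cite{ConnectedChoice}. The reduction $\BFT_n \leq_W \CC_{[0,1]^n}$ is direct: the fixed point set $\Fix(f)$ of a continuous self-map of $[0,1]^n$ is nonempty closed and connected (the latter is the nontrivial ingredient; it follows from the continuity of $f$ and the connectedness of $[0,1]^n$ together with structural properties of fixed point sets), and $\Fix(f)$ is computable as an element of $\A([0,1]^n)$ from $f$ by semi-deciding $f(x) \neq x$. The converse direction relies on Miller's theorem \cite{Miller}, which characterises the fixed point sets of computable self-maps of $[0,1]^n$ as exactly the nonempty co-r.e.\ closed connected subsets; uniformising this characterisation yields a reduction $\CC_{[0,1]^n} \leq_W \BFT_n$. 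The bound $\BFT_n \leq_W \WKL$ follows because one can search along a binary tree encoding nested dyadic cubes containing an approximate fixed point.

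For (iii), the chain of equivalences in dimension one reduces to showing $\IVT \equiv_W \XC_{[0,1]} \equiv_W \CC_{[0,1]}$. In $[0,1]$, connected closed sets are intervals hence convex, so $\CC_{[0,1]} \equiv_W \XC_{[0,1]}$. From an $\IVT$-instance one extracts the zero set, which is convex closed (after using sign information to isolate a connected component), giving $\IVT \leq_W \XC_{[0,1]}$, and conversely given a closed interval $[a,b] \subseteq [0,1]$ the function $x \mapsto \max\{a - x,0\} - \max\{x - b,0\}$ has zeros exactly $[a,b]$, yielding $\XC_{[0,1]} \leq_W \IVT$. The strict separation $\BFT_1 <_W \BFT_2$ holds because $\BFT_1 \leq_W \lim$ while $\BFT_2 \not\leq_W \lim$ (as shown in \cite{ConnectedChoice}), and the equivalence $\BFT_3 \equiv_W \WKL$ is the other main theorem of \cite{ConnectedChoice}, with the key point being that arbitrary $\WKL$-instances can be coded into three-dimensional fixed point problems while $\BFT_n \leq_W \WKL$ from item (ii).

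For (iv), the reduction $\WKL \leq_W \lim$ uses that from an enumeration of complements one computes a name of a path by diagonalisation across levels, each level being decidable in the limit. The reduction $\lim \leq_W \C_{\N^\N}$ follows since a convergent sequence determines its limit as a computable singleton relative to a closed-set name. Both reductions are strict: $\WKL <_W \lim$ because $\lim$ can encode arithmetical information strictly beyond what a compact search can extract, and $\lim <_W \C_{\N^\N}$ because $\C_{\N^\N}$ admits no computable selector even when the input is a convergent sequence of candidates. Detailed arguments are given in \cite{WeihrauchDegrees, ClosedChoice, EffectiveChoice}.

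The main obstacle in any self-contained presentation would be establishing Miller's characterisation and the dimension-three encoding of $\WKL$ into $\BFT_3$; everything else is essentially unpacking of definitions.
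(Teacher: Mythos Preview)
The paper does not prove this fact at all: it is stated with a \texttt{\textbackslash qed} and explicitly deferred to the literature (\cite{WeihrauchDegrees, ClosedChoice, EffectiveChoice, ConnectedChoice}). So there is no ``paper's own proof'' to compare against; any correct sketch pointing to those references would suffice.

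Your sketch, however, contains two genuine errors. First, in (ii) you claim that the fixed point set of a continuous self-map of $[0,1]^n$ is connected and that this gives the reduction $\BFT_n \leq_W \CC_{[0,1]^n}$. This is false already in dimension one: $f(x) = x^2$ on $[0,1]$ has $\Fix(f) = \{0,1\}$. Miller's theorem in fact says the fixed point sets are exactly the nonempty co-r.e.\ closed sets, with no connectedness constraint. The reduction $\BFT_n \leq_W \CC_{[0,1]^n}$ in \cite{ConnectedChoice} is obtained by a different and considerably more delicate argument, not by simply handing $\Fix(f)$ to connected choice.

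Second, in (iii) your justification for $\BFT_1 <_W \BFT_2$ is that $\BFT_2 \not\leq_W \lim$. But item (ii) already gives $\BFT_2 \leq_W \WKL$, and item (iv) gives $\WKL <_W \lim$, so $\BFT_2 \leq_W \lim$. The strictness $\BFT_1 <_W \BFT_2$ is established in \cite{ConnectedChoice} (and \cite{PaulyLeRoux}) by entirely different means, essentially via separation results for connected/convex choice in increasing dimension. Since the paper only asks you to cite these results, the safest fix is to drop the incorrect sketches and simply attribute (ii) and the strict inequalities in (iii) to \cite{ConnectedChoice}.
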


\noindent An important property of computably compact spaces is that their closed choice principle is of low degree.
\begin{theorem}\label{thm: compact choice below WKL}
Let $K$ be a computably compact represented topological space. Then the multimapping
\[\C_K\colon \A(K)\setminus\{\emptyset\} \rightrightarrows K, \; A\mapsto A \]
satisfies $\C_K \leq_W \WKL$.\qed
\end{theorem}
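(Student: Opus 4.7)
The plan is to show $\C_K \leq_W \C_{\{0,1\}^\omega}$, which suffices since $\C_{\{0,1\}^\omega} \equiv_W \WKL$ by Fact \ref{fact on Weihrauch degrees}(i). I will focus on the case in which $K$ is a computable metric space, which is the setting of most interest in what follows; the general topological case proceeds by the same strategy, with $2^{-n}$-nets replaced by finite basic open covers of progressively finer resolution, which exist uniformly by computable compactness together with admissibility of the representation.

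From a $\psi^K$-name of $A \in \A(K)\setminus\{\emptyset\}$ I will build a co-r.e.\ finitely branching tree $T'$ together with a computable decoding map $[T'] \to A$. By Proposition \ref{prop: computable total boundedness equivalent computable compactness}, $K$ is complete and we can uniformly compute, for each $n$, a $2^{-n}$-net $\alpha_n = (a_1^n,\dots,a_{k_n}^n)$. Let $T$ be the tree whose level-$n$ nodes are tuples $(b_0,\dots,b_n)$ with $b_j \in \alpha_j$ and $d(b_j,b_{j+1}) < 2^{-j} + 2^{-j-1}$, chosen so that the closed balls $\clos{B}(b_j,2^{-j})$ form a nested chain of shrinking diameter. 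The $\psi^K$-name of $A$ enumerates basic open sets exhausting $A^C$; in parallel I prune a node $(b_0,\dots,b_n)$ from $T$ as soon as the ball $\clos{B}(b_n,2^{-n})$ is seen to lie inside the portion of $A^C$ enumerated so far. The resulting pruned tree $T'$ is co-r.e., uniformly computable from the $\psi^K$-name of $A$, with branching bounded by the computable number $k_n$ at level $n$.

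Two checks complete the argument. First, $T'$ is infinite: given any $x \in A$, choosing $b_n^x \in \alpha_n$ with $d(x,b_n^x) < 2^{-n}$ produces a branch of $T$ whose balls all contain $x$, hence are never pruned. Second, any infinite branch $(b_n)_n$ of $T'$ is Cauchy by the coherence condition, so converges by completeness to some $x \in K$ with $d(x,b_n) \leq 2^{-n}$; since each $\clos{B}(b_n,2^{-n})$ must meet $A$ and the radii vanish, closedness of $A$ forces $x \in A$, giving a computable decoding $[T'] \to A$. Since $T'$ has computable branching bound $k_n$ at level $n$, encoding each level-$n$ node as a fixed-length binary word converts $T'$ into a co-r.e.\ infinite binary tree whose infinite paths translate computably back to those of $T'$. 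This yields the desired reduction $\C_K \leq_W \C_{\{0,1\}^\omega} \equiv_W \WKL$. The main obstacle is bookkeeping: one must arrange the whole construction to be uniform in the name of $A$. In the metric version above this is transparent, whereas for a general computably compact Hausdorff represented topological space it requires extracting a coherent sequence of refining finite open covers from the compactness data, which takes some care but follows the same pattern.
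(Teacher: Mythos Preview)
The paper states this theorem without proof (note the terminal \qed\ inside the statement), treating it as a known result, so there is nothing to compare against directly. Your reduction $\C_K \leq_W \C_{\{0,1\}^\omega}$ via a pruned tree of shrinking balls is the standard argument and is essentially correct in the metric case, but a few details need tightening.

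First, your coherence condition $d(b_j,b_{j+1}) < 2^{-j} + 2^{-j-1}$ does \emph{not} make the balls $\clos{B}(b_j,2^{-j})$ nested; for that you would need $d(b_j,b_{j+1}) \leq 2^{-j-1}$. What you actually need, and what your condition does give, is the Cauchy property, but then the bound should read $d(x,b_n) \leq 3\cdot 2^{-n}$ rather than $2^{-n}$. This constant is harmless for the rest of the argument. Second, you should verify that $T'$ is downward closed. If ``prune a node'' means removing it together with all descendants, this is automatic, and the final $T'$ consists of those $(b_0,\dots,b_n) \in T$ with $\clos{B}(b_j,2^{-j}) \cap A \neq \emptyset$ for every $j \leq n$; this is co-r.e.\ since each $\clos{B}(b_j,2^{-j}) \subseteq A^C$ is verifiable via the computable compactness of closed balls in $K$. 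With these fixes the metric argument goes through.

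Your handling of the general (non-metric) case is too brisk. A computably compact Hausdorff represented topological space need not carry any basis with a meaningful notion of ``progressively finer resolution'', so one cannot simply mimic the $2^{-n}$-net construction. The clean route is to produce a computable surjection from $\{0,1\}^\omega$ onto $K$ (which exists for admissibly represented computably compact Hausdorff spaces, essentially by restricting the representation to a suitable compact domain) and pull $A$ back along it; alternatively, one simply cites the result, as the paper does.
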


Finally, we need a few observations from (computable) functional analysis. The first theorem is the so-called \demph{projection theorem}, which can be found in virtually any functional analysis textbook (cf.~e.g.~\cite[Satz V.3.2 \& Lemma V.3.3]{Werner}).
\begin{theorem}\label{thm: projection theorem}\hfill
\begin{enumerate}[label=(\roman*)]
\item Let $E$ be a uniformly convex real Banach space and let $K \subseteq E$ be nonempty, closed, and convex. For every $x \in E$ there exists a unique $y \in K$ such that \[d(x,y) = d(x,K) = \inf \{d(x,z)\;|\; z\in K\}.\] We denote this element by $P_K(x)$. The mapping $P_K$ is a continuous retraction onto $K$, called the \emph{metric projection}.
\item Let $H$ be a real Hilbert space and $K \subseteq H$ be nonempty, closed, and convex. Then $P_K$ is a nonexpansive mapping, and for all $x \in H$ the element $P_K(x)$ is characterised by the \emph{variational inequality}
\[\product{x - P_K(x)}{y - P_K(x)} \leq 0 \;\;\text{for all }y \in K.\]
\end{enumerate}
\end{theorem}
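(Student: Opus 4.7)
The plan is to prove the two parts of Theorem \ref{thm: projection theorem} separately, with part (i) relying on uniform convexity to extract a Cauchy minimizing sequence, and part (ii) exploiting the inner-product structure to derive the variational characterization and deduce nonexpansiveness.

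For part (i), I would begin with existence. Set $d := \d(x,K)$ and choose a minimizing sequence $(y_n) \subseteq K$ with $\norm{x - y_n} \to d$. The key claim is that $(y_n)$ is Cauchy. If $d = 0$ this is immediate from closedness, so assume $d > 0$. Fix $\epsilon > 0$, let $\delta = \eta_E(\epsilon)$ be the modulus of uniform convexity, and pick $N$ large so that $\norm{x - y_n} \leq d/(1 - \delta/2)$ for all $n \geq N$. For $n,m \geq N$, if $\norm{y_n - y_m} \geq \epsilon \cdot d/(1 - \delta/2)$, then applying uniform convexity to the rescaled vectors $(x - y_n)/\norm{x-y_n}$ and $(x - y_m)/\norm{x-y_m}$ (after bounding the difference) yields $\norm{x - (y_n + y_m)/2} < d$, contradicting $(y_n + y_m)/2 \in K$ by convexity. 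Hence $(y_n)$ is Cauchy, and its limit $y \in K$ satisfies $\norm{x - y} = d$. Uniqueness follows similarly: two minimizers $y_1 \neq y_2$ would give $\norm{x - (y_1+y_2)/2} < d$ by strict convexity (a consequence of uniform convexity), again contradicting minimality. For continuity of $P_K$, suppose $x_n \to x$; then $\norm{x_n - P_K(x_n)} \leq \norm{x_n - P_K(x)} \to \norm{x - P_K(x)}$, so $(P_K(x_n))$ is a minimizing sequence for $x$, and the Cauchy argument above (adapted) forces $P_K(x_n) \to P_K(x)$.

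For part (ii), I would first establish the variational inequality. Suppose $y = P_K(x)$ and let $z \in K$. For $t \in (0,1]$, convexity gives $y + t(z - y) \in K$, so $\norm{x - y - t(z-y)}^2 \geq \norm{x - y}^2$. Expanding the left side using the inner product yields
\[-2t\,\product{x - y}{z - y} + t^2 \norm{z - y}^2 \geq 0,\]
and dividing by $t$ and letting $t \to 0^+$ gives $\product{x - y}{z - y} \leq 0$. Conversely, if $y \in K$ satisfies this inequality, then for any $z \in K$,
\[\norm{x - z}^2 = \norm{x - y}^2 + 2\product{x - y}{y - z} + \norm{y - z}^2 \geq \norm{x - y}^2,\]
since $\product{x-y}{y-z} = -\product{x-y}{z-y} \geq 0$, so $y = P_K(x)$. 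Nonexpansiveness now follows cleanly: set $y_i = P_K(x_i)$, apply the variational inequality to $(x_1, y_1)$ with test point $y_2$ and to $(x_2, y_2)$ with test point $y_1$, and add the two inequalities to obtain
\[\product{(x_1 - x_2) - (y_1 - y_2)}{y_2 - y_1} \leq 0,\]
i.e.\ $\norm{y_1 - y_2}^2 \leq \product{x_1 - x_2}{y_1 - y_2} \leq \norm{x_1 - x_2}\cdot\norm{y_1 - y_2}$ by Cauchy--Schwarz, proving $\norm{P_K(x_1) - P_K(x_2)} \leq \norm{x_1 - x_2}$.

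The main obstacle is the quantitative Cauchy argument in part (i): translating uniform convexity, which is stated for unit vectors, into a usable bound for the rescaled vectors $(x - y_n)/\norm{x - y_n}$ requires some careful bookkeeping, because one needs to compare these unit vectors when $\norm{x - y_n}$ and $\norm{x - y_m}$ are only approximately equal to $d$. Once this rescaling estimate is in place, the rest of part (i) is routine and part (ii) is a standard manipulation of the inner product.
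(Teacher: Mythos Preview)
Your proposal is correct and follows essentially the same route as the paper: a minimizing sequence plus uniform convexity for existence, strict convexity for uniqueness, and the standard inner-product manipulations (parametrized convex combination, expansion, Cauchy--Schwarz) for part~(ii). The one simplification the paper makes in part~(i) is to normalize both $x - y_n$ and $x - y_m$ by the \emph{common} bound $d + \varepsilon$ (after translating to $x = 0$), so that both vectors lie in $B_E$ and uniform convexity applies directly; this sidesteps the bookkeeping you flag as the main obstacle, since the modulus of convexity does not require the vectors to be unit vectors but only to lie in the closed unit ball.
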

\proof
\hfill
\begin{enumerate}[label=(\roman*)]
\item We may assume that $x \notin K$ and $x = 0$. Put $r = \inf\{\norm{z} \;\big|\; {z \in K}\} > 0$.

\underline{Existence:}  Let $(y_n)_n$ be a sequence in $K$ with $\lim_{n \to \infty}\norm{y_n} = r$. We show that $(y_n)_n$ is a Cauchy sequence. Let $\varepsilon > 0$. There exists $n \in \N$ such that $\norm{y_{n + k}} < r + \varepsilon$ for all $k \geq 0$. We have $\tfrac{y_n + y_{n + k}}{2} \in K$ for all $k \geq 0$ and thus $\norm{\tfrac{y_n + y_{n + k}}{2}} \geq r$. If $\delta \in (0,1]$ satisfies $\eta(\delta) > \tfrac{\varepsilon}{r + \varepsilon}$, then
\[\bignorm{\frac{y_n + y_{n + k}}{2}} \geq r > \left(r + \varepsilon\right)\left(1 - \eta(\delta)\right).\]
Applying the contraposition of uniform convexity to $\tfrac{y_{n + k}}{r + \varepsilon}$ and $\tfrac{y_n}{r + \varepsilon}$ thus yields
\[\norm{y_n - y_{n + k}} < \left(r + \varepsilon\right) \delta. \]
Since $\tfrac{\varepsilon}{r + \varepsilon} \to 0$ as $\varepsilon \to 0$, and $\eta(\delta) > 0$ for all $\delta \in (0,1]$, it follows that $\norm{y_n - y_{n + k}} \to 0$ as $n \to \infty$, i.e. the sequence $(y_n)_n$ is a Cauchy sequence. Since $K$ is closed and $E$ is complete, it converges to some element $y \in K$, which satisfies $\norm{y} = \lim_{n\to\infty}\norm{y_n} = r$.

\underline{Uniqueness:} Suppose that the points $y_1, y_2 \in K$ with $y_1 \neq y_2$ satisfy $\norm{y_i} = r > 0$. Then we have $y_1, y_2 \in \clos{B}(0,r)$. Since $K$ is convex, $\tfrac{y_1 + y_2}{2} \in K$ and since $E$ is strictly convex and $y_1 \neq y_2$, we have $\norm{\tfrac{y_1 + y_2}{2}} < r$. Contradiction.
\item On one hand we have for all $\alpha \in [0,1]$ and $z \in K$:
\begin{align*}
\norm{x - P_K(x)}^2 &\leq \norm{x - \left(\alpha z + (1 - \alpha) P_K(x)\right)}^2 \\
&= \product{x - P_K(x) - \alpha(z - P_K(x))}{x - P_K(x) - \alpha(z - P_K(x))}\\
&= \norm{x - P_K(x)}^2 - 2\alpha\product{x - P_K(x)}{z - P_K(x)} + \alpha^2\norm{z - P_K(x)}^2
\end{align*}
and thus $\product{x - P_K(x)}{z - P_K(x)} \leq \frac{\alpha}{2} \norm{z - P_K(x)}^2$ for all $\alpha \in (0,1]$, which shows that $P_K$ satisfies the variational inequality. On the other hand, if $p \in K$ satisfies $\product{x - p}{z - p} \leq 0$ for all $z \in K$, then for all $z \in K$ we have
\begin{align*}
\norm{x - z}^2 &= \norm{(x - p) + (p - z)}^2 \\
&= \norm{x - p}^2 + 2\product{x - p}{p - z} + \norm{p - z}^2\\
&\geq \norm{x - p}^2
\end{align*}
and thus $p = P_K(x)$ by Theorem \ref{thm: projection theorem}. 
It remains to show that $P_K$ is nonexpansive. Let $x,y \in H$. We may assume that $x \neq y$ and $P_K(x) \neq P_K(y)$. Since $P_K(x), P_K(y) \in K$, we may use the variational inequality to obtain
\[\product{P_K(y) - P_K(x)}{x - P_K(x)} \leq 0 \]
and
\[\product{P_K(x) - P_K(y)}{y - P_K(y)} \leq 0.\]
Adding both inequalities yields
\[\product{P_K(y) - P_K(x)}{x - y + P_K(y) - P_K(x)} \leq 0.\]
And hence
\[\norm{P_K(y) - P_K(x)}^2 \leq \product{P_K(x) - P_K(y)}{x - y} \leq \norm{P_K(x) - P_K(y)}\cdot\norm{x - y},\]
where the last inequality is the Cauchy-Schwarz inequality. We thus obtain
\[\norm{P_K(x) - P_K(y)} \leq \norm{x - y}.\eqno{\qEd}\]
\end{enumerate}\medskip

\noindent The next important result is that projections onto located convex sets in uniformly convex computable Banach spaces are computable relative to a modulus of convexity. This will follow from a highly uniform proof mining result due to Kohlenbach:
\begin{theorem}[{\cite[Proposition 17.4]{Kohlenbook}}]\label{thm: modulus of uniqueness for projection}
There exists a computable functional 
\[\Phi\colon \N^{\N}\times\N\times\N \to \N \]
such that if $E$ is a uniformly convex normed space with modulus of uniform convexity $\eta_E$, $\mu$ is any functional satisfying
$2^{-\mu(n)} \leq \eta_E(2^{-n})$, $K \subseteq E$ is nonempty, closed, and convex, and $x \in E$ with $d(x,K) \leq d$ then
\[\phi(n) = \Phi(\mu,d,n) \]
is a modulus of uniqueness for the projection onto $K$. This means that if $p, q \in K$ satisfy
$\norm{p - x} \leq d(x,K) + 2^{-\phi(n)}$
and 
$\norm{q - x} \leq d(x,K) + 2^{-\phi(n)}$,
then $\norm{p - q} < 2^{-n}$.\qed
\end{theorem}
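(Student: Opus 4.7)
The plan is to quantitatively extract the classical uniqueness proof of the projection. Set $r = d(x, K)$ and $R = r + 2^{-\phi(n)}$, so that $\norm{p - x}, \norm{q - x} \leq R$; by convexity of $K$, we also have $\norm{(p + q)/2 - x} \geq r$. Without loss of generality $\mu$, and consequently the modulus $\eta_E$, may be taken monotone: replace $\mu$ by $\tilde\mu(n) = \max_{k \leq n} \mu(k)$, which remains a modulus, and use the induced monotone witness $\eta'(\varepsilon) := 2^{-\tilde\mu(\lceil -\log_2 \varepsilon \rceil)}$ of uniform convexity in place of $\eta_E$; one checks directly that $\eta'$ is still a legitimate modulus, and its monotonicity is what drives the argument below.

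Applying uniform convexity to the unit-ball vectors $(p - x)/R$ and $(q - x)/R$ (whose distance lies in $(0, 2]$ since $\norm{p - q} \leq 2R$) yields
\[\bignorm{\frac{p + q}{2} - x} \leq R\Big(1 - \eta_E\Big(\frac{\norm{p - q}}{R}\Big)\Big),\]
which, combined with $\norm{(p + q)/2 - x} \geq r$, gives $R \cdot \eta_E(\norm{p - q}/R) \leq R - r = 2^{-\phi(n)}$. Assume for contradiction that $\norm{p - q} \geq 2^{-n}$. From $\norm{p - q} \leq 2R$ we obtain $R \geq 2^{-n-1}$, and by monotonicity $\eta_E(\norm{p-q}/R) \geq \eta_E(2^{-n}/R)$. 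It therefore suffices to pick $\phi(n)$ so large that $R \cdot \eta_E(2^{-n}/R) > 2^{-\phi(n)}$ for every $R$ in the admissible range $[2^{-n-1},\, d + 2^{-\phi(n)}]$.

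Take $\phi(n) \geq n + 1$ to ensure $R \leq d + 1$, and let $L := \lceil \log_2(d + 1) \rceil$, computable from $d \in \N$. Then $2^{-n}/R \geq 2^{-(n+L)}$, so by monotonicity $\eta_E(2^{-n}/R) \geq \eta_E(2^{-(n+L)}) \geq 2^{-\mu(n+L)}$, hence $R \cdot \eta_E(2^{-n}/R) \geq 2^{-(n+1+\mu(n+L))}$. Setting
\[\Phi(\mu, d, n) := n + 2 + \mu(n + L)\]
produces the strict inequality $2^{-\phi(n)} < R \eta_E(2^{-n}/R)$ and hence the contradiction. The functional $\Phi$ is manifestly computable in $(\mu, d, n)$.

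The main obstacle is that $R \eta_E(2^{-n}/R)$ is \emph{not} monotone in $R$: enlarging $R$ multiplies by a bigger prefactor but shrinks the argument of $\eta_E$, so one must bound each factor separately at the worst endpoint of the a priori interval $[2^{-n-1}, d + 2^{-\phi(n)}]$ for $R$. A minor auxiliary point is to justify the monotonicity assumption on $\mu$ and $\eta_E$ by the standard maximum-majorant construction, which does not affect computability of the resulting $\Phi$.
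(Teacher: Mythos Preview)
The paper does not prove this theorem at all: it is stated with a \qed and attributed to \cite[Proposition~17.4]{Kohlenbook}. So there is no ``paper's own proof'' to compare against; you have supplied a proof where the paper merely quotes the result.

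Your argument is correct and is essentially the standard quantitative extraction from the uniqueness half of the projection theorem (cf.\ the proof of Theorem~\ref{thm: projection theorem}(i) in the paper). A couple of minor points worth tightening: after declaring that $\mu$ and $\eta_E$ are replaced by their monotone versions $\tilde\mu$ and $\eta'$, you continue to write $\eta_E$ and $\mu$; this is harmless but slightly confusing, and in particular the final formula should read $\Phi(\mu,d,n)=n+2+\tilde\mu(n+L)=n+2+\max_{k\le n+L}\mu(k)$ in terms of the \emph{original} $\mu$, since using the non-monotone $\mu(n+L)$ directly could undershoot the required bound. Also, the clause ``Take $\phi(n)\ge n+1$ to ensure $R\le d+1$'' is redundant (any $\phi(n)\ge 0$ gives $R\le d+1$), and the case $R<2^{-n-1}$ is handled implicitly since then $\norm{p-q}\le 2R<2^{-n}$ already contradicts the hypothesis. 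With these cosmetic fixes the proof is complete and yields an explicit computable $\Phi$, which is exactly what Kohlenbach's proof-mining result provides.
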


\begin{corollary}\label{cor: computability of projection on subset}
Let $E$ be a uniformly convex computable Banach space, let $C \subseteq E$ be nonempty, convex, and computably overt. Let $\eta_E$ be a modulus of uniform convexity for $E$. Let $\A^{\conv}_{\dist}(C)$ denote the represented space of nonempty convex closed subsets of $C$, represented via their distance function. Then the mapping
\[P\colon \A^{\conv}_{\dist}(C) \to \Co(C,C),\; K \mapsto P_K, \]
is computable relative to $\eta_E$. In fact it is computable relative to any $\mu\colon\N\to\N$ satisfying $2^{-\mu(n)} \leq \eta_E(2^{-n})$.
\end{corollary}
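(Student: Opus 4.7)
The plan is to combine Kohlenbach's modulus of uniqueness (Theorem~\ref{thm: modulus of uniqueness for projection}) with the fact that a computably overt closed set carries a computable dense sequence (Proposition~\ref{prop: overtness and recursive enumerability, metric case}). Given a point $x \in C$ and $K$ presented via its distance function $d_K\colon C \to \R$, the quantity $d(x,K) = d_K(x)$ is directly computable as a real. The modulus of uniqueness says that any two approximate minimisers of $\norm{\cdot - x}$ in $K$ must lie very close to one another, and in particular close to $P_K(x)$; so to compute $P_K(x)$ to precision $2^{-n}$ it suffices to produce \emph{any} computable point $c$ which is simultaneously close to $K$ (as measured by $d_K$) and whose distance to $x$ nearly realises $d_K(x)$.

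Concretely, to produce a $2^{-n}$-approximation of $P_K(x)$, first compute a natural-number upper bound $d' \in \N$ on $d_K(x)$, and set $m = \Phi(\mu, d', n+1)$. Choose positive rationals $\delta \leq 2^{-(n+1)}$ and $\varepsilon > 0$ with $\delta + \varepsilon \leq 2^{-m}$. Enumerate the dense sequence $(c_k)_k$ in $C$ supplied by overtness, and wait for some index $k$ witnessing the conjunction
\[ d_K(c_k) < \delta \quad \text{and} \quad \norm{c_k - x} < d_K(x) + \varepsilon. \]
Both predicates are semi-decidable in the given data, since $d_K\colon C \to \R$ is computable by hypothesis and the metric of $E$ is computable. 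Such an index must eventually appear, because $P_K(x) \in K \subseteq C$ is arbitrarily well approximated by $(c_k)_k$, and any $c_k$ sufficiently close to $P_K(x)$ meets both conditions with room to spare. The algorithm outputs the first such $c_k$ as its $n$-th Cauchy approximant.

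For correctness, given such a $c_k$, pick any $p' \in K$ with $\norm{c_k - p'} < \delta$; this exists by definition of $d_K$. The triangle inequality yields $\norm{p' - x} \leq \norm{p' - c_k} + \norm{c_k - x} < d_K(x) + \delta + \varepsilon \leq d_K(x) + 2^{-m}$, so $p'$ and $P_K(x)$ are both elements of $K$ whose distance to $x$ exceeds the infimum $d(x,K)$ by at most $2^{-\Phi(\mu,d',n+1)}$. Theorem~\ref{thm: modulus of uniqueness for projection} then gives $\norm{p' - P_K(x)} < 2^{-(n+1)}$, and a further triangle inequality yields $\norm{c_k - P_K(x)} < \delta + 2^{-(n+1)} \leq 2^{-n}$, as required. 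The only real subtlety, and what I expect to be the main point that has to be handled carefully, is that the auxiliary witness $p' \in K$ is not itself produced by the algorithm: it serves as a purely analytic bridge by which the modulus of uniqueness, which is a statement about points \emph{in} $K$, is transferred to the computable approximant $c_k \in C$ obtained from the overtness of $C$ and the located information on $K$. The whole construction is uniform in $\mu$, $K$, $x$, and $n$, and hence yields a computable realiser of the assignment $K \mapsto P_K \in \Co(C,C)$ relative to $\mu$, as claimed.
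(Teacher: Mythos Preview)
Your proof is correct and follows essentially the same approach as the paper: both arguments compute an integer bound on $d(x,K)$, invoke Kohlenbach's modulus of uniqueness $\Phi$ from Theorem~\ref{thm: modulus of uniqueness for projection}, and then search for a near-minimiser of $\norm{\cdot - x}$ to serve as the Cauchy approximant to $P_K(x)$. The only minor difference is that the paper asserts directly that the distance function lets one compute a dense sequence \emph{in $K$} (so the approximant $p$ lies in $K$ and the modulus of uniqueness applies immediately), whereas you search in the dense sequence of $C$ supplied by overtness and bridge to a nearby auxiliary $p' \in K$ via the triangle inequality; this is a cosmetic variation rather than a different strategy.
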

\begin{proof}
Let $\mu\colon\N\to\N$ be such that $2^{-\mu(n)} \leq \eta_E(2^{-n})$ and let $\Phi$ be the functional from Theorem \ref{thm: modulus of uniqueness for projection}. We are given a set $K \in \A^{\conv}_{\dist}(C)$, a point $x \in C$ and a number $n \in \N$ and want to compute an approximation to $P_K(x)$ up to error $2^{-n}$. Since we are given the distance function to $K$, we can compute an integer upper bound $d$ to $d(x,K)$. Again using the distance function, we can compute a dense sequence in $K$. This allows us to find a point $p \in K$ with $\norm{p - x} \leq d(x,K) + 2^{-\Phi(\mu,d,n)}$. It follows from Theorem \ref{thm: modulus of uniqueness for projection} that $\norm{p - P_K(x)} < 2^{-n}$.
\end{proof}\medskip

The special case where $E$ has a computable modulus of convexity and $C = E$ yields:

\begin{corollary}[\cite{BrattkaDillhage}]\label{cor: computability of projection on E}
Let $E$ be a uniformly convex computable Banach space with computable modulus of uniform convexity. Then the mapping
\[P\colon \A^{\conv}_{\dist}(E) \to \Co(E,E),\; K \mapsto P_K, \]
is computable. In particular, if $K \subseteq E$ is a nonempty located and convex set, then $P_K$ is $(\delta_E,\delta_E)$-computable.\qed
\end{corollary}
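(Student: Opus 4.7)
The plan is to derive this corollary directly from Corollary \ref{cor: computability of projection on subset} by specialising $C = E$, so the main task is to verify that the hypotheses of that corollary are met automatically when $C$ is all of $E$ and $E$ carries a computable modulus of uniform convexity.

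First I would check computable overtness of $E$. Since $E$ is a computable Banach space, its underlying metric space $(E,d,\nu_E)$ is a computable metric space, so in particular $\nu_E\colon\N\to E$ enumerates a dense computable sequence. By Proposition \ref{prop: overtness and recursive enumerability}, a dense computable sequence witnesses that $E$ itself is a computable point of $\V(E)$, i.e. $E$ is computably overt. Hence $C := E$ qualifies as the ``nonempty, convex, computably overt'' set required by Corollary \ref{cor: computability of projection on subset}.

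Next, since we are told that $\eta_E$ is a \emph{computable} modulus of uniform convexity, we may simply compute some $\mu\colon\N\to\N$ with $2^{-\mu(n)}\leq\eta_E(2^{-n})$ from $\eta_E$: evaluate $\eta_E(2^{-n})$ to sufficient precision and take $\mu(n)$ to be any natural number exceeding $-\log_2\eta_E(2^{-n})$. Feeding this $\mu$ together with $E$ into the machine supplied by Corollary \ref{cor: computability of projection on subset} yields a computable realiser for
\[P\colon \A^{\conv}_{\dist}(E)\to \Co(E,E),\; K\mapsto P_K,\]
which proves the first claim. The ``in particular'' part is immediate: a nonempty located convex $K\subseteq E$ is by definition a computable point of $\A^{\conv}_{\dist}(E)$, so $P_K$ is a computable point of $\Co(E,E)\simeq \RCo{E}{E}$, which is exactly $(\delta_E,\delta_E)$-computability of $P_K$.

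I do not expect any real obstacle here; everything has been isolated in Corollary \ref{cor: computability of projection on subset}, and the only genuinely new observation is the automatic computable overtness of the whole space $E$, which follows from separability via Proposition \ref{prop: overtness and recursive enumerability}.
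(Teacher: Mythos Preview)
Your proposal is correct and matches the paper's approach exactly: the paper treats this corollary as the immediate special case $C = E$ of Corollary~\ref{cor: computability of projection on subset} with a computable modulus of convexity, and you have simply spelled out the two routine verifications (computable overtness of $E$ via Proposition~\ref{prop: overtness and recursive enumerability}, and extracting $\mu$ from $\eta_E$) that the paper leaves implicit.
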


The (for our purpose) most important structural feature of fixed point sets of nonexpansive mappings in strictly convex Banach spaces is that they are always convex. This is a standard exercise in functional analysis. We will prove it here anyway, to give a simple example of a proof exploiting the convexity of the underlying space.
\begin{proposition}\label{prop: fixed point sets of nonexpansive mappings are convex}
Let $E$ be a strictly convex Banach space, let $K \subseteq E$ be nonempty, closed, bounded, and convex. Let $f\colon K \to K$ be nonexpansive. Then the set $\Fix(f)$ is convex.
\end{proposition}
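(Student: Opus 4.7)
The plan is to prove convexity of $\Fix(f)$ by a direct triangle-inequality argument, extracting equality from the nonexpansiveness and using strict convexity to force $f$ to fix the interior points of segments between fixed points.

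First I would fix two fixed points $x, y \in \Fix(f)$ and $\lambda \in (0,1)$ (the cases $\lambda \in \{0,1\}$ and $x = y$ are trivial), and set $z = \lambda x + (1-\lambda) y \in K$. Nonexpansiveness applied to the pairs $(x,z)$ and $(z,y)$ gives
\[\norm{f(z) - x} \leq (1-\lambda)\norm{x - y}, \qquad \norm{f(z) - y} \leq \lambda\norm{x - y}.\]
On the other hand, the triangle inequality yields $\norm{x - y} \leq \norm{x - f(z)} + \norm{f(z) - y}$, so chaining these three inequalities forces equality throughout. In particular, with $u = x - f(z)$ and $v = f(z) - y$, we obtain $\norm{u + v} = \norm{u} + \norm{v}$, together with $\norm{u} = (1-\lambda)\norm{x-y}$ and $\norm{v} = \lambda\norm{x-y}$, both strictly positive.

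The main (and only substantive) step is then to invoke strict convexity in the form: if $u, v \in E$ are nonzero and $\norm{u + v} = \norm{u} + \norm{v}$, then $u$ and $v$ are positive scalar multiples of one another. This is the standard consequence of the midpoint definition given in the paper: normalising $u$ and $v$ to the unit sphere produces a convex combination that lies on the unit sphere, and a short iteration of the midpoint property (halving the combination coefficients until a dyadic midpoint appears) contradicts strict convexity unless $u/\norm{u} = v/\norm{v}$. Once this is established, the relation $u = (\norm{u}/\norm{v})\,v$, i.e.\ $x - f(z) = \tfrac{1-\lambda}{\lambda}(f(z) - y)$, rearranges directly to $f(z) = \lambda x + (1-\lambda) y = z$, which is the required conclusion.

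The only potential obstacle is the passage from the midpoint formulation of strict convexity to the general equality case of the triangle inequality; I would state this as a brief auxiliary observation rather than a separate lemma, since it is classical. Everything else is routine manipulation of the nonexpansiveness estimates.
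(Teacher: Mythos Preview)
Your argument is correct and close in spirit to the paper's, but the execution differs in a way worth noting. The paper treats only the midpoint $\tfrac{x+y}{2}$: it observes that both $f(\tfrac{x+y}{2})$ and $\tfrac{x+y}{2}$ lie in $\clos{B}(x,\tfrac{1}{2}\norm{x-y})\cap\clos{B}(y,\tfrac{1}{2}\norm{x-y})$, and then uses strict convexity directly in its midpoint form to show that this intersection is a single point; the extension to all convex combinations is handled afterwards by continuity of $f$ and density of dyadic combinations. You instead treat an arbitrary $\lambda$ in one shot, at the price of invoking the equality case of the triangle inequality, which in turn requires deriving from the midpoint definition that any nontrivial convex combination of two distinct unit vectors has norm strictly less than one. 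Both routes are standard; yours is slightly more direct once the auxiliary fact is in hand, while the paper's avoids that detour by staying with midpoints throughout.
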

\begin{proof}
Since $f$ is continuous, it suffices to show that for each $x,y \in \Fix(f)$, the convex combination $\frac{x + y}{2}$ is again contained in $\Fix(f)$ (since then it follows that the set of dyadic convex combinations of $x$ and $y$, which is dense in the line segment joining $x$ and $y$, consists entirely of fixed points). Since $f$ is nonexpansive, we have
\[\bignorm{f\left(\frac{x + y}{2}\right) - x} = \bignorm{f\left(\frac{x + y}{2}\right) - f(x)} \leq \bignorm{\frac{x - y}{2}}.\]
Similarly, $\norm{f(\tfrac{x + y}{2}) - y} \leq \tfrac{1}{2}\norm{x - y}$ and obviously the same inequality holds if we replace $f(\tfrac{x + y}{2})$ by $\tfrac{x + y}{2}$. Now, suppose that $a, b \in E$ satisfy 
\[\norm{a - y} \leq \bignorm{\frac{x - y}{2}},\;\norm{a - x} \leq \bignorm{\frac{x - y}{2}},
\;\norm{b - x} \leq \bignorm{\frac{x - y}{2}},\;\norm{b - y} \leq \bignorm{\frac{x - y}{2}}.\]
Then, if $a \neq b$, strict convexity yields
\[\bignorm{\frac{a + b}{2} - y} < \bignorm{\frac{x - y}{2}}\;\text{ and }\;\bignorm{\frac{a + b}{2} - x} < \bignorm{\frac{x - y}{2}}\]
and hence
\[\norm{x - y} \leq \bignorm{x - \frac{a + b}{2}} + \bignorm{y - \frac{a + b}{2}} < \norm{x - y}, \]
contradiction. It follows, that $f(\tfrac{x + y}{2}) = \tfrac{x + y}{2}$.
\end{proof}

\section{Computability of Fixed points and Rates of Convergence}

In this section we study the computability-theoretic complexity of the problems of finding fixed points of nonexpansive mappings on compact domains, and of obtaining rates of convergence of certain fixed point iterations. Let us first state some natural computational problems associated with the fixed point properties of nonexpansive mappings and determine their rough relation.

\begin{Definition}\label{Def: fixed point iteration}
Let $E$ be a uniformly convex real Banach space, let $K \subseteq E$ be a nonempty subset of $K$, and let $\Ne(K)$ denote the set of nonexpansive self-maps of $K$. A \emph{fixed point iteration} on $K$ is a mapping 
$I\colon \Ne(K)\times K \to K^{\N}$
such that for all $f \in \Ne(K)$, $x \in K$, we have $\lim_{n \to \infty} I(f,x)(n) \in \Fix(f)$.
\end{Definition}

\begin{Definition}\label{Def: computational problems}
Let $E$ be a uniformly convex computable Banach space, let $K \subseteq E$ be nonempty, co-semi-decidable, computably overt, bounded, and convex. Let $\Ne(K)$ be the represented space of nonexpansive self-maps of $K$ with representation $[\delta_K\to\delta_K]\Big|^{\Ne(K)}$. Let $I\colon \Ne(K) \times K \to K^{\N}$ be a computable fixed point iteration. Consider the following computational problems:
\begin{enumerate}[label=(\roman*)]
\item The \emph{realiser problem for the Browder-Göhde-Kirk theorem} $\BGK_K$: Given a nonexpansive function $f\colon K \to K$, output a fixed point for $f$. More formally:
\[\BGK_K\colon \Ne(K) \rightrightarrows K,\; f \mapsto \Fix(f).\]
\item The \emph{projection problem} $\Proj_K$: Given a nonexpansive function $f\colon K \to K$, and a point $x\in K$ output the metric projection of $x$ onto $\Fix(f)$. More formally:
\[\Proj_K\colon \Ne(K)\times K \to K,\; (f,x) \mapsto P_{\Fix(f)}(x).\]
\item The \emph{limit problem} $\lim(I)$ for $I$: given a nonexpansive function ${f\colon K \to K}$ and a starting point $x \in K$, output $\lim_{n \in \N} I(f,x)(n)$. More formally:
\[\lim(I)\colon \Ne(K)\times K \to K,\; (f,x) \mapsto \lim_{n\to \infty} I(f,x)(n).\]
\item The \emph{rate of convergence problem} $\Conv_{I}$ for $I$: given a nonexpansive function ${f\colon K \to K}$ and a starting point $x \in K$, output a rate of convergence of the sequence $(I(f,x)(n))_n$. More formally:
\begin{align*}
\Conv_I\colon& \Ne(K)\times K \rightrightarrows \N^\N,\\
&(f,x) \mapsto \{\varphi\in\N^\N\;|\; \forall n \in \N.\forall l \geq \varphi(n). \norm{I(f,x)(l) - \lim_{k \to \infty}I(f,x)(k)} < 2^{-n}\}.
\end{align*}
\end{enumerate}
\end{Definition}

\noindent Most fixed point iterations considered in the literature are of a far more particular form than just computable mappings. This can be exploited to obtain stronger uncomputability results for particular classes of fixed point iterations. We summarise some common properties.

\begin{Definition}
Let $E$ be a uniformly convex real Banach space, and let $K \subseteq E$ be nonempty, convex, closed, and bounded. Let $I\colon \Ne(K)\times K \to K^{\N}$ be a fixed point iteration.
\begin{enumerate}[label=(\roman*)]
\item $I$ is called \demph{projective} if for all $f\in \Ne(K)$ and $x \in K$, the limit $\lim_{n \to \infty}I(f,x)(n)$ is the unique fixed point of $f$ which is closest to $x$.
\item $I$ is called \demph{retractive} if for all $f\in\! \Ne(K)$ and $x \in \Fix(f)$, we have ${\lim_{n \to \infty}I(f,x)(n) = x}$.
\item $I$ is called \demph{avoidant} if for all $f \in \Ne(K)$ and $x \in K$, we have the implication
\[\left(\exists n. f\left(I(f,x)(n)\right) = I(f,x)(n) \right) \Rightarrow f(x) = x. \]
\item $I$ is called \demph{simple} if it is of the form
\begin{align*}
&I(f,x)(0) = x  \\
&I(f,x)(n + 1) = \sum_{k = 0}^n \alpha_k^nI(f,x)(k) + \sum_{j,k = 0}^n \beta_{j,k}^n f^{(j)}(I(f,x)(k)),
\end{align*}
with $\alpha_k^n \geq 0$ and $\beta_{k,j}^n \geq 0$ for all $k,n,j$.
\end{enumerate}
\end{Definition}

\noindent The notion of projectiveness is well-defined thanks to Theorem \ref{thm: projection theorem} and Proposition \ref{prop: fixed point sets of nonexpansive mappings are convex}. Any projective fixed point iteration is clearly retractive. Note that Halpern's iteration (where by convention we always choose the anchor point to be equal to the starting point) is projective and simple and that the Krasnoselski-Mann iteration is simple, retractive, and avoidant.

\begin{proposition}\label{prop: simple and retractive in [0,1] is projective}\hfill
\begin{enumerate}[label=(\roman*)]
\item Let $I\colon\Ne([0,1])\times[0,1] \to [0,1]^{\N}$ be a simple and retractive fixed point iteration. Then $I$ is projective when restricted to the set of all monotonically increasing functions.
\item Let $E$ be a uniformly convex real Banach space, and let $K \subseteq E$ be nonempty, convex, closed, and bounded. Let $I\colon \Ne(K)\times K \to K^{\N}$ be an avoidant fixed point iteration. Then for all nonexpansive $f\colon K\to K$ and $x \notin \Fix(f)$, $\lim_{n\to \infty} I(f,x)(n)$ is a point on the boundary of $\Fix(f)$.
\end{enumerate}
\end{proposition}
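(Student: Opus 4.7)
The plan is to prove the two parts separately. Part (ii) is essentially immediate from unfolding the definition: the contrapositive of avoidance says that if $x \notin \Fix(f)$ then $I(f,x)(n) \notin \Fix(f)$ for every $n$. Since $I$ is a fixed point iteration the sequence converges to some $y^\ast \in \Fix(f)$, so every neighbourhood of $y^\ast$ contains points of the sequence, all of which lie in $K \setminus \Fix(f)$. Hence $y^\ast$ lies in the closure of $K \setminus \Fix(f)$ as well as in $\Fix(f)$, which places it on $\partial\Fix(f)$.

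For part (i), the case $x \in \Fix(f)$ is handled by retractiveness, which forces $I(f,x) \to x = P_{\Fix(f)}(x)$. The interesting case is $x \notin \Fix(f)$, and by the involution $y \mapsto 1 - y$, which conjugates $f$ to another nonexpansive monotonically increasing self-map of $[0,1]$ and commutes with any simple iteration whose coefficients form a convex combination, it suffices to treat $f(x) < x$. The first step is to identify $P_{\Fix(f)}(x)$: the intermediate value theorem applied to $y \mapsto f(y) - y$ on $[0,x]$ produces a fixed point below $x$, so the largest such fixed point $p^\ast = \max(\Fix(f) \cap [0, x])$ exists by closedness; meanwhile nonexpansiveness alone rules out fixed points on the far side, because a fixed point $q > x$ would satisfy $q - f(x) = |f(q) - f(x)| \leq q - x$ and hence $f(x) \geq x$, contradicting $f(x) < x$.

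The second step is an invariance argument. Monotonicity of $f$, together with $f(p^\ast) = p^\ast$ and $f(x) < x$, yields $f([p^\ast, x]) \subseteq [p^\ast, x]$ and hence $f^{(j)}([p^\ast, x]) \subseteq [p^\ast, x]$ for every $j$. Because $I(f,x)(n+1)$ is a convex combination of earlier iterates and their $f^{(j)}$-images, an induction on $n$ shows $I(f,x)(n) \in [p^\ast, x]$ for all $n$, and the limit $y^\ast = \lim_n I(f,x)(n)$ then lies in $\Fix(f) \cap [p^\ast, x] = \{p^\ast\}$ by maximality of $p^\ast$. Hence $y^\ast = p^\ast = P_{\Fix(f)}(x)$, as required. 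The real content is the identification of $P_{\Fix(f)}(x)$ with $p^\ast$, which exploits the one-dimensional structure of $[0,1]$ both via the IVT and by converting nonexpansiveness into a sign-free inequality; the rest is a routine invariance-of-convex-intervals argument.
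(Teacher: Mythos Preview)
Your argument for part~(ii) is correct and is exactly what the paper has in mind (the paper just writes ``trivial'').

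For part~(i), however, there is a genuine gap. Twice you use that the step
\[
I(f,x)(n+1) = \sum_{k=0}^n \alpha_k^n I(f,x)(k) + \sum_{j,k=0}^n \beta_{j,k}^n f^{(j)}(I(f,x)(k))
\]
is a \emph{convex} combination: first to make the involution $y \mapsto 1-y$ commute with the iteration, and second in the invariance step to conclude $I(f,x)(n+1) \in [p^\ast,x]$ from $I(f,x)(k) \in [p^\ast,x]$ for $k \le n$. But the definition of ``simple'' only requires the coefficients to be non-negative, not to sum to~$1$, and retractiveness only constrains the \emph{limit} of the iterates, not each individual stage. If the coefficients at stage~$n$ sum to some $S_n < 1$, then a non-negative combination of values in $[p^\ast,x]$ only lies in $[S_n p^\ast, S_n x]$, which may drop below $p^\ast$ when $p^\ast > 0$; so the induction breaks. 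Likewise, the involution $y \mapsto 1-y$ commutes with the step formula only when $S_n = 1$ (otherwise one picks up an additive error $1 - S_n$).

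The paper's argument sidesteps this entirely. It observes that for monotonically increasing $f$, non-negativity of the coefficients alone gives monotonicity of the iteration in the starting point: $x \le y$ implies $I(f,x)(n) \le I(f,y)(n)$ for all $n$, hence also in the limit. Writing $\Fix(f) = [a,b]$, retractiveness at the endpoint $a$ then forces $\lim_n I(f,x)(n) \le \lim_n I(f,a)(n) = a$ whenever $x \le a$, and since the limit is a fixed point it must equal $a$; symmetrically for $x \ge b$. This comparison argument never needs the coefficients to sum to~$1$, and it is shorter than your route via the explicit identification of $p^\ast$ and interval invariance. Incidentally, your identification $P_{\Fix(f)}(x) = p^\ast$ and the observation that $f(x) < x$ rules out fixed points above $x$ are both correct and can be combined with the paper's comparison idea to reach the conclusion: from $p^\ast \le x$ one gets $p^\ast = \lim_n I(f,p^\ast)(n) \le \lim_n I(f,x)(n)$, and since the latter limit is a fixed point and all fixed points are $\le p^\ast$, equality follows.
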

\begin{proof}\hfill
\begin{enumerate}[label=(\roman*)]
\item By induction one easily verifies that if $x \leq y$, then $I(f,x)(n) \leq I(f,y)(n)$ for all monotonically increasing $f\colon[0,1]\to[0,1]$. It follows that \[{\lim_{n\to\infty}I(f,x)(n) \leq \lim_{n\to\infty}I(f,y)(n)}.\] By Proposition \ref{prop: fixed point sets of nonexpansive mappings are convex}, the set $\Fix(f)$ is an interval of the form $[a,b]$, possibly with $a = b$. If $x\leq a$, then, since the iteration is retractive, 
\[\lim_{n \to \infty} I(f,x)(n) \leq \lim_{n \to\infty} I(f,a)(n) = a,\] so $\lim_{n \to \infty} I(f,x)(n) = a$. An analogous argument applies if $x \geq b$. It follows that the mapping $\lambda x.\lim_{n\to\infty} I(f,x)(n)$ is the metric projection onto $[a,b]$, i.e. the iteration is projective.
\item Is trivial.\qedhere
\end{enumerate}
\end{proof}

\noindent The following proposition establishes the more obvious relationships between the problems introduced in Definition \ref{Def: computational problems}. 

\begin{proposition}\label{prop: obvious Weihrauch reductions}
Let $E$ be a uniformly convex computable Banach space, let $K \subseteq E$ be nonempty, co-semi-decidable, computably overt, bounded, and convex. Then
\[\BGK_K \leq_W \Proj_K.\]
If $I\colon \Ne(K) \times K \to K^{\N}$ is a computable fixed point iteration, then
\[\BGK_K \leq_W \lim(I) \leq_W \Conv_I \leq_W \lim.\]
If $I$ is projective, then 
\[\Proj_K \leq_W \lim(I).\eqno{\qEd}\]
\end{proposition}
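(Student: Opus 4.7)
The plan is to verify each of the five Weihrauch reductions; four of them are essentially unpackings of the definitions, and only $\Conv_I \leq_W \lim$ needs a small standard argument.

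First, since $K$ is nonempty and computably overt, Proposition \ref{prop: overtness and recursive enumerability, metric case} provides a computable dense sequence in $K$, and in particular a computable point $x_0 \in K$ that I may use as a dummy input. Both $\BGK_K \leq_W \Proj_K$ and $\BGK_K \leq_W \lim(I)$ are then obtained by forwarding $f$ together with $x_0$ to the respective oracle: $\Proj_K(f, x_0) \in \Fix(f)$ by the definition of the metric projection onto $\Fix(f)$, and $\lim_n I(f, x_0)(n) \in \Fix(f)$ because $I$ is a fixed point iteration. For $\lim(I) \leq_W \Conv_I$, given a rate $\varphi$ returned by $\Conv_I$ on input $(f, x)$, the limit is computed by outputting $I(f,x)(\varphi(n))$ as the $2^{-n}$-approximation to $\lim_n I(f,x)(n)$. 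Finally, the projectivity hypothesis says precisely that $\lim_n I(f, x)(n) = P_{\Fix(f)}(x)$, so that $\lim(I)(f,x) = \Proj_K(f, x)$ on the relevant inputs, yielding $\Proj_K \leq_W \lim(I)$ immediately.

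The only slightly more substantive step, and the one I expect to be the main obstacle, is $\Conv_I \leq_W \lim$. Given $(f, x)$, the sequence $x_n := I(f, x)(n)$ is computable uniformly in $(f,x)$ since $I$ is computable, and it converges in $K$ and hence is Cauchy. I would define the candidate rate $\varphi(n)$ to be the least $N$ such that $\norm{x_l - x_N} \leq 2^{-(n+2)}$ for every $l \geq N$. Such $N$ exists by the Cauchy property, and letting $l \to \infty$ together with the triangle inequality gives $\norm{x_l - x^*} < 2^{-n}$ for all $l \geq \varphi(n)$, where $x^*$ is the limit, so $\varphi$ is a valid rate of convergence. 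Computing this least $N$ from the given sequence is a $\Sigma^0_2$-search, because the inner predicate $\forall l \geq N.\, \norm{x_l - x_N} \leq 2^{-(n+2)}$ is a $\Pi^0_1$-conjunction of co-semi-decidable real inequalities. Such searches are exactly the tasks that a single call to $\lim$ effectuates uniformly in the parameter $n$, via the standard encoding of $\Sigma^0_2$-predicates as a doubly-indexed sequence of Booleans whose column-wise stabilised values determine $\varphi(n)$. The remaining technical work is therefore to package this into a formal Weihrauch reduction, which is a folklore construction in the Weihrauch-reducibility literature and presents no conceptual difficulty.
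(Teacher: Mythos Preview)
Your proposal is correct. The paper offers no proof of this proposition at all---it ends with a $\qEd$ marker, signalling that the author considers all five reductions immediate from the definitions. Your write-up simply makes explicit what the paper leaves to the reader; in particular, your treatment of $\Conv_I \leq_W \lim$ via a limit-computable search for the least $N$ satisfying the $\Pi^0_1$ Cauchy condition (using that $\widehat{\lim} \equiv_W \lim$ to handle all $n$ in parallel) is the standard argument one would expect.
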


\noindent Next we prove a general upper bound on the Weihrauch degree of $\Proj_K$ (and thus of $\BGK_K$). We need two lemmas which constitute the main steps in Goebel's proof \cite{goebelBGK} of the Browder-Göhde-Kirk theorem (see also the proof of \cite[Theorem IV.7.13]{Werner}).

\begin{lemma}\label{lem: goebels lemma}
Let $E$ be a uniformly convex Banach space, let $K\subseteq E$ be nonempty, convex, closed, and bounded. Then there exists a function $\varphi\colon(0,1)\to(0,1)$ with ${\lim_{\varepsilon \to 0} \varphi(\varepsilon) = 0}$, such that for every nonexpansive mapping $f\colon K \to K$ and all $x, y \in K$ we have the implication
\[\left(\norm{x - f(x)} < \varepsilon \land \norm{y - f(y)} < \varepsilon\right) \rightarrow \bignorm{\frac{x + y}{2} - f\left(\frac{x + y}{2}\right)} < \varphi(\varepsilon) .\eqno{\qEd}\]
\end{lemma}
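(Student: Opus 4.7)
The plan is to set $z := (x+y)/2$, $d := \norm{x-y}$ and $s := \norm{z-f(z)}$, and bound $s$ by combining nonexpansiveness of $f$ with the uniform convexity of $E$. Nonexpansiveness and the triangle inequality give
\[ \norm{f(z)-x} \leq \norm{f(z)-f(x)}+\norm{x-f(x)} \leq \tfrac{d}{2}+\varepsilon, \]
and symmetrically $\norm{f(z)-y} \leq \tfrac{d}{2}+\varepsilon$. Setting $R := \tfrac{d}{2}+\varepsilon$, the vectors $x-f(z)$ and $y-f(z)$ both have norm at most $R$, their difference has norm $d$, and their midpoint is $z-f(z)$ of norm $s$. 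Normalising by $R$ and invoking the modulus of convexity $\eta_E$ yields the core estimate
\[ s \;\leq\; R\bigl(1-\eta_E(d/R)\bigr) \;=\; \bigl(\tfrac{d}{2}+\varepsilon\bigr)\bigl(1-\eta_E(2d/(d+2\varepsilon))\bigr). \]

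Since $K$ is bounded, $d$ ranges over $[0,M]$ where $M := \operatorname{diam}(K) < \infty$, so I would define $\varphi(\varepsilon)$ as the supremum of the right-hand side over $d \in [0,M]$, whereupon the lemma's conclusion holds by construction. To check $\varphi(\varepsilon)\to 0$, I split on an auxiliary $\alpha\in(0,2)$: in the regime $d \leq 2\alpha\varepsilon/(2-\alpha)$ the trivial bound $s\leq d+\varepsilon$ yields $s \leq (2+\alpha)\varepsilon/(2-\alpha)$; in the complementary regime $d > 2\alpha\varepsilon/(2-\alpha)$ one has $2d/(d+2\varepsilon)>\alpha$, so monotonicity of $\eta_E$ combined with the core estimate gives $s \leq (M/2+\varepsilon)(1-\eta_E(\alpha))$. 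Hence for every $\alpha\in(0,2)$,
\[ \varphi(\varepsilon) \;\leq\; \max\Bigl\{(M/2+\varepsilon)(1-\eta_E(\alpha)),\;(2+\alpha)\varepsilon/(2-\alpha)\Bigr\}. \]

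The main obstacle is then to let $\alpha\to 2^-$ as $\varepsilon\to 0^+$ at a rate that makes both terms vanish. Taking $\alpha := 2-\sqrt{\varepsilon}$ makes the second term of order $\sqrt{\varepsilon}$, and the first term vanishes provided $\eta_E(\alpha)\to 1$ as $\alpha\to 2^-$. For the canonical modulus this holds: strict convexity forces $\eta_E(2)=1$ (if $\norm{x},\norm{y}\leq 1$ and $\norm{x-y}=2$, then $x=-y$, so their midpoint vanishes), and the canonical modulus is convex on $[0,2]$ by a classical result of Hanner, hence left-continuous at $2$. I may therefore take $\eta_E$ to be the canonical modulus, at which point $\alpha := 2-\sqrt{\varepsilon}$ yields an explicit $\varphi$ tending to $0$, completing the proof.
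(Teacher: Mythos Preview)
Your core estimate and the subsequent case-split are correct, and the overall strategy works. One point needs repair: the claim that ``the canonical modulus is convex on $[0,2]$ by a classical result of Hanner'' is not right. Hanner's inequalities concern $L^p$ spaces specifically, and for general (even uniformly convex) Banach spaces the canonical modulus need not be convex. What you actually need, namely $\eta_E(\alpha)\to 1$ as $\alpha\to 2^-$, \emph{is} true in every uniformly convex space, but for a different reason. Here is a self-contained argument: given $x,y\in B_E$ with $\norm{x-y}\geq\varepsilon$, set $u=(x+y)/2$ and $v=(x-y)/2$; then $\norm{u\pm v}\leq 1$ and $\norm{v}\geq\varepsilon/2$. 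Applying uniform convexity to the pair $u+v$ and $v-u$ (both in $B_E$, with difference $2u$) shows that $\norm{u}\geq\delta/2$ would force $\norm{v}\leq 1-\eta_E(\delta)$; contrapositively, $\varepsilon/2>1-\eta_E(\delta)$ implies $\norm{u}<\delta/2$, i.e.\ $\eta_E(\varepsilon)\geq 1-\delta/2$. Since $\eta_E(\delta)>0$ for every $\delta>0$, letting $\varepsilon\to 2^-$ yields $\eta_E(\varepsilon)\to 1$. With this fix your proof goes through.

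As for comparison: the paper does not actually prove this lemma; it is quoted from Goebel's proof of the Browder--G\"ohde--Kirk theorem and stated without argument. The paper remarks that $\varphi(\varepsilon)$ can be written as ``a very simple term involving $\varepsilon$ and the modulus of uniform convexity of $E$'', which suggests that Goebel's original argument yields a more explicit closed form than your supremum-plus-diagonal-choice construction. Your route is nonetheless a complete proof; it just trades explicitness of $\varphi$ for a somewhat more hands-on limiting argument.

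Two minor cosmetic points: you should handle $d=0$ separately (there $s<\varepsilon$ trivially, and $\eta_E(0)$ is outside the stated domain $(0,2]$), and you should note that your $\varphi$ as defined may exceed $1$ for $\varepsilon$ not very small, so a truncation is needed to land in the codomain $(0,1)$ stipulated by the lemma; neither affects the substance.
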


\noindent Actually, $\varphi(\varepsilon)$ is given by a very simple term involving $\varepsilon$ and the modulus of uniform convexity of $E$, but we do not need this fact here.

\begin{lemma}\label{lem: infimum lemma}
Let $E$ be a uniformly convex Banach space, let $K \subseteq E$ be nonempty, convex, closed, and bounded, and let $f\colon K \to K$ be nonexpansive. Let $A \subseteq K$ be nonempty, closed, and convex. Then $A$ intersects the fixed point set of $f$ if and only if
\[\inf\{\norm{f(x) - x} \;\big|\; x \in A\} = 0.\]
\end{lemma}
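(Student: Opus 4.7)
The implication $(\Rightarrow)$ is immediate: any $x \in A \cap \Fix(f)$ witnesses the infimum. For $(\Leftarrow)$, my plan is to select an approximate fixed point sequence $(x_n) \subseteq A$ with $\norm{f(x_n) - x_n} \to 0$ and extract from it an honest fixed point via the asymptotic centre technique, whose uniform-convexity arithmetic is in the same spirit as the existence proof of Theorem \ref{thm: projection theorem}(i).

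Define $r\colon E \to [0, \infty)$ by $r(y) = \limsup_n \norm{y - x_n}$, a convex, $1$-Lipschitz function that is bounded on $K$. The first step is to produce a unique minimiser $y^* \in A$ of $r|_A$: set $r^* = \inf_{y \in A} r(y)$, pick a minimising sequence $(y_k) \subseteq A$, and observe that for any $\varepsilon > 0$ and $k, l$ large, the estimates $\norm{y_k - x_n}, \norm{y_l - x_n} \leq r^* + \varepsilon$ hold for all sufficiently large $n$, while convexity of $A$ forces $(y_k + y_l)/2 \in A$ and hence $r((y_k + y_l)/2) \geq r^*$. Applying uniform convexity to $(y_k - x_n)/(r^* + \varepsilon)$ and $(y_l - x_n)/(r^* + \varepsilon)$, exactly as in the Cauchy argument underlying Theorem \ref{thm: projection theorem}(i), shows that $\norm{y_k - y_l}$ must be small; hence $(y_k)$ is Cauchy, its limit $y^* \in A$ minimises $r|_A$, and the same inequality applied to two putative minimisers yields uniqueness.

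The remaining step is to show $f(y^*) = y^*$. Nonexpansiveness combined with $\norm{f(x_n) - x_n} \to 0$ gives
\[\norm{f(y^*) - x_n} \leq \norm{y^* - x_n} + \norm{f(x_n) - x_n},\]
so $r(f(y^*)) \leq r(y^*) = r^*$. The \emph{main obstacle} is that $f(y^*)$ need not lie in $A$, so the uniqueness of the $r|_A$-minimiser cannot be applied to $f(y^*)$ directly. I would bridge this by first extracting from $(x_n)$ a weakly convergent subsequence $x_{n_k} \rightharpoonup x^*$---which exists since uniform convexity of $E$ entails reflexivity via Milman--Pettis---with $x^* \in A$ by Mazur's theorem on weak closures of closed convex sets. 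In the Hilbert case, weak convergence yields the identity $r(y)^2 = \norm{y - x^*}^2 + C$ for a constant $C$, so $x^* = y^*$ is automatic and the inequality $r(f(x^*))^2 \leq r(x^*)^2$ collapses to $\norm{f(x^*) - x^*}^2 \leq 0$. For general uniformly convex $E$, one appeals instead to Browder's demiclosedness principle for $I - f$ at zero, applied to $(x_{n_k})$. Either way, $x^* \in A \cap \Fix(f)$ as required.
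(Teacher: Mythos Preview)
Your argument is correct, but it follows a genuinely different route from the paper's. The paper does not pass through weak topologies at all: it chooses an approximate fixed-point sequence $(x_n)_n \subseteq A$ that in addition has \emph{minimal norm} in the sense that $\norm{x_n}\to r=\inf\{s:\mu(s)=0\}$ where $\mu(s)=\inf\{\norm{f(x)-x}:x\in A,\ \norm{x}\le s\}$, and then shows directly via uniform convexity and Goebel's Lemma~\ref{lem: goebels lemma} that $(x_n)_n$ is norm-Cauchy, hence converges strongly to a fixed point in $A$. This is Goebel's proof of Browder--G\"ohde--Kirk, localised to $A$.

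Your route instead invokes Milman--Pettis to extract a weakly convergent subsequence and then Browder's demiclosedness principle for $I-f$ at $0$ to conclude that the weak limit is a fixed point lying in $A$. This is perfectly valid in uniformly convex spaces. Note, however, that once you commit to demiclosedness, the asymptotic-centre construction of $y^*$ becomes redundant: the weak limit $x^*$ already lies in $A$ (Mazur) and in $\Fix(f)$ (demiclosedness), so the first half of your proof is not actually used in the general case. Also, the identity $r(y)^2=\norm{y-x^*}^2+C$ you quote in the Hilbert case is only valid for $r$ defined along the weakly convergent \emph{subsequence}, not the original $\limsup$; this is easily fixed by passing to the subsequence from the outset.

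The trade-off: the paper's approach is more elementary (no weak topology, no reflexivity, no demiclosedness as a black box) and yields strong convergence directly, which is more in tune with the quantitative, proof-mining orientation of the surrounding section. Your approach is conceptually quicker once the cited machinery is granted, but that machinery is itself nontrivial---in particular, the standard proof of demiclosedness in uniformly convex spaces already contains essentially the uniform-convexity arithmetic the paper writes out by hand.
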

\begin{proof}
Clearly, if $A$ intersects the fixed point set of $f$, then $\inf\{\norm{f(x) - x} \;\big|\; x \in A\} = 0$. On the other hand, suppose that $\inf\{\norm{f(x) - x} \;\big|\; x \in A\} = 0$. Let
\[\mu(s) = \inf\{\norm{f(x) - x} \;\big|\; x \in A, \norm{x} \leq s \} \]
and
\[r = \inf\{s > 0 \; \big|\; \mu(s) = 0\}.\]
Since $K$ is bounded and $\inf\{\norm{f(x) - x} \;\big|\; x \in A\} = 0$, $r$ is a well-defined real number. Let $(x_n)_n$ be a sequence in $A$ with
\[\lim_{n \to \infty} \norm{f(x_n) - x_n} = 0 \]
and
\[\lim_{n \to \infty}\norm{x_n} = r. \]
We will show that $(x_n)_n$ is a Cauchy sequence. It then follows that $(x_n)_n$ converges to a fixed point, which proves the claim. Suppose that $(x_n)_n$ is not a Cauchy sequence. Then $r > 0$ and there exists $\varepsilon > 0$ and a subsequence $(y_n)_n$ of $(x_n)_n$ such that $\norm{y_{n + 1} - y_n} \geq \varepsilon$ for all sufficiently large $n$. Let $2r \geq s > r$ be such that
\[\left(1 - \eta_E\left(\frac{\varepsilon}{2r}\right)\right)s < r.\]
For $n$ sufficiently large we have $\norm{y_n} \leq s$, so that we have the inequalities
\[\bignorm{\frac{y_n}{s}} \leq 1,\; \bignorm{\frac{y_{n + 1}}{s}} \leq 1,\; \bignorm{\frac{y_n}{s} - \frac{y_{n + 1}}{s}} \geq \frac{\varepsilon}{s}. \]
Applying uniform convexity, we obtain 
\[\bignorm{\frac{y_n + y_{n + 1}}{2}} \leq s\left(1 - \eta_E\left(\frac{\varepsilon}{s}\right)\right), \]
which yields (using that without loss of generality, $\eta_E$ is monotonically increasing)
\[\bignorm{\frac{y_n + y_{n + 1}}{2}} \leq s\left(1 - \eta_E\left(\frac{\varepsilon}{2r}\right)\right) < r. \]
Now, by Lemma \ref{lem: goebels lemma} we have
\[\lim_{n \to \infty}\bignorm{\frac{y_n + y_{n + 1}}{2} - f\left(\frac{y_n + y_{n + 1}}{2}\right)} = 0. \]
This contradicts the minimality of $r$. Hence, $(x_n)_n$ is a Cauchy sequence.
\end{proof}

\begin{proposition}\label{prop: initial upper bound on Proj}
Let $E$ be a uniformly convex computable Banach space, let $K \subseteq E$ be nonempty, co-semi-decidable, computably overt, bounded, and convex. Then we have
\[\Proj_K \leq_W \lim \circ \lim.\]
If $K$ is computably compact or $E$ is a Hilbert space, then
\[\Proj_K \leq_W \lim. \]
\end{proposition}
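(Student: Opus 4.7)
The strategy is to first obtain a computably overt representation $\Fix(f) \in \V(K)$ using $\lim$, and then combine it with auxiliary data to extract $P_{\Fix(f)}(x)$. The central tool is Lemma \ref{lem: infimum lemma}: for closed convex nonempty $A \subseteq K$, $A \cap \Fix(f) \neq \emptyset$ iff $\inf\{\|f(y) - y\| : y \in A\} = 0$. Apply this to $A = \overline{B(c,r) \cap K}$ for a rational ball $B(c,r)$ meeting $K$; convexity of $K$ ensures that $B(c,r) \cap K$ is dense in $A$, so by continuity the infimum coincides with $\inf\{\|f(x_n) - x_n\| : x_n \in B(c,r)\}$ over the overt dense sequence $(x_n)_n$ in $K$. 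Consequently $B(c,r) \cap \Fix(f) \neq \emptyset$ is expressible as the vanishing of a right-r.e.\ real, i.e.~a $\Pi^0_2$ predicate of the form $\forall k.\,\exists n.\,P(n,k)$ with $P$ r.e.; deciding it for all rational balls and collecting the centres of successful ones yields $\Fix(f) \in \V(K)$.

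For the general case, two sequential applications of $\lim$ decide the $\Pi^0_2$ predicate (the first collapsing the $\exists n.\,P(n,k)$ layer into a binary sequence in $k$, the second checking that this sequence is constantly $1$). In parallel --- and therefore at no extra cost, since $\lim$ is parallelizable --- we compute a function $\mu\colon\N\to\N$ with $2^{-\mu(n)} \leq \eta_E(2^{-n})$: for each $n$, the value $\eta_E(2^{-n})$ is right-r.e.\ (as an infimum of a computable expression over a dense sequence in $B_E\times B_E$) and strictly positive by uniform convexity of $E$, so the least valid $n'$ is $\Delta^0_2$-computable and hence $\lim$-computable. A routine adaptation of the proof of Corollary \ref{cor: computability of projection on subset} --- using the diameter of the bounded set $K$ as an integer upper bound on $d(x,\Fix(f))$ and the overt dense sequence in place of the distance function --- then combines $\mu$ with $\Fix(f) \in \V(K)$ to produce $P_{\Fix(f)}(x)$, proving $\Proj_K \leq_W \lim \circ \lim$.

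If $K$ is computably compact, the $\Pi^0_2$ predicate above collapses to a $\Pi^0_1$ one: $\Fix(f) \in \A(K)$ is directly computable from $f$, whence $\Fix(f) \in \K(K)$ by Proposition \ref{prop: closed and compact sets, (i) closed set is compact, (ii) compact set in Hausdorff space is closed}, so $\overline{B(c,r) \cap K} \cap \Fix(f) \neq \emptyset$ is the failure of the semi-decidable compact-in-open inclusion $\overline{B(c,r) \cap K} \subseteq \Fix(f)^C$. A single application of $\lim$ then yields $\Fix(f) \in \V(K)$; combined with the distance-from-below computability afforded by $\Fix(f) \in \K(K)$, the distance $d(x, \Fix(f))$ becomes exactly computable, so $\{P_{\Fix(f)}(x)\} = \Fix(f) \cap \overline{B}(x, d(x, \Fix(f)))$ is a computable singleton in $\A(K)$ from which Kreinovich's theorem (Theorem \ref{thm: Kreinovich's theorem}) extracts the unique point. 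If instead $E$ is a Hilbert space, one bypasses the preliminary step altogether: Wittmann's theorem (Theorem \ref{thm: Halpern's theorem}) implies that Halpern's iteration with anchor and starting point both equal to $x$ produces a computable sequence in $K$ converging to $P_{\Fix(f)}(x)$, from which $\lim$ directly extracts the limit. The main obstacle throughout is the correct complexity-theoretic classification of the infimum condition: genuinely $\Pi^0_2$ in general (whence the need for $\lim \circ \lim$) but collapsing to $\Pi^0_1$ when $K$ is computably compact.
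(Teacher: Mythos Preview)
Your Hilbert-space case matches the paper exactly. Your compact case is correct and takes a genuinely different route: rather than computing the full distance function to $\Fix(f)$ and invoking Corollary~\ref{cor: computability of projection on subset} (which requires a modulus $\mu$, limit-computed in parallel), you observe that one $\lim$ yields $\Fix(f)\in\V(K)$, combine this with the directly available $\Fix(f)\in\K(K)$ to get $d(x,\Fix(f))$ exactly, and then extract $P_{\Fix(f)}(x)$ from the co-semi-decidable singleton $\Fix(f)\cap\overline{B}(x,d(x,\Fix(f)))$ via Kreinovich's theorem. This neatly sidesteps any use of $\mu$ in the compact case.

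In the general case there is a gap in your final step. You assert that $\Fix(f)\in\V(K)$ together with $\mu$ and a diameter bound suffice, via a ``routine adaptation'' of Corollary~\ref{cor: computability of projection on subset}, to compute $P_{\Fix(f)}(x)$. This is not so: that proof must \emph{verify} that a candidate $p$ satisfies $\|p-x\|\le d(x,\Fix(f))+2^{-\phi(n)}$, which requires $d(x,\Fix(f))$ from below, and a dense sequence gives it only from above. Concretely, take $K=[0,1]$, $x=0$, $\Fix(f)=[a,1]$: overtness of $[a,1]$ yields $a$ merely as a right-r.e.\ real, so the projection $a$ is not computable from that data alone, regardless of $\mu$.

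The repair is immediate and already implicit in your setup: since you \emph{decide} (not merely semi-decide) the predicates $\overline{B(c,r)\cap K}\cap\Fix(f)\neq\emptyset$, you also know which balls \emph{miss} $\Fix(f)$, and that is precisely what gives $d(\cdot,\Fix(f))$ from below. You therefore obtain $\Fix(f)\in\A_{\dist}(K)$ outright, not just $\V(K)$, and Corollary~\ref{cor: computability of projection on subset} then applies without adaptation. The paper organises the two applications of $\lim$ somewhat differently --- the first computes the infima $\inf\{\|f(y)-y\|:y\in\overline{B}(a,r)\cap K\}$ exactly, whence $\Fix(f)\in\A_{\dist_<}(K)$ via Lemma~\ref{lem: infimum lemma}; the second upgrades $\A_{\dist_<}$ to $\A_{\dist}$ --- but once your final step is corrected the two arguments have the same content.
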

\begin{proof}
We are given as input a nonexpansive function $f\colon K \to K$ and a point $x \in K$ and want to obtain the point $p = P_{\Fix(f)}(x) \in K$. In the case where $E$ is a Hilbert space, we can use Halpern's iteration (\ref{thm: Halpern's theorem}) to obtain a computable sequence converging to $p$ and apply $\lim$ to obtain $p$ itself. 

In the case where $K$ is computably compact, we can compute $\Fix(f)$ as an element of $\A_{\dist_<}(K)$ (see the discussion after Definition \ref{Def: distance representations}). In particular we can compute a sequence $(d(x_n,\Fix(f))_{n \in \N}$, where $(x_n)_n$ is a computable dense sequence in $K$, as an element of $\R_{<}^{\N}$. Using the standard identification of $\N^{\N}$ with $\left(\N^{\N}\right)^{\N}$, it is easy to see that we can use a single instance of $\lim$ to obtain countably many instances of $\lim$ in parallel (cf.~also e.g.~\cite{ClosedChoice} or \cite{WeihrauchDegrees}). Using $\lim$, we can hence compute the sequence $(d(x_n,\Fix(f))_{n \in \N}$ as an element of $\R^{\N}$, which allows us to compute $\lambda y.d(y,\Fix(f))$ as an element of $\Co(K,\R)$, since we have 
\[\left|d(y,\Fix(f)) - d(z, \Fix(f)) \right| \leq d(y,z)\] for all $y ,z \in K$. 

Independently, we can use the same instance of $\lim$ to obtain a modulus of uniform convexity $\eta_E$ for $E$: we may put
\[\eta_E(\varepsilon) = \inf\Big\{1 - \bignorm{\frac{x + y}{2}} \;\big|\; x,y \in B_E, \norm{x - y} \geq \varepsilon \Big\}, \]
and if $(x_n)_n$ is a computable dense sequence in $B_E$ (we may choose e.g. the sequence of rational points contained in the open unit ball) we have
\[\eta_E(\varepsilon) = \inf\Big\{1 - \bignorm{\frac{x_i + x_j}{2}} \;\big|\; i,j \in \N, \norm{x_i - x_j} > \varepsilon \Big\},\]
which is clearly limit-computable\footnote{We call a multimapping \emph{limit-computable} if its Weihrauch degree is below $\lim$.} in $\varepsilon$. This allows us to compute the restriction of $\eta_E$ to the rational numbers using countably many applications of $\lim$. In particular we can limit-compute a function $\mu\colon \N \to \N$ satisfying $2^{-\mu(n)} \leq \eta_E(2^{-n})$. Using the distance function of $\Fix(f)$ and the function $\mu$, we apply Corollary \ref{cor: computability of projection on subset} to obtain the projection of $x$ onto $\Fix(f)$.

In the general case, we cannot a-priori compute $\Fix(f)$ as an element of $\A_{\dist_<}(K)$, because of Proposition \ref{prop: equivalence lower semi-located and computably closed implies locally compact}. We can however use $\lim$ to obtain $\Fix(f)$ as an element of $\A_{\dist_<}(K)$: since $K$ is computably overt, we can list all rational closed balls $\clos{B}(a,r)$ for which the open ball $B(a,r)$ intersects $K$. Given such a rational closed ball $\clos{B}(a,r)$ in $E$, we can compute a dense sequence in $\clos{B}(a,r) \cap K$: choose a computable dense sequence $(x_n)_n$ in $K$ and filter out those points $x_n$ which satisfy $d(x_n,a) < r$. Using the convexity of $K$ it is easy to see that the resulting sequence is dense in $\clos{B}(a,r) \cap K$. This allows us to limit-compute
\[\inf\big\{\norm{f(x) - x} \;\big|\; x \in \clos{B}(a,r) \cap K\big\}.\]
Again, we can do this for all suitable closed rational balls in parallel. We can then enumerate those balls $\clos{B}(a,r)$ satisfying $\inf\{\norm{f(x) - x} \;\big|\; x \in \clos{B}(a,r) \cap K\} > 0$, which by Lemma \ref{lem: infimum lemma} is equivalent to $\clos{B}(a,r) \cap \Fix(f) = \emptyset$. This allows us to compute the distance function to $\Fix(f)$ from below (cf.~\cite[Theorem 3.9 (1)]{BrattkaPresser} or the proof of \cite[Lemma 5.1.7]{Weih}). Now we apply the limit-computable method used in the compact case above to obtain the projection. Since the Weihrauch degree of the composition of two limit-computable mappings is below $\lim\circ\lim$ (see e.g. Fact 8.2 in \cite{BolzanoWeierstrass}), the result follows.
\end{proof}

Note that if $E$ is finite dimensional, then $K$ is always computably compact, so that the stronger upper bound of Proposition \ref{prop: initial upper bound on Proj} applies. We will show in Theorem \ref{thm: upper bound on Proj in smooth spaces} that $\lim$ is an upper bound on $\Proj_K$ in all uniformly convex and uniformly smooth computable Banach spaces, and thus for instance in all $L^p$-spaces with $1 < p < \infty$.

We now begin a discussion on the computability of fixed points and the Weihrauch degree of the Browder-G\"ohde-Kirk theorem. Proposition \ref{prop: fixed point sets of nonexpansive mappings are convex} yields an immediate upper bound for the Weihrauch degree of $\BGK_K$.

\begin{proposition}
Let $E$ be a uniformly convex computable Banach space. Let $K \subseteq E$ be nonempty, co-semi-decidable, computably overt, bounded, and convex. Then we have $\BGK_K \leq_W \XC_K$.\qed
\end{proposition}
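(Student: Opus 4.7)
The plan is to show that from a nonexpansive $f\colon K \to K$ we can computably produce the set $\Fix(f)$ as an element of the represented space $\A^{\conv}(K)\setminus\{\emptyset\}$ on which $\XC_K$ is defined, and then simply invoke $\XC_K$ on it.

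First I would verify the computability of the reduction. Given $f \in \Ne(K)$, the map $h\colon K \to \R$, $h(x) = \norm{f(x) - x}$, is computable relative to $f$, since vector subtraction and the norm are computable on $E$ and $f$ comes with a $[\delta_K \to \delta_K]$-name. Therefore the set $U_f = \{x \in K \mid h(x) > 0\} = h^{-1}((0,\infty))$ is an element of $\O(K)$ computable from $f$ (one simply composes $h$ with the semi-decidable predicate ``${>}\,0$'' on $\R$). Taking complements in the sense of Definition \ref{Def: canonical representation of open and closed sets}, we obtain $\Fix(f) = K \setminus U_f$ as an element of $\A(K)$, computable in $f$.

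Next I would argue that $\Fix(f)$ lies in the domain of $\XC_K$. By Theorem \ref{thm: Browder-Goehde-Kirk Theorem}, applied to the uniformly convex Banach space $E$ and the nonempty, bounded, closed, convex set $K$, the fixed point set $\Fix(f)$ is nonempty. By Proposition \ref{prop: fixed point sets of nonexpansive mappings are convex} (noting that uniform convexity implies strict convexity), the set $\Fix(f)$ is convex. Hence $\Fix(f) \in \A^{\conv}(K)\setminus\{\emptyset\}$, so it is a valid input to $\XC_K$.

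Putting the pieces together, the Weihrauch reduction is given by the computable forward map $f \mapsto \Fix(f) \in \A^{\conv}(K)\setminus\{\emptyset\}$, together with the trivial backward map that returns the output of $\XC_K$ unchanged: any point in $\XC_K(\Fix(f))$ is by definition an element of $\Fix(f)$, hence a fixed point of $f$. No step here poses any genuine obstacle: the only content beyond the definitions is the non-emptiness and convexity of $\Fix(f)$, which are guaranteed by Theorem \ref{thm: Browder-Goehde-Kirk Theorem} and Proposition \ref{prop: fixed point sets of nonexpansive mappings are convex} respectively, and the computability of $f \mapsto \Fix(f)$ in $\A(K)$, which is immediate from the computability of $h$ together with the definition of $\psi^K$.
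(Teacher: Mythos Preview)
Your argument is correct and is exactly the reasoning the paper has in mind: the proposition is stated with a \qed\ because it follows immediately from Proposition~\ref{prop: fixed point sets of nonexpansive mappings are convex} (convexity of $\Fix(f)$), Theorem~\ref{thm: Browder-Goehde-Kirk Theorem} (non-emptiness), and the standard fact that the zero set of a computable function is uniformly co-semi-decidable. You have simply spelled out these steps in detail.
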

In finite dimension, this already implies that $\BGK$ is always strictly weaker than $\WKL$. In fact it is non-uniformly computable thanks to the following result due to Le Roux and Ziegler.
\begin{theorem}[\cite{ZR}]\label{thm: convex sets have computable points}
Let $E$ be a finite dimensional computable Banach space. Let $K \subseteq E$ be nonempty, co-semi-decidable, and convex. Then $K$ contains a computable point.
\end{theorem}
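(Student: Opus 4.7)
My plan is to argue by induction on the dimension $d$ of $E$. By the effective independence lemma (Lemma \ref{lem: effective independence lemma}), a computable basis of $E$ exists, which provides a computable linear isomorphism between $E$ and $\R^d$ (with a computable norm equivalent to the Euclidean one). Since this isomorphism is a computable homeomorphism it preserves co-semi-decidability and convexity, so I may assume $E=\R^d$ throughout.

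For the base case $d=0$ the statement is trivial. For $d=1$, $K\subseteq\R$ is a closed interval $[a,b]$ with $-\infty\leq a\leq b\leq+\infty$: if $K$ is unbounded it contains integers; if $a<b$ are both finite the open interval $(a,b)$ contains a rational; and if $K=\{a\}$ is a singleton, then the enumeration of rational open intervals covering $K^c=(-\infty,a)\cup(a,+\infty)$ yields a sequence of bounded closed rational intervals containing $a$ whose diameters tend to zero, so $a$ is computable.

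For the inductive step with $d\geq 2$, I first reduce to the bounded case: non-uniformly pick an integer $n\geq 0$ with $K\cap\clos{B}(0,n)\neq\emptyset$, and replace $K$ by this intersection, which remains nonempty, closed, convex, and co-semi-decidable. Let $\pi\colon\R^d\to\R^{d-1}$ denote the projection onto the first $d-1$ coordinates. Then $\pi(K)$ is compact, convex, and nonempty. The key claim is that $\pi(K)$ is co-semi-decidable: for $y\in\R^{d-1}$ we have $y\notin\pi(K)$ iff the compact line segment $\pi^{-1}(y)\cap\clos{B}(0,n)$ is contained in $K^c$, and this condition is semi-decidable uniformly in $y$ thanks to compactness of the segment and semi-decidability of $K^c$. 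The induction hypothesis applied to $\pi(K)\subseteq\R^{d-1}$ gives a computable point $y_0\in\pi(K)$, and the fiber $\pi^{-1}(y_0)\cap K$ is then a nonempty closed convex co-semi-decidable subset of the computable affine line $\pi^{-1}(y_0)\cong\R$. By the $d=1$ case there is a computable point $t_0\in\pi^{-1}(y_0)\cap K$, yielding the computable point $(y_0,t_0)\in K$.

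The main technical obstacle is the co-semi-decidability of $\pi(K)$, which motivates the preliminary reduction to bounded $K$: without compactness of the fibers, $\pi(K)$ need not even be closed and the semi-decidability of $K^c$ cannot be transferred to the complement of the image. This boundedness reduction is non-uniform, which is harmless since the theorem only asserts existence (rather than uniform computability) of a computable point in $K$.
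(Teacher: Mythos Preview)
Your proof is correct and follows essentially the same inductive approach as the paper's sketch: reduce to a bounded subset of $\R^d$, project onto a coordinate subspace (using compactness of the fibers to transfer co-semi-decidability to the image), and apply the induction hypothesis to the fiber over the computable point thus obtained. The only cosmetic difference is that the paper projects onto the first coordinate and then recurses on the $(d-1)$-dimensional slice $\{x\}\times\R^{d-1}\cap K$, whereas you project onto the first $d-1$ coordinates and finish with the one-dimensional fiber; the two arrangements are equivalent.
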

\begin{proof}[Proof sketch.]
We may assume that $K$ is compact, since we can always intersect $K$ with a sufficiently large closed ball. We may also assume that $E$ is represented by $\rho^d$ for some $d \in \N$. We proceed by induction on $\dim E$. If $\dim E = 1$, then $K$ is either a singleton and hence computable, or it is an interval and hence contains a rational point. If $\dim E = d$, then the projection of $K$ onto the $x$-axis is still nonempty, co-semi-decidable, and convex, and contains a computable point $x$ by induction hypothesis. Now, the intersection of $\{x\}\times\R^{d - 1}$ with $K$ is nonempty, convex, and co-semi-decidable, of dimension strictly smaller than $d$. Again by induction hypothesis, the intersection, and in particular $K$, contains a computable point.
\end{proof}
The above theorem even shows that $K$ has a dense subset of computable points, since the intersection of $K$ with a small rational ball is again co-semi-decidable and compact. A more uniform version, using Weihrauch degrees, has been given in \cite{PaulyLeRoux}.

\begin{corollary}\label{cor: fixed points in finite dimension are computable}
Let $E$ be a finite dimensional, strictly convex computable Banach space. Let $K \subseteq E$ be nonempty, computably overt, co-semi-decidable, bounded, and convex and let $f\colon K \to K$ be computable and nonexpansive. Then $f$ has a computable fixed point.\qed
\end{corollary}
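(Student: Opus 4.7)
The plan is to reduce the claim to Theorem \ref{thm: convex sets have computable points} by verifying that $\Fix(f)$ itself is a nonempty, co-semi-decidable, convex subset of $E$. There are essentially three facts to assemble.

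First I would observe that $E$, being finite-dimensional and strictly convex, is automatically uniformly convex: the closed unit ball of $E$ is compact, so for any $\varepsilon\in(0,2]$ the continuous function $(x,y)\mapsto 1-\bignorm{\tfrac{x+y}{2}}$ attains a positive minimum on the (compact) set $\{(x,y)\in B_E\times B_E\mid \norm{x}=\norm{y}=1,\norm{x-y}\geq\varepsilon\}$ (this is just the non-uniform analogue of the argument made after Theorem \ref{thm: supremum on compact space}). Hence Theorem \ref{thm: Browder-Goehde-Kirk Theorem} applies and guarantees that $\Fix(f)$ is nonempty.

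Next I would verify the structural properties of $\Fix(f)$. It is contained in $K$, hence bounded, and by Proposition \ref{prop: fixed point sets of nonexpansive mappings are convex} it is convex. To see that it is co-semi-decidable as a subset of $E$, note that the complement $E\setminus \Fix(f)$ is the union $(E\setminus K)\cup\{x\in K\mid f(x)\neq x\}$: the first part is semi-decidable because $K$ is co-semi-decidable, and the second part is semi-decidable because $f$ and the norm are computable, so $x\mapsto\norm{f(x)-x}$ is a computable map $K\to\R$ and the predicate $\norm{f(x)-x}>0$ is semi-decidable. Taking the union yields a semi-decidable description of $E\setminus\Fix(f)$, so $\Fix(f)$ is co-semi-decidable in $E$.

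Finally, Theorem \ref{thm: convex sets have computable points} applies directly to $\Fix(f)$ (which is nonempty, co-semi-decidable, and convex in the finite-dimensional computable Banach space $E$) and produces a computable element, which is by construction a computable fixed point of $f$. The only real subtlety is the verification that $\Fix(f)$ inherits co-semi-decidability from $K$ and the computability of $f$; once this is set up the result is immediate, and no further appeal to effective functional analysis is needed.
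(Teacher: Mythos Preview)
Your argument is correct and matches the paper's intended reasoning: the corollary carries a bare \qed\ and is meant to follow immediately from Theorem~\ref{thm: convex sets have computable points} once one notes that $\Fix(f)$ is nonempty (via Theorem~\ref{thm: Browder-Goehde-Kirk Theorem}, using that finite-dimensional strictly convex spaces are uniformly convex), convex (Proposition~\ref{prop: fixed point sets of nonexpansive mappings are convex}), and co-semi-decidable. You have simply spelled out these steps explicitly, including the routine verification of co-semi-decidability that the paper leaves to the reader.
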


Corollary \ref{cor: fixed points in finite dimension are computable} in particular shows that the Browder-G\"ohde-Kirk theorem in finite dimension is strictly more effective than the (in this case more general) Brouwer fixed point theorem: a construction due to Orevkov \cite{Orevkov} and Baigger \cite{Baigger} shows that there exists a computable function on the unit square in $\R^2$ without computable fixed points, while in every finite dimension some fixed points whose existence is guaranteed by the Browder-G\"ohde-Kirk theorem are computable. Note that Corollary \ref{cor: fixed points in finite dimension are computable} really only uses the fact that $f$ is computable and its fixed point set is convex. Thus, Theorem \ref{thm: convex sets have computable points} presents a fairly general non-uniform computability result: if a computable equation on a finite dimensional space has a convex set of solutions, then it has a computable solution. A nontrivial application is based on the following result\footnote{This application was pointed out to the author by Ulrich Kohlenbach.}. A self-map $f\colon K \to K$ of a nonempty subset $K$ of a real Hilbert space $H$ is called \emph{pseudocontractive} if it satisfies \[\product{f(x) - f(y)}{x - y} \leq \norm{x - y}^2 \]
for all $x,y \in K$.
\begin{theorem}[\cite{ZhangCheng}]
Let $H$ be a real Hilbert space, let $K \subseteq H$ be nonempty, closed and convex, and let $f\colon K \to K$ be pseudocontractive. Then $\Fix(f)$ is closed and convex.\qed
\end{theorem}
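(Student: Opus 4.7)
The plan is to reduce the problem to showing that the solution set of a certain variational inequality on $K$ is closed and convex, exploiting the fact that $I - f$ is both monotone (a mere restatement of pseudocontractivity) and continuous (under the standard hypothesis that $f$ itself is continuous, which is implicit in essentially all of the fixed-point literature on pseudocontractive operators). All the Hilbert-space structure is packaged into these two properties.

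The first step is the elementary observation that
\[\Fix(f) = \big\{p \in K \;\big|\; \langle f(p) - p, q - p\rangle \leq 0 \text{ for every } q \in K\big\}.\]
The inclusion ``$\subseteq$'' is immediate. For the reverse inclusion, suppose $p \in K$ satisfies the above inequality for every $q \in K$; the crucial move is to specialise to the choice $q = f(p)$, which lies in $K$ precisely because $f$ is a self-map of $K$. This forces $\norm{f(p) - p}^2 \leq 0$ and hence $f(p) = p$.

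The set on the right-hand side is the solution set of the variational inequality $\operatorname{VI}(I - f, K)$. By Minty's lemma, which applies because $I - f$ is monotone and hemi-continuous, a point $p \in K$ solves this variational inequality if and only if
\[\langle q - f(q), q - p\rangle \geq 0 \quad\text{for all } q \in K.\]
Hence
\[\Fix(f) = \bigcap_{q \in K}\big\{p \in K \;\big|\; \langle q - f(q), q - p\rangle \geq 0\big\},\]
and each set in this intersection is a closed half-space intersected with the closed convex set $K$. As an intersection of closed convex sets, $\Fix(f)$ is itself closed and convex.

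The main obstacle is the verification of Minty's lemma in the required generality: while the direction using monotonicity is immediate, the converse requires the familiar ``Minty trick'' of testing against the convex combination $q_t = (1 - t)p + tq$ and invoking hemi-continuity to pass to the limit $t \to 0^+$ inside the inner product. Once this is in hand, the remainder of the argument is purely algebraic and neatly sidesteps the more delicate machinery of maximal monotone extensions, nonexpansive resolvents, or Rockafellar's sum theorem that a more directly operator-theoretic approach would require.
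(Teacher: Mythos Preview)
The paper does not supply its own proof of this theorem: it is quoted from \cite{ZhangCheng} and marked with a \qed, so there is nothing to compare against. Your argument is correct and is in fact the standard one. The reformulation of $\Fix(f)$ as the solution set of the variational inequality $\operatorname{VI}(I-f,K)$ is valid precisely because $f$ maps $K$ into itself, and the monotonicity of $I-f$ is exactly the definition of pseudocontractivity. Minty's lemma then does the work, and the representation of $\Fix(f)$ as an intersection of closed half-spaces (intersected with $K$) is immediate.

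One point worth flagging explicitly: the theorem as stated in the paper does not include continuity among its hypotheses, and you rightly note that hemicontinuity of $f$ is needed for the nontrivial direction of Minty's lemma. Without it, you only obtain the inclusion
\[
\Fix(f)\subseteq \bigcap_{q\in K}\{p\in K\mid \langle q-f(q),\,q-p\rangle\ge 0\},
\]
which is not enough. In the context in which the theorem is used in the paper (computable, and elsewhere Lipschitz, pseudocontractions), continuity is automatic, so this is a harmless gap in the \emph{statement} rather than in your proof; but it would be cleaner to add the hypothesis explicitly.
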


\begin{corollary}\label{cor: computability of fixed points of pseudocontractions}
Let $H$ be a finite dimensional computable Hilbert space. Let $K \subseteq H$ be nonempty, computably overt, co-semi-decidable, bounded, and convex, and let $f\colon K\to K$ be computable and pseudocontractive. Then $f$ has a computable fixed point.\qed
\end{corollary}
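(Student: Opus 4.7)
The strategy is to reduce the statement to an application of Theorem \ref{thm: convex sets have computable points} (Le~Roux--Ziegler), exactly as was done in Corollary \ref{cor: fixed points in finite dimension are computable}, but using the Zhang--Cheng theorem in place of Proposition \ref{prop: fixed point sets of nonexpansive mappings are convex} to obtain convexity of the fixed point set. Concretely, I will verify that $\Fix(f)$ is a nonempty, co-semi-decidable, convex subset of the finite dimensional computable Banach space $H$; then Theorem \ref{thm: convex sets have computable points} applies and produces the desired computable fixed point.

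First I would record that $K$ is compact: since $H$ is finite dimensional, $K$ is a co-semi-decidable subset of a (locally compact) Hausdorff represented topological space, and co-semi-decidability together with boundedness gives that $K$ is closed and bounded, hence compact. Thus $f \colon K \to K$ is a continuous self-map of a nonempty, compact, convex subset of $\R^d$, so Brouwer's fixed point theorem guarantees $\Fix(f) \neq \emptyset$. Convexity (and closedness) of $\Fix(f)$ is precisely the content of the Zhang--Cheng theorem cited above.

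Next I would argue that $\Fix(f)$ is co-semi-decidable as a subset of $H$. Since $f$ is computable and the norm on $H$ is computable, the function $x \mapsto \norm{f(x) - x}$ is computable from $K$ to $\R$. Hence the set
\[
\Fix(f)^C \cap K \;=\; \{x \in K \;|\; \norm{f(x) - x} > 0\}
\]
is a semi-decidable subset of $K$, and together with the fact that $K$ itself is co-semi-decidable in $H$ this shows $\Fix(f) \in \A(H)$ computably.

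Having established that $\Fix(f)$ is nonempty, convex and co-semi-decidable in the finite dimensional computable Banach space $H$, Theorem \ref{thm: convex sets have computable points} immediately yields a computable point of $\Fix(f)$, which is the required computable fixed point of $f$. The only nontrivial input beyond the proof of Corollary \ref{cor: fixed points in finite dimension are computable} is the Zhang--Cheng convexity theorem, so there is no genuine obstacle; the main thing to check carefully is the co-semi-decidability of the fixed point set, but this is routine because $\Fix(f)^C$ is the preimage of the semi-decidable set $(0,\infty) \subseteq \R$ under the computable map $x \mapsto \norm{f(x)-x}$.
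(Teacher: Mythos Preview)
Your proposal is correct and follows essentially the same approach as the paper: the paper notes just before this corollary that Corollary~\ref{cor: fixed points in finite dimension are computable} ``really only uses the fact that $f$ is computable and its fixed point set is convex,'' and then invokes the Zhang--Cheng theorem to supply convexity in the pseudocontractive case, after which Theorem~\ref{thm: convex sets have computable points} applies. Your write-up makes explicit the nonemptiness (via Brouwer) and the co-semi-decidability of $\Fix(f)$, which the paper leaves implicit.
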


Corollary \ref{cor: computability of fixed points of pseudocontractions} is strictly more general than the Euclidean version of Corollary \ref{cor: fixed points in finite dimension are computable}, as the following example, due to \cite{ChidumeMutangadura}, shows. 

\begin{proposition}\label{prop: computable Lipschitz pseudocontraction which is not nonexpansive}
There exists a computable Lipschitz-continuous pseudocontractive mapping on the unit ball of Euclidean $\R^2$, which is not nonexpansive.
\end{proposition}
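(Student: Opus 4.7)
The plan is to exhibit the explicit example constructed by Chidume and Mutangadura in \cite{ChidumeMutangadura}, which is a piecewise-defined mapping on the closed unit ball $B \subseteq \R^2$. The construction splits $B$ into the inner disk $D_{1/2} = \{x : \norm{x} \leq 1/2\}$ and the annulus $A = \{x : 1/2 \leq \norm{x} \leq 1\}$, and on each piece the mapping is defined by an explicit algebraic formula built from the identity, a fixed rotation of $\R^2$, and (on the annulus) the radial projection $x\mapsto x/\norm{x}$. The two branches agree on the boundary circle $\norm{x} = 1/2$, so the resulting map $f$ is continuous and indeed Lipschitz.

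First I would verify computability. Since $\norm{\cdot}$, $(\cdot)/\norm{\cdot}$ (away from the origin), arithmetic, and the chosen rotation are all computable on $B$, and since the two branches agree on the boundary $\norm{x}=1/2$, the piecewise definition produces a total computable map: one can evaluate both formulae with arbitrary precision and, because they are equal on the boundary, select either one according to whatever comparison information about $\norm{x}$ and $1/2$ becomes available. Lipschitz continuity then follows by separately bounding the operator norm of $f$ on $D_{1/2}$ and on $A$, and controlling the cross-cases via the triangle inequality together with the fact that both branches take the same values on the common boundary.

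The main geometric content is verifying the pseudocontractivity inequality
\[\langle f(x) - f(y),\, x - y\rangle \leq \norm{x - y}^2\]
for all $x,y \in B$. I would proceed by case analysis on the location of $x$ and $y$: (i) both in $D_{1/2}$, where $f$ is essentially of the form $I + R$ with $R$ a rotation by $\pi/2$, so that $\langle R(x-y),\,x-y\rangle = 0$ and the inequality collapses to equality; (ii) both in the annulus, where the inequality reduces to an identity involving the radial projection that can be verified by direct expansion using the Cauchy--Schwarz inequality and the orthogonality of the rotation; and (iii) the mixed case, where one point is in $D_{1/2}$ and the other in $A$. Since both branches agree on the interface, one can interpolate along the segment joining $x$ to $y$ by inserting the point at which the segment meets the circle $\norm{\cdot} = 1/2$, reducing (iii) to cases (i) and (ii) together with a standard convexity argument.

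Finally, to show $f$ is not nonexpansive I would exhibit a pair of points at which $\norm{f(x) - f(y)} > \norm{x-y}$. Natural candidates are two diametrically opposite points on the boundary circle $\norm{x} = 1/2$, where the presence of the rotation summand in the inner branch produces a strict expansion of distances along directions orthogonal to $x-y$. The main obstacle will be the mixed case of the pseudocontractivity verification; once the boundary-interpolation trick is set up, however, it reduces cleanly to the two easier cases, so the whole argument is genuinely elementary, though computationally somewhat tedious.
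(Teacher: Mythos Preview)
Your proposal is correct and takes essentially the same approach as the paper: both use the Chidume--Mutangadura example $f(x) = x + x^\perp$ on $\norm{x}\leq 1/2$ and $f(x) = x/\norm{x} - x + x^\perp$ on $\norm{x}\geq 1/2$. The paper is in fact more terse than you are: it simply writes down the formula, remarks that computability and the Lipschitz constant $5$ are easy, and defers the pseudocontractivity verification entirely to \cite{ChidumeMutangadura}, whereas you outline a case analysis. Your interpolation trick for the mixed case is valid (if $z$ lies on the segment $[x,y]$ with $x-z = s(x-y)$, the two pseudocontractivity bounds combine to give exactly $s\norm{x-y}^2 + (1-s)\norm{x-y}^2$), and your choice of diametrically opposite points on the circle $\norm{x}=1/2$ does witness non-nonexpansiveness, so there is nothing to correct.
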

\begin{proof}
For $x = (x_1,x_2)$, we put $x^{\perp} = (-x_2,x_1)$. Let
\[f(x) = \begin{cases} x + x^{\perp} &\text{ if }\norm{x} \leq \tfrac{1}{2},\\ \tfrac{x}{\norm{x}} - x + x^{\perp} &\text{ if }\norm{x} \geq \tfrac{1}{2}.\end{cases} \]
It is easy to see that $f$ is computable and Lipschitz-continuous with Lipschitz constant $5$. The proof of pseudocontractiveness is somewhat technical and can be found in \cite{ChidumeMutangadura}.
\end{proof}

The function from Proposition \ref{prop: computable Lipschitz pseudocontraction which is not nonexpansive} actually provides an example of a pseudocontractive mapping with a unique fixed point, for which the Krasnoselski-Mann iteration, which is guaranteed to converge for nonexpansive mappings, fails to converge. This is proved in \cite{ChidumeMutangadura}.

In infinite dimension, there exist computable firmly nonexpansive mappings without computable fixed points, already on compact sets. A mapping $f\colon K \to K$ defined on a nonempty subset $K$ of a real Hilbert space $H$ is called \emph{firmly nonexpansive} if it satisfies
\[\norm{f(x) - f(y)}^2 \leq \product{x - y}{f(x) - f(y)} \]
for all $x, y \in K$. Clearly, every firmly nonexpansive mapping is nonexpansive. It is not difficult to see that a mapping is firmly nonexpansive if and only if it is of the form ${f(x) = \tfrac{1}{2}(x + g(x))}$, where $g$ is a nonexpansive mapping. Let \[\mathcal{H} = \{x \in \ell^2 \;|\; 0 \leq x(n) \leq 2^{-n} \;\text{for all }n \in \N\}\] denote the Hilbert cube in $\ell^2$ (represented by $\delta_{\ell^2}\big|^{\mathcal{H}}$).

\begin{theorem}\label{thm: nonexpansive mapping on Hilbert cube without computable fixed points}
There exists a computable firmly nonexpansive mapping $f\colon \mathcal{H} \to \mathcal{H}$ without computable fixed points.
\end{theorem}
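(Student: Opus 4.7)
The proof is an explicit construction, with two stages: a reduction to the nonexpansive case, followed by an encoding of a Kleene tree into the coordinates of $\mathcal{H}$.

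For the reduction, recall from the paragraph preceding the theorem that $f$ is firmly nonexpansive if and only if $f = \tfrac{1}{2}(\id + g)$ for some nonexpansive $g$, in which case $\Fix(f) = \Fix(g)$. Hence it suffices to produce a computable nonexpansive $g\colon\mathcal{H}\to\mathcal{H}$ whose fixed-point set contains no computable point; then $f = \tfrac{1}{2}(\id + g)$ is the desired firmly nonexpansive map. Nonemptiness of $\Fix(g)$ is automatic from Theorem~\ref{thm: Browder-Goehde-Kirk Theorem} applied to $\ell^2$, which is uniformly convex by the parallelogram law, together with its nonempty, bounded, closed, convex subset $\mathcal{H}$; convexity of $\Fix(g)$ is then given by Proposition~\ref{prop: fixed point sets of nonexpansive mappings are convex}.

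For the construction of $g$, fix a computable infinite binary tree $T\subseteq\{0,1\}^{<\N}$ admitting no computable infinite branch, with every node having an extension in $T$. The Lipschitz embedding $\iota\colon\{0,1\}^\N\to\mathcal{H}$ given by $\iota(p)_n = p_n\cdot 2^{-n}$ maps $[T]$ to a nonempty, co-r.e.\ closed subset of $\mathcal{H}$ containing no computable point. I design $g$ coordinate-by-coordinate, using $T$, so that any $x\in\Fix(g)$ uniquely decodes to a branch of $[T]$ via a threshold-style reading of each coordinate (e.g.\ the $n$-th bit is $0$ when $x_n<2^{-n-1}$ and $1$ when $x_n>2^{-n-1}$), with $g$ actively forbidding the boundary values as well as any coordinate patterns $\sigma\notin T$. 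Concretely, for each $\sigma\in T$ of length $n+1$, one allocates a closed sub-interval $J_\sigma\subseteq[0,2^{-n}]$ whose position depends on the last bit of $\sigma$, and defines $g(x)_n$ as a piecewise-linear map of $x_0,\ldots,x_n$ that fixes exactly the $T$-compatible intervals, depending effectively on $T$ and hence computable. Any computable fixed point of $g$ would then yield a computable infinite branch of $T$, a contradiction.

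The main obstacle is carrying out this coordinatewise design while keeping $g$ globally nonexpansive in the $\ell^2$ norm, not merely coordinate-wise. The rapid decay $2^{-n}$ of the Hilbert-cube coordinates is essential here: coordinatewise $1$-Lipschitz perturbations at level $n$ contribute at most $2^{-n}$ in norm, so the coordinate layers can be combined into a single globally $1$-Lipschitz map without violating the nonexpansiveness constraint. Once nonexpansiveness, computability of $g$, and the computability of the decoding map from $\Fix(g)$ into $[T]$ are verified, the theorem follows.
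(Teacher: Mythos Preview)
Your reduction to the nonexpansive case via $f=\tfrac12(\id+g)$ is fine and matches the paper. The construction of $g$, however, has a genuine gap, and it is not merely a matter of checking Lipschitz constants more carefully.

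The decisive obstruction is convexity. By Proposition~\ref{prop: fixed point sets of nonexpansive mappings are convex}, $\Fix(g)$ is convex. Your scheme forces the $n$-th coordinate of a fixed point into one of two disjoint sub-intervals $J_{\sigma0},J_{\sigma1}\subseteq[0,2^{-n}]$ depending on the previously decoded bits, and ``actively forbids'' the in-between values. But then any two fixed points $x,y$ that decode to distinct branches of $T$ have midpoint $\tfrac{x+y}{2}$ whose coordinate at the first level of disagreement lies strictly between the two admissible intervals, hence is not a fixed point --- contradicting convexity. In other words, a nonexpansive self-map of $\mathcal H$ cannot have a fixed-point set that looks like (a thickening of) $\iota([T])$ for a branching tree $T$; the Kleene-tree picture is incompatible with the geometry you are obliged to respect. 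Your nonexpansiveness paragraph is also not a proof: once $g(x)_n$ depends on $x_0,\dots,x_n$, coordinatewise $1$-Lipschitz bounds do not sum to a global $\ell^2$-Lipschitz bound (the Jacobian is lower triangular with many nonzero entries), and the $2^{-n}$ decay of the \emph{range} says nothing about Lipschitz dependence on the \emph{earlier} coordinates, which do not decay.

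The paper sidesteps both issues by replacing the Kleene tree with a pair of disjoint recursively inseparable r.e.\ sets $A,B$. For each $n$ one builds a \emph{one-variable} nonexpansive $f_n\colon[0,1]\to[0,1]$ (a small contraction towards $0$ if $n\in A$, towards $1$ if $n\in B$, and the identity otherwise), computable uniformly in $n$ by dovetailing the two enumerations. Then $g(x)(n)=2^{-n}f_n(2^{n}x(n))$ acts \emph{diagonally}, coordinate by coordinate, so global nonexpansiveness in $\ell^2$ is immediate, and $\Fix(g)$ is the product of intervals $\prod_n 2^{-n}\Fix(f_n)$, hence convex. Any computable fixed point would yield a computable separator of $A$ and $B$. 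The moral is that the ``tree'' has to be of product type to be compatible with convexity; recursively inseparable pairs give exactly that.
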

\begin{proof}
Put $g_n(x) = (1-2^{-n})x$ and $h_n(x) = 2^{-n} + (1 - 2^{-n})x$. Then $(g_n)_n$ and $(h_n)_n$ are computable sequences of nonexpansive self-maps of $[0,1]$, satisfying $|g_n(x) - x| \leq 2^{-n}$, $|h_n(x) - x| \leq 2^{-n}$ for all $x \in [0,1]$, $\Fix(g_n) = \{0\}$, and $\Fix(h_n) = \{1\}$.

Let $A,B \subseteq \N$ be two disjoint, recursively enumerable, and recursively inseparable sets. Let $\alpha$ be the G\"odel number of an algorithm with halting set $A$ and $\beta$ be the G\"odel number of an algorithm with halting set $B$. Consider the sequence of functions $(f_n)_n$ with
\[f_n(x) = \begin{cases}g_i(x) &\text{ if }\alpha\text{ halts on input }n\text{ within }i\text{ steps,}\\
					    h_i(x) &\text{ if }\beta\text{ halts on input }n\text{ within }i\text{ steps,} \\
					    	x &\text{ if both }\alpha\text{ and }\beta\text{ diverge on input }n. \end{cases} \]
Note that since $A$ and $B$ are disjoint, both $\alpha$ and $\beta$ cannot halt on the same input, so $f_n$ is well-defined. The sequence $(f_n)_n$ is a computable sequence: in order to compute $f_n(x)$ up to error $2^{-m}$, we simulate $\alpha$ and $\beta$ simultaneously on input $n$ for $m$ steps. If $\alpha$ (respectively $\beta$) halts within $k \leq m$ steps, we output $g_k(x)$ (respectively $h_k(x)$) up to error $2^{-m}$. If neither $\alpha$ nor $\beta$ halt after $m$ steps, we may output $x$ as an approximation, since $|g_{m + k}(x) - x| \leq 2^{-m-k}$ and $|h_{m + k}(x - x)| \leq 2^{-m-k}$ for all $k \geq 0$. Now, suppose there exists a computable sequence $(x_n)_n$ with $f_n(x_n) = x_n$. In order to arrive at a contradiction, we use $(x_n)_n$ to construct a computable set $S\subseteq \N$ separating $A$ and $B$. Membership for $S$ is decided as follows: for a given $n \in \N$, run the tests $x_n > 0$ and $x_n < 1$ simultaneously. At least one of the tests has to succeed. If the first test to succeed is $x_n > 0$, we decide that $n \notin S$. If the first test to succeed is $x_n < 1$, we decide that $n \in S$. Note that in the case where both $x_n > 0$ and $x_n < 1$, the outcome of the decision procedure may depend on the Cauchy sequence of dyadic rational numbers representing $x_n$. We claim that $S \supseteq A$ and $\comp{S} \supseteq B$. If $n \in A$, then $\alpha$ halts on input $n$ after $i \in \N$ steps. So $f_n = g_i$, and thus $x_n = 0$. The test $x_n > 0$ will hence fail, while the test $x_n < 1$ will succeed and thus $n \in S$. If $n \in B$, then $\beta$ halts on input $n$ after $i \in \N$ steps, so $f_n = h_i$ and thus $x_n = 1$. It follows that $n \notin S$. So $S$ separates $A$ and $B$. Contradiction.
Now define a nonexpansive mapping $g\colon\mathcal{H}\to\mathcal{H}$ via $g(x)(n) = 2^{-n}f_n(2^nx(n))$. Then $g$ is computable, for in order to compute an approximation to $g(x)$ in $\ell^2$ up to error $2^{-n}$, it suffices to compute the real numbers $g(x)(0),\dots,g(x)(n + 1)$ up to error $2^{-2n - 1}/(n + 2)$. Any fixed point for $g$ can be used to compute a sequence of fixed points for $(f_n)_n$. In particular, $g$ has no computable fixed points. In order to obtain a firmly nonexpansive mapping $f$ we put $f = \tfrac{1}{2}(\id + g)$.
\end{proof}

Theorem \ref{thm: nonexpansive mapping on Hilbert cube without computable fixed points} in particular shows that fixed points of (firmly) nonexpansive mappings are not computable relative to any discrete advice. Let us now consider the computability of rates of convergence of certain fixed point iterations. While in infinite dimension, the non-uniform uncomputability of fixed points in particular implies that there exist computable mappings such that no computable fixed point iteration has a computable rate of convergence for any computable starting point, and Theorem \ref{uniform uncomputability of fixed points} tells us that there is no general algorithm for obtaining rates of convergence uniformly in the input function and in the starting point, it might still be the case (at least in finite dimension) that there exists a computable fixed point iteration $I$ such that for every computable nonexpansive function $f$ there exists a computable starting point $x_0 \notin \Fix(f)$ such that the sequence $I(f,x_0)$ has a computable rate of convergence. This could still be practically relevant, since in a given practical scenario one might be able to exploit additional information on the input function in order to choose the starting point of the iteration in such a way that the rate of convergence becomes computable. We will however see that this fails to be the case for a large class of fixed point iterations, already on the compact unit interval.

We prove a special case of our main result (Theorem \ref{main result}) where the underlying set is the the compact unit interval $[0,1]$. Our theorem uniformly characterises the fixed point sets of computable nonexpansive self maps of $[0,1]$. We first recall an elementary fact (cf.~\cite{Weih}). 

\begin{proposition}\label{prop: co-semi-decidable intervals and their endpoints} 
Let $\mathcal{I} = \{(a,b) \in (\R_<\times \R_>)\;|\; a \leq b\}$. Then the mapping
\[\A^{\conv}(\R)\setminus\{\emptyset\} \to \mathcal{I},\; [a,b] \mapsto (a,b) \]
and its inverse
\[\mathcal{I} \to \A^{\conv}(\R)\setminus\{\emptyset\},\; (a,b) \mapsto [a,b] \]
are computable. \qed
\end{proposition}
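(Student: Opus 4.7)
The plan is to exhibit explicit algorithms in both directions. The inverse direction is essentially trivial; the forward direction requires the convexity of $[a,b]$ in a nontrivial way.

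For the inverse map, I am given rationals $(p_n)_n$ with $\sup_n p_n = a$ and $(q_n)_n$ with $\inf_n q_n = b$. Since $\R \setminus [a,b] = (-\infty,a) \cup (b,\infty) = \bigcup_n (-\infty, p_n) \cup \bigcup_n (q_n, \infty)$, producing a $\psi^{\R}$-name of $[a,b]$, i.e., a $\theta^{\R}$-name of the open complement, reduces to semi-deciding $x \in \R \setminus [a,b]$ given a $\delta_{\R}$-name of $x$. I would do this by searching in parallel for some $n$ with $x < p_n$ or $x > q_n$; one such witness appears precisely when $x < a$ or $x > b$. The hypothesis $a \leq b$ guarantees that $[a,b]$ is nonempty, and convexity is automatic.

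For the forward map, the plan is to semi-decide, for each rational $q$, the predicates $q < a$ and $q > b$ from the $\psi^{\R}$-name together with the standing promise of convexity and nonemptiness. The elementary tool is that for rational $u < v$, the predicate $[u,v] \subseteq \comp{[a,b]}$ is semi-decidable from the $\theta^{\R}$-name of $\comp{[a,b]}$, by searching for a finite subcover of the compact interval $[u,v]$ by basic open sets enumerated in the name. By convexity, any such witness forces $[a,b] \subseteq (-\infty, u) \cup (v, \infty)$, and connectedness of $[a,b]$ places it entirely in one of these two rays.

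The hard part will be resolving which ray $[a,b]$ occupies when presented with a single disjoint witness. My resolution is to aggregate witnesses by tracking, at each stage $t$, the connected components of $\R \setminus U_t$, where $U_t$ is the finite union of basic open sets enumerated through stage $t$. The nonempty convex $[a,b]$ is contained in exactly one component, and every other component is a subset of $\comp{[a,b]} = \bigcup_t U_t$ and therefore eventually fully covered by further enumeration. The leftmost bounded component of $\R \setminus U_t$, once it exists, has left endpoint $\alpha_t \leq a$: either this component contains $[a,b]$ (giving $\alpha_t \leq a$) or it is disjoint from $[a,b]$ and lies entirely to the left (giving $\alpha_t < a$). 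The sequence $(\alpha_t)_t$ is non-decreasing, and since in the limit the only surviving component is $[a,b]$ itself, $\alpha_t \to a$; this yields the $\rho_<$-name of $a$, and the $\rho_>$-name of $b$ is obtained symmetrically from the rightmost bounded components. This is the standard construction, developed in detail in Weihrauch's book \cite{Weih}.
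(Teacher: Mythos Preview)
The paper records this proposition without proof, citing \cite{Weih}, so there is nothing to compare against beyond your own sketch. Your inverse direction is fine.

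Your forward direction has a genuine gap. You assert that the leftmost bounded component of $\R \setminus U_t$ either contains $[a,b]$ or lies entirely to its left, so that its left endpoint $\alpha_t$ is always $\leq a$. You have missed a third case: $[a,b]$ may lie in the \emph{unbounded} left component of $\R \setminus U_t$, which happens whenever every interval enumerated so far is contained in $(b,\infty)$. Then the leftmost bounded component lies to the \emph{right} of $[a,b]$ and $\alpha_t > b \geq a$. For instance, if $[a,b]=\{0\}$ and the first two enumerated basic intervals are $(3,5)$ and $(7,9)$, then $\R\setminus U_2=(-\infty,3]\cup[5,7]\cup[9,\infty)$ and $\alpha_2=5>0=a$, so your output is not a valid $\rho_<$-approximation. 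This gap is not repairable over $\R$: the sequence of singletons $\{-n\}$ converges to $\{0\}$ in $\A(\R)$ (any compact $K\subseteq\R\setminus\{0\}$ eventually avoids $-n$), yet $-n\not\to 0$ in $\R_<$, so the forward map as literally stated is discontinuous. The result should be read over a compact ambient interval such as $[0,1]$, which is how it is actually applied in Theorem~\ref{thm: main theorem on [0,1]}; there one simply semi-decides $q<a$ by checking whether $[0,q]$ is covered by finitely many of the enumerated intervals, and symmetrically for $b$.
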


\begin{theorem}\label{thm: main theorem on [0,1]}
\hfill
\begin{enumerate}[label=(\roman*)]
\item The mapping
\[\Fix\colon \Ne([0,1]) \to \A^{\conv}([0,1])\setminus\{\emptyset\}, \; f \mapsto \Fix(f) \]
is computable.
\item And so is its multivalued inverse
\[\Fix^{-1}\colon \A^{\conv}([0,1])\setminus\{\emptyset\} \rightrightarrows \Ne([0,1]).\] 
\end{enumerate}
\end{theorem}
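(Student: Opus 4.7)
The plan is to handle the two directions through the representation of $\A^{\conv}([0,1]) \setminus \{\emptyset\}$ by $\mathcal{I}$ provided by Proposition \ref{prop: co-semi-decidable intervals and their endpoints}. Since $\Fix(f)$ is nonempty (Theorem \ref{thm: Browder-Goehde-Kirk Theorem}), convex (Proposition \ref{prop: fixed point sets of nonexpansive mappings are convex}), and closed in $[0,1]$, it is a nonempty closed interval $[\alpha,\beta]$; part (i) thus reduces to computing $\alpha = \inf \Fix(f)$ as a left-r.e.\ number and $\beta = \sup \Fix(f)$ as a right-r.e.\ number.

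For $\alpha$, I would semi-decide, for each rational $q \in [0,1] \cap \Q$, the predicate $q < \alpha$. By closedness of $\Fix(f)$ this is equivalent to $[0,q] \cap \Fix(f) = \emptyset$, and by Lemma \ref{lem: infimum lemma} (with $A = [0,q]$) to $\inf_{x \in [0,q]}|f(x)-x| > 0$. The function $x \mapsto |f(x)-x|$ is computable from $f$ and $x$, and its infimum over the computably compact set $[0,q]$ (which has a computable dense sequence) is computable by Theorem \ref{thm: supremum on compact space} applied to $-|f(\cdot)-\cdot|$; strict positivity is semi-decidable. The symmetric argument with $A = [q,1]$ yields $\beta$ as a right-r.e.\ number.

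For part (ii), the immediate candidate $f = P_{[a,b]}$ is not computable from the input: its value at $0$ equals $a$, which the representation only provides as a left-r.e.\ real. The main obstacle is therefore to produce \emph{some} computable nonexpansive self-map with fixed point set exactly $[a,b]$. My proposal is the geometric average
\[f(x) = \sum_{k=1}^\infty 2^{-k} \max(a_k, \min(b_k, x)),\]
where $(a_k)_k$, $(b_k)_k$ are rational approximations extracted from the name of $[a,b]$, arranged (by taking running extrema and capping to $[0,1]$) so that $a_k \nearrow a$ in $[0,a]$ and $b_k \searrow b$ in $[b,1]$. Each summand $g_k(x) := \max(a_k, \min(b_k, x))$ is the metric projection onto $[a_k,b_k] \subseteq [0,1]$, hence $1$-Lipschitz with values in $[0,1]$; the series has truncation error at most $2^{-K}$ past index $K$, so $f$ is a computable function $[0,1] \to [0,1]$, and nonexpansiveness is inherited by the weighted average of $1$-Lipschitz maps.

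It remains to verify that $\Fix(f) = [a,b]$. For $x \in [a,b]$ we have $a_k \leq a \leq x \leq b \leq b_k$ for every $k$, so $g_k(x) = x$ and therefore $f(x) = x$. For $x < a$ we always have $g_k(x) = \max(a_k, x) \geq x$, and since $\sup_k a_k = a > x$ there are infinitely many $k$ with $a_k > x$, contributing strictly positive summands; hence $f(x) > x$. The case $x > b$ is symmetric. This produces a computable single-valued selector for $\Fix^{-1}$, which is enough to establish computability of the multifunction.
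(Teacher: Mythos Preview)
Your proof is correct. For part~(ii) your construction is essentially identical to the paper's: both form the weighted average $\sum_k 2^{-k} P_{[a_k,b_k]}$ of projections onto approximating intervals $[a_k,b_k]\supseteq[a,b]$, with the same verification that the fixed point set is exactly $[a,b]$.

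For part~(i) the paper takes a shorter path: it simply observes that $\Fix(f)$ is the zero set of the continuous map $x\mapsto f(x)-x$, hence co-semi-decidable uniformly in $f$, and convexity comes from Proposition~\ref{prop: fixed point sets of nonexpansive mappings are convex}; this already yields a $\psi$-name in $\A^{\conv}([0,1])$. Your route through Proposition~\ref{prop: co-semi-decidable intervals and their endpoints}, computing the endpoints by semi-deciding $[0,q]\cap\Fix(f)=\emptyset$ via the minimum of $|f-\id|$ on $[0,q]$, is also correct but more roundabout. In particular, your appeal to Lemma~\ref{lem: infimum lemma} is heavier than needed here: on the compact interval $[0,q]$ the infimum of $|f-\id|$ is attained, so the equivalence with $\Fix(f)\cap[0,q]=\emptyset$ is immediate without any convexity argument.
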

\begin{proof}
The first claim immediately follows from Proposition \ref{prop: fixed point sets of nonexpansive mappings are convex} and the well-known result that the set of zeroes of a continuous mapping $f$ is co-semi-decidable in $f$.

Let us now prove the second claim. Suppose we are given a nonempty, closed interval $[a,b] \in \A^{\conv}([0,1])$.  By Proposition \ref{prop: co-semi-decidable intervals and their endpoints} we can compute a monotonically increasing list $(a_n)_n$ of rational numbers converging from below to $a$, and a monotonically decreasing list $(b_n)_n$ of rational numbers converging from above to $b$. We may assume without loss of generality that $a_n \geq 0$ and $b_n \leq 1$ for all $n \in \N$.

From $(a_n)$ and $(b_n)$ we can compute a sequence $(f_n)_n$ of nonexpansive functions via
\[f_n(x) = \begin{cases}a_n &\text{if }x \leq a_n\text{,}\\ x &\text{if }a_n \leq x \leq b_n\text{,} \\ b_n &\text{if }x\geq b_n\text{.}\end{cases}\]
Finally, we compute
\[f(x) = \sum_{n \in \N} 2^{-n-1}f_n(x).\]
Then $f$ is nonexpansive, and maps $[0,1]$ into $[0,1]$. Let us now show that $\Fix(f) = [a,b]$. If $x \in [a,b]$ then $f_n(x) = x$ for all $n \in \N$, so ${f(x) = \sum_{n \in \N} 2^{-n - 1}x = x}$. Suppose now without loss of generality that $x < a$. Then $f_n(x) \geq x$ for all $n \in \N$, and there exists $m\in\N$ such that $x < a_m$ and hence $f_m(x) > x$. It follows that 
\[f(x) = \sum_{n \in \N} 2^{-n - 1}f_n(x) > x,\] and hence $x\notin\Fix(f)$. An analogous argument applies if $x > b$. We may hence put ${\Fix^{-1}([a,b]) = f}$.
\end{proof}

From the proof we obtain the following non-uniform corollary, which is slightly stronger than the non-uniform version of Theorem \ref{thm: main theorem on [0,1]}.

\begin{corollary}\label{cor: construction of firmly nonexpansive function with given fixed point set}
Let $[a,b] \subseteq [0,1]$ be a co-semi-decidable interval. Then there exists a monotonically increasing, firmly nonexpansive, computable function $f\colon [0,1] \to [0,1]$ with $\Fix(f) = [a,b]$.
\end{corollary}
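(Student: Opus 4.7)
The plan is to observe that the function $f = \sum_{n \in \N} 2^{-n-1} f_n$ already constructed in the proof of Theorem \ref{thm: main theorem on [0,1]} automatically satisfies the two additional properties required here, namely monotonicity and firm nonexpansiveness. Computability of $f$, nonexpansiveness, and the identity $\Fix(f) = [a,b]$ are already delivered by that proof, so only these structural features remain to be checked.

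First I would note that each clamping function
\[
f_n(x) = \begin{cases} a_n &\text{if } x \leq a_n, \\ x &\text{if } a_n \leq x \leq b_n, \\ b_n &\text{if } x \geq b_n \end{cases}
\]
is manifestly monotonically nondecreasing, so the convex combination $f$ (recall $\sum_{n \in \N} 2^{-n-1} = 1$) inherits this property.

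For firm nonexpansiveness I would use the fact, recorded in the text, that $f$ is firmly nonexpansive if and only if $g := 2f - \id$ is nonexpansive. On $\R$, monotonicity together with nonexpansiveness of $f$ implies $0 \leq f(x) - f(y) \leq x - y$ whenever $x > y$, and therefore
\[
g(x) - g(y) = 2\bigl(f(x) - f(y)\bigr) - (x - y) \in [-(x-y),\,x-y],
\]
so that $|g(x) - g(y)| \leq |x - y|$. This completes the verification. There is no real obstacle here; the argument is short precisely because on the one-dimensional space $\R$ firm nonexpansiveness is equivalent to the conjunction of monotonicity and nonexpansiveness, and both of these already drop out of the construction used for Theorem \ref{thm: main theorem on [0,1]}.
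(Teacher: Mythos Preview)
Your argument is correct. The route differs slightly from the paper's: rather than arguing that the function $g = \sum_n 2^{-n-1} f_n$ from Theorem \ref{thm: main theorem on [0,1]} is itself firmly nonexpansive, the paper simply notes that $g$ is monotone and nonexpansive and then passes to $f = \tfrac{1}{2}(\id + g)$, invoking the general Hilbert-space characterisation of firm nonexpansiveness to conclude. Your version is a little sharper in that you exploit the one-dimensional fact that monotonicity together with nonexpansiveness already forces $2f - \id$ to be nonexpansive, so no further modification is needed; the paper's version has the advantage of being a mechanical application of a device that works in any Hilbert space, at the cost of producing a different (though equally valid) witness. Either way the verification is immediate, and both approaches rest on the same construction from Theorem \ref{thm: main theorem on [0,1]}.
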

\begin{proof}
The algorithm we use in the proof of Theorem \ref{thm: main theorem on [0,1]} to compute $\Fix^{-1}$ maps any nonempty co-semi-decidable interval to a monotonically increasing, nonexpansive function ${g\colon [0,1] \to [0,1]}$ with $\Fix(g) = [a,b]$. In order to obtain a firmly nonexpansive function $f$, we put $f(x) = \tfrac{1}{2}(x + g(x))$.
\end{proof}

Since by Proposition \ref{prop: co-semi-decidable intervals and their endpoints} any left-r.e.\ number can be the left endpoint of a co-semi-decidable interval, we obtain the announced result together with Proposition \ref{prop: simple and retractive in [0,1] is projective}.
\begin{corollary}\label{cor: hardness of rates of convergence}
Let $I\colon \Ne([0,1])\times [0,1] \to [0,1]^{\N}$ be an either projective, or simple and retractive, or avoidant computable fixed point iteration. Let $\varepsilon > 0$. There exists a computable, firmly nonexpansive function $f\colon [0,1] \to [0,1]$ with $\operatorname{diam}(\Fix(f)) < \varepsilon$ such that for no computable $x \notin \Fix(f)$, the sequence $I(f,x)$ has a computable rate of convergence.
\end{corollary}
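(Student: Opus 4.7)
The plan is to apply Corollary \ref{cor: construction of firmly nonexpansive function with given fixed point set} to a co-semi-decidable interval $[a,b] \subseteq [0,1]$ whose \emph{both} endpoints are non-computable. Using rescaled Specker sequences, I would pick a non-computable left-r.e.\ number $a \in (0,\varepsilon/4)$ and a non-computable right-r.e.\ number $b \in (\varepsilon/2,3\varepsilon/4)$, so that $a < b$, $b - a < \varepsilon$, and $[a,b] \subsetneq [0,1]$. By Proposition \ref{prop: co-semi-decidable intervals and their endpoints} the pair $(a,b)$ represents a co-semi-decidable interval, and Corollary \ref{cor: construction of firmly nonexpansive function with given fixed point set} then delivers a computable, monotonically increasing, firmly nonexpansive function $f\colon[0,1]\to[0,1]$ with $\Fix(f)=[a,b]$ and $\operatorname{diam}(\Fix(f)) < \varepsilon$.

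The decisive reduction is that a computable rate of convergence $\varphi$ for $I(f,x)$, applied to a computable $x$, would force $L := \lim_n I(f,x)(n)$ to be a computable real: the sequence $(I(f,x)(n))_n$ is computable in $[0,1]$ because $I$, $f$, and $x$ are, so computing $I(f,x)(\varphi(n+1))$ to precision $2^{-(n+1)}$ yields $L$ to precision $2^{-n}$. Thus it is enough to show that for every computable $x \notin [a,b]$ the limit $L$ is non-computable.

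I would then distinguish the three cases for $I$. If $I$ is projective, then $L = P_{[a,b]}(x)$, which equals $a$ when $x<a$ and $b$ when $x>b$; both are non-computable. If $I$ is simple and retractive, Proposition \ref{prop: simple and retractive in [0,1] is projective}(i) shows that $I$ is projective on the monotonically increasing $f$, so the previous argument carries over verbatim. Finally, if $I$ is avoidant, Proposition \ref{prop: simple and retractive in [0,1] is projective}(ii) places $L$ on $\partial\Fix(f)=\{a,b\}$, which once again consists of two non-computable points.

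The main subtlety is making both endpoints non-computable simultaneously. This is needed precisely for the avoidant case, where the avoidance property alone does not tell us whether $I(f,x)$ approaches $a$ or $b$ from a given side; without arranging both to be non-computable one could not rule out that the iteration happens to converge to the computable endpoint. Once this is taken care of, the three cases combine to yield the desired impossibility.
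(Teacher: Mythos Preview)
Your proposal is correct and follows essentially the same approach as the paper's own proof: both choose a non-computable left-r.e.\ $a$ and a non-computable right-r.e.\ $b$ with $b-a<\varepsilon$, invoke Corollary~\ref{cor: construction of firmly nonexpansive function with given fixed point set} to obtain a monotone firmly nonexpansive $f$ with $\Fix(f)=[a,b]$, and then use Proposition~\ref{prop: simple and retractive in [0,1] is projective} to force the limit into $\{a,b\}$, contradicting computability of a rate of convergence. Your explicit case split and your remark on why \emph{both} endpoints must be non-computable make the argument more transparent, but the substance is the same; the only cosmetic caveat is that your specific choice $b\in(\varepsilon/2,3\varepsilon/4)$ tacitly assumes $\varepsilon\le 4/3$, which is harmless since one may replace $\varepsilon$ by $\min(\varepsilon,1)$.
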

\begin{proof}
Let $a \in (0,1)$ be an uncomputable left-r.e.\ number and $b \in (0,1)$ be an uncomputable right-r.e.\ number with $|a - b| < \varepsilon$. Then the closed interval $[a,b] \subseteq [0,1]$ is co-semi-decidable by Proposition \ref{prop: co-semi-decidable intervals and their endpoints}. Using Corollary \ref{cor: construction of firmly nonexpansive function with given fixed point set} we obtain a monotonically increasing firmly nonexpansive function $f\colon[0,1]\to[0,1]$ with $\Fix(f) = [a,b]$. If $x \notin \Fix(f)$, then by Proposition \ref{prop: simple and retractive in [0,1] is projective}, we have $\lim_{n \to \infty}I(f,x)(n) \in \{a,b\}$. In particular, $\lim_{n \to \infty}I(f,x)(n)$ is uncomputable. Since $x$ is computable, the sequence $(I(f,x)(n))_n$ is a computable sequence of real numbers, so if it had a computable rate of convergence, its limit would be computable. This proves the claim.
\end{proof}

Both the Halpern iteration and the Krasnoselski-Mann iteration are simple and retractive, so Corollary \ref{cor: hardness of rates of convergence} applies to them. Using Weihrauch degrees, we can state our present results more uniformly.
\begin{proposition}
We have $\Proj_{[0,1]} \equiv_W \lim$. If $I\colon \Ne([0,1])\times [0,1] \to [0,1]^{\N}$ is an either projective, or simple and retractive, or avoidant computable fixed point iteration, then $\lim(I) \equiv_W \Conv_I \equiv_W \lim$.
\end{proposition}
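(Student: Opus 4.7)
The upper bounds are immediate from prior results: Proposition \ref{prop: initial upper bound on Proj} gives $\Proj_{[0,1]} \leq_W \lim$ since $[0,1]$ is computably compact, and Proposition \ref{prop: obvious Weihrauch reductions} gives the chain $\lim(I) \leq_W \Conv_I \leq_W \lim$. What remains is to prove the matching lower bounds $\lim \leq_W \Proj_{[0,1]}$ and, for each of the three classes of iteration, $\lim \leq_W \lim(I)$.

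The plan is to turn the non-uniform argument behind Corollary \ref{cor: hardness of rates of convergence} into a uniform Weihrauch reduction, exploiting the fact that the construction underlying Theorem \ref{thm: main theorem on [0,1]} and Corollary \ref{cor: construction of firmly nonexpansive function with given fixed point set} is itself uniform. I would use the (folklore) equivalence between $\lim$ and the operator that lifts a left-r.e.\ real in $(0,1)$ to its Cauchy representation: given a $\lim$-instance, first extract a left-r.e.\ real $a \in (0,1)$; then apply Proposition \ref{prop: co-semi-decidable intervals and their endpoints} to view $[a,1]$ as a computable point of $\A^{\conv}([0,1])\setminus\{\emptyset\}$; and finally run the algorithm of Corollary \ref{cor: construction of firmly nonexpansive function with given fixed point set} to obtain a monotonically increasing, firmly nonexpansive, computable $f\colon [0,1]\to[0,1]$ with $\Fix(f) = [a,1]$. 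The pair $(f, 0)$ is then the single oracle query.

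The remaining task is to verify that each oracle's output equals $a$. For $\Proj_{[0,1]}$ this is immediate from $P_{[a,1]}(0) = a$. For $\lim(I)$ I would distinguish the three cases: if $I$ is projective, the limit equals $P_{\Fix(f)}(0) = a$; if $I$ is simple and retractive, then since $f$ is monotonically increasing by construction, Proposition \ref{prop: simple and retractive in [0,1] is projective}(i) makes $I$ projective on $f$, giving the same conclusion; and if $I$ is avoidant, Proposition \ref{prop: simple and retractive in [0,1] is projective}(ii) forces the limit onto the $[0,1]$-boundary of $\Fix(f) = [a,1]$, which is the singleton $\{a\}$ since $1$ lies in the interior of $[a,1]$ in the subspace topology on $[0,1]$. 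The remaining equivalences then close by the chain $\lim \leq_W \lim(I) \leq_W \Conv_I \leq_W \lim$. The main subtle point I anticipate is the avoidant case, where ruling out convergence to the ``wrong'' endpoint $1$ relies crucially on the ambient space being $[0,1]$ rather than $\R$, so that $1$ is forced into the interior of $\Fix(f)$.
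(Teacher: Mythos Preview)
Your proposal is correct and follows essentially the same route as the paper: reduce $\lim$ via $\id\colon\R_{<}\to\R$ on $[0,1]$, build $f$ with $\Fix(f)=[a,1]$ using the uniform construction of Theorem \ref{thm: main theorem on [0,1]}/Corollary \ref{cor: construction of firmly nonexpansive function with given fixed point set}, and evaluate at the starting point $0$. Your case analysis is in fact more careful than the paper's compressed appeal to Proposition \ref{prop: simple and retractive in [0,1] is projective}: you explicitly restrict to $a\in(0,1)$ so that $0\notin\Fix(f)$ (needed for the avoidant case) and correctly argue that the relative boundary of $[a,1]$ in $[0,1]$ is $\{a\}$, whereas the paper simply asserts $\Proj_{[0,1]}\leq_W\lim(I)$ for all three cases, which is not literally what Proposition \ref{prop: simple and retractive in [0,1] is projective}(ii) gives for avoidant iterations.
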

\begin{proof}
It is well known that the identity $\id\colon \R_{<} \to \R$ is Weihrauch-equivalent to $\lim$, even when restricted to the unit interval. Given $a \in \R_{<} \cap [0,1]$, we can compute the interval $[a,1] \in \A^{\conv}([0,1])$, and hence, by Theorem \ref{thm: main theorem on [0,1]}, construct a nonexpansive function $f\colon [0,1] \to [0,1]$ with $\Fix(f) = [a,1]$. Now, $\Proj_{[0,1]}(f,0) = a$, so $\lim \leq_W \Proj_{[0,1]}$. Together with Proposition \ref{prop: obvious Weihrauch reductions} we obtain $\Proj_{[0,1]} \equiv_W \lim$. If $I$ is a projective, simple and retractive, or avoidant, computable fixed point iteration, then by Proposition \ref{prop: simple and retractive in [0,1] is projective} we obtain $\Proj_{[0,1]} \leq_W \lim(I)$, and thus $\lim \leq_W \lim(I) \leq_W \Conv_I \leq_W \lim$, i.e. $\lim(I) \equiv_W \Conv_I \equiv_W \lim$.
\end{proof}

\begin{proposition}\label{prop: BGK in [0,1]}
$\BGK_{[0,1]}\equiv_w \XC_{[0,1]} \equiv_W \IVT \equiv_W \BFT_1$.\qed
\end{proposition}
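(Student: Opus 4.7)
The plan is to chain together the results established just above. The earlier proposition already records $\BGK_{[0,1]} \leq_W \XC_{[0,1]}$, which handles one direction of the equivalence $\BGK_{[0,1]} \equiv_W \XC_{[0,1]}$. The remaining equivalences $\XC_{[0,1]} \equiv_W \IVT \equiv_W \BFT_1$ are already stated in Fact \ref{fact on Weihrauch degrees} (iii), so nothing new is required for that part.

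The only nontrivial step is therefore the reduction $\XC_{[0,1]} \leq_W \BGK_{[0,1]}$. I would carry this out by directly exhibiting computable pre- and post-processing maps $K$ and $N$ witnessing the Weihrauch reduction. Given an input $A \in \A^{\conv}([0,1])\setminus\{\emptyset\}$ to $\XC_{[0,1]}$, I apply the computable mapping $\Fix^{-1}$ of Theorem \ref{thm: main theorem on [0,1]} (ii) to produce a nonexpansive function $f \in \Ne([0,1])$ with $\Fix(f) = A$. This $f$ serves as input to $\BGK_{[0,1]}$. Any realiser of $\BGK_{[0,1]}$ then returns some fixed point $x \in \Fix(f) = A$, which is exactly a valid output for $\XC_{[0,1]}$ on input $A$; thus the post-processing can simply pass the point through unchanged (the original input to $\XC_{[0,1]}$ is not needed at all, so in the standard formulation with $K\tuple{HN}{\id}$ one just projects onto the first component). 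This gives $\XC_{[0,1]} \leq_W \BGK_{[0,1]}$.

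There is no real obstacle here: the entire content of the proof has already been prepared by Theorem \ref{thm: main theorem on [0,1]}, which provides the computable realisation of $\Fix^{-1}$, and by the earlier general upper bound. Combining $\BGK_{[0,1]} \leq_W \XC_{[0,1]}$ and $\XC_{[0,1]} \leq_W \BGK_{[0,1]}$ yields $\BGK_{[0,1]} \equiv_W \XC_{[0,1]}$, and then Fact \ref{fact on Weihrauch degrees} (iii) closes the chain to $\IVT$ and $\BFT_1$. The whole argument is essentially a one-liner once Theorem \ref{thm: main theorem on [0,1]} is in place.
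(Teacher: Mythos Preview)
Your proposal is correct and matches the paper's own explanation almost verbatim: the paper notes that $\BGK_{[0,1]}\equiv_W \XC_{[0,1]}$ follows immediately from Theorem \ref{thm: main theorem on [0,1]} (both directions, via the computability of $\Fix$ and $\Fix^{-1}$), and that $\XC_{[0,1]} \equiv_W \IVT \equiv_W \BFT_1$ is already part of Fact \ref{fact on Weihrauch degrees}. There is nothing to add.
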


The equivalence $\BGK_{[0,1]}\equiv_W \XC_{[0,1]}$ follows immediately from Theorem \ref{thm: main theorem on [0,1]}, the equivalence $\XC_{[0,1]} \equiv_W \IVT \equiv_W \BFT_1$ was already stated in Fact \ref{fact on Weihrauch degrees}. So far, there seems to be a significant discrepancy between the computational content of the existence result $\BGK$ and the ``constructive'' theorems by Wittmann and Krasnoselski. We will see in Section \ref{Section: 5} that this discrepancy disappears on non-compact domains in infinite-dimensional Hilbert space, where the Browder-Göhde-Kirk theorem is Weihrauch equivalent to $\lim$, and hence to Wittmann's theorem. 
\section{Weak Topologies}
In order to be able to prove our main result in full generality, we have to introduce an admissible representation for the weak topology on a reflexive Banach space $E$. Such a representation has first been introduced by Brattka and Schr\"oder \cite{BrattkaSchroeder}. We denote the continuous dual of a normed space $E$ by $E'$ and define the mapping
\[\product{\cdot}{\cdot}\colon E\times E' \to \R,\; (x,x') \mapsto x'(x).\]

\begin{Definition}\label{Def: admissible representation for weak* topology}
Let $E$ be a computable Banach space. The represented space $E'_w$ is the space $E'$, represented via the co-restriction of $[\delta_E\to\rho]$ to all continuous linear functionals.
\end{Definition}

\begin{theorem}[\cite{BrattkaSchroeder}]\label{thm: admissibility of delta E'_w}
The representation $[\delta_E\to\rho]\Big|^{E'}$ is admissible with respect to the weak* topology on $E'$.\qed
\end{theorem}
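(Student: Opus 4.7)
I would interpret admissibility as in Schröder's framework: $[\delta_E\to\rho]\big|^{E'}$ is admissible for $(E',\text{weak*})$ if it is sequentially continuous into $(E',\text{weak*})$ and if every other continuous representation $\delta'$ of $(E',\text{weak*})$ satisfies $\delta'\leq_t [\delta_E\to\rho]\big|^{E'}$. I would therefore split the proof into these two parts.

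\smallskip
\noindent\textbf{Continuity.} Let $p_n \to p$ in Baire space with $p_n, p \in \dom([\delta_E\to\rho]\big|^{E'})$, and write $x'_n = [\delta_E\to\rho](p_n)$, $x' = [\delta_E\to\rho](p)$. By the very definition of $[\delta_E\to\rho]$, the codes $p_n$ and $p$ represent continuous realisers of $x'_n$ and $x'$ as functions $E\to\R$. Fixing any $x\in E$ with $\delta_E$-name $q$, evaluating the realiser coded by $p_n$ at $q$ yields a $\rho$-name of $x'_n(x)$; this evaluation is jointly continuous in $(p_n,q)$, so $x'_n(x)\to x'(x)$ for every $x\in E$. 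This is exactly weak*-convergence, so $[\delta_E\to\rho]\big|^{E'}$ is sequentially continuous into $(E',\text{weak*})$.

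\smallskip
\noindent\textbf{Maximality.} Let $\delta'$ be any representation of $E'$ that is continuous into $(E',\text{weak*})$. The idea is to build a continuous reduction by currying the joint evaluation. The joint evaluation
\[\text{eval}\colon (E',\text{weak*}) \times E \to \R,\quad (x',x)\mapsto x'(x)\]
is sequentially continuous: if $x_n\to x$ in $E$ and $x'_n\to x'$ weak*, then the Banach--Steinhaus theorem (applicable because $E$ is a Banach space) gives $M:=\sup_n \norm{x'_n}<\infty$, whence
\[|x'_n(x_n)-x'(x)| \leq M\norm{x_n-x} + |x'_n(x)-x'(x)| \to 0.\]
Pulling back along $\delta'\times \delta_E$, we obtain a sequentially continuous map from a subspace of $\N^\N\times\N^\N$ to $\R$, which by admissibility of $\rho$ admits a continuous realiser $F\colon\subseteq\N^\N\times\N^\N\to\N^\N$ with $\rho(F(p,q)) = \delta'(p)(\delta_E(q))$. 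Currying $F$ yields a continuous function $K\colon\dom(\delta')\to\N^\N$ such that the realiser coded by $K(p)$ maps every $\delta_E$-name $q$ of $x\in E$ to a $\rho$-name of $\delta'(p)(x)$. Hence $K(p)$ is a $[\delta_E\to\rho]$-name of $\delta'(p)\in E'$, so $\delta'\leq_t [\delta_E\to\rho]\big|^{E'}$.

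\smallskip
\noindent\textbf{Main obstacle.} The delicate point is the maximality step. On the topological side, joint weak*-continuity of evaluation can fail, so one must work with \emph{sequential} continuity (and thus with $E$ being a Banach space, so that Banach--Steinhaus applies) and invoke that the function-space representation $[\delta_E\to\rho]$ is the exponential in the cartesian closed category of admissibly represented (sequential) spaces. Everything else—the sequential continuity of $[\delta_E\to\rho]\big|^{E'}$ and the reduction of evaluation-continuity to the existence of a continuous realiser—is then essentially the standard currying/uncurrying argument used to establish admissibility of canonical function space representations.
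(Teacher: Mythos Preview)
The paper does not prove this theorem; it is stated with a citation to Brattka--Schr\"oder and a terminal \qed. Your argument is correct and is essentially a direct reconstruction of the standard proof: continuity of the representation is immediate from continuity of evaluation, and maximality follows by currying once one knows that the pairing $(x',x)\mapsto x'(x)$ is sequentially continuous on $(E',\text{weak*})\times E$, which is exactly the Banach--Steinhaus step you isolate.

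For comparison, the approach Brattka--Schr\"oder actually take (as sketched in the paper's later Remark on their Corollary~5) is phrased more topologically: one invokes the general fact that $[\delta_E\to\rho]$ is admissible for the sequentially-compact-open topology on $\Co(E,\R)$, and then observes that on the linear subspace $E'$ the sequentially-compact-open convergence coincides with weak* convergence. Unwinding that coincidence is again precisely your Banach--Steinhaus computation (weak* convergence of $(x'_n)$ forces norm-boundedness, hence uniform convergence on norm-compacta). So the two routes are the same argument in different packaging; yours makes the analytic input (uniform boundedness) explicit, theirs absorbs it into a statement about topologies on function spaces.
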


Since the points of $E$, viewed as functionals on $E'$, separate the points of $E'$, the weak* topology on $E'$ is Hausdorff. By Theorem \ref{thm: admissibility of delta E'_w}, the space $E'_w$ is then a Hausdorff represented topological space, so that the space $\K(E'_w)$ is well-defined and coincides extensionally with the set of all weak* compact subsets of $E'$. Note that this crucially relies on the separability of $E$, since since weak* sequential compactness and weak* compactness need not coincide on duals of inseparable spaces. The fact that they do coincide in the separable case also follows from the well-known fact that the weak* topology on the dual space of a separable Banach space is metrisable on the unit ball (cf.~Theorem \ref{thm: unit ball in weak*-topology as a computable metric space} below). Also note that the weak* topology on $E'$ is in general not sequential (i.e. there exist sequentially weak* closed sets which are not weak* closed). Consequently, the spaces $\O(E_w')$ and $\A(E_w')$ do not coincide with the hyperspaces of weak* open and weak* closed sets respectively, but with the hyperspaces of weak* sequentially open and weak* sequentially closed sets. If $A \in \A(E'_w)$, we write $A_w$ for the represented space $(A,\delta_{E'_w}\big|^A)$ to emphasize the underlying representation.

If $E$ is a reflexive real Banach space with computable dual $E'$, we obtain a canonical representation for $E$ with respect to the weak topology, by identifying $E$ with $E''$ and putting $E_w = (E')'_w$, i.e. $E_w$ is the represented space $E''$ with representation $[\delta_{E'}\to\rho]\big|^{E''}$ (using that in this case the weak* topology on $E''$ coincides with the weak topology on $E$). Again, the space $\K(E_w)$ and the space of weakly compact subsets of $E$ coincide extensionally\footnote{Thus, Theorem \ref{thm: admissibility of delta E'_w} together with Proposition 3.3.2 (3) in \cite{SchroederPhD} provide an interesting proof of the separable \emph{Eberlein-{\v S}mulian theorem} as well as its analogue for the weak* topology in duals of separable spaces.}, but the caveat on $\O(E'_W)$ and $\A(E'_w)$ also applies to $\O(E_w)$ and $\A(E_w)$. As in the case of $E'_w$, if $A \in \A(E_w)$, we write $A_w$ for the represented space $(A,\delta_{E_w})$. We will often use the adjective ``weak'' when referring to elements in hyperspaces constructed from $E_w$. For instance, we may call the computable points of $\A(E_w)$ ``weakly co-semi-decidable'' and the computable points of $\A(E'_w)$ ``weak* co-semi-decidable'' etc.

\begin{Remark}
Note that in the definition of $E_w$ we only require $E'$, but not $E$ itself, to be a computable Banach space. By definition, the mapping $\product{\cdot}{\cdot}$ is $(\delta_{E_w}\times\delta_{E'},\rho)$-computable. If both $E$ and $E'$ are computable Banach spaces, it is natural to require that the mapping $\product{\cdot}{\cdot}$ be $(\delta_{E}\times\delta_{E'},\rho)$-computable, so that $\id\colon E_w \to E$ becomes computable (see for instance Theorem \ref{thm: unit ball in weak*-topology as a computable metric space} (iii), Corollary \ref{cor: Banach space is isomorphic to subspace of function space} and Propositions \ref{prop: strongly overt implies weakly overt}, \ref{prop: weak and strong topology equivalent on compact sets} and \ref{prop: computably weakly closed uniformly implies lower semi-located} below). This is for instance the case for the spaces $L^p([0,1])$ with $1 < p < \infty$, since we have $(L^p([0,1]))' = L^q([0,1])$, where $\tfrac{1}{p} + \tfrac{1}{q} = 1$, and if $f \in L^p([0,1])$ and $g \in L^q([0,1])$, then $\product{f}{g}$ is given by the effective formula
\[\product{f}{g} = \int_0^1 f(x) g(x) \operatorname{dx}.\]
\end{Remark}

Next, we prove some basic properties of the space $\K(E'_w)$. We will need a few effective counterparts to classical results from functional analysis. The first is an effective version of the separable Banach-Alaoglou theorem, which was proved by Brattka \cite{BrattkaHahnBanach}.

\begin{theorem}[Computable separable Banach-Alaoglou theorem, \cite{BrattkaHahnBanach}]\label{thm: unit ball in weak*-topology as a computable metric space}
Let $E$ be a computable Banach space. Let $B_{E'_w}$ denote the unit ball in $E'$, viewed as a subset of the represented space $E_w'$ (thus bearing the weak* topology). Then
\begin{enumerate}[label=(\roman*)]
\item $B_{E'_w} \in \K(E'_w)$.
\item More generally, let $K \subseteq E'_w$ be a co-semi-decidable subset of $E'_w$. If $K$ is bounded, then $K$ is computably weak* compact.
\item If $E'$ is a computable Banach space and $\product{\cdot}{\cdot}$ is $(\delta_{E}\times\delta_{E'},\rho)$-computable, then $B_{E'_w}$ admits the structure of a computably compact computable metric space.
\end{enumerate}
\end{theorem}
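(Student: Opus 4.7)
The plan is to realise the classical separable Banach--Alaoglou argument computably by embedding $B_{E'_w}$ into a computably compact product of intervals. Let $(e_n)_n$ denote the fundamental dense sequence of $E$, so that $\|e_n\|$ is computable uniformly in $n$. Form the product
\[P = \prod_{n\in\N}[-\|e_n\|,\|e_n\|],\]
equipped with the computable metric $d_P((x_n)_n,(y_n)_n) = \sum_{n\in\N} 2^{-n}\min(|x_n-y_n|,1)$; this makes $P$ a complete and computably totally bounded computable metric space, hence computably compact by Proposition~\ref{prop: computable total boundedness equivalent computable compactness}. The evaluation map
\[\Phi\colon B_{E'_w}\to P,\qquad \Phi(x') = \bigl(\product{e_n}{x'}\bigr)_n,\]
is $(\delta_{E'_w},\delta_P)$-computable, because each coordinate is simply the value of the given functional at a fixed computable input $e_n$.

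The key observation is that $\Phi(B_{E'_w})$ is a co-semi-decidable subset of $P$: it is cut out by the countably many closed conditions that, whenever $e_k = \sum_i q_i e_{n_i}$ holds in $E$ with $q_i\in\Q$, the coordinates satisfy $c_k = \sum_i q_i c_{n_i}$, together with $|c_k|\leq\|e_k\|$ for all $k$. These conditions are enumerable from the data on $E$, so $\Phi(B_{E'_w})\in\A(P)$. By Proposition~\ref{prop: closed and compact sets, (i) closed set is compact, (ii) compact set in Hausdorff space is closed}(i) it is therefore a computable element of $\K(P)$, and $\Phi$ admits a computable inverse obtained by continuously extending a candidate sequence along $(e_n)_n$. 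Since $B_{E'_w}$ is itself co-semi-decidable in $E'_w$ (the predicate $\|x'\|>c$ is semi-decidable from $\delta_{E'_w}$ by evaluation on a dense sequence of unit vectors in $E$), transporting compactness back through $\Phi$ and applying Proposition~\ref{prop: compact subspace of represented space is compact set} yields $B_{E'_w}\in\K(E'_w)$, which proves (i).

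For (ii), any bounded $K\subseteq E'_w$ is contained in $rB_{E'_w}$ for some rational bound $r$, and $rB_{E'_w}$ is computably weak* compact by (i) and scaling. Since $K$ is co-semi-decidable in $E'_w$, and hence in $rB_{E'_w}$, Proposition~\ref{prop: closed and compact sets, (i) closed set is compact, (ii) compact set in Hausdorff space is closed}(i) applied inside the computably compact $rB_{E'_w}$ gives $K\in\K(E'_w)$.

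For (iii), under the additional hypothesis that $\product{\cdot}{\cdot}$ is $(\delta_E\times\delta_{E'},\rho)$-computable, I would equip $B_{E'_w}$ with the metric
\[d(x',y') = \sum_{n\in\N} 2^{-n}\frac{|\product{e_n}{x'-y'}|}{1+\|e_n\|},\]
which is $(\delta_{E'}\times\delta_{E'},\rho)$-computable thanks to uniform convergence. Restricting the numbering $\nu_{E'}$ to the (effectively enumerable) $\Q$-linear combinations whose norm is provably at most $1$ provides a dense set of rational points with computable mutual distances, giving $B_{E'_w}$ a computable metric space structure. The main obstacle, which I expect to require the most care, is verifying that the resulting Cauchy representation is \emph{equivalent} to $\delta_{E'_w}\big|^{B_{E'_w}}$: one direction follows from the computability of $d$, while the other combines admissibility of $\delta_{E'_w}$ (Theorem~\ref{thm: admissibility of delta E'_w}) with the observation that $d$-convergence forces pointwise convergence on the dense sequence $(e_n)_n$, which on the uniformly bounded set $B_{E'}$ entails weak* convergence. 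Completeness and computable total boundedness of the resulting space then follow from the computable weak* compactness established in (i) via Proposition~\ref{prop: computable total boundedness equivalent computable compactness}.
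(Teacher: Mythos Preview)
Your overall strategy coincides with the paper's: the paper simply cites Brattka's result that $B_{E'_w}$ embeds computably into a computably compact metric space with co-semi-decidable image and computable inverse, and your product $P=\prod_n[-\|e_n\|,\|e_n\|]$ together with the evaluation map $\Phi$ is precisely that embedding. Parts (ii) and (iii) also match the paper's argument (scaling into the unit ball; pulling back a metric and using the rational points of $E'$ of norm $<1$).

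There is, however, a genuine gap in your description of $\Phi(B_{E'_w})$ as a co-semi-decidable subset of $P$. First, the clause ``whenever $e_k=\sum_i q_i e_{n_i}$ holds in $E$'' is an exact equality of vectors and is not semi-decidable, so these relations cannot be enumerated from the data on $E$. Second, and more seriously, the conditions you list are insufficient: if the $e_n$ are linearly independent (which one may assume by the Effective Independence Lemma), there are no nontrivial rational relations at all, so your description degenerates to $|c_k|\leq\|e_k\|$, i.e.\ all of $P$, which is certainly not $\Phi(B_{E'_w})$. The correct closed description is
\[\Phi(B_{E'_w})=\Bigl\{(c_n)_n\in P\;\Big|\;\forall\text{ finite }(q_i,n_i)\in\Q\times\N:\ \Bigl|\sum_i q_i c_{n_i}\Bigr|\leq\Bigl\|\sum_i q_i e_{n_i}\Bigr\|\Bigr\},\]
ranging over an enumeration of all finite rational combinations. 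Each such inequality has a semi-decidable negation (both sides are computable reals), so the conjunction is co-semi-decidable; and this single family of inequalities simultaneously enforces linearity (by taking combinations of small norm) and the norm bound $\|x'\|\leq 1$. With this correction your argument goes through and agrees with the paper's.
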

\begin{proof}
It is proved in \cite{BrattkaHahnBanach} that there exists a computable embedding $i\colon B_{E'_w} \to X$ into a computably compact computable metric space $X$, such that $i(B_{E'_w})$ is computably compact as a subset of $X$ and the partial inverse $i^{-1}\colon i(B_{E'_w}) \to B_{E'_w}$ is computable. It follows that $i(B_{E'_w})$ is a computably compact represented space, and $i$ induces a computable isomorphism between $B_{E'_w}$ and $i(B_{E'_w})$, so that $B_{E'_w}$ is computably compact as a represented space. It follows from Proposition \ref{prop: compact subspace of represented space is compact set} that $B_{E'_w}$ is a computably compact subset of $E_w'$. 
This proves the first claim.

For the second claim, observe that the mappings
\[\operatorname{mult}\colon (0,\infty)\times \A(E_w') \to \A(E'_w),\; (\alpha,A) \mapsto \alpha A = \{\alpha x\;\big|\; x \in A \}\]
and
\[\operatorname{mult}\colon (0,\infty)\times \K(E_w') \to \K(E'_w),\; (\alpha,K) \mapsto \alpha K = \{\alpha x\;\big|\; x \in K \}\]
are computable. Proposition \ref{prop: closed and compact sets, (i) closed set is compact, (ii) compact set in Hausdorff space is closed} (i) asserts that
\[f\colon \A(B_{E'_w})\to\K(B_{E'_w}),\; A \mapsto A \]
is computable. Trivially, given $A\in \A(E_w')$, such that $A \subseteq B_{E'_w}$, we can compute $A$ as a set in $\A(B_{E'_w})$. Given a bounded set $A \in \A(E'_w)$ with bound $b$, we hence obtain $A$ as an element of $\K(E'_w)$ by computing $\operatorname{mult}(b,f(\operatorname{mult}(\tfrac{1}{b},A)))$.

The third claim follows immediately from the proof of the first. We may pull back the metric $d_X$ on $X$ via $i$ to obtain a metric on $B_{E'_w}$, i.e. put $d(x,y) = d_X(i(x),i(y))$ for $x, y \in B_{E'_w}$. As the set of rational points in $B_{E'_w}$ we may choose those rational points of the computable Banach space $E'$ whose norm is strictly smaller than one. One now easily verifies that $B_{E'_w}$ is computably compact as a computable metric space, and that the Cauchy representation on $B_{E_w'}$ is computably equivalent to $[\delta_E\to\rho]\big|^{B_{E'_w}}$.
\end{proof}

As a corollary we get an effective version of a classical result in functional analysis (cf.~\cite[Korollar VIII.3.13]{Werner}) in the reflexive case. 

\begin{corollary}\label{cor: Banach space is isomorphic to subspace of function space}
Let $E$ be a reflexive computable Banach space with computable dual $E'$, such that the mapping $\product{\cdot}{\cdot}$ is $(\delta_E\times\delta_{E'},\rho)$-computable. Then $E$ is computably isometrically isomorphic to a co-semi-decidable and computably overt subspace of a function space $\Co(M)$ over a computably compact metric space $M$.
\end{corollary}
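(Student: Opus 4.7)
The plan is to take $M := B_{E'_w}$, which by Theorem~\ref{thm: unit ball in weak*-topology as a computable metric space}(iii) is a computably compact computable metric space, and to define the canonical embedding
\[\iota\colon E \to \Co(M),\quad \iota(x)(x') = \product{x}{x'}.\]
Since $\product{\cdot}{\cdot}\colon E\times E' \to \R$ is $(\delta_E\times\delta_{E'},\rho)$-computable by hypothesis and $M$ inherits its representation from $E'_w$, the map $\iota$ is computable. The classical Hahn--Banach identity $\|\iota(x)\|_\infty = \sup_{x'\in B_{E'}}|\product{x}{x'}| = \|x\|_E$ makes $\iota$ an isometric linear map, so $\iota(E)$ is a closed linear subspace of $\Co(M)$. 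For computable overtness, observe that if $(e_n)_n$ is the computable dense sequence of rational points of $E$, then $(\iota(e_n))_n$ is a computable sequence in $\Co(M)$ and, by the isometry, is dense in $\iota(E)$; Proposition~\ref{prop: overtness and recursive enumerability} then yields that $\iota(E)$ is computably overt in $\Co(M)$.

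For co-semi-decidability, I would characterize $\iota(E)$ intrinsically as the set of those $f \in \Co(M)$ satisfying the rational linearity relations
\[f(\alpha x' + \beta y') = \alpha f(x') + \beta f(y')\]
for all rational $\alpha,\beta \in \Q$ and all rational $x',y' \in B_{E'}$ with $\alpha x' + \beta y' \in B_{E'}$. Since $f$ is evaluable at these computable points of $M$, the violation of any single such equation is semi-decidable relative to $f$; enumerating all rational quadruples therefore semi-decides $f \notin \iota(E)$. The main obstacle is to justify this characterization: one must show that any $f \in \Co(M)$ satisfying all rational linearity relations genuinely arises from a point of $E$. For this I would use continuity of $f$ together with density of rational points in $B_{E'}$ to promote rational linearity to full $\R$-linearity on $B_{E'}$, then rescale to extend $f$ uniquely to a linear functional on all of $E'$ that is bounded by $\|f\|_\infty$; this bounded functional is an element of $E'' = E$ by reflexivity, and coincides with some $\iota(x)$ by construction.

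Finally, for computability of the inverse $\iota^{-1}\colon \iota(E) \to E$, I would, given $f = \iota(x) \in \iota(E)$ and $n \in \N$, enumerate the rational points $e_k \in E$ and output the first $k$ for which $\|\iota(e_k) - f\|_\infty < 2^{-n}$. The sup-norm here is computable relative to $f$ and $k$ via Theorem~\ref{thm: supremum on compact space} applied to the computably compact metric space $M$ together with its computable dense sequence. Because $\iota$ is an isometry, any such $e_k$ satisfies $\|e_k - x\|_E < 2^{-n}$, and density of $(e_k)_k$ in $E$ ensures termination of the search; assembling these approximations yields a Cauchy name for $x$, as required.
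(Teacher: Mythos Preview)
Your proposal is correct and follows essentially the same route as the paper: both take $M = B_{E'_w}$, use the canonical evaluation embedding, invoke the Hahn--Banach identity for the isometry, characterise the image as the set of (weak*-continuous) linear functions on $B_{E'}$ for co-semi-decidability, and compute the inverse by searching rational points of $E$ via the sup-norm from Theorem~\ref{thm: supremum on compact space}. The paper is terser---it simply says ``clearly, $i(E)$ is computably overt'' and ``we can verify if it is nonlinear''---whereas you spell out the overtness via Proposition~\ref{prop: overtness and recursive enumerability} and make explicit the reflexivity step that identifies a bounded linear functional on $E'$ with a point of $E$; but these are elaborations of the same argument, not a different approach.
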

\begin{proof}
By Theorem \ref{thm: unit ball in weak*-topology as a computable metric space} (iii), $M = B_{E'_w}$ (with the weak* topology) is a computably compact computable metric space. We show that the mapping
\[i\colon E \to \Co(M), \; x \mapsto \lambda x'.(x,x')\]
is a $(\delta_E,[\delta_{B_{E'_w}}\to\rho])$-computable isometric embedding with co-semi-decidable and computably overt image. The $(\delta_E,[\delta_{B_{E'_w}}\to\rho])$-computability is obvious, and the fact that it is an isometry follows from $\norm{x} = \sup_{x' \in B_{E'}}|\product{x}{x'}|$, which in turn is an easy corollary of the Hahn-Banach theorem (cf.~e.g.~\cite[Korollar III.1.7]{Werner}). Clearly, $i(E)$ is computably overt. It is also co-semi-decidable, for if we are given a continuous function $f$ on $B_{E'_w}$, we can verify if it is nonlinear. It remains to show that its inverse $i^{-1}\colon i(E) \to E$ is computable. Given a $[\delta_{B_{E'_w}}\to\rho]$-name of $i(x) \in i(E)$ and a $\nu_E$-name of a rational point $y \in E$ we can compute a $[\delta_{B_{E'_w}}\to\rho]$-name of $i(x) - i(y)$, and hence compute 
\[\max\{|\product{x - y}{x'}|\;\big|\;x' \in B_{E'_w}\} = \norm{i(x) - i(y)} = \norm{x - y},\]
using Theorem \ref{thm: supremum on compact space}. For every $n \in \N$ we may hence search for a rational point $y_n$ in $E$ satisfying $\norm{y_n - x} < 2^{-n}$, which allows us to compute a $\delta_E$-name of $x$.
\end{proof}

\begin{Remark}\label{Rem: corollary five in Brattka Schroeder wrong}
In Corollary 5 in \cite{BrattkaSchroeder}, it is claimed that the representation $\delta^w_E$ of $E$ defined by
\[\delta^w_E(p) = x \;:\Leftrightarrow\; [\delta'_E\to\rho](p) = \iota(x),\]
where $\iota\colon E\to E''$ is the canonical embedding and $\delta'_E = [\delta_E\to\rho]\Big|^{E'}$, is admissible for the weak topology on $E$. This contradicts the proof of our Corollary \ref{cor: Banach space is isomorphic to subspace of function space}, which suggests that $\delta^w_E$ is admissible with respect to the norm topology. To convince ourselves that the claim is false, we consider the simple example of $E = \ell^2$. For the scope of this remark we will adopt the notation used in \cite{BrattkaSchroeder}. The representation $\delta^{\geq}_{\ell^2}$, where a $\delta^{\geq}_{\ell^2}$-name of $(x_n)_n \in \ell^2$ is a $\rho^\omega$-name of a sequence $(b,x_1,x_2,\dots)$ with $b \geq \norm{(x_n)_n}$, is equivalent to $\delta_{\ell^2_w}$ (which is denoted by $\delta_{\ell^2}'$ in \cite{BrattkaSchroeder}). It follows that given a $\delta^w_E(p)$-name of $x \in \ell^2$ we can compute 
$(x,x')$ for any $x' \in \ell^2$, provided that we know $x'(n)$ for all $n \in \N$ and some bound on $\norm{x'}$. In particular we can compute $x(n) = (x,e_n)$ for every $n \in \N$. Since $\delta_{\ell^2} \leq \delta_{\ell^2_w}$, a $\delta^w_{\ell^2}$-name of $x$ allows us to compute a $[\delta_{\ell^2}\to\rho]$-name of $x$. Theorem 5.1. in \cite{Brattka06computableversions} then asserts that we can compute some bound $b$ on $\norm{x}$. It follows that we can compute $\norm{x}^2 = \product{x}{x}$, and so $\delta^w_{\ell^2} \equiv \delta_{\ell^2}^{=}$, which entails that in fact $\delta^w_{\ell^2}$ is admissible with respect to the (strictly stronger) norm topology.

The flaw in the argument seems to be the claim that for every \emph{compatible} representation $\delta$ of a separable Banach space $X$, the dual representation $\delta' = [\delta \to \delta_{\Field}]\big|^{X'}$ is admissible with respect to the weak* topology on $X'$, the reasoning being that for represented topological spaces $A$ and $B$, the canonical function space representation $[\delta_A\to\delta_B]$ is admissible with respect to the sequentially-compact-open topology on $\RCo{A}{B}$, and that weak*-convergence on $X'$ coincides with compact-open-convergence on $\Co(X,\Field)$ (cf.~Theorem \ref{thm: admissibility of delta E'_w} and Proposition 1 in \cite{BrattkaSchroeder}). However, if $\delta$ is admissible with respect to the weak topology on $X$, then $\delta' = [\delta\to\rho]$ is admissible with respect to the weakly-compact-open topology and not necessarily with respect to the (norm-)compact-open topology. Thus, if $X$ is a reflexive computable Banach space, and we start with the standard representation $\delta_X$ of $X$, which is compatible and admissible with respect to the strong topology, then $\delta'_X = [\delta_{X}\to\rho]$ is compatible and admissible with respect to the compact-open topology on $X'$, which is just the weak* topology. Applying the construction again, we see that $(\delta'_X)' = [\delta_{X}'\to\rho]$ is compatible and admissible with respect to the weak*-compact-open topology, which in general is strictly stronger than the (norm-)compact-open-topology.
\qed
\end{Remark}

Let us now turn to some special properties of convex sets. \emph{Mazur's lemma} asserts that a convex set is weakly sequentially closed if and only if it is strongly closed.

\begin{theorem}[Mazur's lemma]\label{thm: Mazur}
Let $E$ be a Banach space and $K \subseteq E$ be convex. If $(x_n)_n$ is a sequence in $K$ which converges weakly to $x \in E$, then there exists a sequence of finite convex combinations of the $x_n$'s, converging strongly to $x$.
\end{theorem}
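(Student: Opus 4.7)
The plan is to proceed by contradiction, combining the geometric form of the Hahn-Banach theorem with the definition of weak convergence. Let $C = \conv\{x_n \;|\; n \in \N\}$ denote the convex hull of the terms of the sequence, and let $\clos{C}$ denote its strong closure. The claim that there exists a sequence of finite convex combinations of the $x_n$'s converging strongly to $x$ is equivalent to the assertion that $x \in \clos{C}$. I would therefore assume for contradiction that $x \notin \clos{C}$ and derive a contradiction from the weak convergence $x_n \to x$.

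Assume $x \notin \clos{C}$. Since $\clos{C}$ is a nonempty, closed, convex subset of $E$ and $\{x\}$ is a compact convex set disjoint from $\clos{C}$, the geometric Hahn-Banach separation theorem produces a continuous linear functional $\phi \in E'$ and a real number $\alpha$ such that $\phi(y) \leq \alpha < \phi(x)$ for every $y \in \clos{C}$. In particular, $\phi(x_n) \leq \alpha$ for all $n \in \N$, because each $x_n$ lies in $C \subseteq \clos{C}$.

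On the other hand, the hypothesis that $x_n \to x$ weakly means precisely that $\phi(x_n) \to \phi(x)$ for every $\phi \in E'$. Passing to the limit in the inequality $\phi(x_n) \leq \alpha$ yields $\phi(x) \leq \alpha$, which contradicts $\phi(x) > \alpha$. Hence $x \in \clos{C}$, and unpacking the definition of the strong closure of a convex hull, there is a sequence $(y_k)_k$ of finite convex combinations of the $x_n$'s with $\norm{y_k - x} \to 0$.

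The only nontrivial ingredient is the Hahn-Banach separation theorem in its geometric form, which is a standard result and may be cited from any functional analysis textbook (for instance \cite{Werner}); everything else is a straightforward manipulation of the definitions of weak convergence and convex hull. I do not anticipate any real obstacle in carrying out this argument.
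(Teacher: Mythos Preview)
Your argument is correct and is the standard proof of Mazur's lemma via Hahn--Banach separation. Note, however, that the paper does not actually prove this statement: Theorem~\ref{thm: Mazur} is stated without proof as a classical result from functional analysis, so there is no ``paper's own proof'' to compare against. Your proposal simply supplies the textbook argument that the paper takes for granted.
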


It follows that strongly overt convex sets are weakly overt.

\begin{proposition}\label{prop: strongly overt implies weakly overt}
Let $E$ be a reflexive computable Banach space with computable dual $E'$, such that the mapping $\product{\cdot}{\cdot}$ is $(\delta_{E}\times\delta_{E'},\rho)$-computable. Then the identity
\[\id\colon \V^{\conv}(E)\setminus\{\emptyset\} \to \V^{\conv}(E_w)\setminus\{\emptyset\},\]
where $\V^{\conv}(E)$ denotes the hyperspace of convex overt closed subsets of $E$, is well-defined and computable.
\end{proposition}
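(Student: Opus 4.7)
The first step is to observe that under the standing hypothesis the identity $\id\colon E \to E_w$ is computable: a $\delta_{E_w}$-name of $x$ is just a $[\delta_{E'}\to\rho]$-name of the functional $x'\mapsto\product{x}{x'}$, and currying the $(\delta_E\times\delta_{E'},\rho)$-computability of the pairing produces such a name from a $\delta_E$-name of $x$. Next, by Proposition \ref{prop: overtness and recursive enumerability, metric case}, a $\upsilon^E$-name of a nonempty $K\in\V^{\conv}(E)$ can be effectively replaced by a computable sequence $(x_n)_n$ in $K$ that is dense in $K$ with respect to the strong topology. Combining the two remarks, $(x_n)_n$ is also uniformly computable as a sequence in $E_w$.

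The algorithm for producing a $\upsilon^{E_w}$-name of $K$ is then the obvious one: given a weakly open $U \in \O(E_w)$, run the semi-decidable tests ``$x_n \in U$'' in parallel over $n \in \N$ and output $1$ as soon as one of them succeeds. For correctness, if some $x_n \in U$ then clearly $K\cap U \neq\emptyset$ since $x_n\in K$; conversely, if $x\in K\cap U$, there is a subsequence $(x_{n_k})_k$ converging strongly to $x$, hence also weakly to $x$, so eventually $x_{n_k}\in U$, because $U$ is weakly open and $x\in U$. Thus the algorithm correctly semi-decides $K \cap U \neq \emptyset$. The conceptual crux, and the only genuine content of the proof, is precisely this last observation: strong density of $(x_n)_n$ in $K$ upgrades automatically to weak density, simply because strong convergence implies weak convergence.

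It remains to check well-definedness of the target, i.e.\ that $K$ is convex and sequentially weakly closed. Convexity of $K$ is preserved tautologically, and any convex strongly closed subset of a Banach space is weakly closed by Hahn--Banach separation (alternatively, directly sequentially weakly closed by Mazur's lemma, Theorem \ref{thm: Mazur}, applied to any weakly convergent sequence in $K$). Hence $K \in \V^{\conv}(E_w)\setminus\{\emptyset\}$ and the algorithm above witnesses it computably as such. There is no real obstacle here; the entire substance of the proposition is bundled into the one-line verification that a strongly dense sequence remains weakly dense.
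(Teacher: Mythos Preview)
Your proof is correct and follows essentially the same approach as the paper's: both extract a norm-dense sequence via Proposition~\ref{prop: overtness and recursive enumerability, metric case}, push it through the computable identity $E\to E_w$, and then feed the resulting sequence back into the overtness machinery (you verify $\operatorname{intersects?}$ directly, the paper invokes Proposition~\ref{prop: overtness and recursive enumerability}, which is the same thing unwrapped). One terminological nit: when you write ``because $U$ is weakly open'', recall that $\O(E_w)$ consists of \emph{sequentially} weakly open sets, not weakly open sets in general; but this is precisely the property your argument uses, so the reasoning is sound.
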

\begin{proof}
The mapping $\id$ is well-defined by Theorem \ref{thm: Mazur}. Since $E$ is a computable metric space and $E_w$ is separable, we may use the characterisation of overtness given in Proposition \ref{prop: overtness and recursive enumerability, metric case} and the sufficient condition given in Proposition \ref{prop: overtness and recursive enumerability}. If $K \in \V^{\conv}(E)$, then by Proposition \ref{prop: overtness and recursive enumerability, metric case}, we can compute a $\delta_{E}^{\omega}$-name of a norm-dense sequence $(x_n)_n$ in $K$. Since the weak topology is coarser than the norm topology, the weak sequential closure of $(x_n)_n$ contains $K$, and by Theorem \ref{thm: Mazur}, any weak limit of $(x_n)_n$ is already contained in $K$, so that $K$ is the closure of $(x_n)_n$ with respect to the sequentialisation of the weak topology. Since $\product{\cdot}{\cdot}$ is computable, we have $\delta_{E} \leq \delta_{E_w}$, so that we can compute a $\delta_{E_w}^{\omega}$-name of $(x_n)_n$. Thus we can compute $K$ as an element of $\V^{\conv}(E_w)$ using Proposition \ref{prop: overtness and recursive enumerability}.
\end{proof}

Propositions \ref{prop: strongly overt implies weakly overt} and \ref{prop: overtness and recursive enumerability} imply that a convex subset of a reflexive computable Banach space $E$ is weakly overt if and only if it has a computable norm-dense sequence.

Finally, we prove a useful uniform characterisation of computably weakly compact convex sets in a reflexive Banach space $E$ with computable dual $E'$, which will be an important ingredient for the proof of our main result. Note that by Mazur's lemma, a convex subset of $E$ is weakly compact if and only if it is closed and bounded (cf.~also \cite[Proposition 2.8.1]{Megginson}).
\begin{Definition}
Let $E$ be a reflexive Banach space with computable dual $E'$.
\begin{enumerate}[label=(\roman*)]\item A \emph{rational half space} is a nonempty set of the form
\[h = \{x \in E \;|\; \product{x}{x_h'} + a_h \leq 0 \} \]
where $x_h'$ is a rational point in $E'$ and $a_h \in \Q$. A $\nu_{\HB}$-name of a rational half space is a $\nu_{E'}\times\nu_{\Q}$-name of $(x_h',a_h) \in E'\times\Q$.
\item Let $K \subseteq E$ be closed, convex and bounded. A $\kappa_{\HB}$-name of $K$ is a $\nu_{\HB}^\omega\times\kappa^{E_w}$-name of all rational half spaces containing $K$ in their interior and a weakly compact set $L \in \K(E_w)$ containing $K$.
\end{enumerate}
\end{Definition}

\noindent Note that by Theorem \ref{thm: unit ball in weak*-topology as a computable metric space} (ii), a $\kappa_{\HB}$-name of a closed, convex and bounded set can be computed from a list of all rational half spaces containing $K$ in their interior and a rational bound on $\sup\{\norm{x} \;\big|\; x \in K\}$. It may not be immediately obvious that $\kappa_{\HB}$ is a well-defined representation. This follows however from the following easy consequence of the Hahn-Banach separation theorem (cf.~e.g.~\cite[Theorem III.2.5]{Werner}).
\begin{lemma}
Let $E$ be a reflexive Banach space with computable dual $E'$. Let $K \subseteq E$ be closed, bounded and convex, let $x \notin K$. Then there exists a rational half space $h$ such that $K \subseteq \intr{h}$ and $x \in \comp{h}$.\qed
\end{lemma}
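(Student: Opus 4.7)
The plan is to reduce the claim to the classical Hahn--Banach strict separation theorem and then replace the separating functional by a nearby rational one, exploiting density of the rational points in $E'$ together with the boundedness of $K$ to preserve strict separation.

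First I would invoke the geometric Hahn--Banach theorem in the form of strict separation of a closed convex set from a disjoint compact convex set (as in \cite[Theorem III.2.5]{Werner}) applied to $K$ and $\{x\}$. This produces a continuous linear functional $x^{*} \in E'$ and real numbers $\alpha < \beta$ such that $\product{y}{x^{*}} \leq \alpha$ for every $y \in K$ and $\product{x}{x^{*}} \geq \beta$.

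Next I would perturb $x^{*}$ to a rational functional. Fix a rational bound $M > 0$ with $\|y\| \leq M$ for all $y \in K$ and $\|x\| \leq M$, which exists since $K$ is bounded. Since the rational points of the computable Banach space $E'$ are dense, I can choose a rational $x_h' \in E'$ with $\|x^{*} - x_h'\| < (\beta-\alpha)/(4M)$. Setting $\alpha' = \alpha + (\beta-\alpha)/4$ and $\beta' = \beta - (\beta-\alpha)/4$, the triangle inequality yields
\[\sup_{y \in K}\product{y}{x_h'} \leq \alpha' < \beta' \leq \product{x}{x_h'}.\]
Finally, pick any rational $a_h$ with $-\beta' < a_h < -\alpha'$ and put $h = \{z \in E \mid \product{z}{x_h'} + a_h \leq 0\}$. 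Then $\product{y}{x_h'} + a_h \leq \alpha' + a_h < 0$ for all $y \in K$, so $K \subseteq \intr{h}$, while $\product{x}{x_h'} + a_h \geq \beta' + a_h > 0$ shows $x \in \comp{h}$. Note that $x_h' \neq 0$ (forced by $\product{x}{x_h'} > \sup_{y \in K}\product{y}{x_h'}$), so $h$ is a genuine affine half space and in particular nonempty; hence it is a valid rational half space.

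There is no serious obstacle here; the only point requiring a moment's thought is verifying that the perturbation from $x^{*}$ to $x_h'$ does not destroy the strict separation, which is precisely what the uniform bound $M$ on the norms of points in $K \cup \{x\}$ is used for.
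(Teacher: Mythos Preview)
Your argument is correct and is precisely the approach the paper has in mind: the paper does not actually give a proof but merely marks the lemma as an easy consequence of the Hahn--Banach separation theorem (citing \cite[Theorem III.2.5]{Werner}), and your proof spells out exactly that derivation, including the one nontrivial step of perturbing the separating functional to a rational one using the boundedness of $K\cup\{x\}$.
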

Obviously, the boundedness condition on $K$ cannot be dropped, as the example of a straight line with irrational slope in $\R^2$ shows. 
\begin{theorem}\label{Hahn-Banach equivalent to co-r.e.}
Let $E$ be a reflexive Banach space with computable dual $E'$. Then we have
$\kappa_{\HB} \equiv \left(\kappa^{E_w}\right)\bigr|^{\K^{\conv}(E_w)}$, where $\K^{\conv}(E_w)$ denotes the space of convex weakly compact subsets of $E_w$.
\end{theorem}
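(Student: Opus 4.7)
The plan is to establish the two reductions $\kappa_{\HB} \leq \kappa^{E_w}$ and $\kappa^{E_w}\big|^{\K^{\conv}(E_w)} \leq \kappa_{\HB}$ separately. Both directions hinge on the separation identity
\[ \comp{K} \;=\; \bigcup_h \comp{h}, \]
where $h$ ranges over the rational half-spaces with $K \subseteq \intr{h}$: the inclusion $\supseteq$ is immediate from $K \cap \comp{h} = \emptyset$ whenever $K \subseteq \intr{h}$, while $\subseteq$ is precisely the Hahn-Banach separation lemma stated just before the theorem.

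For $\kappa_{\HB} \leq \kappa^{E_w}$, suppose we are given an enumeration $h_1, h_2, \ldots$ of all rational half-spaces with $K \subseteq \intr{h_i}$ together with a weakly compact superset $L \in \K(E_w)$. To semi-decide $K \subseteq U$ for an input $U \in \O(E_w)$, observe that by the identity above this is equivalent to $L \subseteq U \cup \bigcup_i \comp{h_i}$. Each $\comp{h_i} = \{x \in E : \product{x}{x_{h_i}'} + a_{h_i} > 0\}$ is a computable element of $\O(E_w)$ uniformly in its $\nu_{\HB}$-name, because the functional $x \mapsto \product{x}{x_{h_i}'}$ is $(\delta_{E_w},\rho)$-computable by the very definition of $E_w$ (combined with $\nu_{E'} \leq \delta_{E'}$). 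Hence $U \cup \bigcup_i \comp{h_i}$ is computable as a single element of $\O(E_w)$, and feeding it to the $\operatorname{contained?}$-function of $L$ performs the required semi-decision.

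Conversely, given $K \in \K^{\conv}(E_w)$ we construct a $\kappa_{\HB}$-name as follows. As weakly compact superset we take $L := K$ itself, which is immediate from the input. To enumerate all rational half-spaces $h$ with $K \subseteq \intr{h}$, we loop over all pairs $(x_h',a_h) \in \im \nu_{E'} \times \Q$ in parallel, compute $\intr{h} \in \O(E_w)$ from each (again using $(\delta_{E_w},\rho)$-computability of $\product{\cdot}{x_h'}$), and emit $h$ as soon as $\operatorname{contained?}_K(\intr{h})$ halts with output $1$. Completeness of the resulting enumeration is guaranteed by the separation identity, since every rational half-space containing $K$ in its interior is eventually visited and accepted.

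The only substantive insight is the separation identity; no hard estimates are required. The one point that has to be verified with some care is that the half-space data interacts correctly with the represented space $E_w$, i.e.\ that $\intr{h}$ and $\comp{h}$ are uniformly computable elements of $\O(E_w)$ from the $\nu_{\HB}$-name of $h$, which follows directly from the admissibility of $\delta_{E_w}$ for the weak topology.
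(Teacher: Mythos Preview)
Your proof is correct and follows essentially the same route as the paper. The only presentational difference is that for the reduction $\kappa_{\HB} \leq \kappa^{E_w}$ the paper first computes a $\psi^{E_w}$-name of $K$ (the characteristic function of $\comp{K}$ into $\S$) and then invokes Proposition~\ref{prop: closed and compact sets, (i) closed set is compact, (ii) compact set in Hausdorff space is closed}\,(i) with the compact superset $L$, whereas you compute $\operatorname{contained?}_K$ directly via $L \subseteq U \cup \bigcup_i \comp{h_i}$; these are the same argument unwound. One small remark: in the second direction, completeness of the enumeration does not actually rely on the separation identity---it follows simply because you search over all rational half-spaces and the test $K \subseteq \intr{h}$ is correctly semi-decided; the separation identity is what makes $\kappa_{\HB}$ a well-defined representation in the first place.
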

\begin{proof}
$\left[\kappa_{\HB} \leq \left(\kappa^{E_w}\right)\bigr|^{\K^{\conv}(E_w)}\right]$: Suppose we are given a $\kappa_{\HB}$-name of $K \in \K^{\conv}(E_w)$. Since the name provides us with a weakly compact set $L \in \K(E_w)$ containing $K$, and since $\id\colon \A(L_w)\to \K(L_w)$ is computable by Proposition \ref{prop: closed and compact sets, (i) closed set is compact, (ii) compact set in Hausdorff space is closed} (i), it suffices to show that we can compute a $\psi^{E_w}$-name of $K$. Given $x \in E_w$ and a $\nu_{\HB}$-name of a functional $f = \product{\cdot}{x_h'} + a_h$ we can compute $f(x) \in \R$. Now, if the sequence of half spaces containing $K$ given by the $\kappa_{\HB}$-name is defined by the sequence of affine linear functionals $(f_n)_n$, we can compute the characteristic function of $K^C$ into Sierpi\'nski-space as follows: given $x \in E_w$, if there exists an $n \in \N$ such that $f_n(x) > 0$ output one, otherwise output zero. This shows that we can compute a $\psi^{E_w}$-name of $K$, which proves the claim. \\
$\left[\left(\kappa^{E_w}\right)\bigr|^{\K^{\conv}(E_w)} \leq \kappa_{\HB}\right]$:
Suppose we are given a $\kappa^{E_w}$-name of $K$. We need to compute a weakly compact set $L \in \K(E_w)$ with $L \supseteq K$ and a list of all rational half spaces containing $K$ in their interior. Since $K$ contains itself and is given as a $\kappa^{E_w}$-name, we may put $L = K$, so that it suffices to show that we can enumerate all rational half spaces containing $K$ in their interior. We show that given a $\nu_{\HB}$-name of an affine linear functional (technically, of the half space defining the functional) $f\colon E \to \R$ of the form $f(x) = (x,x') + a$, we can verify if $f(x) < 0$ for all $x \in K$. We can computably translate the $\nu_{\HB}$-name of $f$ into a $\delta_{E'}$-name of $x'$. Then by definition of $E_w$, the mapping $f\colon E_w\to \R, x \mapsto (x,x') + a$ is computable. It follows that $U_f = \{x \in E_w \;\big|\; f(x) < 0\}$ is semi-decidable relative to $f$. By definition of $\kappa$, the relation $K \subseteq U_f$ is semi-decidable relative to $f$ as well, which proves the claim.
\end{proof}

The proof of Theorem \ref{Hahn-Banach equivalent to co-r.e.} shows that the definition of $\kappa_{\HB}$ can be slightly relaxed.

\begin{lemma}\label{lem: listing enough half-spaces}
Let $E$ be a reflexive Banach space with computable dual $E'$. Define a new representation $\tilde{\kappa}_{\HB}$ of $\K^{\conv}(E_w)$ as follows: a $\tilde{\kappa}_{\HB}$-name of $K \in \K^{\conv}(E_w)$ is a $\nu_{\HB}^\omega\times\kappa^{E_w}$-name of a sequence $(h_n)_n$ of rational half spaces such that $K = \bigcap_{n \in \N} \intr{h}_n$ and a weakly compact set $L \in \K(E_w)$ containing $K$. Then $\tilde{\kappa}_{\HB} \equiv \kappa_{\HB}$.
\end{lemma}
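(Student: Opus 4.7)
The plan is to prove the two reductions separately. For $\kappa_{\HB} \leq \tilde{\kappa}_{\HB}$, the work is essentially trivial: by the Hahn-Banach separation lemma immediately preceding Theorem \ref{Hahn-Banach equivalent to co-r.e.}, the sequence of \emph{all} rational half-spaces containing $K$ in its interior automatically satisfies $K = \bigcap_n \intr{h}_n$, so every $\kappa_{\HB}$-name is already a $\tilde{\kappa}_{\HB}$-name, the weakly compact bound $L$ being common to both representations.

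For the harder reduction $\tilde{\kappa}_{\HB} \leq \kappa_{\HB}$, the plan is to enumerate all rational half-spaces $h$ in parallel and, for each candidate, to semi-decide the condition $K \subseteq \intr{h}$, writing $h$ to the target output stream upon confirmation. The concrete semi-decision will search for some $N \in \N$ such that
\[K_N := L \cap h_1 \cap \cdots \cap h_N \subseteq \intr{h}.\]
Since each $K_N$ is weakly (sequentially) closed and contained in the weakly compact set $L$, Proposition \ref{prop: closed and compact sets, (i) closed set is compact, (ii) compact set in Hausdorff space is closed}(i) makes it computable from the $\tilde{\kappa}_{\HB}$-data as an element of $\K(E_w)$; the open half-space $\intr{h}$ is computable from its $\nu_{\HB}$-name as an element of $\O(E_w)$; hence the containment $K_N \subseteq \intr{h}$ is semi-decidable, and the entire search is effective. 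Soundness is immediate since $K \subseteq L$ and $K \subseteq \intr{h}_n \subseteq h_n$ for every $n$ give $K \subseteq K_N$, so $K_N \subseteq \intr{h}$ implies $K \subseteq \intr{h}$.

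Completeness will follow from a finite-intersection-property argument: if $K \subseteq \intr{h}$, then the nested weakly compact sets $C_N := K_N \setminus \intr{h}$ satisfy $\bigcap_N C_N = (L \cap \bigcap_n h_n) \setminus \intr{h}$, and provided this intersection equals $K \setminus \intr{h} = \emptyset$, FIP for countable decreasing families of nonempty compact sets in a Hausdorff space forces some $C_N$ itself to be empty, i.e., $K_N \subseteq \intr{h}$ for some $N$.

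The main obstacle is precisely the equality $L \cap \bigcap_n h_n = K$: the definition of $\tilde{\kappa}_{\HB}$ guarantees only $\bigcap_n \intr{h}_n = K$, and a priori the intersection of the \emph{closed} half-spaces could be strictly larger. I would prove this as a small preliminary lemma by a convexity argument: assuming $K \neq \emptyset$ (the empty case handled trivially), fix $k \in K$ and suppose for contradiction $x \in \bigcap_n h_n \setminus K$. For $t \in (0,1]$ let $y_t = (1-t)x + tk$; since $f_n(x) \leq 0$ and $f_n(k) < 0$ (because $k \in \intr{h}_n$ for every $n$), one computes $f_n(y_t) = (1-t)f_n(x) + tf_n(k) < 0$ for every $n$ and every such $t$, so $y_t \in \bigcap_n \intr{h}_n = K$. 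Letting $t \to 0^+$ gives $y_t \to x$ in norm, and closedness of $K$ forces $x \in K$, the desired contradiction. With this lemma in place, the FIP argument closes and the reduction $\tilde{\kappa}_{\HB} \leq \kappa_{\HB}$ goes through.
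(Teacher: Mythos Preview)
Your proof is correct, but it takes a genuinely different route from the paper's. The paper's argument is a one-liner: it observes that the proof of the reduction $\kappa_{\HB} \leq \left(\kappa^{E_w}\right)\bigr|^{\K^{\conv}(E_w)}$ in Theorem~\ref{Hahn-Banach equivalent to co-r.e.} never uses that the listed half-spaces are \emph{all} rational half-spaces whose interior contains $K$, only that $K = \bigcap_n \intr{h}_n$; hence that same proof already gives $\tilde{\kappa}_{\HB} \leq \left(\kappa^{E_w}\right)\bigr|^{\K^{\conv}(E_w)}$, and chaining with $\left(\kappa^{E_w}\right)\bigr|^{\K^{\conv}(E_w)} \leq \kappa_{\HB}$ finishes. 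Your approach is direct and self-contained: you build the finite approximants $K_N = L \cap h_1 \cap \cdots \cap h_N$, semi-decide $K_N \subseteq \intr{h}$ using their weak compactness, and close with a finite-intersection-property argument. This buys you independence from Theorem~\ref{Hahn-Banach equivalent to co-r.e.} at the cost of more work.

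One point worth highlighting: your auxiliary convexity lemma (that $\bigcap_n h_n = K$ whenever $K \neq \emptyset$) is not merely a technical device for your FIP step --- it is exactly what is needed to make the paper's argument fully rigorous as well. The algorithm in the proof of Theorem~\ref{Hahn-Banach equivalent to co-r.e.} outputs $1$ on input $x$ iff some $f_n(x) > 0$, i.e.\ it computes the characteristic function of the complement of $\bigcap_n h_n$, not of $\bigcap_n \intr{h}_n$. For $\kappa_{\HB}$-names these coincide by Hahn--Banach separation; for $\tilde{\kappa}_{\HB}$-names one needs precisely your lemma. So your write-up in fact makes explicit a step the paper glosses over. (Your parenthetical ``the empty case handled trivially'' is the one soft spot: as your own example-hunting would reveal, $\bigcap_n \intr{h}_n = \emptyset$ does not force $\bigcap_n h_n = \emptyset$, so the empty case is not literally trivial --- but since the only use of the lemma in the paper is for $\Fix(f) \neq \emptyset$, this is harmless.)
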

\begin{proof}
Clearly, $\kappa_{\HB} \leq \tilde{\kappa}_{\HB}$. For the converse direction, note that the proof of the reduction $\kappa_{\HB} \leq \left(\kappa^{E_w}\right)\bigr|^{\K^{\conv}(E_w)}$ in Theorem \ref{Hahn-Banach equivalent to co-r.e.} actually establishes the stronger reduction ${\tilde{\kappa}_{\HB} \leq \left(\kappa^{E_w}\right)\bigr|^{\K^{\conv}(E_w)}}$, so that we obtain the reduction chain
\[\tilde{\kappa}_{\HB} \leq \left(\kappa^{E_w}\right)\bigr|^{\K^{\conv}(E_w)} \leq \kappa_{\HB} \]
and thus $\tilde{\kappa}_{\HB} \equiv \kappa_{\HB}$. 
\end{proof}

On compact subsets of a Banach space $E$, the weak topology and the norm topology coincide. This is effectively witnessed by our representation. 

\begin{proposition}\label{prop: weak and strong topology equivalent on compact sets}
Let $E$ be a reflexive computable Banach space with computable dual, such that the mapping $\product{\cdot}{\cdot}$ is $(\delta_{E}\times\delta_{E'},\rho)$-computable. Let $K\subseteq E$ be a computably compact and computably overt subset of $E$. Then we have $\delta_{E_w}\big|^K \equiv \delta_{E}\big|^K$.
\end{proposition}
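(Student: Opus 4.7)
The reduction $\delta_{E}\big|^K \leq \delta_{E_w}\big|^K$ is immediate: since $\product{\cdot}{\cdot}$ is $(\delta_E\times\delta_{E'},\rho)$-computable, a $\delta_E$-name of $x \in K$ lets us compute the functional $x' \mapsto \product{x}{x'}$ on $E'$, which is precisely a $\delta_{E_w}$-name of $x$ viewed in $E'' = E$. For the nontrivial reduction $\delta_{E_w}\big|^K \leq \delta_{E}\big|^K$, my plan is to reduce to Kreinovich's theorem (Theorem \ref{thm: Kreinovich's theorem}) applied to the represented space $(K,\delta_E\big|^K)$: from a weak name of $x$ I would first compute the singleton $\{x\}$ as an element of $\A(K)$, and then extract $x$ as a $\delta_E\big|^K$-name via Kreinovich.

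Before invoking Kreinovich I need to verify that $(K,\delta_E\big|^K)$ really is a computably compact represented topological space, and not merely a computable point of $\K(E)$. Since $K$ is computably overt in the complete computable metric space $E$, Proposition \ref{prop: overtness and recursive enumerability, metric case} yields a computable norm-dense sequence in $K$, which together with the restricted metric turns $K$ into a complete computable metric space whose Cauchy representation is equivalent to $\delta_E\big|^K$. That $K \in \K(E)$ is computable then lets us enumerate finite $\nu_E$-rational ball covers of $E$ and test their containment of $K$ to extract $2^{-n}$-nets; by Proposition \ref{prop: computable total boundedness equivalent computable compactness}, $(K,\delta_E\big|^K)$ is computably compact.

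The central step is then to produce, uniformly in a $\delta_{E_w}$-name of $x \in K$, a $\psi^K$-name of $\{x\} \in \A(K)$, i.e.\ a realiser that semi-decides $y \neq x$ given $y \in K$ via $\delta_E\big|^K$. By the Hahn--Banach theorem $y \neq x$ iff $\product{y - x}{x'} \neq 0$ for some $x' \in E'$, and by density and continuity it suffices to quantify over the numbering $\nu_{E'}$ of rational points. For each rational $x'$ the value $\product{y}{x'}$ is computable from the strong name of $y$ by the hypothesis on $\product{\cdot}{\cdot}$, while $\product{x}{x'}$ is computable from the weak name of $x$ directly from the definition of $\delta_{E_w}$; thus $\product{y-x}{x'} \in \R$ is computable and the predicate $\product{y-x}{x'} \neq 0$ is semi-decidable. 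Running these tests in parallel over all rational $x'$ yields the realiser, and Kreinovich's theorem then outputs $x$ via $\delta_E\big|^K$. I expect the main technicality to be the first sub-step — upgrading ``$K$ is a computably compact subset of $E$'' to ``$(K,\delta_E\big|^K)$ is a computably compact represented space'' — and this is precisely why the combined hypothesis of computable compactness and computable overtness is essential.
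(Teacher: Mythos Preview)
Your argument is correct, and it is in fact a little more direct than the paper's. Both proofs handle the easy reduction $\delta_E\big|^K \leq \delta_{E_w}\big|^K$ identically and both ultimately rest on the effective Hausdorffness of $E_w$ (separation of distinct points by rational functionals) together with the computable compactness of $K$. The difference is in how the pieces are assembled. The paper first establishes the intermediate fact that $\id\colon\O(K)\to\O(K_w)$ is computable via the chain $\O(K)\to\A(K)\to\K(K)\to\K(K_w)\to\A(K_w)\to\O(K_w)$, and then recovers the point $x$ via a second chain $K_w\to\A(K_w)\to\K(K_w)\to\K(K)\to K$. You bypass this by computing $\{x\}\in\A(K)$ directly from the weak name --- which amounts to composing the trivially computable map $\A(K_w)\to\A(K)$ (coming from $\delta_E\leq\delta_{E_w}$) with effective Hausdorffness --- and then invoking Kreinovich once. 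Your route avoids the need to prove $\id\colon\O(K)\to\O(K_w)$ computable at all; the paper's route yields that equivalence of the open-set hyperspaces as a by-product, which is of some independent interest but is not needed for the proposition as stated.

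You are also more explicit than the paper about the passage from ``$K$ is a computable point of $\K(E)$'' to ``$(K,\delta_E\big|^K)$ is a computably compact represented space'', which is indeed where the overtness hypothesis is used; the paper's proof invokes ``the computable compactness of $K$'' at step (1) without spelling this out.
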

\begin{proof}
Since $\product{\cdot}{\cdot}$ is computable, we have $\delta_{E} \leq \delta_{E_w}$, so we only have to show the converse reduction $\delta_{E_w}|^K \leq \delta_{E}|^K$. Define the represented spaces 
\[K = \left(K,\delta_{E}\big|^K\right)\; \text{ and }\; K_w = \left(K,\delta_{E_w}\big|^K\right).\] 
Firstly, observe that $E_w$ is effectively Hausdorff, i.e. the mapping \[E_w \to \A(E_w), \;x \mapsto \{x\}\] is computable: we can verify if two given elements in $E_w$ are different by comparing their values on the rational points of $E'$. It follows that the identity $\id\colon \O(K) \to \O(K_w)$ is computable via the following chain of maps:

\[
\begin{CD}
\O(K) @>>> \A(K) @>(1)>> \K(K) @>(2)>> \K(K_w) @>(3)>> \A(K_w) @>>> \O(K_w)\\
U @>>> K\setminus U @>>> K\setminus U @>>> K_w\setminus U @>>> K_w\setminus U @>>> U.
\end{CD}
\]\medskip

\noindent The computability of (1) follows from the computable
compactness of $K$ together with Proposition \ref{prop: closed and
  compact sets, (i) closed set is compact, (ii) compact set in
  Hausdorff space is closed} (i). The computability of (2) can be derived from the computability of ${\id\colon\O(E_w)\to\O(E)}$, which in turn follows from the computability of $\id\colon E\to E_w$, and the computability of (3) follows from the fact that $E_w$ is effectively Hausdorff, together with Proposition \ref{prop: closed and compact sets, (i) closed set is compact, (ii) compact set in Hausdorff space is closed} (ii).

We then obtain the mapping $\id\colon K_w \to K$, i.e.~the reduction $\delta_{E_w}|^K \leq \delta_{E}|^K$, via the following chain of maps:
\[
\begin{CD}
K_w @>(4)>> \A(K_w) @>(5)>> \K(K_w) @>(6)>> \K(K) @>(7)>> K\\
x @>>> \{x\} @>>> \{x\} @>>> \{x\} @>>> x.
\end{CD}
\]
Mapping (4) is computable since $E_w$ is effectively Hausdorff. To establish the computability of (5), observe that the computability of (2) and the computable compactness of $K$ imply that $K \in \K(K_w)$ and apply Proposition \ref{prop: closed and compact sets, (i) closed set is compact, (ii) compact set in Hausdorff space is closed} (i). The computability of (6) follows from the computability of $\id\colon \O(K)\to\O(K_w)$, which we have established above. For the computability of (7), observe that we can verify if a rational ball of the form $B(a,2^{-n})$ contains $\{x\}$, which yields a Cauchy sequence effectively converging to $x$ by exhaustive search over all rational balls.
\end{proof}

Finally, we observe that computably overt, co-semi-decidable subsets of $E'_w$ are (uniformly) located. The following proposition guarantees that this actually makes sense.
\begin{proposition}
Let $E$ be a Banach space. Let $A \subseteq E$ be weakly sequentially closed. Then $A$ is closed with respect to the norm topology.
\end{proposition}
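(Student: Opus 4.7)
The plan is to show that $A$ is norm-sequentially closed, and then invoke metrizability of the norm topology to conclude that $A$ is norm-closed. The whole argument rests on the observation that norm convergence implies weak convergence: if $x_n \to x$ in norm, then for every $x' \in E'$, continuity of $x'$ yields $\langle x_n, x' \rangle \to \langle x, x' \rangle$, which is exactly the definition of weak convergence $x_n \rightharpoonup x$.

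First I would take an arbitrary norm-convergent sequence $(x_n)_n$ in $A$ with norm-limit $x \in E$. By the observation above, $(x_n)_n$ also converges weakly to $x$. Since $A$ is weakly sequentially closed by assumption, we obtain $x \in A$. Hence $A$ is norm-sequentially closed.

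Finally, since $(E, \|\cdot\|)$ is a metric space (in particular first-countable), a subset is closed if and only if it is sequentially closed. Thus $A$ is closed with respect to the norm topology, which completes the proof. There is no real obstacle here; the statement is essentially the remark that the weak topology is coarser than the norm topology, so weak (sequential) closedness is a stronger condition than norm (sequential) closedness.
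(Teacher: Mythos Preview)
Your proof is correct and follows essentially the same approach as the paper: show norm-sequential closedness using that norm convergence implies weak convergence, then invoke metrizability to pass from sequentially closed to closed. The paper's proof is slightly terser but identical in substance.
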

\begin{proof}
Since $E$ is a metric space, $A$ is closed if and only if it is sequentially closed. Let $(x_n)_n$ be a sequence in $A$ with limit $x \in E$. Then $x$ is a weak limit of $(x_n)_n$, so $x \in A$, since $A$ is weakly sequentially closed. It follows that $A$ is sequentially closed, and thus closed.
\end{proof}

\begin{proposition}\label{prop: computably weakly closed uniformly implies lower semi-located}
Let $E$ be a reflexive computable Banach space with computable dual, such that the mapping $\product{\cdot}{\cdot}$
is $(\delta_{E}\times\delta_{E'},\rho)$-computable. Then the canonical embedding $i\colon \A(E_w)\setminus\{\emptyset\} \to \A_{\dist_{<}}(E), A \mapsto A$ is computable.
\end{proposition}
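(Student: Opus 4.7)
The goal is to show that from a $\psi^{E_w}$-name of a nonempty weakly (sequentially) closed set $A \subseteq E$, we can uniformly compute a $[\delta_E\to\rho_<]$-name of the distance function $d_A(x) = \inf\{\norm{x - y} \;\big|\; y \in A\}$. Since a $\rho_<$-name of a real number is any sequence of rationals whose supremum equals that number, it suffices, for each fixed $x \in E$, to enumerate rationals $q$ for which we can verify $d_A(x) > q$.

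The key reformulation is the following: for $x \in E$ and $q \in \Q$ we have $d_A(x) > q$ if and only if there is a rational $r > q$ with $\clos{B}(x,r) \cap A = \emptyset$, i.e.\ $\clos{B}(x,r) \subseteq \comp{A}$. The forward direction uses any rational $r$ with $q < r < d_A(x)$; conversely, $\clos{B}(x,r) \cap A = \emptyset$ forces $\norm{x-y} > r$ for all $y \in A$ and hence $d_A(x) \geq r > q$. Since $A \neq \emptyset$, $d_A(x) < \infty$, so every rational strictly below $d_A(x)$ will be enumerated, producing a $\rho_<$-name.

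The main technical step is to argue that the closed ball $\clos{B}(x,r)$ can be computed as an element of $\K(E_w)$ uniformly in $x \in E$ (via $\delta_E$) and $r \in \Q_+$. Using reflexivity, $E_w$ is just the weak$^*$ topology $(E')'_w$ on the dual of $E'$, so Theorem \ref{thm: unit ball in weak*-topology as a computable metric space} (i), applied to the computable Banach space $E'$, yields $B_{E_w} \in \K(E_w)$ as a computable point. The assumed computability of $\product{\cdot}{\cdot}$ gives $\id\colon E \to E_w$ computable; combined with the computability of addition and scalar multiplication on $E_w$ (both visible from the identity $x'(y + z) = x'(y) + x'(z)$ applied to $\delta_{E_w}$-names), the affine map $T_{x,r}\colon E_w \to E_w$, $y \mapsto x + ry$ is computable. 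Pushing compact sets forward along a computable continuous map is computable (since $T_{x,r}(K) \subseteq U$ iff $K \subseteq T_{x,r}^{-1}(U)$), so $\clos{B}(x,r) = T_{x,r}(B_{E_w}) \in \K(E_w)$ is computable uniformly in $x$ and $r$.

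The algorithm then reads: given the $\psi^{E_w}$-name of $A$, which provides a $\theta^{E_w}$-name of $\comp{A} \in \O(E_w)$, dovetail over all pairs of rationals $q < r$; use the defining semi-decidability of $\clos{B}(x,r) \subseteq \comp{A}$ provided by the $\kappa^{E_w}$-name of $\clos{B}(x,r)$; whenever the inclusion is verified, output $q$. The enumerated sequence has supremum $d_A(x)$ by the equivalence above, and the construction is uniform in $A$ and $x$. The only non-routine obstacle is the representation-theoretic verification that $\clos{B}(x,r) \in \K(E_w)$ computably and uniformly; once this is in place, everything else is the standard compact-in-open semi-decision.
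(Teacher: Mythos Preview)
Your proposal is correct and follows essentially the same route as the paper: both reduce the problem to semi-deciding $\clos{B}(x,r)\subseteq\comp{A}$ by computing $\clos{B}(x,r)\in\K(E_w)$ via scaling and translating the weakly compact unit ball (Theorem~\ref{thm: unit ball in weak*-topology as a computable metric space}), and then invoking the defining property of $\kappa^{E_w}$. The only cosmetic difference is that the paper phrases the final step as enumerating all rational closed balls in $\comp{A}$ and cites \cite[Theorem 3.9 (1)]{BrattkaPresser} for the passage to $\A_{\dist_<}$, whereas you spell out the equivalence $d_A(x)>q\Leftrightarrow\exists r\in\Q\,(q<r\wedge\clos{B}(x,r)\subseteq\comp{A})$ directly.
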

\begin{proof}
Given a sequentially weakly closed set $A \in \A(E_w)$, it suffices to show that we can uniformly computably enumerate all closed balls with rational centres and radii contained in the complement $A^C$ of $A$. The result then follows from \cite[Theorem 3.9 (1)]{BrattkaPresser}. The proof of Theorem \ref{thm: unit ball in weak*-topology as a computable metric space} (ii) allows us to uniformly translate a computable number $r \in \R$ into a name of $\clos{B}(0,r)$ as a weakly compact subset of $E_w$, i.e. the mapping
\[(0,\infty) \to \K(E_w), \; r \mapsto \clos{B}(0,r)\]
is computable. It is easy to see that the mapping
\[E_w\times E_w \to E_w,\; (x,c) \mapsto x + c\] 
is computable. Hence, the mapping
\[E_w \times (0,\infty) \to \K(E_w),\; (c,r) \mapsto \clos{B}(c,r) \]
is computable. It follows that the mapping
\[\A(E_w)\times E_w \times (0,\infty) \to \S,\; (A,c,r) \mapsto \begin{cases}1 &\text{if }\clos{B}(c,r) \subseteq A^C\text{,}\\0 &\text{otherwise}\end{cases} \]
is computable. Since $\product{\cdot}{\cdot}$ is computable, we have $\delta_{E}\leq\delta_{E_w}$, so that in particular the mapping
\[\A(E_w)\times E \times (0,\infty) \to \S,\; (A,c,r) \mapsto \begin{cases}1 &\text{if }\clos{B}(c,r) \subseteq A^C\text{,}\\0 &\text{otherwise}\end{cases} \]
is computable. Using this mapping we can enumerate all rational closed balls contained in the complement of $A$.
\end{proof}

Proposition \ref{prop: computably weakly closed uniformly implies lower semi-located} in particular implies that any nonempty weakly co-semi-decidable subset of $E$ is lower semi-located, and hence every nonempty weakly co-semi-decidable and computably overt subset of $E$ is located (which by Proposition \ref{prop: equivalence lower semi-located and computably closed implies locally compact} is at least not uniformly true for co-semi-decidable subsets of $E$, if $E$ is infinite dimensional).

Let us introduce some further Weihrauch degrees. Let $E$ be a computable Banach space with computable dual, such that the mapping $\product{\cdot}{\cdot}$ is $(\delta_{E}\times\delta_{E'},\rho)$-computable. Let $A\subseteq E$ be nonempty and weakly closed. The \emph{weak closed choice principle} $\C_A^{\w\to\w}$ on $A$ is the closed choice principle $\C_{A_w}$ on the represented space $A_w = (A,\delta_{E_w})$. The \emph{weak-strong closed choice principle} is the multimapping
\[\C_A^{\w\to\n}\colon \A(A_w) \rightrightarrows A, \; S \mapsto S,\]
where the image is represented by $\delta_{E}|^A$. Similarly, we define $\XC_A^{\w\to\w}$, $\UC^{\w\to\w}_A$, $\XC^{\w\to\n}_A$ and $\UC^{\w\to\n}_A$. We may also define a (computationally) weaker version of the Browder-Göhde-Kirk theorem. Let $K$ be nonempty, computably overt, weakly co-semi-decidable, bounded and convex. The \emph{weak Browder-Göhde-Kirk} theorem is the mapping \[\WBGK_{K}\colon \Ne(K) \rightrightarrows K_w,\; f\mapsto \Fix(f),\]
where we are given a nonexpansive mapping like in the case of the Browder-G\"ohde-Kirk theorem, but are only required to compute a fixed point with respect to the weak topology. Note that in $\ell^2$ this amounts to computing a fixed point with respect to an orthonormal basis, but not necessarily computing its $\ell^2$-norm (cf.~also \cite{BrattkaInseparable}).

\section{Characterisation of the Fixed Point Sets of Computable Nonexpansive Mappings in Computable Hilbert Space}\label{Section: 5}

We may now prove our main result. Throughout this section we will work on a computable Hilbert space $H$. Note that in this case $H' \simeq H$ is again a computable Hilbert space, and that the mapping $\product{\cdot}{\cdot}\colon H\times H' \to \R$ is the usual inner product on $H$, which is computable by the polarisation identity. In particular, we can use Definition \ref{Def: admissible representation for weak* topology} to construct the space $H_w$, whose representation is admissible for the weak topology on $H$.
\begin{theorem}\label{main result}
Let $H$ be a computable Hilbert space, let $K \subseteq H$ be weakly co-semi-decidable, computably overt, bounded, and convex. Then
\begin{enumerate}[label=(\roman*)]
\item The mapping
\[\Fix\colon \Ne(K)\to \K^{\conv}(K_w)\setminus\{\emptyset\}, f \mapsto \Fix(f)\]
is computable.
\item And so is its multivalued inverse
\[\Fix^{-1}\colon \K^{\conv}(K_w)\setminus\{\emptyset\} \rightrightarrows \Ne(K).\eqno{\qEd}\]
\end{enumerate}
\end{theorem}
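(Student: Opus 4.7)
The plan is to exploit the $\kappa_{\HB}$-representation of $\K^{\conv}(H_w)$ from Theorem \ref{Hahn-Banach equivalent to co-r.e.} and its relaxation in Lemma \ref{lem: listing enough half-spaces}, which reduce the task of writing or reading a weakly compact convex set to the enumeration of rational half spaces. Throughout, the set $K$ is itself a computable point of $\K(K_w)$ by Theorem \ref{thm: unit ball in weak*-topology as a computable metric space}(ii), and for any $f \in \Ne(K)$ the fixed point set $\Fix(f)$ is nonempty by Theorem \ref{thm: Browder-Goehde-Kirk Theorem}, convex by Proposition \ref{prop: fixed point sets of nonexpansive mappings are convex}, and weakly closed by Mazur's lemma, hence a weakly compact subset of $K$.

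For part (i), the plan is to output a $\kappa_{\HB}$-name of $\Fix(f)$ by taking $L = K$ as the weakly compact superset and enumerating rational half spaces $h = \{x : \product{x}{x'} + a \leq 0\}$ with $\Fix(f) \subseteq \intr{h}$. The key step is to semi-decide this inclusion. For rational $\epsilon > 0$, let $A_\epsilon = K \cap \{x : \product{x}{x'} + a + \epsilon \geq 0\}$, which is closed, convex, and bounded. By Lemma \ref{lem: infimum lemma}, $A_\epsilon \cap \Fix(f) = \emptyset$ iff $\inf_{x \in A_\epsilon} \norm{f(x) - x} > 0$, and $\Fix(f) \subseteq \intr{h}$ holds iff this disjunction succeeds for some rational $\epsilon > 0$; emptiness of $A_\epsilon$ is itself semi-decidable from $K \in \K(K_w)$ together with the weak openness of $\{x : \product{x}{x'} + a + \epsilon < 0\}$. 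Computable overtness of $K$ combined with the convexity of the defining slab gives a computable norm-dense sequence $(y^\epsilon_m)_m$ in $A_\epsilon$, and since $x \mapsto \norm{f(x) - x}$ is $2$-Lipschitz, the infimum over $A_\epsilon$ agrees with $\inf_m \norm{f(y^\epsilon_m) - y^\epsilon_m}$. The main obstacle is semi-deciding the strict positivity of this infimum: because $A_\epsilon$ is only weakly compact and $\norm{f(\cdot) - \cdot}$ is merely norm-continuous, Theorem \ref{thm: supremum on compact space} is not available, and one must combine Goebel's Lemma \ref{lem: goebels lemma} with the demiclosedness of $\id - f$ (a consequence of uniform convexity) in order to transfer almost-fixed-point data on a finite portion of the dense sequence into the desired uniform lower bound.

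For part (ii), the plan is to mimic the one-dimensional construction of Theorem \ref{thm: main theorem on [0,1]}. Unwinding the $\kappa_{\HB}$-name of $C$ and thickening each enumerated half space by a rational amount yields a computable enumeration $(h_n)_n$ of rational half spaces with $C = K \cap \bigcap_n \clos{h}_n$. Setting $C_n = K \cap \clos{h}_1 \cap \cdots \cap \clos{h}_n$, one obtains a decreasing sequence of nonempty closed, convex, bounded sets with $\bigcap_n C_n = C$. Each $C_n$ should be computably located, the lower semi-location coming from Proposition \ref{prop: computably weakly closed uniformly implies lower semi-located} and the upper semi-location from filtering the computable dense sequence of $K$ through the explicit half-space constraints (this filtration, and the preservation of density through it, is the main technical obstacle on this side). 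Corollary \ref{cor: computability of projection on subset} then makes each metric projection $P_{C_n} : K \to C_n$ a computable nonexpansive self-map of $K$ with $\Fix(P_{C_n}) = C_n$. I would then set
\[ f(x) = \sum_{n=1}^{\infty} 2^{-n} P_{C_n}(x), \]
uniform convergence together with closedness and convexity of $K$ giving $f \in \Ne(K)$. If $x \in C$ then every $P_{C_n}(x) = x$ and $f(x) = x$; conversely, for any $c \in C$ the variational inequality of Theorem \ref{thm: projection theorem}(ii) gives
\[ \product{x - c}{f(x) - x} \leq -\sum_{n=1}^{\infty} 2^{-n} \norm{x - P_{C_n}(x)}^2, \]
which is strictly negative as soon as $x \notin C_n$ for some $n$, that is, as soon as $x \notin C$; hence $f(x) \neq x$, establishing $\Fix(f) = C$.
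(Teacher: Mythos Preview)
Your plan for part (ii) is correct and takes a genuinely different route from the paper. The paper projects onto the individual rational half spaces $h_k$ via the explicit formula of Lemma~\ref{lem: computability of projection onto rational half space}, combines them via Bruck's Theorem~\ref{thm: intersection of fixed points}, and then post-composes with $P_K$, invoking Lemma~\ref{projecting back} to show that this last step does not create new fixed points. You instead project onto the nested intersections $C_n = K\cap h_1\cap\dots\cap h_n \subseteq K$ and argue the fixed-point identity directly from the variational inequality. Your approach avoids both Bruck's theorem and Lemma~\ref{projecting back}, at the cost of having to establish locatedness of the $C_n$ (for which the paper's Lemma~\ref{lem: convex intersection lemma} and Corollary~\ref{computability of intersection of computable convex sets and rational half spaces} are exactly what you need, applied inductively using $C_{n-1}\cap\intr{h}_n \supseteq C \neq \emptyset$). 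Both arguments ultimately rest on the Hilbert-space variational inequality.

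Your plan for part (i), however, has a genuine gap at precisely the point you flag as the ``main obstacle''. You reduce $\Fix(f)\subseteq\intr{h}$ to the statement $\inf_{x\in A_\epsilon}\norm{f(x)-x} > 0$, which is correct via Lemma~\ref{lem: infimum lemma}, but strict positivity of an infimum computed only from above is a $\Pi^0_1$-complete condition and is \emph{not} semi-decidable. Your suggestion to ``combine Goebel's Lemma with demiclosedness of $\id - f$'' does not close this gap: demiclosedness only tells you that if the infimum were zero then $A_\epsilon$ would meet $\Fix(f)$, which is the content of Lemma~\ref{lem: infimum lemma} again, not a $\Sigma^0_1$-witness for positivity. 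The paper resolves this with a different, genuinely quantitative idea (Lemma~\ref{quantitative projection lemma}): rather than trying to bound $\norm{f(x)-x}$ from below uniformly on $A_\epsilon$, it exploits the computable projection $P_A$ onto $A = h\cap K$ and shows that $\Fix(f)\cap h = \emptyset$ is equivalent to the existence of a single point $x$ in a dense subset of $A$ and a single $n\in\N$ with
\[
\norm{f(x)-x} > 2^{-n} \quad\text{and}\quad \norm{P_A(f(x)) - x} < \frac{2^{-2n-3}}{B}.
\]
This is manifestly $\Sigma^0_1$. The forward direction uses that $P_A\circ f|_A$ has a fixed point in $A$ (by Browder--G\"ohde--Kirk applied to $A$), and the backward direction is a direct computation from the variational inequality and nonexpansiveness. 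This lemma is the technical heart of part (i), and nothing in your outline substitutes for it.
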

Let us sketch the proof of the second claim. Given a nonempty, weakly closed, bounded and convex subset $A$ of $K$, by Theorem \ref{Hahn-Banach equivalent to co-r.e.} we can enumerate a sequence of half spaces whose intersection is equal to $A$. Now, the projections onto these half spaces are nonexpansive, thanks to Theorem \ref{thm: projection theorem} (ii), and computable:
\begin{lemma}\label{lem: computability of projection onto rational half space}
Let $H$ be a computable Hilbert space. There exists a computable function which takes as input a rational half space $h \subseteq H$, encoded as a $\nu_{\HB}$-name, and returns as output the metric projection onto $h$ as an element of $\Co(H,H)$.
\end{lemma}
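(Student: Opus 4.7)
The plan is to give an explicit closed-form expression for the metric projection onto a rational half space in Hilbert space and verify that each ingredient in the formula is computable uniformly in the $\nu_{\HB}$-name of $h$.

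First, I would decode the $\nu_{\HB}$-name into the pair $(v, a) \in E' \times \Q$ with $v = x_h'$ rational and $a = a_h$ rational, so $h = \{x \in H \mid \product{x}{v} + a \leq 0\}$. Since $v$ is a rational point in $H' \simeq H$ (given by a finite $\Q$-linear combination of fundamental sequence elements), we can decide whether $v = 0$. In the degenerate case $v = 0$, nonemptiness of $h$ forces $a \leq 0$ and thus $h = H$, so we return the identity, which is computable as an element of $\Co(H,H)$.

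In the generic case $v \neq 0$, I claim that the projection is given by the explicit formula
\[P_h(x) = x - \frac{\max(0,\product{x}{v} + a)}{\norm{v}^2}\,v.\]
Indeed, if $\product{x}{v} + a \leq 0$, then $x \in h$ and $P_h(x) = x$; otherwise $P_h(x)$ must lie on the boundary hyperplane $\{y \mid \product{y}{v} + a = 0\}$ and it is elementary to check (using the variational inequality from Theorem \ref{thm: projection theorem} (ii), or a direct minimisation) that the stated formula yields a point $y$ on this hyperplane satisfying $\product{x - y}{z - y} \leq 0$ for every $z \in h$. Thus the formula agrees with the unique minimiser guaranteed by Theorem \ref{thm: projection theorem}.

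It remains to observe that every operation in this formula is computable uniformly in $(v,a)$: since $v$ is rational, $\norm{v}^2 \in \Q_{>0}$ is a nonzero rational computable from the name of $v$; the inner product $\product{x}{v}$ is computable from $x \in H$ since $v$ is a finite rational linear combination of fundamental vectors and $H$ is a computable Hilbert space; addition with the rational $a$ is computable; the mapping $r \mapsto \max(0,r)$ is $(\rho,\rho)$-computable; division by the computable nonzero rational $\norm{v}^2$ is computable; and scalar multiplication and subtraction in $H$ are computable. Composing these operations in the order dictated by the formula yields, uniformly in the $\nu_{\HB}$-name of $h$, a $[\delta_H \to \delta_H]$-name of $P_h$, i.e.\ a name in $\Co(H,H)$. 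I do not anticipate any serious obstacle here; the only subtlety is the decidable case split on whether $v = 0$, which is why we needed the input to be a genuine rational functional rather than just a computable element of $H'$.
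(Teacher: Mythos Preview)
Your approach is essentially identical to the paper's: the same case split on whether the defining functional vanishes, and the same explicit projection formula (the paper writes it with the normalised vector $\tilde{x}_h = x_h/\norm{x_h}$, but the expressions agree). Two small points deserve tightening. First, your claim that ``since $v$ is a finite $\Q$-linear combination of fundamental sequence elements, we can decide whether $v = 0$'' is not automatic: the fundamental sequence of a computable normed space need not be linearly independent, so a nontrivial rational combination could vanish. The paper closes this gap by invoking the effective independence lemma (Lemma~\ref{lem: effective independence lemma}), which lets one assume without loss of generality that $\{n \mid \nu_H(n) = 0\}$ is decidable. Second, $\norm{v}^2$ is in general only a computable real, not a rational number (the inner products of fundamental sequence elements need not be rational); this does not affect the argument, since division by a computable positive real is still computable.
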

\begin{proof}
Let $h = \{x \in H \;|\; \product{x}{x_h} + a_h \leq 0\}$, where $x_h$ is a rational point in $H$ and $a_h \in \Q$. It follows from Lemma \ref{lem: effective independence lemma} that we can without loss of generality assume that the set $\{n\in\N \;|\; \nu_H(n) = 0\}$
is \emph{decidable} (cf.~also \cite[Lemma 3]{BrattkaHahnBanach}). Thus, we can decide if $x_h = 0$, and if this is the case we necessarily have $a_h = 0$ (since $h$ is nonempty), and the projection onto $h$ is the identity on $H$. If $x_h \neq 0$, put $\tilde{x}_h = \tfrac{x_h}{\norm{x_h}}$, $\tilde{a}_h = \tfrac{a_h}{\norm{x_h}}$, and $p = x - \alpha \tilde{x}_h$, where $\alpha = \max\{0, \product{x}{\tilde{x}_h} + \tilde{a}_h\}$. One easily verifies that $p \in h$ and that $p$ satisfies the variational inequality (Theorem \ref{thm: projection theorem} (ii)). It follows that $P_h(x) = p$. This proves the claim.
\end{proof}
We can hence compute a sequence of nonexpansive mappings such that $A$ is the intersection of the fixed point sets of these mappings. The following theorem due to Bruck allows us to construct a single nonexpansive mapping whose fixed point set is the intersection of the fixed point sets of our sequence of mappings.
\begin{theorem}[\cite{Bruck}]\label{thm: intersection of fixed points}
Let $E$ be a strictly convex real normed space, let $K \subseteq E$ be nonempty, closed, bounded, and convex.
Let $(\lambda_n)_n$ be any sequence in $(0,1)$ satisfying $\sum_n \lambda_n = 1$. Let $(f_n)_n$ be a family of nonexpansive mappings on $K$ with $\bigcap_{n \in \N}\Fix(f_n) \neq \emptyset$. Then the mapping
\[ f = \sum_n \lambda_nf_n \]
is well-defined, nonexpansive and satisfies
\[ \Fix(f) = \bigcap_{n \in \N} \Fix(f_n).\eqno{\qEd}\]
\end{theorem}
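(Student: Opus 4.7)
The plan is to first establish well-definedness and nonexpansiveness of $f$ (which are largely routine), and then to devote the bulk of the argument to $\Fix(f) \subseteq \bigcap_n \Fix(f_n)$, which is where strict convexity enters essentially. For well-definedness, fix any $y \in \bigcap_n \Fix(f_n)$ and $x \in K$; since $K$ is bounded with diameter $D$, nonexpansiveness of each $f_n$ yields $\norm{f_n(x) - y} \le D$, so the partial combinations $s_N := \sum_{n \le N} \lambda_n f_n(x) + \bigl(\sum_{n > N} \lambda_n\bigr) y$ lie in $K$ (by convexity and closedness) and form a Cauchy sequence, since $\norm{s_{N + M} - s_N} \le 2D \sum_{n > N} \lambda_n \to 0$. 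Taking the limit (which lies in $K$ in the Banach-space settings in which the theorem is actually applied in this paper) gives $f(x) = \sum_n \lambda_n f_n(x) \in K$. Nonexpansiveness is then immediate: $\norm{f(x) - f(y)} \le \sum_n \lambda_n \norm{f_n(x) - f_n(y)} \le \sum_n \lambda_n \norm{x - y} = \norm{x - y}$.

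The inclusion $\bigcap_n \Fix(f_n) \subseteq \Fix(f)$ is trivial. For the reverse inclusion, suppose $f(x) = x$ and fix $y \in \bigcap_n \Fix(f_n)$; if $x = y$ there is nothing to prove, so assume $x \ne y$. Setting $v_n := f_n(x) - y$, nonexpansiveness of $f_n$ gives $\norm{v_n} = \norm{f_n(x) - f_n(y)} \le \norm{x - y}$, while
\[
\norm{x - y} = \norm{f(x) - y} = \Bigl\| \sum_n \lambda_n v_n \Bigr\| \le \sum_n \lambda_n \norm{v_n} \le \sum_n \lambda_n \norm{x - y} = \norm{x - y}.
\]
Equality throughout forces $\norm{v_n} = \norm{x - y}$ for every $n$, so the normalised vectors $u_n := v_n / \norm{x - y}$ are unit vectors satisfying $\bigl\| \sum_n \lambda_n u_n \bigr\| = 1$.

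The main step, and the part where strict convexity is genuinely used, is to deduce from this that all $u_n$ coincide. Split $\sum_n \lambda_n u_n = \lambda_1 u_1 + (1 - \lambda_1) w_1$, where $w_1 := \sum_{n \ge 2} \tfrac{\lambda_n}{1 - \lambda_1} u_n$ is again a convex combination of unit vectors and hence satisfies $\norm{w_1} \le 1$. Combined with $\norm{\lambda_1 u_1 + (1 - \lambda_1) w_1} = 1$ and the triangle inequality this forces $\norm{w_1} = 1$, and then strict convexity of $E$ together with $\lambda_1 \in (0, 1)$ forces $u_1 = w_1$. Iterating the same splitting inside $w_1$ (with renormalised weights $\lambda_n/(1 - \lambda_1)$) yields $u_2 = w_2$, and so on, so all $u_n$ are equal, i.e.\ $f_n(x) = f_1(x)$ for every $n$. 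Then $x = f(x) = \sum_n \lambda_n f_1(x) = f_1(x)$, so $x$ is a fixed point of $f_1$, hence of every $f_n$.

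The main obstacle is thus the passage from the two-vector strict-convexity statement to an infinite convex combination of unit vectors whose average has norm one; this is handled by the recursive splitting above, which reduces each step to the two-vector case. Well-definedness and nonexpansiveness, by contrast, are entirely routine consequences of the boundedness of $K$ and the assumption $\sum_n \lambda_n = 1$.
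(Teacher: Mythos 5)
The paper gives no proof of its own for this result---it is stated with a $\qed$ and a citation to Bruck---so there is no in-paper argument to compare against; what follows is simply an assessment of your argument. Your proof is correct. Well-definedness, nonexpansiveness, and the trivial inclusion $\bigcap_n \Fix(f_n) \subseteq \Fix(f)$ are handled properly (and you rightly flag that the convergence of the series $\sum_n \lambda_n f_n(x)$ tacitly requires completeness, which ``normed space'' does not literally supply---a latent imprecision in the statement, since Bruck works in Banach spaces). The reverse inclusion correctly reduces, via the chain of norm equalities, to showing that unit vectors $u_n$ whose $\lambda$-weighted average has norm one must all coincide, and the recursive splitting $w_{k-1} = \mu_k u_k + (1-\mu_k)w_k$ with $\mu_k = \lambda_k / (1 - \sum_{j<k}\lambda_j) \in (0,1)$ does reduce this to the two-vector case. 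Two details are left implicit and deserve a sentence each: (a) you invoke strict convexity in the form $\norm{\lambda x + (1-\lambda)y} < 1$ for \emph{arbitrary} $\lambda \in (0,1)$, whereas the paper's definition is stated only for the midpoint $\lambda = 1/2$; the two are equivalent, but the equivalence should be noted. (b) From $u_k = w_k$ alone one does not immediately get $u_{k-1} = u_k$; what closes the induction is the observation that then $w_{k-1} = \mu_k u_k + (1-\mu_k)w_k = u_k$, so each $u_k$ equals the previous tail and hence equals $u_{k-1}$. Both points are one-liners, so the argument is essentially complete.
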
\medskip

\noindent In the final step, we project back onto $K$ in order to construct a self-map of $K$.
\begin{lemma}\label{projecting back}
Let $H$ be a real Hilbert space, $K \subseteq H$ be closed and convex and $f\colon K \to H$ be nonexpansive and suppose that $\Fix(f) \neq \emptyset$. Let $P_K$ denote the metric projection onto $K$. Then $P_K\circ f$ is nonexpansive as well with $\Fix(P_K\circ f) = \Fix(f)$.
\end{lemma}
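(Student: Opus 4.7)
The proof breaks into three parts. First, nonexpansiveness of $P_K\circ f$ is immediate: Theorem \ref{thm: projection theorem} (ii) tells us that $P_K$ is nonexpansive on all of $H$, and $f$ is nonexpansive by hypothesis, so the composition is nonexpansive.

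Second, the inclusion $\Fix(f)\subseteq \Fix(P_K\circ f)$ is trivial: if $f(x)=x$, then $x\in K$, hence $P_K(f(x))=P_K(x)=x$.

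The nontrivial direction is $\Fix(P_K\circ f)\subseteq \Fix(f)$, and this is where the assumption $\Fix(f)\neq\emptyset$ is used. Pick any $p\in\Fix(f)$, and suppose $x\in K$ satisfies $P_K(f(x))=x$. The idea is to combine two pieces of information: (a) the variational inequality from Theorem \ref{thm: projection theorem} (ii) applied to the projection $x=P_K(f(x))$, which gives $\product{f(x)-x}{y-x}\leq 0$ for every $y\in K$, and in particular for $y=p$; and (b) nonexpansiveness of $f$ together with $f(p)=p$, yielding $\norm{f(x)-p}\leq \norm{x-p}$.

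Expanding
\[\norm{f(x)-p}^2 = \norm{f(x)-x}^2 + 2\product{f(x)-x}{x-p} + \norm{x-p}^2\]
and using (b) to cancel $\norm{x-p}^2$ on both sides, one obtains
\[\norm{f(x)-x}^2 \leq 2\product{f(x)-x}{p-x},\]
while (a) with $y=p$ shows the right-hand side is $\leq 0$. Hence $\norm{f(x)-x}=0$, i.e.\ $f(x)=x$. No real obstacle arises — the only subtlety is remembering to invoke the hypothesis $\Fix(f)\neq\emptyset$ to supply the comparison point $p$, which is what makes the variational inequality useful.
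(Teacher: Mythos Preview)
Your proof is correct and follows essentially the same route as the paper: expand $\norm{f(x)-p}^2$ with $p\in\Fix(f)$, use nonexpansiveness of $f$ to bound it by $\norm{x-p}^2$, and apply the variational inequality for $x=P_K(f(x))$ to force $\norm{f(x)-x}^2\leq 0$. The only cosmetic difference is that the paper phrases the final step as a contradiction (assuming $f(x)\neq x$ and deriving $\norm{f(x)-f(p)}>\norm{x-p}$), whereas you argue directly.
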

\begin{proof}
It is clear that $P_K \circ f$ is nonexpansive and that $\Fix(f) \subseteq \Fix(P_K \circ f)$.
Suppose there exists $x \in K$ with $f(x) \neq x$ and $P_K(f(x)) = x$. Let $y$ be some fixed point of $f$. Then
\[\norm{f(x) - f(y)}^2 = \norm{y - x}^2 + \norm{x - f(x)}^2 - 2\product{f(x) - x}{y - x}.\]
By assumption, $x = P_K(f(x))$, so by the variational inequality (Theorem \ref{thm: projection theorem})
\[\product{f(x) - x}{y - x} \leq 0 \; \text{for all }y\in K.\]
We also assumed that $f(x) \neq x$, i.e. $\norm{f(x) - x}^2 > 0$, hence
\[\norm{f(x) - f(y)}^2 > \norm{x - y}^2.\]
Contradicting the assumption that $f$ is nonexpansive.
\end{proof}
\begin{proof}[Proof of Theorem \ref{main result} $(ii)$]
We prove that given a $\kappa_{\operatorname{HB}}$-name $\phi$ of a nonempty, closed, convex subset $A \subseteq K$ we can compute the name of a nonexpansive function $f\colon K \to K$ with $\Fix(f) = A$. The name $\phi$ encodes a sequence of rational half spaces $(h_k)_k$ containing $A$ in their interior. Using Lemma \ref{lem: computability of projection onto rational half space}, given $\phi$ we can compute a $[\delta_H\to\delta_H]^\omega$-name of some sequence $(P_k)_k$ of projections, where $P_k$ is the projection onto the rational half space $h_k$. By Theorem \ref{thm: intersection of fixed points}, the mapping $g = \sum_{k \in \N} 2^{-k - 1}P_k$ will satisfy ${\Fix(g) = \bigcap_{k \in \N} \Fix(P_k) = A}$. By Lemma \ref{projecting back}, the mapping $P_K \circ g\colon K \to K$ will have the same set of fixed points. Note that $P_K$ is computable by Corollary \ref{cor: computability of projection on E}, since $K$ is located by Proposition \ref{prop: computably weakly closed uniformly implies lower semi-located}.
\end{proof}
In order to prove item (i) of Theorem \ref{main result}, we need to inspect Lemma \ref{projecting back} a little closer. We first need another simple lemma.
\begin{lemma}\label{lem: convex intersection lemma}
Let $E$ be real normed space, let $C \subseteq E$ be closed and convex, let $S \subseteq C$ be a dense subset of $C$ and let $h$ be a half space in $E$. If $C\cap \intr{h}$ is nonempty, then $S\cap \intr{h}$ is dense in $C \cap h$.
\end{lemma}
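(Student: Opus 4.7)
The plan is the following. Fix an arbitrary $x \in C \cap h$ and some $\varepsilon > 0$; I will produce an element $s \in S \cap \intr{h}$ with $\norm{s - x} < \varepsilon$. The key geometric idea is to first slide $x$ slightly into the interior of $h$ along a segment that stays inside $C$, and then approximate the slid point from $S$ while remaining in $\intr{h}$.

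First, use the hypothesis to pick any $y \in C \cap \intr{h}$. Writing the half space as $h = \{z \in E \;|\; \product{z}{x_h} + a_h \leq 0\}$, we have $\product{x}{x_h} + a_h \leq 0$ and $\product{y}{x_h} + a_h < 0$. For $t \in (0,1]$ set $z_t = (1-t)x + ty$; convexity of $C$ gives $z_t \in C$, and linearity of $\product{\cdot}{x_h}$ yields
\[\product{z_t}{x_h} + a_h = (1-t)(\product{x}{x_h}+a_h) + t(\product{y}{x_h}+a_h) < 0,\]
so $z_t \in C \cap \intr{h}$. Moreover $\norm{z_t - x} = t\norm{y - x} \to 0$ as $t \to 0$, so we may choose $t \in (0,1]$ with $\norm{z_t - x} < \varepsilon/2$.

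Next, observe that $\intr{h}$ is a norm-open subset of $E$, so there exists $\delta > 0$ with $B(z_t, \delta) \subseteq \intr{h}$. Let $\delta' = \min(\delta, \varepsilon/2)$. Since $S$ is dense in $C$ and $z_t \in C$, there exists $s \in S$ with $\norm{s - z_t} < \delta'$. Then $s \in B(z_t, \delta) \subseteq \intr{h}$, and by the triangle inequality
\[\norm{s - x} \leq \norm{s - z_t} + \norm{z_t - x} < \delta' + \varepsilon/2 \leq \varepsilon,\]
so $s \in S \cap \intr{h}$ is the desired approximation.

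No step of this argument looks like a serious obstacle: it is essentially a two-line convex-combination trick combined with openness of $\intr{h}$. The only mildly delicate point is to notice that the segment from a point of $C \cap h$ to a point of $C \cap \intr{h}$ immediately enters $\intr{h}$, which is why the assumption ``$C \cap \intr{h}$ nonempty'' (rather than merely ``$C \cap h$ nonempty'') is essential.
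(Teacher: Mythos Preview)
Your proof is correct and follows essentially the same approach as the paper: pick a point in $C\cap\intr{h}$, move along the segment toward $x$ to land in $C\cap\intr{h}$ arbitrarily close to $x$, then use density of $S$ together with openness of $\intr{h}$ to approximate. The paper's proof is just a terser rendering of the same two steps.
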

\begin{proof}
Let $x \in C\cap h$, and let $c \in C \cap \intr{h}$. Then the line segment joining $c$ and $x$ is contained in $C \cap h$ and contains an element $b \in B(x,\varepsilon/2) \cap \intr{h}$. Now we choose $a \in S$ in a sufficiently small ball around $b$, so that $d(x,a) < \varepsilon$. 
\end{proof}
Lemma \ref{lem: convex intersection lemma} guarantees that intersections of weakly closed and overt sets and rational half spaces are (uniformly) overt. This is a special property, as in general the intersection operator on closed sets is $\left((\psi\sqcap\upsilon)\times(\psi\sqcap\upsilon),\upsilon\right)$-discontinuous (cf.~\cite[Theorem 5.1.13]{Weih}).
\begin{corollary}\label{computability of intersection of computable convex sets and rational half spaces}
Let $E$ be a computable Banach space with computable dual $E'$, such that the mapping $\product{\cdot}{\cdot}$ is $(\delta_{E}\times\delta_{E'},\rho)$-computable. Then intersection of weakly closed convex sets $C$ and closed rational half spaces $h$ with $C\cap \intr{h} \neq \emptyset$ is $\left((\psi^{E_w}\sqcap\upsilon^{E_w})\times\nu_{\HB},(\psi^{E_w}\sqcap\upsilon^{E_w})\right)$-computable, and hence ${\left((\psi^{E_w}\sqcap\upsilon^{E_w})\times\nu_{\HB},\psi_{\dist}\right)}$-computable.
\end{corollary}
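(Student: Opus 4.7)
The plan is to split the claim into two reductions: first, construct a $(\psi^{E_w}\sqcap\upsilon^{E_w})$-name of $C\cap h$ from the inputs, and second, derive the $\psi_{\dist}$ conclusion by combining Proposition \ref{prop: computably weakly closed uniformly implies lower semi-located} (for the $\rho_<$-side of the distance function) with the fact, recorded just after Proposition \ref{prop: strongly overt implies weakly overt}, that a convex weakly overt subset of a reflexive computable Banach space admits a computable norm-dense sequence (for the $\rho_>$-side).

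For the $\psi^{E_w}$-part of $C\cap h$, I would decompose the complement as $\comp{(C\cap h)} = \comp{C}\cup\comp{h}$. The set $\comp{C}$ is computable in $\O(E_w)$ from the given $\psi^{E_w}$-name of $C$; the set $\comp{h} = \{x \in E : \product{x}{x_h} + a_h > 0\}$ is the preimage of $(0,\infty)$ under the affine functional $\product{\cdot}{x_h} + a_h\colon E_w\to\R$, which is computable from the $\nu_{\HB}$-name of $h$ by the very definition of $E_w$. Since $\O(E_w)$ is closed under computable finite unions, $\comp{(C\cap h)}$ is computable in $\O(E_w)$, yielding the desired $\psi^{E_w}$-name of $C\cap h$.

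For the $\upsilon^{E_w}$-part, given $U \in \O(E_w)$ I need to semi-decide $(C\cap h)\cap U \neq \emptyset$. Here Lemma \ref{lem: convex intersection lemma}, applied with $C$ taken as a dense subset of itself and using the hypothesis $C\cap\intr{h}\neq\emptyset$, gives that $C\cap\intr{h}$ is dense in $C\cap h$; hence $(C\cap h)\cap U \neq\emptyset$ if and only if $C\cap(\intr{h}\cap U)\neq\emptyset$. Since $\intr{h}\in\O(E_w)$ is computable from the $\nu_{\HB}$-name of $h$, the set $\intr{h}\cap U$ is uniformly computable in $\O(E_w)$, and feeding it into the $\upsilon^{E_w}$-name of $C$ yields the required test. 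The ``hence'' clause then follows by routing the resulting $\psi^{E_w}$-name through Proposition \ref{prop: computably weakly closed uniformly implies lower semi-located} to obtain the $\rho_<$-side of the distance function, and converting the $\upsilon^{E_w}$-name of the convex set $C\cap h$ into a $\delta_E$-computable norm-dense sequence (via a $\delta_{E_w}$-weakly dense sequence supplied by Proposition \ref{prop: overtness and recursive enumerability} and rational convex combinations whose norm-density is certified by Mazur's Lemma \ref{thm: Mazur}) to obtain the $\rho_>$-side.

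The delicate step is this last extraction of a $\delta_E$-computable norm-dense sequence: Proposition \ref{prop: overtness and recursive enumerability} only supplies $\delta_{E_w}$-approximants, and, as Remark \ref{Rem: corollary five in Brattka Schroeder wrong} emphasises, $\delta_{E_w}$-data do not yield $\delta_E$-data directly; one therefore has to form the rational convex combinations effectively and exploit Mazur's lemma to certify uniform norm-convergence to points of $C\cap h$ before the infimum of norm distances to the resulting sequence can be used to extract $\rho_>$-approximations of $d_{C\cap h}$.
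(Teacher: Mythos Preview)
Your argument for the $(\psi^{E_w}\sqcap\upsilon^{E_w})$-computability of $C\cap h$ is correct and essentially the same as the paper's. The paper phrases the overtness step as ``enumerate all elements given by the $\upsilon$-name of $C$ which are also contained in $\intr{h}$'', tacitly treating the $\upsilon^{E_w}$-name as a dense sequence; your abstract reformulation via the test $C\cap(\intr{h}\cap U)\neq\emptyset$ is more faithful to Definition~\ref{Def: canonical representation of overt sets} and is the cleaner way to say the same thing. The $\psi^{E_w}$-part is identical in both proofs.

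The genuine gap is in your justification of the ``hence $\psi_{\dist}$'' clause. The paper simply asserts $(\psi^{E_w}\sqcap\upsilon^{E_w}) \equiv \psi_{\dist}$ and cites Proposition~\ref{prop: computably weakly closed uniformly implies lower semi-located}. You correctly note that this proposition only yields the $\rho_<$-half of the distance function and that the $\rho_>$-half requires a norm-dense sequence. But your proposed extraction is broken: Proposition~\ref{prop: overtness and recursive enumerability} states $\delta_{\re} \leq \upsilon$, not the converse, so it does \emph{not} supply a dense sequence from an $\upsilon^{E_w}$-name; the converse (Proposition~\ref{prop: overtness and recursive enumerability, metric case}) needs a complete computable metric space, which $E_w$ is not. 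And Mazur's lemma is purely existential---it does not hand you an algorithm for locating the approximating convex combinations, so it cannot ``certify uniform norm-convergence'' in any effective sense. If you want to match the paper's level of detail, simply invoke the remark after Proposition~\ref{prop: strongly overt implies weakly overt} (which you already cite in your first paragraph) as a black box and stop there, rather than trying to unpack it via Proposition~\ref{prop: overtness and recursive enumerability} and Mazur.
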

\begin{proof}
Since $\nu_{\HB} \leq \psi$, we can always uniformly compute a $\psi$-name of the intersection. In order to compute an $\upsilon$-name, enumerate all elements given by the $\upsilon$-name of $C$ which are also contained in $\intr{h}$. The above lemma guarantees that this yields an $\upsilon$-name of $C \cap h$. The second claim follows from $(\psi^{E_w}\sqcap\upsilon^{E_w}) \equiv \psi_{\dist}$, which in turn follows from Proposition \ref{prop: computably weakly closed uniformly implies lower semi-located}.
\end{proof}

\begin{lemma}\label{quantitative projection lemma}
Let $H$ be a real Hilbert space, $K \subseteq H$ be closed, bounded and convex, let $f\colon K \to K$ be nonexpansive, $h$ be a half space such that $\intr{h}\cap K \neq \emptyset$ and let $S\subseteq K$ be dense in $K$. Let $A = h\cap K$. Then ${\Fix(f) \cap h = \emptyset}$ if and only if
\begin{equation}\label{eq: quantitative projection lemma equation}
\exists x \in \intr{h} \cap S.\exists n \in \N.\left(\norm{f(x) - x} > 2^{-n} \land \norm{P_A(f(x)) - x} < \frac{2^{-2n-3}}{B}\right), 
\end{equation}
where $B \geq \sup\{\norm{x} + 1 \;\big|\; x \in K\}$.
\end{lemma}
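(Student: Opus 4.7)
The statement is an equivalence, and I would prove the two directions separately: the ``if'' direction is a direct quantitative estimate, while the ``only if'' direction combines an existence argument with a density-and-continuity perturbation. The main obstacle lies on the ``only if'' side, where one must secure a suitable equilibrium of $P_A \circ f|_A$ using the Browder--G\"ohde--Kirk theorem (the map $f|_A$ itself need not be a self-map of $A$, so BGK cannot be applied to it directly).

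For the ``if'' direction, fix $x \in \intr{h}\cap S$ and $n \in \N$ as in the hypothesis, set $p := P_A(f(x))$, and assume toward a contradiction that $y \in \Fix(f) \cap h$. Then $y \in A$, so the variational inequality of Theorem~\ref{thm: projection theorem}(ii) gives $\product{f(x) - p}{y - p} \leq 0$. Expanding $\norm{f(x) - y}^2 = \norm{f(x) - p}^2 + 2\product{f(x) - p}{p - y} + \norm{p - y}^2$ and combining with the nonexpansiveness bound $\norm{f(x) - y} \leq \norm{x - y}$ yields $\norm{f(x) - p}^2 \leq \norm{x - y}^2 - \norm{p - y}^2$. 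Factoring the right-hand side and invoking the reverse triangle inequality together with $\norm{x-y} + \norm{p-y} < 4B$ (since $K$ sits in a ball of radius $B - 1$), one obtains $\norm{f(x) - p}^2 < 4B\cdot\norm{x - p}$. Substituting $\norm{x - p} < 2^{-2n-3}/B$ produces $\norm{f(x) - p} < 2^{-n-1/2}$, so $\norm{f(x) - x} \leq \norm{f(x) - p} + \norm{p - x} < 2^{-n}$, contradicting the assumption. The exponent $2n+3$ is calibrated precisely so that after taking the square root and adding the residual term $\norm{p-x}$, the sum stays strictly below $2^{-n}$.

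For the ``only if'' direction, suppose $\Fix(f) \cap h = \emptyset$. Since $A$ is nonempty, closed, bounded and convex with $A \cap \Fix(f) = \emptyset$, Lemma~\ref{lem: infimum lemma} provides $\varepsilon > 0$ with $\norm{f(y) - y} \geq \varepsilon$ for all $y \in A$. The key observation is that $g := P_A \circ f|_A$ is a nonexpansive self-map of $A$, so Theorem~\ref{thm: Browder-Goehde-Kirk Theorem} furnishes some $z \in A$ with $P_A(f(z)) = z$; in particular $\norm{P_A(f(z)) - z} = 0$ and $\norm{f(z) - z} \geq \varepsilon$. The remaining task is to perturb $z$ into $\intr{h} \cap S$ while preserving both inequalities approximately. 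Fixing any anchor $u \in \intr{h} \cap K$, the convex combinations $z_\lambda := \lambda u + (1-\lambda) z$ lie in $\intr{h} \cap K$ for every $\lambda \in (0, 1]$, since $\product{z_\lambda}{x_h'} + a_h \leq \lambda(\product{u}{x_h'} + a_h) < 0$, and $z_\lambda \to z$ as $\lambda \to 0$. Because $S$ is dense in $K$ and $\intr{h} \cap K$ is open in $K$, the set $S \cap \intr{h}$ accumulates at $z$. Choosing $n$ with $2^{-n} < \varepsilon/2$ and taking $x \in S \cap \intr{h}$ sufficiently close to $z$, continuity of $f$ and of $P_A$ preserves the strict inequalities $\norm{f(x) - x} > 2^{-n}$ and $\norm{P_A(f(x)) - x} < 2^{-2n-3}/B$.
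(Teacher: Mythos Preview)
Your proof is correct and follows essentially the same approach as the paper's: both directions hinge on the variational inequality for $P_A$ combined with nonexpansiveness, and the ``only if'' direction obtains the equilibrium point via the Browder--G\"ohde--Kirk theorem applied to $P_A \circ f|_A$, followed by a perturbation into $S \cap \intr{h}$. The only differences are cosmetic: in the ``if'' direction you factor $\norm{x-y}^2 - \norm{p-y}^2$ where the paper expands further around $x$, and in the ``only if'' direction your detour through Lemma~\ref{lem: infimum lemma} is unnecessary (it suffices that the single point $z$ satisfies $\norm{f(z)-z} > 0$), and you rederive the density statement that the paper packages as Lemma~\ref{lem: convex intersection lemma}.
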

\proof
Let us first prove the forward direction. By the Browder-Göhde-Kirk Theorem, ${P_A\circ f|_A\colon A\to A}$ has a fixed point $\tilde{x} \in h \cap K$. Since, by assumption, $\tilde{x}$ is not a fixed point of $f$, there exists an $m \in \N$ with $\norm{f(\tilde{x}) - \tilde{x}} > 2^{-m}$. By Lemma \ref{lem: convex intersection lemma}, $S \cap \intr{h}$ is dense in $h\cap K$, so that we may choose $x \in S\cap\intr{h}$ with $\norm{x - \tilde{x}} < \frac{2^{-2m-6}}{B}$. Then
\begin{align*}
\norm{P_A(f(x)) - x} &\leq \norm{P_A(f(\tilde{x})) - \tilde{x}} + \norm{P_A(f(x)) - P_A(f(\tilde{x}))} + \norm{x - \tilde{x}}\\ &\leq 2 \norm{x - \tilde{x}} < \frac{2^{-2m-5}}{B} = \frac{2^{-2(m + 1) - 3}}{B}.
\end{align*}
And (using $B \geq 1$)
\[\norm{f(x) - x} \geq \norm{f(\tilde{x}) - \tilde{x}} - 2\norm{x - \tilde{x}} > 2^{-m} -  \frac{2^{-2m-5}}{B} \geq 2^{-m} -  2^{-2m-5} > 2^{-m-1}.\]
For the converse direction, we proceed by contrapositive. We suppose that there exists $y \in \Fix(f) \cap h$ (and hence $y \in \Fix(P_A\circ f|_A)$) and show 
\[\forall x \in \intr{h} \cap S.\forall n \in \N. \left( \norm{f(x) - x} > 2^{-n} \rightarrow \norm{P_A(f(x)) - x} \geq \frac{2^{-2n-3}}{B} \right). \]
Let $x \in \intr{h} \cap S$ with $\norm{f(x) - x} > 2^{-n}$. Since $f$ is nonexpansive, we have
\begin{align*}\norm{y - x}^2 &\geq \norm{f(y) - f(x)}^2 \\&= \norm{f(y) - P_A(f(x))}^2 + \norm{P_A(f(x)) - f(x)}^2  + 2(y - P_A(f(x)),P_A(f(x)) - f(x)).
\end{align*}
Now, by the variational inequality, $2(y - P_A(f(x)),P_A(f(x)) - f(x)) \geq 0$, so that
\begin{align*}
\norm{y - x}^2 &\geq \norm{f(y) - P_A(f(x))}^2 + \norm{P_A(f(x)) - f(x)}^2\\
 &=  \norm{y - x}^2 + \norm{x - P_A(f(x))}^2 + 2(y - x,x - P_A(f(x)))\\ &+ \norm{P_A(f(x)) - x}^2 + \norm{x - f(x)}^2 + 2(P_A(f(x)) - x, x - f(x)),
\end{align*}
which entails that
\begin{align*}
0 &\geq \norm{f(x) - x}^2 + 2(P_A(f(x)) - x, x - f(x) - y + x)\\
  &\geq \norm{f(x) - x}^2 - 2\norm{P_A(f(x)) - x}\cdot\norm{x - f(x) - y + x}\\
  &\geq \norm{f(x) - x}^2 - 8\norm{P_A(f(x)) - x}B,
\end{align*}
and hence
\[\norm{P_A(f(x)) - x} \geq \frac{2^{-2n-3}}{B}.\eqno{\qEd}\]\medskip

\noindent Note that it follows from Corollary \ref{computability of intersection of computable convex sets and rational half spaces} that the projection onto $A$ in Lemma \ref{quantitative projection lemma} is computable.

\begin{proof}[Proof of Theorem \ref{main result} $(i)$]
Given a nonexpansive mapping $f\colon K \to K$, we want to compute a $\kappa_{\HB}$-name of $\Fix(f)$. We need to compute a weakly compact set $L \in \K(E_w)$ with $L \supseteq \Fix(f)$, and a list of all rational half spaces containing $f$. Since $K$ contains $\Fix(f)$ and is computably weakly compact by Theorem \ref{thm: unit ball in weak*-topology as a computable metric space} (ii) we may put $L = K$, so it suffices to list all rational half spaces containing $\Fix(f)$. In fact, by Lemma \ref{lem: listing enough half-spaces} it suffices to compute a list of rational half spaces $(h_n)_n$ satisfying $\bigcap_{n \in \N} \intr{h}_n = \Fix(f)$. In order to do so, we enumerate two different lists $L_1$ and $L_2$ of half spaces and interleave them. The first list $L_1$ consists of all rational half spaces containing $K$ in their interior. This list is computable since $K$ is computably weakly compact, and hence $\kappa_{\HB}$-computable by Theorem \ref{Hahn-Banach equivalent to co-r.e.}. In order to compute the second list $L_2$, we first enumerate all rational half spaces $h$ such that $\intr{h} \cap K \neq \emptyset$ and $h^C \cap K \neq \emptyset$. This is possible because $K$ is computably overt. Out of these half spaces we only enumerate those which satisfy $\Fix(f) \cap (\intr{h})^C = \emptyset$. In order to verify this property we apply Lemma \ref{quantitative projection lemma} to the half space $(\intr{h})^C$. Note that we can compute the projection onto $K\cap (\intr{h})^C$ by Corollary \ref{computability of intersection of computable convex sets and rational half spaces}, so that the property \eqref{eq: quantitative projection lemma equation} in Lemma \ref{quantitative projection lemma} becomes semi-decidable. Now, $\Fix(f) \cap (\intr{h})^C = \emptyset$ is equivalent to $\intr{h} \supseteq \Fix(f)$, and it is easy to see that the list $(h_n)_n$ we obtain by interleaving $L_1$ and $L_2$ satisfies $\bigcap_{n \in \N} \intr{h}_n = \Fix(f)$.
\end{proof}

Theorem \ref{main result} now allows us to determine the Weihrauch degree of the weak and strong Browder-Göhde-Kirk theorem.

\begin{theorem}\label{thm: BGK equivalent convex choice}
Let $H$ be a computable Hilbert space and $K\subseteq H$ be nonempty, bounded, convex, computably weakly closed, and computably overt. Then \[\BGK_K \equiv_W \XC_K^{\w\to\n},\] and \[\WBGK \equiv_W \XC_K^{\w\to\w}.\] If $K$ is computably compact, then \[\BGK_K \equiv_W \WBGK_K \equiv_W \XC_K.\]
\end{theorem}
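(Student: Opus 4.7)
The plan is to derive all three equivalences almost directly from Theorem \ref{main result}, which provides computable maps in both directions between $\Ne(K)$ and $\K^{\conv}(K_w)\setminus\{\emptyset\}$.

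For $\BGK_K \leq_W \XC_K^{\w\to\n}$, given a nonexpansive $f\colon K\to K$, I would apply Theorem \ref{main result}(i) to compute $\Fix(f)$ as an element of $\K^{\conv}(K_w)\setminus\{\emptyset\}$. Since $K$ is bounded, convex, and weakly closed in the reflexive space $H$, Theorem \ref{thm: unit ball in weak*-topology as a computable metric space}(ii) ensures $K\in\K(H_w)$ computably, whence closed subsets of $K_w$ are automatically weakly compact by Proposition \ref{prop: closed and compact sets, (i) closed set is compact, (ii) compact set in Hausdorff space is closed}(i), and the two hyperspaces $\A^{\conv}(K_w)$ and $\K^{\conv}(K_w)$ coincide as represented spaces. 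In particular $\Fix(f)$ is a valid input to $\XC_K^{\w\to\n}$, which produces a point of $\Fix(f)$ represented via $\delta_H|^K$; this is by definition a valid $\BGK_K$-output. Conversely, for $\XC_K^{\w\to\n}\leq_W \BGK_K$, given a nonempty weakly closed convex $A\subseteq K$ we apply Theorem \ref{main result}(ii) to obtain a nonexpansive $f_A\colon K\to K$ with $\Fix(f_A)=A$, pass $f_A$ to $\BGK_K$, and return the output as a (strongly represented) element of $A$.

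For the weak-topology equivalence $\WBGK_K\equiv_W \XC_K^{\w\to\w}$, I would repeat the same two constructions verbatim, the only change being that both the output of $\WBGK_K$ and of $\XC_K^{\w\to\w}$ live in $K_w$ rather than $K$: the reduction from $\WBGK_K$ to $\XC_K^{\w\to\w}$ still uses $\Fix$ of Theorem \ref{main result}(i), and the reverse reduction still uses $\Fix^{-1}$ of Theorem \ref{main result}(ii). No additional computability is required because in both directions the representations of the inputs and outputs match on the nose.

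For the compact case, suppose $K$ is computably compact. Then Proposition \ref{prop: weak and strong topology equivalent on compact sets} gives $\delta_{H_w}|^K \equiv \delta_H|^K$, i.e.\ $K$ and $K_w$ are computably equivalent represented spaces. Consequently the hyperspaces $\A(K_w)$ and $\A(K)$, as well as the output spaces, are computably equivalent, which immediately yields $\XC_K^{\w\to\w} \equiv_W \XC_K^{\w\to\n} \equiv_W \XC_K$ and, on the BGK side, $\BGK_K \equiv_W \WBGK_K$; combined with the previous two items this gives $\BGK_K \equiv_W \WBGK_K \equiv_W \XC_K$.

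The only point that requires more than a one-line verification is the identification $\A^{\conv}(K_w) \equiv \K^{\conv}(K_w)$ needed to splice Theorem \ref{main result}(i) into the input of $\XC$; the main obstacle is therefore purely book-keeping on the representation of $K$ itself as a weakly compact set, which is handled by Theorem \ref{thm: unit ball in weak*-topology as a computable metric space}(ii) together with Proposition \ref{prop: closed and compact sets, (i) closed set is compact, (ii) compact set in Hausdorff space is closed}.
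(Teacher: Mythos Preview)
Your proposal is correct and follows essentially the same approach as the paper: both directions come directly from Theorem~\ref{main result}, with the identification $\A^{\conv}(K_w)\equiv\K^{\conv}(K_w)$ handled via the computable weak compactness of $K$ from Theorem~\ref{thm: unit ball in weak*-topology as a computable metric space}(ii), and the compact case via Proposition~\ref{prop: weak and strong topology equivalent on compact sets}. The paper's proof is terser but otherwise identical in structure and in the results it invokes.
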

\begin{proof}
The equivalences $\BGK_K \equiv_W \XC_K^{\w\to\n}$ and $\WBGK \equiv_W \XC_K^{\w\to\w}$ follow from Theorem \ref{main result}, together with the fact that by Theorem \ref{thm: unit ball in weak*-topology as a computable metric space} $K$ is computably weak* compact, so that $\id:\A^{\conv}(K) \to \K^{\conv}(K)$ is computable. If $K$ is computably compact, then by Proposition \ref{prop: weak and strong topology equivalent on compact sets} we have $\delta_{E}\big|^K \equiv \delta_{E_w}\big|^K$, which yields $\BGK_K \equiv_W \WBGK_K \equiv_W \XC_K$ (note that if two representations of the same space are equivalent, then the induced canonical representations of closed sets are - by construction - equivalent as well). 
\end{proof}

Theorem \ref{thm: BGK equivalent convex choice} also shows that on a non-compact domain, negative information on the weak closedness of a set is much stronger than negative information on its norm-closedness. We have for instance $\BGK_{B_{\ell^2}} \leq_W \Proj_{B_{\ell^2}} \equiv_W \lim$, and so $\XC_{B_{\ell^2}}^{\w\to\n} \leq_W \lim$, while already $\UC_{B_{\ell^2}}^{\n\to\n}$ is equivalent to the extremely non-effective principle $\C_{\N^\N}$. In finite dimension, the degree of $\BGK_K$ is always strictly below $\WKL$ because of Corollary \ref{cor: fixed points in finite dimension are computable}. On the unit ball in $\ell^2$ this is no longer the case.

\begin{theorem}\label{thm: computable mapping with norm-uncomputable image}
There exists a computable mapping
\[T\colon \subseteq [0,1]^{\N} \to \K^{\conv}(B_{\ell^2_w})\]
with $\dom T = \{x \in [0,1]^\N\;|\;x(n)\leq x(n + 1)\}$ such that for all $x \in \dom T$ we have $T(x) = \{a\}$ with ${\norm{a}_2 = \lim x(n)}$.
\end{theorem}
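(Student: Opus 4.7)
The plan is to encode the scalar $\ell := \lim_n x(n)$ as the $\ell^2$-norm of an explicit weakly computable vector and then take $T(x)$ to be its singleton. Define $a(x) \in \ell^2$ coordinate-wise by $a(x)_0 = x(0)$ and $a(x)_n = \sqrt{x(n)^2 - x(n-1)^2}$ for $n \geq 1$. Monotonicity of $x$ makes every radicand non-negative, and the squared partial sums telescope: $\sum_{k=0}^N a(x)_k^2 = x(N)^2$, whence $\norm{a(x)}_2^2 = \ell^2$, so $a(x) \in B_{\ell^2}$ has norm exactly $\ell$. Each coordinate $a(x)_n$ is patently computable from $x$, uniformly in $n$.

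Next, I verify that $a(x)$ is a computable point of the represented space $B_{\ell^2_w}$. Given $y \in \ell^2$ in the Cauchy representation, the partial inner products $s_N := \sum_{k=0}^N a(x)_k y(k)$ are computable in $x$, $y$, and $N$, and by Cauchy--Schwarz
\[|\product{a(x)}{y} - s_N| \;\leq\; \norm{a(x)}_2 \Big(\norm{y}_2^2 - \sum_{k=0}^N y(k)^2\Big)^{1/2} \;\leq\; \Big(\norm{y}_2^2 - \sum_{k=0}^N y(k)^2\Big)^{1/2}.\]
Since $\norm{y}_2$ is computable from an $\ell^2$-name of $y$, the right-hand side is a computable sequence tending to $0$, so $\product{a(x)}{y}$ is uniformly computable in $x$ and $y$. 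This is precisely the defining property of a computable point of $B_{\ell^2_w}$.

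It remains to promote $\{a(x)\}$ to a computable point of $\K^{\conv}(B_{\ell^2_w})$. The singleton is automatically convex, and the space $B_{\ell^2_w}$ is computably compact by Theorem~\ref{thm: unit ball in weak*-topology as a computable metric space}. To supply a $\kappa$-name of $\{a(x)\}$ one must semi-decide $\{a(x)\} \subseteq U$ uniformly in $U \in \O(B_{\ell^2_w})$, and this reduces to semi-deciding $a(x) \in U$, which is immediate from the definition of $\O(B_{\ell^2_w})$ together with the previous paragraph. Setting $T(x) := \{a(x)\}$ then yields the required computable map with $\norm{a(x)}_2 = \lim_n x(n)$.

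The only substantive computation is the Cauchy--Schwarz tail estimate, together with the elementary fact that $\norm{y}_2$ is computable from the $\ell^2$-name of $y$; everything else assembles hyperspace constructions already established in the paper, so I anticipate no obstacle beyond this bookkeeping.
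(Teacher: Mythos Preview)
Your proof is correct and follows essentially the same approach as the paper: the key construction of $a(x)$ via telescoping coordinates is identical, and the passage to $\K^{\conv}(B_{\ell^2_w})$ uses the same computable weak compactness of the unit ball. The only minor difference is that you justify the weak computability of $a(x)$ explicitly via a Cauchy--Schwarz tail estimate and then obtain the $\kappa$-name directly, whereas the paper asserts weak computability more tersely and first produces a $\psi$-name of $\{a\}$ (by checking coordinate-wise inequality) before invoking compactness; these are equivalent bookkeeping choices.
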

\begin{proof}
Let $x\in \dom T$. Put $a(0) = x(0)$ and $a(n + 1) = \sqrt{x(n + 1)^2 - x(n)^2}$. Then we have $a(n)^2 + \dots + a(0)^2 = x(n)^2$. Now, put $T(x) = \{a\}$. Note that $a$ is $\delta_{\ell^2_w}$-computable relative to $x$, so we can compute $\{a\}$ in $\K^{\conv}(B_{\ell^2_w})$: in order to compute the characteristic function of $\{a\}^C$ into Sierpi\'nski space we simply check for inequality with $a$ component-wise. This allows us to compute $\{a\}$ as a point in $\A^{\conv}(B_{\ell^2_w})$, and thus as a point of $\K^{\conv}(B_{\ell^2_w})$, using that the identity $\id\colon \A^{\conv}(B_{\ell^2_w}) \to \K^{\conv}(B_{\ell^2_w})$ is computable, since $B_{\ell^2_w}$ is computably weakly compact by Theorem \ref{thm: unit ball in weak*-topology as a computable metric space}.
\end{proof}

Choosing from a set in $\K^{\conv}(B_{\ell^2})$ hence allows us computably translate a $\rho_<$-name to a $\rho$-name of a given real number $x \in [0,1]$, already if the set is a singleton. This yields:

\begin{corollary}\label{cor: Weihrauch degree of BGK on unit ball}
$\BGK_{B_{\ell^2}} \equiv_W \UC_{B_{\ell^2}}^{\w\to\n} \equiv_W \lim$. In particular $\BGK_{B_{\ell^2}} \equiv_W \Proj_{B_{\ell^2}}$.
\end{corollary}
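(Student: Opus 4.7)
The statement collects four Weihrauch-reductions into two equivalence chains, so the plan is to close the circle
\[\lim \;\leq_W\; \UC_{B_{\ell^2}}^{\w\to\n} \;\leq_W\; \XC_{B_{\ell^2}}^{\w\to\n} \;\equiv_W\; \BGK_{B_{\ell^2}} \;\leq_W\; \Proj_{B_{\ell^2}} \;\leq_W\; \lim.\]
Two of the links are already in hand: the equivalence $\BGK_{B_{\ell^2}} \equiv_W \XC_{B_{\ell^2}}^{\w\to\n}$ is supplied by Theorem \ref{thm: BGK equivalent convex choice}, and the trivial restriction from convex choice to choice on singletons yields $\UC_{B_{\ell^2}}^{\w\to\n} \leq_W \XC_{B_{\ell^2}}^{\w\to\n}$. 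It thus suffices to verify the remaining three reductions.

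For the upper bound I would invoke Proposition \ref{prop: obvious Weihrauch reductions} to get $\BGK_{B_{\ell^2}} \leq_W \Proj_{B_{\ell^2}}$, and then apply the Hilbert space case of Proposition \ref{prop: initial upper bound on Proj} to obtain $\Proj_{B_{\ell^2}} \leq_W \lim$; this leg uses Halpern's iteration together with a single application of $\lim$ to recover the norm-limit of the (computable) sequence of approximations from Theorem \ref{thm: Halpern's theorem}. Note that $B_{\ell^2}$ is nonempty, bounded, convex, computably overt (the rational points of norm less than one are dense), and computably weakly closed (indeed computably weak*-compact by Theorem \ref{thm: unit ball in weak*-topology as a computable metric space}), so the hypotheses of all results we invoke are met.

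The content of the corollary is therefore the lower bound $\lim \leq_W \UC_{B_{\ell^2}}^{\w\to\n}$, and this is precisely what Theorem \ref{thm: computable mapping with norm-uncomputable image} was tailored for. Using the folklore fact recalled just before Proposition \ref{prop: BGK in [0,1]} that $\id\colon \R_{<} \to \R$ is Weihrauch-equivalent to $\lim$ (even restricted to $[0,1]$), it is enough to reduce $\id\colon [0,1]_{<}\to[0,1]$ to $\UC_{B_{\ell^2}}^{\w\to\n}$. Given a left-r.e.\ name of $x\in[0,1]$, extract a monotonically increasing rational sequence in $[0,1]$ converging to $x$, feed it into the computable mapping $T$ of Theorem \ref{thm: computable mapping with norm-uncomputable image} to obtain a $\kappa^{B_{\ell^2_w}}$-name of the singleton $\{a\}$ with $\norm{a}_2 = x$, and then apply $\UC_{B_{\ell^2}}^{\w\to\n}$ to return $a$ in the strong representation $\delta_{\ell^2}$. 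Since the norm is $(\delta_{\ell^2},\rho)$-computable, this recovers $x = \norm{a}_2$ as a $\rho$-name, as required.

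Combining the two directions yields $\BGK_{B_{\ell^2}} \equiv_W \UC_{B_{\ell^2}}^{\w\to\n} \equiv_W \lim$. The ``in particular'' statement then follows by inserting $\Proj_{B_{\ell^2}}$ into the chain: $\BGK_{B_{\ell^2}} \leq_W \Proj_{B_{\ell^2}} \leq_W \lim \equiv_W \BGK_{B_{\ell^2}}$, forcing $\Proj_{B_{\ell^2}} \equiv_W \BGK_{B_{\ell^2}}$. No step is genuinely difficult at this stage; the only delicate point is to notice that the \emph{singleton} case $\UC$ is already enough to drive the lower bound, which is exactly the extra mileage extracted from Theorem \ref{thm: computable mapping with norm-uncomputable image}.
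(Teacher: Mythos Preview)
Your proof is correct and follows essentially the same route as the paper: both close the cycle $\lim \leq_W \UC_{B_{\ell^2}}^{\w\to\n} \leq_W \XC_{B_{\ell^2}}^{\w\to\n} \equiv_W \BGK_{B_{\ell^2}} \leq_W \lim$ using Theorem \ref{thm: BGK equivalent convex choice}, Propositions \ref{prop: obvious Weihrauch reductions} and \ref{prop: initial upper bound on Proj}, and Theorem \ref{thm: computable mapping with norm-uncomputable image} together with the computability of the $\ell^2$-norm for the lower bound. Your version is slightly more explicit in threading $\Proj_{B_{\ell^2}}$ through the chain to obtain the ``in particular'' clause, but the argument is otherwise identical.
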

\begin{proof}
We have $\UC_{B_{\ell^2}}^{\w\to\n} \leq_W \XC_{B_{\ell^2}}^{\w\to\n} \equiv_W \BGK_{B_{\ell^2}}$, the latter by Theorem \ref{thm: BGK equivalent convex choice}. Also, $\BGK_{B_{\ell^2}} \leq_W \lim$ by Proposition \ref{prop: obvious Weihrauch reductions} and Proposition \ref{prop: initial upper bound on Proj}. It follows from Theorem \ref{thm: computable mapping with norm-uncomputable image} that $\UC_{B_{\ell^2}}^{\w\to\n}$ allows us to determine the limit of any computable monotonically increasing sequence $x \in [0,1]^{\N}$, since
\[\lim_{n\to\infty} x(n) = \norm{\UC_{B_{\ell^2}}^{\w\to\n}(T(x))}_2\]
and $\norm{\cdot}_2$ is $(\delta_{\ell^2},\rho)$-computable. We hence have $\lim \leq_W \UC_{B_{\ell^2}}^{\w\to\n}$, which finishes the proof.
\end{proof}

In particular we have the following non-uniform corollary:
\begin{corollary}\label{cor: nonuniform complexity of BGK on unit ball}
There exists a computable nonexpansive self-map of the closed unit ball in $\ell^2$ with a unique fixed point, which is uncomputable.\qed
\end{corollary}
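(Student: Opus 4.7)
The plan is to derive the corollary non-uniformly from Theorem \ref{main result} (ii) and Theorem \ref{thm: computable mapping with norm-uncomputable image}, in essentially the same spirit as the argument in Corollary \ref{cor: Weihrauch degree of BGK on unit ball}, only fixing a single hard instance rather than reducing $\lim$ parametrically.

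First, I would invoke Specker's theorem to pick a computable monotonically increasing sequence $x \in [0,1]^{\N}$ whose limit $\ell = \lim_{n\to\infty} x(n)$ is not computable as a real number. Feeding $x$ into the computable map $T$ of Theorem \ref{thm: computable mapping with norm-uncomputable image}, I obtain a concrete computable point $T(x) = \{a\} \in \K^{\conv}(B_{\ell^2_w})$ with $\|a\|_2 = \ell$. In particular, since the norm $\|\cdot\|_2$ is $(\delta_{\ell^2},\rho)$-computable, the point $a$ cannot be computable with respect to the strong representation $\delta_{\ell^2}$; it is only weakly computable.

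Next, I would apply Theorem \ref{main result} (ii) to the computable singleton $\{a\} \in \K^{\conv}(B_{\ell^2_w})\setminus\{\emptyset\}$. Since $\Fix^{-1}$ is computable as a multimapping, there is a computable selection, giving a computable nonexpansive self-map $f\colon B_{\ell^2}\to B_{\ell^2}$ with $\Fix(f) = \{a\}$. This $f$ then has a unique fixed point, namely $a$, which is not $\delta_{\ell^2}$-computable by the previous paragraph, so the corollary is established.

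There is no real obstacle here beyond noting (i) that the unit ball $B_{\ell^2}$ satisfies the hypotheses of Theorem \ref{main result} — it is bounded, convex, computably overt (it has a dense computable sequence of rational points) and weakly co-semi-decidable (indeed computably weakly compact by Theorem \ref{thm: unit ball in weak*-topology as a computable metric space}) — and (ii) that computability of a point of $\ell^2$ in the strong sense would entail computability of its norm. Both observations are immediate from results already collected in the paper.
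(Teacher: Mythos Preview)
Your proposal is correct and follows essentially the same route as the paper: the corollary is stated with a \qed\ as an immediate non-uniform consequence of the preceding Corollary~\ref{cor: Weihrauch degree of BGK on unit ball}, whose proof already combines Theorem~\ref{thm: computable mapping with norm-uncomputable image} (applied to a Specker-type sequence) with Theorem~\ref{main result}~(ii) exactly as you describe. Your only addition is making explicit the verification that $B_{\ell^2}$ satisfies the hypotheses of Theorem~\ref{main result} and that strong computability of $a$ would force computability of $\|a\|_2$, both of which are indeed immediate.
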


Compare Corollary \ref{cor: nonuniform complexity of BGK on unit ball} to Theorem \ref{thm: nonexpansive mapping on Hilbert cube without computable fixed points}: on a compact domain, any computable function without computable fixed points necessarily has uncountably many fixed points, since otherwise it has at least one isolated fixed point which is then computable by Theorem \ref{thm: Kreinovich's theorem}. If we drop compactness, even unique solutions may be uncomputable. Note however, that since the unit ball in $\ell^2$ is still computably weakly compact, unique fixed points on $B_{\ell^2}$ are still ``weakly computable'', in the sense that they are computable as elements in the represented space $\ell^2_w$. In particular, their coordinates with respect to an orthonormal basis are still computable.

On a computably compact domain, the Weihrauch degree of the theorem is still at most $\WKL$. We can now show that it is in fact equivalent to $\WKL$ on the Hilbert cube. In order to do so, we will first have to define the \emph{parallelisation} of Weihrauch degrees, which was introduced in \cite{WeihrauchDegrees}.

\begin{Definition}
Let $f\colon\subseteq X \rightrightarrows Y$ be a partial multimapping. The \emph{parallelisation} $\hat{f}$ of $f$ is the multimapping $\hat{f}\colon\subseteq X^\N\rightrightarrows Y^\N$, $\hat{f}(\lambda n.x(n)) = \lambda n.f(x(n))$.
\end{Definition}

It is not hard to see that $f \leq_W g$ implies $\hat{f} \leq_W \hat{g}$ (cf.~also Proposition 4.2 in \cite{WeihrauchDegrees}). The following theorem is essentially due to \cite{WeihrauchDegrees} (cf.~also Theorem 6.2 and the subsequent comment in \cite{EffectiveChoice}).

\begin{theorem}
$\widehat{\IVT} \equiv_W \WKL$.\qed
\end{theorem}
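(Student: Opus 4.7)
The plan is to establish the two Weihrauch reductions separately and then combine them.

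For the direction $\widehat{\IVT} \leq_W \WKL$, I would combine two standard ingredients. By Fact \ref{fact on Weihrauch degrees} (ii) and (iii) we already have $\IVT \equiv_W \BFT_1 \leq_W \WKL$, and since Weihrauch reducibility is monotone under parallelization ($f \leq_W g$ implies $\hat{f} \leq_W \hat{g}$), this yields $\widehat{\IVT} \leq_W \widehat{\WKL}$. It then suffices to invoke the well-known fact that $\WKL$ is parallelizable, $\widehat{\WKL} \equiv_W \WKL$. This follows from $\WKL \equiv_W \C_{\{0,1\}^\omega}$ (Fact \ref{fact on Weihrauch degrees} (i)) together with a computable homeomorphism $(\{0,1\}^\omega)^\omega \cong \{0,1\}^\omega$, under which countably many infinite binary trees may be packaged into a single tree whose paths simultaneously encode a path in each.

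For the converse $\WKL \leq_W \widehat{\IVT}$, I would go via the lesser limited principle of omniscience. A standard pruning argument shows $\WKL \equiv_W \widehat{\C_{\{0,1\}}} \equiv_W \widehat{\LLPO}$: each node of an infinite binary tree gives rise to a binary choice problem, and answering all of them in parallel yields a path. Hence it suffices to produce a uniform reduction $\LLPO \leq_W \IVT$ and to parallelize.

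The main obstacle lies in this final reduction $\LLPO \leq_W \IVT$, which must be carried out uniformly and continuously in the input. Given an $\LLPO$-instance $p \in \{0,1\}^{\N}$ with at most one $1$, I would construct a computable $f_p \colon [0,1]\to\R$ with $f_p(0) = -1$ and $f_p(1) = 1$ whose zero set is contained in $\{0,\tfrac{1}{2},1\}$ and such that $0$ (respectively $1$) can be a zero of $f_p$ only if the correct $\LLPO$-answer is $0$ (respectively $1$). Concretely, one starts with $f_p(x) = 2x - 1$ (unique zero at $\tfrac{1}{2}$); as soon as a $1$ is discovered at an even position, $f_p$ is continuously deformed so that its unique zero migrates from $\tfrac{1}{2}$ towards $1$, while preserving $f_p(0) = -1$ and $f_p(1) = 1$; dually for an odd position. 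The delicate part is ensuring that the construction is continuous in $p$ so that partial information yields partial approximations to $f_p$ as a computable function, which forces the ``deformation'' to proceed gradually (e.g.\ by reducing the slope near the endpoint opposite to the one the zero is being pushed to). Any $x$ returned by $\IVT(f_p)$ then rounds to the nearest element of $\{0,1\}$ to yield a correct $\LLPO$-answer, and parallelizing this uniform reduction gives $\WKL \equiv_W \widehat{\LLPO} \leq_W \widehat{\IVT}$, completing the proof.
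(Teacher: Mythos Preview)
The paper does not prove this theorem; it merely cites \cite{WeihrauchDegrees} and \cite{EffectiveChoice}. Your overall architecture --- parallelizability of $\WKL$ for one direction, and $\WKL \equiv_W \widehat{\LLPO}$ together with $\LLPO \leq_W \IVT$ for the other --- is precisely the route taken in those references, so on that level there is nothing to compare.

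There is, however, a concrete gap in your sketch of $\LLPO \leq_W \IVT$. The description is internally inconsistent: you insist that $f_p(0) = -1$ and $f_p(1) = 1$ throughout, yet also claim the zero set is contained in $\{0,\tfrac12,1\}$; under those endpoint constraints neither $0$ nor $1$ is ever a zero. Reading the ``migrating zero'' description charitably, the unique zero ends up somewhere in $(0,\tfrac12)$, at $\tfrac12$, or somewhere in $(\tfrac12,1)$ according to the parity of the (possible) $1$ in $p$. But then the post-processing step ``round $x$ to the nearest element of $\{0,1\}$'' is not computable: the multimap sending $x$ to $0$ if $x \le \tfrac12$ and to $1$ if $x \ge \tfrac12$ has no continuous realiser (it is essentially $\LLPO$ itself), and when $p \equiv 0$ the zero sits exactly at $\tfrac12$, so searching $p$ in parallel does not help either.

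The standard repair is to give $f_p$ a \emph{plateau} of zeros rather than a single migrating one: take $f_p$ piecewise linear with $f_p(0)=-1$, $f_p\equiv c_p$ on $[\tfrac13,\tfrac23]$, $f_p(1)=1$, where $c_p = -2^{-n}$ if a $1$ appears at an even position $n$, $c_p = 2^{-n}$ for an odd position, and $c_p = 0$ if no $1$ appears. Then any zero lies in $(\tfrac23,1)$, in $[\tfrac13,\tfrac23]$, or in $(0,\tfrac13)$ respectively, and the post-processing runs the overlapping semi-decidable tests ``$x > \tfrac13$'' and ``$x < \tfrac23$'' in parallel; at least one always terminates, and whichever does yields a valid $\LLPO$-answer.
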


\begin{theorem}\label{thm: BGK on Hilbert cube is WKL}
Let $\mathcal{H} = \{\sum_{i \in \N} \alpha_i e_i\;|\; \alpha_i \in [0,2^{-i}]\}$ be the Hilbert cube in $\ell^2$. Then $\BGK_{\mathcal{H}} \equiv_W \WKL$.
\end{theorem}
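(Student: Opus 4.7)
The plan has two directions, with the reduction $\WKL \leq_W \BGK_{\mathcal{H}}$ being the real content. For the upper bound, the Hilbert cube is computably compact, so by Theorem~\ref{thm: BGK equivalent convex choice} we have $\BGK_{\mathcal{H}} \equiv_W \XC_{\mathcal{H}} \leq_W \C_{\mathcal{H}}$, and $\C_{\mathcal{H}} \leq_W \WKL$ by Theorem~\ref{thm: compact choice below WKL}.

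For the lower bound I would use the characterization $\widehat{\IVT} \equiv_W \WKL$ recalled just above, together with $\IVT \equiv_W \XC_{[0,1]}$ from Fact~\ref{fact on Weihrauch degrees}. Since parallelization is Weihrauch-monotone, it suffices to exhibit a Weihrauch reduction $\widehat{\XC_{[0,1]}} \leq_W \BGK_{\mathcal{H}}$. The key idea is to exploit the factorization of $\mathcal{H}$ as a product of rescaled intervals, in order to aggregate countably many one-dimensional convex choice problems into a single nonexpansive self-map of $\mathcal{H}$.

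Concretely, given a sequence $(K_n)_n$ of nonempty closed convex subsets of $[0,1]$ presented as inputs to $\widehat{\XC_{[0,1]}}$, I would apply Theorem~\ref{thm: main theorem on [0,1]}(ii) uniformly in $n$ to obtain a computable sequence $(f_n)_n$ of nonexpansive maps $f_n\colon [0,1]\to[0,1]$ with $\Fix(f_n) = K_n$, and then define $F\colon \mathcal{H}\to\mathcal{H}$ coordinatewise by
\[F(x)_n = 2^{-n}\, f_n(2^n x_n).\]
Since $y \mapsto 2^n y$ maps $[0,2^{-n}]$ bijectively onto $[0,1]$, $F$ is well-defined, and $F(x) = x$ iff $2^n x_n \in \Fix(f_n) = K_n$ for every $n$. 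Each coordinate map $y \mapsto 2^{-n} f_n(2^n y)$ on $[0,2^{-n}]$ is $1$-Lipschitz, so
\[\norm{F(x) - F(y)}^2 = \sum_n 2^{-2n}|f_n(2^n x_n) - f_n(2^n y_n)|^2 \leq \sum_n |x_n - y_n|^2 = \norm{x - y}^2,\]
which shows $F$ is nonexpansive. Computability of $F$ as an element of $\Ne(\mathcal{H})$ is routine: to approximate $F(x)$ in $\ell^2$ up to error $2^{-k}$, use the uniform computability of $(f_n)_{n\leq N}$ to approximate the first $N$ coordinates and truncate the rest, since $\sum_{n > N} |F(x)_n|^2 \leq \sum_{n > N} 4^{-n}$ can be made arbitrarily small.

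Feeding $F$ into $\BGK_{\mathcal{H}}$ then produces a fixed point $x \in \mathcal{H}$ in the strong topology, and reading off $y_n := 2^n x_n$ yields the required selection $y_n \in K_n$ computable uniformly in $n$, realizing $\widehat{\XC_{[0,1]}}$. I do not expect a substantial obstacle here: the main conceptual step is just the rescaled product decomposition of $\mathcal{H}$, after which every other ingredient---the uniform computability of $\Fix^{-1}$ on $[0,1]$ from Theorem~\ref{thm: main theorem on [0,1]}, the geometric-sum tail control for the $\ell^2$-approximation, and the closed-convex-choice reformulation of $\BGK$ from Theorem~\ref{thm: BGK equivalent convex choice}---has already been prepared by earlier results.
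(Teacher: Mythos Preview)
Your proof is correct and uses essentially the same core idea as the paper: the product decomposition of $\mathcal{H}$ into rescaled intervals, together with $\widehat{\IVT} \equiv_W \widehat{\XC_{[0,1]}} \equiv_W \WKL$. The only difference is on which side of the equivalence $\BGK_{\mathcal{H}} \equiv_W \XC_{\mathcal{H}}$ the reduction is carried out. The paper works on the convex-choice side: from $([a_n,b_n])_n$ it directly builds the product set $A = \{\sum_i \alpha_i e_i \mid \alpha_i \in [2^{-i}a_i,2^{-i}b_i]\}$ as a point of $\K^{\conv}(\mathcal{H}_w)$ and then invokes $\XC_{\mathcal{H}}$. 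You work on the nonexpansive-map side: you first apply the one-dimensional $\Fix^{-1}$ of Theorem~\ref{thm: main theorem on [0,1]}(ii) coordinatewise and assemble the product map $F$ explicitly, then invoke $\BGK_{\mathcal{H}}$. The paper in fact notes this alternative route in the remark following the proof (pointing to the coordinatewise construction used in Theorem~\ref{thm: nonexpansive mapping on Hilbert cube without computable fixed points}). Your version is slightly more self-contained in that it does not rely on Theorem~\ref{thm: BGK equivalent convex choice} for the lower bound, while the paper's version avoids re-verifying nonexpansiveness and computability of the product map.
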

\begin{proof}
Clearly, $\mathcal{H}$ is computably compact, so $\BGK_{\mathcal{H}} \equiv_W \XC_{\mathcal{H}} \leq_W \C_{\mathcal{H}} \equiv_W \WKL$. In order to prove the converse direction, we show that $\widehat{\IVT} \leq_W \XC_{\mathcal{H}}$. Since $\widehat{\IVT} \equiv_W \WKL$ and $\XC_{\mathcal{H}} \equiv_W \BGK_{\mathcal{H}}$, it follows that $\WKL \leq_W \BGK_{\mathcal{H}}$. By Proposition \ref{prop: BGK in [0,1]} we have $\IVT \equiv_W \XC_{[0,1]}$ and so $\widehat{\IVT} \equiv_W \widehat{\XC_{[0,1]}}$. Let $([a_n,b_n])_n$ be a sequence of closed intervals in $\left(\A^{\conv}([0,1])\right)^{\N}$. Consider the set $A = \{\sum_{i \in \N} \alpha_i e_i\;|\; \alpha_i \in [a_i2^{-i},b_i2^{-i}]\} \subseteq {\mathcal{H}}$. Then $A$ is computable as a point in $\K^{\conv}({\mathcal{H}})$ relative to $([a_n,b_n])_n$. Clearly, choosing a point in $A$ allows us to choose a point in $([a_n,b_n])_n$, so $\widehat{\XC_{[0,1]}} \leq_W \XC_{\mathcal{H}} \equiv_W \BGK_\mathcal{H}$.
\end{proof}

Theorem \ref{thm: BGK on Hilbert cube is WKL} can (essentially) be viewed as a uniform strengthening of Theorem \ref{thm: nonexpansive mapping on Hilbert cube without computable fixed points}. Notice that the proof of Theorem \ref{thm: nonexpansive mapping on Hilbert cube without computable fixed points} can be utilized to establish the reduction ${\widehat{\LLPO} \leq_W \BGK_{\mathcal{H}}}$, which yields a slightly different proof of Theorem \ref{thm: BGK on Hilbert cube is WKL}, since $\widehat{\LLPO} \equiv_W \WKL$ (again, cf.~\cite{WeihrauchDegrees}). The proof of Theorem \ref{thm: BGK on Hilbert cube is WKL} can also be used to show $\WBGK_{B_{\ell^2}} \equiv_W \WKL$. We now have a fairly good idea of the computational content of the Browder-Göhde-Kirk theorem. It follows from \cite{PaulyLeRoux} that $(\BGK_{[0,1]^n})_{n \in \N}$ is a strictly increasing sequence of Weihrauch degrees, all strictly below $\WKL$. On the compact but infinite dimensional Hilbert cube $\mathcal{H}$ the theorem becomes equivalent to $\WKL$. If we drop compactness and consider the theorem on the unit ball in $\ell^2$, it becomes even more non-effective, and in particular equivalent to computing rates of convergence for fixed point iterations, but is still much more effective than full choice on $B_{\ell^2}$.

In finite dimension, a computable nonexpansive self-map of a computably compact domain always has computable fixed points by Theorem \ref{thm: convex sets have computable points}, and this relies solely on the fact that the fixed point set is convex. This is reminiscent of the fact that unique zeroes of computable functions are always (in this case even uniformly) computable. A typical feature of such results is that they assert the existence of computable objects, but the computational complexity of these objects is unbounded. This is also the case here: using similar techniques as in \cite{Ko}, we can strengthen Theorem \ref{main result} $(ii)$ to assert for every nonempty co-semi-decidable and convex $A \subseteq K$ the existence of a polynomial-time computable nonexpansive $f\colon K\to K$ such that $\Fix(f) = A$, at least in the case where $K$ is computably compact, and so in particular in the finite-dimensional case (if $K$ is not computably compact there is no uniform majorant on the names of the points in $K$, so one would have to work in the framework of \emph{second-order complexity} \cite{KawamuraCook}). This allows us to characterise the computational complexity of fixed points of Lipschitz-continuous polynomial-time computable functions according to their Lipschitz constant.

\begin{theorem}\label{thm: complexity of mappings w.r.t. Lipschitz constant}
Let $[0,1]^2$ be the unit square in Euclidean space $\R^2$. Let ${f\colon [0,1]^2 \to [0,1]^2}$ be polynomial-time computable and Lipschitz-continuous with Lipschitz constant $L$. Then:
\begin{itemize}[label=$-$]
\item If $L < 1$, $f$ has a unique polynomial time computable fixed point, which is uniformly computable relative to the promise that $L < 1$ and uniformly polynomial time computable relative to the promise that $L < 1 - \varepsilon$ for some fixed $\varepsilon > 0$.
\item If $L = 1$, the fixed point set of $f$ can be any nonempty co-semi-decidable convex subset of $[0,1]^2$. The multi-valued operator mapping $f$ to some fixed point is realiser-discontinuous and hence uncomputable, but $f$ still has computable fixed points. However, there is no computable bound on the computational complexity of the fixed points of $f$.
\item If $L > 1$, $f$ may not have any computable fixed points.\qed
\end{itemize}
\end{theorem}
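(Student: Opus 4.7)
My proof plan splits along the three values of $L$.

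\textbf{Case $L < 1$.} I would argue by the Banach contraction principle. The Picard iterates $x_{n+1} = f(x_n)$ starting from any rational $x_0 \in [0,1]^2$ satisfy $\norm{x_n - x_{n+k}} \leq \tfrac{L^n}{1-L}\norm{x_1 - x_0}$, and $[0,1]^2$ is complete, so they converge to the unique fixed point $x^*$. Given a rational lower bound $\varepsilon$ on $1 - L$, reaching precision $2^{-n}$ requires $O(n/\varepsilon)$ iterations, each of $f$ evaluated to precision polynomial in $n$ on a polynomial-time computable function, hence total cost polynomial in $n$. Without an explicit $\varepsilon$ but with the promise $L < 1$, the same iteration still converges; I would handle this by improving upper bounds on $L$ during the run (exploiting the polynomial-time Lipschitz behaviour on a dense rational grid) and terminating once the Banach residual bound falls below $2^{-n}$, yielding uniform computability but no uniform polynomial-time bound.

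\textbf{Case $L = 1$.} The main task is a polynomial-time strengthening of Theorem \ref{main result}$(ii)$ specialised to $H = \R^2$ and $K = [0,1]^2$. Given a nonempty co-semi-decidable convex $A \subseteq K$, I would enumerate rational half-spaces $(h_k)_k$ whose interiors contain $A$ in such a way that the bit-length of the parameters of $h_k$ grows polynomially in $k$; this is achievable by a standardised exhaustive search over rational half-spaces, testing the predicate that $A$ lies in the interior of a candidate $h$ via the co-semi-decidability oracle with a time budget polynomial in $k$ at stage $k$. Each projection $P_{h_k}$ is polynomial-time computable by the explicit formula in the proof of Lemma \ref{lem: computability of projection onto rational half space}. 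Setting $g(x) = \sum_{k \in \N} 2^{-k-1} P_{h_k}(x)$, the tail after $n + c$ terms is bounded in norm by a constant multiple of $2^{-n}$, so $g$ is polynomial-time computable, and Theorem \ref{thm: intersection of fixed points} yields $\Fix(g) = \bigcap_k \Fix(P_{h_k}) = A$. Composing with the metric projection $P_K$ (which is polynomial-time on $K = [0,1]^2$) gives $f \colon K \to K$, and Lemma \ref{projecting back} preserves $\Fix(f) = A$. Choosing $A = \{a\}$ for a computable point $a \in [0,1]^2$ of arbitrarily high time complexity then refutes any uniform computable complexity bound on fixed points, while Theorem \ref{thm: convex sets have computable points} guarantees computable fixed points always exist. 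Realiser-discontinuity of the selection multimap follows by composing with the map $A \mapsto f_A$: a continuous realiser of the selector would yield a continuous realiser for $\XC_{[0,1]^2}$, contradicting the realiser-discontinuity implicit in Fact \ref{fact on Weihrauch degrees}.

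\textbf{Case $L > 1$.} I would invoke the Orevkov--Baigger construction mentioned in the discussion after Corollary \ref{cor: fixed points in finite dimension are computable}, which exhibits a computable self-map of $[0,1]^2$ without computable fixed points. Standard smoothing yields a Lipschitz-continuous example, and rescaling allows any prescribed Lipschitz constant $L > 1$.

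\textbf{Main obstacle.} The technical heart is the polynomial-time strengthening of Theorem \ref{main result}$(ii)$: from a bare co-semi-decidable presentation of $A$ one must extract a sequence of separating half-spaces of polynomially-bounded bit size. As alluded to in the theorem statement, this works precisely because $K$ is computably compact, so that names can be uniformly majorised; in the non-compact setting one would instead have to pass to the second-order complexity framework of \cite{KawamuraCook}. Once this enumeration is in hand, the geometric-series truncation and the explicit projection formulas reduce the remaining arguments to routine estimates.
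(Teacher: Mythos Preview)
Your treatment of the cases $L < 1$ and $L = 1$ is essentially what the paper has in mind. The paper does not give a formal proof of this theorem; it states the result with a \qed after sketching, in the preceding paragraph, that Ko-style time-budgeting techniques allow one to strengthen Theorem~\ref{main result}(ii) so that every nonempty co-semi-decidable convex $A \subseteq [0,1]^2$ arises as $\Fix(f)$ for a \emph{polynomial-time} nonexpansive $f$. Your outline of that strengthening---enumerate candidate rational half-spaces, run the containment test with a polynomial time budget at stage $k$, and pad with trivial half-spaces when the budget is exhausted---is exactly the intended mechanism, and your derivation of realiser-discontinuity and of the absence of a computable complexity bound from it is correct.

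The genuine gap is in the case $L > 1$. You write that ``standard smoothing yields a Lipschitz-continuous example'' from the Orevkov--Baigger function. This step is not justified: mollifying or otherwise perturbing a continuous map to make it Lipschitz will in general move the fixed-point set, and there is no reason why the smoothed map should still lack computable fixed points. One needs a construction that produces a Lipschitz (indeed polynomial-time computable Lipschitz) self-map of $[0,1]^2$ whose fixed-point set is a prescribed nonempty co-semi-decidable connected set without computable points, and this is genuinely harder than the continuous case. The paper itself acknowledges this: immediately after the theorem it says that the third claim ``follows from a strengthening of the results in \cite{ConnectedChoice}, which the authors of that paper have recently obtained, but which seems to be unpublished as of yet.'' So you should not expect to close this case by a routine smoothing argument; the correct move is to cite that strengthening (or reproduce it), not to smooth Orevkov--Baigger. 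Your rescaling remark, incidentally, is fine once one such example exists: convex combination with the identity, $\alpha f + (1-\alpha)\id$, preserves the fixed-point set and interpolates the Lipschitz constant down to any value in $(1,L_0]$.
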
 

\noindent The third claim in Theorem \ref{thm: complexity of mappings w.r.t. Lipschitz constant} follows from a strengthening of the results in \cite{ConnectedChoice}, which the authors of that paper have recently obtained, but which seems to be unpublished as of yet. 

\section{Further Results and Possible Generalisations}
The special case of Theorem \ref{main result} where the underlying Hilbert space is two-dimensional seems to generalise to uniformly convex and smooth real Banach spaces of dimension two. Note that the first item of the theorem becomes trivial in finite dimension. A Banach space is called \emph{smooth}, if its dual space is strictly convex and \emph{uniformly smooth} if its dual space is uniformly convex. For instance, all $L^p$-spaces with $1 < p < \infty$ are uniformly convex and uniformly smooth. The two notions of smoothness and uniform smoothness coincide in finite dimension.
\begin{conjecture}\label{characterisation of fixed point sets in two dimensional smooth and convex Banach space}
Let $E$ be a uniformly convex, smooth, computable Banach space of dimension two, and let $K\subseteq E$ be bounded, convex, and located\footnote{Recall that in finite dimension a nonempty closed set is located if and only if it is co-semi-decidable and computably overt (cf.~also \cite{Weih}).}. Then the multi-valued mapping
\[\Fix^{-1}\colon \K^{\conv}(K)\setminus\{\emptyset\}\rightrightarrows \Ne(K)\]
is computable.\qed
\end{conjecture}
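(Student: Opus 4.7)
The plan is to mirror the proof of Theorem~\ref{main result}(ii) as closely as possible, replacing Hilbert space–specific constructions with their analogs available in a smooth uniformly convex Banach space. Given $A \in \K^{\conv}(K)$, I would first use the finite-dimensional analog of Theorem~\ref{Hahn-Banach equivalent to co-r.e.} (which simplifies considerably since in finite dimension the weak and strong topologies coincide and $E'$ is automatically a computable Banach space) to enumerate rational half-spaces $(h_n)_n$ containing $A$ in their interior and satisfying $\bigcap_{n \in \N} h_n \cap K = A$. This step uses only the fact that $K$ is located and $A$ is co-semi-decidable, together with Hahn–Banach separation.

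The heart of the argument will be to construct, for each rational half-space $h$, a computable nonexpansive retraction $r_h$ of $K$ onto $h \cap K$. In Hilbert space this was handled by Lemma~\ref{lem: computability of projection onto rational half space}, where nonexpansiveness followed from the variational inequality, and by Lemma~\ref{projecting back} for retracting back to $K$. In the 2D uniformly convex smooth setting, strict convexity guarantees existence and uniqueness of the metric projection $P_h$, and smoothness (equivalently, strict convexity of $E'$) ensures that the normalised duality map $J\colon E \to E'$ is single-valued and continuous; one then hopes that an analog of the variational inequality, phrased via $J$, yields nonexpansiveness of $P_h$. Once $r_h$ is available, Bruck's theorem (Theorem~\ref{thm: intersection of fixed points}) applies in the strictly convex setting to yield $f = \sum_n 2^{-n-1} r_{h_n}$, a nonexpansive self-map of $K$ with $\Fix(f) = \bigcap_n \Fix(r_{h_n}) = A$, and computability is inherited from computability of each $r_{h_n}$ together with Corollary~\ref{cor: computability of projection on subset}. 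The quantitative Goebel-type Lemmas~\ref{lem: goebels lemma} and \ref{lem: infimum lemma} depend only on uniform convexity and carry over unchanged, providing whatever quantitative tools may be needed.

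The main obstacle, and the reason the result remains a conjecture, is precisely the replacement of the Hilbert-space variational inequality: one must show that in a 2D uniformly convex smooth Banach space the metric projection onto a (closed rational) half-space is genuinely nonexpansive, and similarly that one can nonexpansively retract onto the located convex set $K$. In general normed spaces this fails, but smoothness together with the low dimension makes it plausible that projections onto hyperplanes become nonexpansive through a duality-map-based variational inequality of the form $\product{x - P_h(x)}{J(y - P_h(x))} \leq 0$ for $y \in h$. If metric projections turn out not to be nonexpansive in the required generality, the fallback would be to construct \emph{sunny nonexpansive retractions} onto half-spaces directly, exploiting the 2D geometry (e.g.\ by moving each $x \notin h$ along a carefully chosen direction determined by the shape of the unit ball) and then feeding these into Bruck's theorem as before. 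Localising the argument to $h \cap K$ (rather than $E \to h \to K$) via Corollary~\ref{computability of intersection of computable convex sets and rational half spaces} may further simplify the retract-back step and obviate the need for a nonexpansive projection onto $K$ itself.
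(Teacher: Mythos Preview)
Your overall architecture matches the paper's, but the specific mechanism you lead with and your identification of the conjectural gap both differ from the paper.

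The paper does not try to salvage metric projections. Nonexpansiveness of metric projections onto closed convex sets characterises Hilbert space (the paper cites \cite{Phelps} for this), so a duality-map variational inequality will not force $P_h$ to be nonexpansive in a genuine non-Hilbert two-dimensional space. Instead the paper takes what you call a ``fallback'' as its primary route, but without any ad hoc geometric construction: it invokes Karlovitz's theorem (Theorem~\ref{existence of sunny nonexpansive retracts}) that in a smooth two-dimensional Banach space every closed convex subset is a sunny nonexpansive retract, together with Bruck's uniqueness result (Theorem~\ref{uniqueness of sunny nonexpansive retracts}). Uniqueness plus computable compactness of $\Ne(K)$ then yields computability of the sunny nonexpansive retraction onto each $h_n \cap K$ for free (Theorem~\ref{computability of sunny nonexpansive retracts}), after which Bruck's averaging theorem (Theorem~\ref{thm: intersection of fixed points}) applies exactly as in the Hilbert case.

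Consequently, the obstruction the paper isolates is not nonexpansiveness of projections but the retract-back step: after forming $g = \sum_n 2^{-n-1} f_n$, one composes with the sunny nonexpansive retraction onto $K$ and must know that this does not enlarge the fixed-point set. That is precisely Conjecture~\ref{projection lemma in Banach space}, which the paper explicitly names as the sole missing piece. Your closing remark---working with retractions onto $h_n \cap K$ so that $g$ already lands in $K$---is exactly the construction the paper writes down; indeed, since each $f_n$ then maps $K$ into $K$ and $K$ is convex, $g\colon K\to K$ already and $\Fix(g)=A$ by Bruck, which arguably makes the appeal to Conjecture~\ref{projection lemma in Banach space} superfluous. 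The paper, however, still records that conjecture as the open point.
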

The proof of this result would be almost identical to that of Theorem \ref{main result}. The only places where we used that the underlying space is a Hilbert space were Theorem \ref{thm: projection theorem}, which asserts that the projection onto each convex, closed set is nonexpansive, and Lemma \ref{projecting back}. In general the projection onto a closed and convex subset of a Banach space will not be nonexpansive. In fact, this property characterises Hilbert spaces (cf.~\cite{Phelps}). However, we only need the existence of a computable nonexpansive retraction onto each located convex subset. A retraction $Q\colon E \to K$ of $E$ onto a nonempty subset $K\subseteq E$ is called \emph{sunny}, if 
\[Q(\alpha x + (1 - \alpha)Q(x)) = Q(x) \;\text{ for all } x \in E, \alpha \in [0,1].\]
Geometrically, this means that for all $x \notin K$, all points on the ray  defined by $x$ and $Q(x)$ with initial point $Q(x)$ are mapped onto the same point $Q(x)$. It is well known that in a smooth Banach space of dimension two, sunny nonexpansive retractions onto closed convex subsets exist and are unique. Consequently, they are computable.
\begin{theorem}[\cite{Karlovitz}]\label{existence of sunny nonexpansive retracts}
Let $E$ be a smooth real Banach space of dimension two. Then for every nonempty closed convex subset $C$ of $E$, there exists a nonexpansive sunny retraction of $E$ onto $C$.\qed
\end{theorem}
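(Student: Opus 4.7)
The plan is to construct $Q$ directly from the planar geometry of $E$. If $C$ is contained in an affine line, the ordinary metric projection onto $C$ is nonexpansive in any normed space and is trivially sunny, so one may take $Q$ to be this projection; I shall therefore assume $C$ has nonempty interior.

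Using smoothness of $E$, for each $y \in \partial C$ and each unit support functional $f$ of $C$ at $y$, I would assign a canonical outward unit vector $v_f \in E$ satisfying $\langle v_f, f \rangle = 1$. Existence and uniqueness of $v_f$ is precisely the statement that $E^*$ is strictly convex, i.e.\ that $E$ is smooth, and this $v_f$ is also the normalised preimage of $f$ under the duality map $J$ of $E$. I would then define $Q$ by $Q(y + t v_f) = y$ for $t \geq 0$ and $Q|_C = \id_C$.

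The central step, which I expect to be the main obstacle, is to verify that the rays $R_{y,f} = \{y + t v_f : t \geq 0\}$ cover $E \setminus C$ and that no two of them meet outside $C$, so that $Q$ is well-defined. The argument would use the Jordan curve theorem applied to $\partial C$ together with a monotonicity property of the support cones: as one traverses $\partial C$, the outward support directions rotate monotonically, which both forces the rays to sweep out the exterior of $\partial C$ and prevents them from crossing outside $C$. Both ingredients use two-dimensionality essentially, which is the reason the statement does not generalise to arbitrary dimension.

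Once well-definedness is established, sunniness of $Q$ is immediate from the construction. Nonexpansiveness then follows from the standard variational characterisation of sunny nonexpansive retractions in smooth spaces: $Q$ is sunny nonexpansive iff $\langle z - Q(x), J(x - Q(x)) \rangle \leq 0$ for all $x \in E$ and $z \in C$. This inequality holds by construction, since if $x - Q(x) = t v_f$ with $t \geq 0$ and $f$ a support functional of $C$ at $Q(x)$, then $J(x - Q(x)) = t f$, and $\langle z - Q(x), f \rangle \leq 0$ for $z \in C$ by the support-cone property. Applying the inequality at $x_1$ with $z = Q(x_2)$ and at $x_2$ with $z = Q(x_1)$, and combining via a standard duality-pairing bound, yields $\|Q(x_1) - Q(x_2)\| \leq \|x_1 - x_2\|$.
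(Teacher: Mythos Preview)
The paper does not prove this theorem: it is stated as a citation to Karlovitz and closed immediately with a \qed, with no argument given. There is therefore nothing in the paper to compare your attempt against.

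On your sketch itself: the overall framework via the variational characterisation of sunny nonexpansive retractions is the standard one, and your final paragraph (deducing $\langle z - Q(x), J(x - Q(x))\rangle \leq 0$ from the support property and invoking the Bruck--Reich characterisation) is correct once $Q$ is known to be well-defined. However, there is a concrete error in the construction. You claim that the unit vector $v_f \in E$ with $\langle v_f, f\rangle = 1$ is unique because $E^*$ is strictly convex (equivalently, $E$ is smooth). This is backwards: a unit functional $f \in E^*$ has a \emph{unique} norming vector in $E$ precisely when $E$ is strictly convex, which is not assumed here. Smoothness of $E$ gives the dual statement---each unit $v \in E$ has a unique norming functional, i.e.\ the duality map $J\colon E \to E^*$ is single-valued---but not the one you need. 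A two-dimensional space whose unit ball is a ``stadium'' (a rectangle with semicircular caps) is smooth but not strictly convex, and there your $v_f$ is genuinely non-unique for the functional supported on a flat side, so the ray $R_{y,f}$ is not well-defined as written.

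This also means the step you flag as the main obstacle---that the rays foliate $E \setminus C$---cannot be carried out as sketched. The natural repair is to work in the direction smoothness actually provides: for $x \notin C$, define $Q(x)$ implicitly as a point $y \in \partial C$ satisfying $\langle z - y, J(x - y)\rangle \leq 0$ for all $z \in C$, using that $J$ is single-valued. Establishing existence and uniqueness of such $y$ in dimension two is then the real content of Karlovitz's argument, and it is not a routine verification.
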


\begin{theorem}[\cite{BruckProjections}]\label{uniqueness of sunny nonexpansive retracts}
Let $E$ be a smooth real Banach space. Let $K \subseteq C$ be two nonempty, closed, and convex subsets of $E$. Then there exists at most one sunny nonexpansive retraction of $C$ onto $K$.\qed
\end{theorem}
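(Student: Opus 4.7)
The plan is to derive the standard variational characterization of sunny nonexpansive retractions in smooth Banach spaces and deduce uniqueness from it. Since $E$ is smooth, the normalized duality map $J\colon E\to E'$, which sends $z$ to the unique $\phi\in E'$ with $\langle z,\phi\rangle = \norm{z}^{2} = \norm{\phi}^{2}$, is single-valued and odd ($J(-z)=-J(z)$), and the squared norm is Gâteaux differentiable at every $z\neq 0$ with $\tfrac{d}{dt}\norm{w+tv}^{2}\big|_{t=0} = 2\langle v, J(w)\rangle$ whenever $w\neq 0$.

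The key lemma I would prove is that any sunny nonexpansive retraction $Q\colon C \to K$ satisfies
\[\langle x - Q(x),\, J(Q(x)-y)\rangle \;\geq\; 0 \qquad \text{for all } x\in C,\, y\in K.\]
Fix $x\in C$, $y\in K$, and for $s\in[0,1]$ set $z_{s} = sx+(1-s)Q(x) \in C$. The sunny condition as stated in the excerpt gives $Q(z_{s}) = Q(x)$, while $y \in K$ is a fixed point of $Q$, so nonexpansiveness yields
\[\norm{Q(x) - y} \;=\; \norm{Q(z_{s}) - Q(y)} \;\leq\; \norm{z_{s} - y}.\]
Hence $s \mapsto \norm{z_{s} - y}^{2}$ attains its minimum on $[0,1]$ at $s=0$. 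Assuming $Q(x)\neq y$ (the other case being trivial), differentiating via smoothness gives
\[0 \;\leq\; \tfrac{d}{ds}\norm{z_{s}-y}^{2}\Big|_{s=0^{+}} \;=\; 2\langle x - Q(x),\, J(Q(x) - y)\rangle,\]
which is the claim.

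Granting this, uniqueness is immediate. Let $Q_{1},Q_{2}$ be two sunny nonexpansive retractions of $C$ onto $K$ and fix $x\in C$. Applying the inequality to $Q_{1}$ with $y=Q_{2}(x)\in K$ and to $Q_{2}$ with $y=Q_{1}(x)\in K$:
\[\langle x-Q_{1}(x),\, J(Q_{1}(x)-Q_{2}(x))\rangle \geq 0, \qquad \langle x-Q_{2}(x),\, J(Q_{2}(x)-Q_{1}(x))\rangle \geq 0.\]
Oddness of $J$ turns the second into $\langle Q_{2}(x)-x,\, J(Q_{1}(x)-Q_{2}(x))\rangle \geq 0$; adding cancels the $x$-terms and leaves
\[0 \;\leq\; \langle Q_{2}(x)-Q_{1}(x),\, J(Q_{1}(x)-Q_{2}(x))\rangle \;=\; -\norm{Q_{1}(x)-Q_{2}(x)}^{2},\]
so $Q_{1}(x)=Q_{2}(x)$.

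The main obstacle is the variational lemma itself, namely converting the vector inequality $\norm{Q(x)-y}\leq \norm{z_{s}-y}$ into a scalar inequality against a specific functional. Smoothness of $E$ is used essentially at this step: it is precisely what collapses the subdifferential of $\tfrac{1}{2}\norm{\cdot}^{2}$ at $Q(x)-y$ to the single functional $J(Q(x)-y)$, so that one-sided minimality at $s=0$ yields a single scalar inequality rather than a set-valued inclusion. Without smoothness one would only obtain the existence of \emph{some} supporting functional witnessing minimality, and the uniqueness argument above would break down — and indeed, in non-smooth spaces distinct sunny nonexpansive retractions onto the same subset can coexist.
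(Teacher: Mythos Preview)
The paper does not give its own proof of this theorem; it is quoted from \cite{BruckProjections} and closed with \qed. Your argument is correct and is in fact the standard proof due to Bruck: derive the variational inequality $\langle x-Q(x),\, J(Q(x)-y)\rangle \geq 0$ from sunniness, nonexpansiveness, and G\^ateaux differentiability of the squared norm in smooth spaces, then apply it symmetrically to two retractions to force $\norm{Q_1(x)-Q_2(x)}^2 \leq 0$.
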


\begin{theorem}\label{computability of sunny nonexpansive retracts}
Let $E$ be a smooth computable Banach space of dimension two. Let $C \subseteq E$ be nonempty, convex, bounded, and located. Then the mapping
\[\operatorname{SRet}\colon\A^{\conv}_{\dist}(C)\setminus\{\emptyset\}\mapsto
\Ne(C)\]
 that maps $K$ to the unique sunny nonexpansive retraction of $\,C$ onto
 $K$, 
is computable.
\end{theorem}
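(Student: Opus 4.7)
The strategy is to exhibit $\operatorname{SRet}(K)$ as the unique element of a uniformly co-semi-decidable subset of the computably compact represented space $\Ne(C)$, and then apply Kreinovich's theorem (Theorem \ref{thm: Kreinovich's theorem}). Existence of a sunny nonexpansive retraction $C \to K$ follows from Theorem \ref{existence of sunny nonexpansive retracts} applied to $E$: since $K \subseteq C \subseteq E$ and $C$ is convex, the restriction to $C$ of any sunny nonexpansive retraction $E \to K$ is itself a sunny nonexpansive retraction $C \to K$. Uniqueness is Theorem \ref{uniqueness of sunny nonexpansive retracts}. The problem thus reduces to two claims: (a) $\Ne(C)$ under $[\delta_C \to \delta_C]\big|^{\Ne(C)}$ is a computably compact Hausdorff represented topological space, and (b) ``$f$ is a sunny nonexpansive retraction of $C$ onto $K$'' is co-semi-decidable in $(f,K) \in \Ne(C) \times \A^{\conv}_{\dist}(C)$.

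For claim (a), $C$ is itself computably compact: being a bounded located subset of the finite-dimensional Banach space $E$, it is contained in a computably compact closed ball of $E$ and, being located, is co-semi-decidable therein, hence computably compact by Proposition \ref{prop: closed and compact sets, (i) closed set is compact, (ii) compact set in Hausdorff space is closed} (i). Fix a computable dense sequence $(a_k)_k$ in $C$. Pointwise evaluation at $(a_k)_k$ computably embeds $\Ne(C)$ into $C^\N$, with image the set of sequences $(y_k)_k$ satisfying the co-semi-decidable family of constraints $\|y_i - y_j\| \le \|a_i - a_j\|$; since $C^\N$ is computably compact and this image is closed and co-semi-decidable therein, it is computably compact by the same Proposition. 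The key technical point is the computable equivalence of this evaluation-representation with $[\delta_C \to \delta_C]\big|^{\Ne(C)}$: the 1-Lipschitz condition supplies a free uniform modulus of continuity, so given $x \in C$ and $n \in \N$ one dovetails until some $a_k$ is within $2^{-n-1}$ of $x$ and then reads off $f(a_k)$ to error $2^{-n-1}$, yielding $f(x)$ to error $2^{-n}$.

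For claim (b), decompose ``sunny nonexpansive retraction of $C$ onto $K$'' into the three subconditions (i) $f(C) \subseteq K$, (ii) $f|_K = \id$, and (iii) the sunny identity $f(\alpha c + (1-\alpha) f(c)) = f(c)$ for all $c \in C$, $\alpha \in [0,1]$. With a computable dense sequence $(c_n)_n$ of $C$ and the computable distance function $d_K$, condition (i) fails semi-decidably via $d_K(f(c_n)) > 0$ for some $n$. Since $K$ is located and hence computably overt, it carries a computable dense sequence $(x_n)_n$, and (ii) fails semi-decidably via $\|f(x_n) - x_n\| > 0$ for some $n$. By continuity of $f$, (iii) reduces to the rational instances $c = c_n$, $\alpha \in \Q \cap [0,1]$, each of which fails semi-decidably via $\|f(\alpha c_n + (1-\alpha) f(c_n)) - f(c_n)\| > 0$. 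The resulting co-semi-decidable subset of $\Ne(C)$ is non-empty by existence and a singleton by uniqueness, so Kreinovich's theorem delivers $\operatorname{SRet}(K)$ computably in $K$. The main obstacle is the effective coincidence in (a) of the evaluation-at-a-dense-sequence representation with the function-space representation $[\delta_C \to \delta_C]\big|^{\Ne(C)}$; the 1-Lipschitz hypothesis is precisely what renders this step automatic.
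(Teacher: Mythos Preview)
Your proposal is correct and follows essentially the same approach as the paper: establish that $\Ne(C)$ is computably compact (via equicontinuity and compactness of $C$), show that the set of sunny nonexpansive retractions of $C$ onto $K$ is co-semi-decidable relative to $K$ by verifying the negations of the three defining conditions, and conclude by unique choice on a computably compact space. Your write-up is in fact more detailed than the paper's sketch---in particular you spell out the evaluation-at-a-dense-sequence argument for the computable compactness of $\Ne(C)$ and the reduction of existence to restricting a retraction $E\to K$ to the convex set $C$---but the underlying strategy is identical.
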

\begin{proof}[Proof (sketch)]
The set of nonexpansive self-maps of $C$ is computably compact, since it is equicontinuous and $C$ is compact. We can verify if a given map $f\colon C\to C$ does not leave all points of $K$ fixed, if it maps a point of $C$ to a point outside of $K$, and if ${f(\alpha x + (1 - \alpha)f(x)) \neq f(x)}$ for some $x \in C$, $\alpha \in [0,1]$. It follows that the set of sunny nonexpansive retractions of $C$ onto $K$ is co-semi-decidable relative to (a $\psi_{\dist}$-name of) $K$. Theorems \ref{existence of sunny nonexpansive retracts} and \ref{uniqueness of sunny nonexpansive retracts} assert that it is a singleton. It follows that the operator is uniformly computable.
\end{proof}
The other result that relies on Hilbert space techniques is Lemma \ref{projecting back}, which uses the nonexpansiveness of the projection and the variational inequality. In principle we could replace the projection by the sunny nonexpansive retraction onto the domain, but the question remains whether this will always leave the fixed point set unchanged.
\begin{conjecture}\label{projection lemma in Banach space}
Let $E$ be a smooth and uniformly convex Banach space of dimension two, let $K \subseteq E$ be nonempty, closed, bounded, and convex and let $f\colon K \to E$ be nonexpansive with ${\Fix(f)\neq\emptyset}$. Let $P\colon E \to K$ be the sunny nonexpansive retraction onto $K$. Then we have $\Fix(P\circ f) = \Fix(f)$.
\end{conjecture}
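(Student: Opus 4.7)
The inclusion $\Fix(f) \subseteq \Fix(P \circ f)$ is immediate since $P$ acts as the identity on $K$. For the converse I would argue by contradiction: assume some $x \in K$ satisfies $P(f(x)) = x$ while $f(x) \neq x$, and fix any $y \in \Fix(f) \subseteq K$ (which exists by hypothesis). The strategy is to mimic the Hilbert-space proof of Lemma \ref{projecting back}, replacing the inner product by the pairing against the normalized duality mapping $J\colon E \to E'$. Smoothness of $E$ ensures that $J$ is single-valued, and strict convexity of $E$ (a consequence of uniform convexity) will be used to promote a key subgradient inequality to a strict one. No quantitative use of uniform convexity is required, so dimension two enters only insofar as it guarantees the existence of $P$ via Theorem \ref{existence of sunny nonexpansive retracts}.

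The analytic tool replacing the variational inequality is the classical characterisation (due to Reich) stating that in a smooth Banach space a retraction $P\colon E \to K$ onto a closed convex subset is sunny and nonexpansive if and only if
\[\product{z - P(z)}{J(u - P(z))} \leq 0 \qquad \text{for all } z \in E,\; u \in K.\]
Applying this with $z = f(x)$ (so $P(z) = x$) and $u = y$ yields $\product{f(x) - x}{J(y - x)} \leq 0$.

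To derive the contradicting inequality, I would use the subgradient inequality for the convex function $\phi = \tfrac{1}{2}\norm{\cdot}^2$, whose G\^ateaux derivative in a smooth space is $J$, at $v = x - y$ and $u = f(x) - y$:
\[\tfrac{1}{2}\norm{f(x) - y}^2 \geq \tfrac{1}{2}\norm{x - y}^2 + \product{f(x) - x}{J(x - y)}.\]
Strict convexity of $E$ propagates to strict convexity of $\phi$, so the inequality above is \emph{strict} whenever $f(x) - y \neq x - y$, i.e.\ exactly under the assumption $f(x) \neq x$. Combined with nonexpansiveness $\norm{f(x) - y} = \norm{f(x) - f(y)} \leq \norm{x - y}$, this forces $\product{f(x) - x}{J(x - y)} < 0$, and since $J(-w) = -J(w)$ in a smooth space, $\product{f(x) - x}{J(y - x)} > 0$, contradicting the conclusion of the previous paragraph. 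The main obstacles in writing this up carefully are (i) locating and citing the Reich characterisation of sunny nonexpansive retractions in the required form, and (ii) verifying that strict convexity of $E$ upgrades the subgradient inequality for $\phi$ to a strict one; both are standard, but the second is easy to fumble outside of Hilbert space and should be spelled out by remarking that for a G\^ateaux differentiable strictly convex function, the affine support at any point lies strictly below the function away from the base point.
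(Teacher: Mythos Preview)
The paper does \emph{not} prove this statement: it is explicitly left as an open conjecture (hence the label). The paper only establishes the weaker Lemma~\ref{weak projection lemma}, which adds the geometric hypothesis that $\partial K$ contains no line segments, and whose proof is entirely different---a boundary/interior argument exploiting that a fixed point of $P\circ f$ not lying in $\Fix(f)$ must sit on $\partial K$, together with convexity of $\Fix(P\circ f)$.

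Your argument, by contrast, appears to \emph{settle} the conjecture in full. The two ingredients you invoke are both standard and correctly applied: Reich's characterisation of sunny nonexpansive retractions in smooth spaces (giving $\product{f(x)-x}{J(y-x)}\leq 0$), and the subdifferential inequality for $\tfrac12\|\cdot\|^2$. The crucial step---upgrading the subgradient inequality to a \emph{strict} one when $f(x)\neq x$---is exactly right: strict convexity of $E$ is equivalent to strict convexity of the function $\tfrac12\|\cdot\|^2$, and for any strictly convex G\^ateaux-differentiable function the first-order lower bound is strict away from the base point. Combining this with nonexpansiveness forces $\product{f(x)-x}{J(y-x)}>0$, contradicting Reich's inequality. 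Note that your proof uses only strict convexity and smoothness of $E$; uniform convexity and dimension two are irrelevant to the argument itself (they enter only via the existence of $P$, as you observe).

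This is genuinely more than the paper achieves. The author's Hilbert-space proof of Lemma~\ref{projecting back} obtains strictness ``for free'' from the exact expansion $\|f(x)-y\|^2=\|x-y\|^2+\|f(x)-x\|^2-2\product{f(x)-x}{y-x}$, and the paper apparently did not see how to recover strictness in the Banach-space setting without that identity. Your use of strict convexity of $\tfrac12\|\cdot\|^2$ is precisely the missing idea. If you write this up, the two points you flag---a precise citation for Reich's characterisation (e.g.\ Reich, \emph{J.\ Math.\ Anal.\ Appl.} 1973, or Goebel--Reich's monograph) and a one-line verification that strict convexity of $E$ implies strict convexity of $\|\cdot\|^2$---are indeed the only things needing care.
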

\begin{proof}[Proof of to Conjecture \ref{characterisation of fixed point sets in two dimensional smooth and convex Banach space} up to Conjecture \ref{projection lemma in Banach space}] We could now prove Conjecture \ref{characterisation of fixed point sets in two dimensional smooth and convex Banach space} analogously to Theorem \ref{main result}: we are given a convex, closed subset $A$ of $K$ as a $\kappa_{\HB}$-name and want to construct a nonexpansive mapping $f\colon K \to K$ with $\Fix(f) = A$. Let $(h_n)_n$ be the sequence of half spaces given by the $\kappa_{\HB}$-name. Since $h_n\cap K \neq \emptyset$ for all $n$, we can compute a $\psi_{\dist}^\omega$-name of the sequence $(h_n\cap K)_{n \in \N}$ thanks to Corollary \ref{computability of intersection of computable convex sets and rational half spaces}. Now, Theorem \ref{computability of sunny nonexpansive retracts} allows us to compute a $[\delta_K\to\delta_K]^\omega$-name of the sequence $(f_n)_n$ of sunny nonexpansive retractions of $K$ onto $h_n\cap K$. Applying Theorem \ref{thm: intersection of fixed points}, we obtain a nonexpansive mapping $g\colon K \to E$ with $\Fix(g) = \bigcap_{n\in\N} h_n = A$. Finally, we use the computable nonexpansive sunny retraction onto $K$ and Conjecture \ref{projection lemma in Banach space} to obtain a self-map $f$ of $K$ with $\Fix(f) = A$.
\end{proof}
The only ``missing piece'' in this proof is Conjecture \ref{projection lemma in Banach space}. By replacing this conjecture by a weaker statement that we can prove, we obtain a weaker version of Conjecture \ref{characterisation of fixed point sets in two dimensional smooth and convex Banach space}, which is almost as good.
\begin{lemma}\label{weak projection lemma}
Let $E$ be a uniformly convex Banach space of dimension two, let $K \subseteq E$ be nonempty, closed, bounded, and convex. Suppose that $\intr{K}$ is nonempty and that $\partial K$ does not contain any line segments, and let $f\colon K \to E$ be nonexpansive with $\Fix(f)\neq\emptyset$. Let $P\colon E \to K$ be the sunny nonexpansive retraction onto $K$. Then $P\circ f$ is nonexpansive as well with $\Fix(P\circ f) = \Fix(f)$.
\end{lemma}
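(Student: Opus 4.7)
My plan is to decompose the statement into three pieces: $P \circ f$ is nonexpansive (immediate from composition of two nonexpansive maps), $\Fix(f) \subseteq \Fix(P\circ f)$ (immediate since $f(x)=x$ entails $x \in K$ and $P$ restricted to $K$ is the identity), and the delicate reverse inclusion $\Fix(P\circ f) \subseteq \Fix(f)$. For the latter, I would argue by contradiction: suppose $x \in \Fix(P\circ f)$ with $f(x) \neq x$. Note that $f(x) \notin K$ (otherwise $P(f(x)) = f(x)$ forces $f(x)=x$), and pick any $y \in \Fix(f)$; necessarily $y \neq x$.

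Chasing equalities through
\[\norm{x-y} = \norm{P(f(x)) - P(f(y))} \leq \norm{f(x)-y} = \norm{f(x)-f(y)} \leq \norm{x-y}\]
yields $\norm{f(x)-y} = \norm{x-y}$. Combined with nonexpansiveness of $f$ and $P$ and the strict convexity of $E$ (inherited from uniform convexity), this pins down the behaviour of $f$ and $P$ on the segments $[x,y]$ and $[f(x),y]$: for any $t \in [0,1]$, write $z_t = (1-t)x + ty$. The estimates $\norm{f(z_t)-f(x)} \leq t\norm{x-y}$ and $\norm{f(z_t) - y} \leq (1-t)\norm{x-y}$ sum to $\norm{f(x)-y}$, so by strict convexity $f(z_t)$ lies on $[f(x),y]$ and, by magnitude, $f(z_t) = (1-t)f(x) + ty$. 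The analogous chase for $P\circ f$ gives $P((1-t)f(x)+ty) = (1-t)x + ty = z_t$ for all $t \in [0,1]$; in particular $P\circ f$ is the identity on $[x,y]$.

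The argument now splits on whether $y \in \intr{K}$. If it is, then since $f(x) \notin K$ while $y \in \intr{K}$, the segment $[f(x),y]$ crosses $\partial K$ at a point $p = (1-t_0)f(x) + t_0 y$ with $t_0 \in (0,1)$; since $p \in K$ we have $P(p)=p$, but the displayed formula also gives $P(p) = (1-t_0)x + t_0 y$, forcing $(1-t_0)(f(x)-x)=0$ and hence $f(x)=x$, the desired contradiction.

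If instead $y \in \partial K$, this is where the hypothesis that $\partial K$ contains no line segment enters: the segment $[x,y]\subseteq K$ cannot lie entirely in $\partial K$, so there exists $z \in (x,y) \cap \intr{K}$. Since $P\circ f$ is the identity on $[x,y]$, we have $P(f(z))=z$, and a direct sunniness argument promotes $z$ to an element of $\Fix(f)$: for small $\alpha > 0$ the point $\alpha f(z)+(1-\alpha)z$ lies in a ball around $z \in \intr K$ contained in $K$ and so is fixed by $P$, while sunniness sends it to $z$; this forces $\alpha(f(z)-z)=0$ and thus $f(z)=z$. Rerunning the preceding paragraph with $z \in \Fix(f)\cap \intr{K}$ in place of $y$ yields the contradiction. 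The main subtlety I anticipate is giving a clean derivation of the affine behaviour of $f$ and $P$ on the relevant segments from strict convexity alone; after that step the rest is geometric bookkeeping with sunniness and the exclusion of line segments from $\partial K$.
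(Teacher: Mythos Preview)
Your argument is correct, but it takes a noticeably longer route than the paper's. The paper dispenses with your explicit strict-convexity chase: since $E$ is strictly convex, Proposition~\ref{prop: fixed point sets of nonexpansive mappings are convex} already gives that $\Fix(P\circ f)$ is convex, so the whole segment $[x,y]$ lies in $\Fix(P\circ f)$ without any computation of $f(z_t)$ or $P(f(z_t))$. The paper then makes a single clean observation from sunniness---any point of $\Fix(P\circ f)\setminus\Fix(f)$ must lie on $\partial K$ (exactly your step-5 argument, applied once and for all)---and finishes in one stroke: pick $z$ on the open segment $(x,y)$ close enough to $x$ that $z\notin\Fix(f)$; the no-line-segment hypothesis forces $z\in\intr K$, contradicting the boundary observation. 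There is no case split on whether $y$ lies in $\intr K$ or $\partial K$, and no need to locate where $[f(x),y]$ crosses $\partial K$.

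What your approach buys is self-containment (you never invoke the convexity of fixed-point sets as a black box) and some extra information you do not actually need here, namely that $f$ is affine on $[x,y]$. What the paper's approach buys is brevity and a cleaner logical structure: one sunniness lemma plus convexity of the fixed-point set. Your case analysis and the crossing-point argument in the $y\in\intr K$ case are correct but avoidable.
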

\begin{proof}
Clearly, $P\circ f$ is nonexpansive with $\Fix(f) \subseteq \Fix(P \circ f)$. Suppose that there exists $x \in \Fix(P\circ f)$, which is not a fixed point of $f$. Since $P$ is sunny, $x \in \partial K$. Let $y \in \Fix(f)$. Since $\Fix(f)$ is closed, there exists $\varepsilon > 0$ such that $B(x,\varepsilon) \subseteq \Fix(f)^C$. Since $P\circ f$ is nonexpansive, the line segment $L$ joining $y$ and $x$ is contained in $\Fix(P \circ f)$. By hypothesis, the line segment without its endpoints has to lie in $\intr{K}$ (it is easy to see that if a convex set contains three points of a line segment in its boundary, it contains the whole line segment in its boundary). Hence, there exists $z \in \intr{K}\cap L \cap B(x,\varepsilon)$. Contradiction.
\end{proof}
\begin{Remark}
A similar proof shows that we may replace the condition that $\partial K$ contains no line segments, by the condition that $\Fix(f)\cap\intr{K} \neq \emptyset$. In this case we do not even require the retraction to be sunny.
\end{Remark}
\begin{theorem}\label{weak main theorem on Banach spaces}
Let $E$ be a uniformly convex, smooth computable Banach space of dimension two, and let $K\subseteq E$ be nonempty, bounded, convex, and located. Suppose that either $\dim K = 2$ and $\partial K$ contains no line segments or $\dim K = 1$. then the multi-valued mapping
\[\Fix^{-1}\colon \K^{\conv}(K)\setminus\{\emptyset\}\rightrightarrows \Ne(K)\]
is computable.
\end{theorem}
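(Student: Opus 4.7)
The plan is to transcribe the proof of Theorem~\ref{main result}~(ii) to the Banach setting, replacing the metric projections of the Hilbert proof by computable sunny nonexpansive retractions (Theorem~\ref{computability of sunny nonexpansive retracts}) and invoking Lemma~\ref{weak projection lemma} in place of Lemma~\ref{projecting back}. The Hahn--Banach representation of Theorem~\ref{Hahn-Banach equivalent to co-r.e.} together with Lemma~\ref{lem: listing enough half-spaces} still presents $A \in \K^{\conv}(K)$ as the intersection of a computable sequence of rational half spaces, and Bruck's Theorem~\ref{thm: intersection of fixed points}, which is valid in any strictly convex Banach space, still amalgamates a sequence of nonexpansive retractions with a common fixed point into a single nonexpansive mapping whose fixed point set equals the intersection.

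For the case $\dim K = 2$, I would proceed as follows. From the $\kappa_{\HB}$-name of $A$, extract a computable sequence $(h_n)_n$ of rational half spaces with $\bigcap_n h_n = A$. Fix a rational closed ball $B$ containing $K$. Each intersection $B \cap h_n$ is a computably located convex subset of $B$ by Corollary~\ref{computability of intersection of computable convex sets and rational half spaces} (since $A \subseteq B \cap \intr{h}_n$ is nonempty), so Theorem~\ref{computability of sunny nonexpansive retracts} applied with ambient set $C = B$ produces a computable sequence $(s_n)_n$ of sunny nonexpansive self-maps of $B$ that retract $B$ onto $B \cap h_n$. By Bruck, $g = \sum_n 2^{-n-1} s_n \colon B \to B$ is nonexpansive with $\Fix(g) = \bigcap_n (B \cap h_n) = A$, and the restriction $g|_K \colon K \to B \subseteq E$ is nonexpansive with $\Fix(g|_K) = K \cap A = A$. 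Applying Theorem~\ref{computability of sunny nonexpansive retracts} a second time, with $C = B$ and subset $K$ (located by assumption), yields the computable sunny nonexpansive retraction $P \colon B \to K$, which agrees with the restriction of the unique sunny nonexpansive retraction $E \to K$. Since $\dim K = 2$ forces $\intr K \neq \emptyset$ and $\partial K$ contains no line segments by assumption, Lemma~\ref{weak projection lemma} ensures that $f := P \circ g|_K \colon K \to K$ is nonexpansive with $\Fix(f) = \Fix(g|_K) = A$, giving the desired element of $\Ne(K)$.

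The case $\dim K = 1$ is handled separately because the hypothesis $\intr K \neq \emptyset$ of Lemma~\ref{weak projection lemma} fails: $K$ is then a bounded closed segment in a one-dimensional affine subspace of $E$, computably affinely isometric to a closed interval in $\R$, and the conclusion follows from a direct adaptation of Theorem~\ref{thm: main theorem on [0,1]} to such intervals. The principal technical check in the $\dim K = 2$ case is that the two invocations of Theorem~\ref{computability of sunny nonexpansive retracts} indeed apply --- the first requires the located presentations of the $B \cap h_n$ provided by Corollary~\ref{computability of intersection of computable convex sets and rational half spaces}, and the second uses the standing locatedness assumption on $K$ --- after which everything else is a faithful transcription of the Hilbert argument.
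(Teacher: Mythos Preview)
Your proposal is correct and follows essentially the same strategy as the paper: represent $A$ via rational half spaces, build nonexpansive retractions onto the resulting slices, amalgamate via Bruck's Theorem~\ref{thm: intersection of fixed points}, and then correct to a self-map of $K$ using a sunny nonexpansive retraction together with Lemma~\ref{weak projection lemma}; the one-dimensional case is handled separately via Theorem~\ref{thm: main theorem on [0,1]} in both.

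The only noteworthy difference is the choice of ambient set for the retractions. The paper applies Theorem~\ref{computability of sunny nonexpansive retracts} with $C = K$ to obtain sunny nonexpansive retractions $f_n\colon K \to h_n \cap K$, so the Bruck combination $g = \sum_n 2^{-n-1} f_n$ already maps $K$ into $K$ (by convexity of $K$). You instead work inside an auxiliary rational ball $B \supseteq K$, retract $B$ onto $B \cap h_n$, restrict to $K$, and only then genuinely need the final projection $P\colon B \to K$. Both routes are valid; the paper's is slightly more economical in that the projection step becomes essentially redundant, while yours makes the invocation of Lemma~\ref{weak projection lemma} indispensable. Your justification that the computed retraction $P\colon B \to K$ agrees with the restriction of the sunny nonexpansive retraction $E \to K$ (needed to match the hypothesis of Lemma~\ref{weak projection lemma}) follows from the uniqueness Theorem~\ref{uniqueness of sunny nonexpansive retracts}, as you indicate.
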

In a uniformly convex space the unit ball contains no line segments, so $B_E$ is an example of an admissible domain $K$. In particular, every co-semi-decidable, convex subset of $B_E$ is the fixed point set of some computable, nonexpansive self-map of $B_E$.
\begin{proof}[Proof of Theorem \ref{weak main theorem on Banach spaces}]
If $\dim K = 1$, we introduce suitable coordinates in which $K$ is contained in the $x$-axis and use the construction of Theorem \ref{thm: main theorem on [0,1]}. If $\dim K = 2$, we use the proof of Conjecture \ref{characterisation of fixed point sets in two dimensional smooth and convex Banach space}. Note that here we may replace Conjecture \ref{projection lemma in Banach space} by Lemma \ref{weak projection lemma}, so the proof is complete.
\end{proof}

The obvious question at this point is whether Conjecture \ref{characterisation of fixed point sets in two dimensional smooth and convex Banach space} might generalise to higher dimensional Banach spaces. While most of the results we used in the proof at least generalise to finite-dimensional smooth and uniformly convex computable Banach spaces the main obstruction appears to be the existence of nonexpansive retractions. Our proof uses the fact that there exist nonexpansive retractions onto every rational half space, but if $E$ is a Banach space of dimension at least three and there exist nonexpansive retractions onto each two-dimensional subspace, then $E$ is a Hilbert space. Similarly, the unit ball of an at least three-dimensional Banach space $E$ is a nonexpansive retract of $E$ if and only if $E$ is a Hilbert space (cf.~\cite{BruckHilbert}). On the other hand, every fixed point set of a nonexpansive mapping $f\colon K\to K$ is a nonexpansive retraction of $K$. In view of these results it seems likely that Theorem \ref{main result} characterises computable Hilbert space of dimension three or higher.

Finally, we extend the stronger upper bound obtained in Proposition \ref{prop: initial upper bound on Proj} for compact sets and Hilbert space to the noncompact case in uniformly convex and uniformly smooth spaces. For this we need a generalisation of Theorem \ref{thm: Halpern's theorem} due to Reich \cite{ReichStrongConvergence}. We will only state a special case.

\begin{theorem}[\cite{ReichStrongConvergence}]\label{thm: Reich's theorem}
Let $E$ be a uniformly smooth, uniformly convex Banach space, let $K \subseteq E$ be nonempty, closed, bounded and convex, let $f\colon K \to K$ be nonexpansive, and let $x \in K$. Put $\alpha_n = 1 - (n + 2)^{-\tfrac{1}{2}}$. Then the sequence $(x_n)_n$ defined by the iteration scheme $x_0 = x$ and
\[x_{n + 1} = (1 - \alpha_n)x_0 + \alpha_n f(x_n) \]
converges to a fixed point of $f$. \qed
\end{theorem}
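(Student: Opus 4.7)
The plan is to follow Reich's original argument \cite{ReichStrongConvergence}, which is the Banach-space analogue of Wittmann's Hilbert-space proof of Theorem \ref{thm: Halpern's theorem}. The two extra ingredients required are the normalised duality mapping $J\colon E \to E'$, which in a uniformly smooth Banach space is single-valued and norm-to-norm uniformly continuous on bounded subsets, and the sunny nonexpansive retraction $Q\colon K \to \Fix(f)$ onto the fixed point set, whose existence in arbitrary uniformly smooth $E$ is due to Bruck and Reich (the two-dimensional case is covered by Theorems \ref{existence of sunny nonexpansive retracts} and \ref{uniqueness of sunny nonexpansive retracts}). Throughout, abbreviate $\beta_n = 1 - \alpha_n = (n+2)^{-1/2}$ and note that $\sum_n \beta_n = \infty$ while $\sum_n |\beta_{n+1} - \beta_n|$ is finite.

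First I would establish that $(x_n)_n$ is bounded and asymptotically regular. Picking $p \in \Fix(f)$, which exists by Theorem \ref{thm: Browder-Goehde-Kirk Theorem}, an easy induction on
\[\norm{x_{n+1} - p} \leq \beta_n\norm{x_0 - p} + \alpha_n \norm{x_n - p}\]
yields $\norm{x_n - p} \leq \norm{x_0 - p}$ for all $n$. Decomposing
\[x_{n+1} - x_n = (\beta_n - \beta_{n-1})(x_0 - f(x_{n-1})) + \alpha_n(f(x_n) - f(x_{n-1})),\]
using nonexpansiveness of $f$, and exploiting summability of $|\beta_{n+1} - \beta_n|$, a discrete Gronwall-type estimate gives $\norm{x_{n+1} - x_n} \to 0$; since $\norm{x_{n+1} - f(x_n)} = \beta_n\norm{x_0 - f(x_n)} \to 0$, it follows that $\norm{f(x_n) - x_n} \to 0$.

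Let $z = Q(x_0)$. The sunny nonexpansive retraction is characterised by the variational inequality $\product{x_0 - z}{J(y - z)} \leq 0$ for all $y \in \Fix(f)$. The main obstacle is the analytic estimate
\[\limsup_{n \to \infty} \product{x_0 - z}{J(x_n - z)} \leq 0.\]
One proves it by choosing a subsequence $(x_{n_k})_k$ that realises the $\limsup$, extracting a weakly convergent sub-subsequence (possible since $K$ is bounded and $E$ is reflexive, being uniformly convex), and arguing that asymptotic regularity together with the demiclosedness of $I - f$ at $0$ (a standard consequence of uniform convexity, in the spirit of the Cauchy-sequence construction in Lemma \ref{lem: infimum lemma}) forces the weak limit to lie in $\Fix(f)$. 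Norm-to-weak$^*$ uniform continuity of $J$ on bounded sets, which is exactly what uniform smoothness buys, then lets one pass to the limit in the variational characterisation of $z$ and conclude.

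Finally, I would close the loop using the subdifferential inequality $\norm{u + v}^2 \leq \norm{u}^2 + 2\product{v}{J(u + v)}$ valid in any smooth Banach space. Applied with $u = \alpha_n(f(x_n) - z)$ and $v = \beta_n(x_0 - z)$, combined with nonexpansiveness of $f$, this yields
\[\norm{x_{n+1} - z}^2 \leq \alpha_n^2 \norm{x_n - z}^2 + 2\beta_n \product{x_0 - z}{J(x_{n+1} - z)}.\]
This is a recurrence of the form $a_{n+1} \leq (1 - \gamma_n)a_n + \gamma_n \delta_n$ with $\gamma_n = 1 - \alpha_n^2$ satisfying $\sum_n \gamma_n = \infty$ and $\limsup_n \delta_n \leq 0$. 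A standard Xu-type lemma then forces $a_n = \norm{x_n - z}^2 \to 0$, so $x_n \to z \in \Fix(f)$ in norm, as claimed.
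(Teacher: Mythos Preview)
The paper does not prove this theorem; it is quoted from Reich \cite{ReichStrongConvergence} and closed with a \qed immediately after the statement, so there is no argument in the paper to compare yours against.

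Your sketch follows the modern Xu-style treatment and is largely sound, but there is a genuine gap in the $\limsup$ step. You extract a weakly convergent subsequence $x_{n_k} \rightharpoonup y \in \Fix(f)$ and then claim that ``norm-to-weak$^*$ uniform continuity of $J$'' lets you pass to the limit in $\product{x_0 - z}{J(x_{n_k} - z)}$. It does not: uniform smoothness gives norm-to-\emph{norm} uniform continuity of $J$ on bounded sets, but $x_{n_k} - z$ converges only \emph{weakly}, and the duality mapping is in general not weak-to-weak$^*$ sequentially continuous (this already fails in $L^p$ for $p \neq 2$). The variational inequality for $z = Q(x_0)$ tells you $\product{x_0 - z}{J(y - z)} \leq 0$, but that alone does not yield $\limsup_k \product{x_0 - z}{J(x_{n_k} - z)} \leq 0$.

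Reich's actual argument, and Xu's later refinements, avoid this by working with the resolvent curve $z_t$ defined by $z_t = t x_0 + (1-t) f(z_t)$: one shows $z_t \to z$ \emph{strongly} as $t \to 0^+$ and derives an estimate of the form $\product{x_0 - z_t}{J(x_n - z_t)} \leq o(1)$ directly from the defining equation of $z_t$ together with asymptotic regularity of $(x_n)_n$. Letting $t \to 0$ then uses only the strong convergence $z_t \to z$ and norm-to-norm continuity of $J$ on bounded sets; no weak continuity of $J$ is required. Once the $\limsup$ inequality is secured in this way, your final recursion via the subdifferential inequality and the Xu-type lemma closes the proof correctly.
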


Note that the iteration defined in Theorem \ref{thm: Reich's theorem} converges to a retraction onto the fixed point set of $f$. In fact, one can show that the sequence $(x_n)_n$ converges to $Q(x_0)$, where $Q$ is the unique sunny nonexpansive retraction of $K$ onto $\Fix(f)$.

\begin{theorem}\label{thm: upper bound on Proj in smooth spaces}
Let $E$ be a uniformly convex, uniformly smooth computable Banach space. Let $K \subseteq E$ be nonempty, bounded, convex, co-semi-decidable, and computably overt. Then
\[\Proj_K \leq_W \lim.\]
\end{theorem}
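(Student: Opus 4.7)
The strategy is to upgrade the $\lim\circ\lim$ argument of Proposition \ref{prop: initial upper bound on Proj} to a single application of $\lim$ by using Reich's iteration (Theorem \ref{thm: Reich's theorem}) to supply overtness of $\Fix(f)$, which the earlier argument lacked. Concretely, we aim to compute $\Fix(f)$ as an element of $\A^{\conv}_{\dist}(K)$ using a single parallelized use of $\lim$; Corollary \ref{cor: computability of projection on subset} then yields $P_{\Fix(f)}(x)$ by a computable post-processing step.

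The algorithm runs three limit-computable procedures in parallel. Since $K$ is computably overt, we have a computable dense sequence $(y_n)_n$ in $K$. Applying Reich's iteration at each starting point $y_n$ produces a computable double sequence whose $n$-th row converges by Theorem \ref{thm: Reich's theorem} to $Q(y_n)\in\Fix(f)$, where $Q$ is the unique sunny nonexpansive retraction of $K$ onto $\Fix(f)$. A single application of $\lim$ returns the sequence $(Q(y_n))_n$; since $Q$ is nonexpansive and fixes $\Fix(f) = Q(K)$ pointwise, the image of a dense sequence in $K$ under $Q$ is dense in $\Fix(f)$, so by Proposition \ref{prop: overtness and recursive enumerability} we obtain $\Fix(f) \in \V(K)\setminus\{\emptyset\}$, equivalently $\Fix(f)\in\A_{\dist_>}(K)$. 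In parallel, following the noncompact branch of Proposition \ref{prop: initial upper bound on Proj}, we use the computable overtness of $K$ together with Lemma \ref{lem: infimum lemma} to semi-decide (relative to $\lim$) which rational closed balls are disjoint from $\Fix(f)$, which yields $\Fix(f)\in\A_{\dist_<}(K)$. Also in parallel, $\lim$ produces a function $\mu\colon\N\to\N$ with $2^{-\mu(n)}\leq\eta_E(2^{-n})$ for a modulus of uniform convexity $\eta_E$ of $E$, by the same formula and argument as in the compact-case proof of Proposition \ref{prop: initial upper bound on Proj}.

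Combining $\Fix(f)\in\A_{\dist_<}(K)$ and $\Fix(f)\in\A_{\dist_>}(K)$ delivers the distance function $d(\cdot,\Fix(f))$ as a fully $(\delta_K,\rho)$-computable function, i.e.\ $\Fix(f)\in\A_{\dist}(K)$, and the set is convex by Proposition \ref{prop: fixed point sets of nonexpansive mappings are convex}. Feeding this located convex subset into Corollary \ref{cor: computability of projection on subset}, together with $\mu$ and the overt convex domain $K$, yields $P_{\Fix(f)}\in\Co(K,K)$, and in particular the value $P_{\Fix(f)}(x)$. Since $\lim$ is parallelizable and the remaining post-processing is computable, the whole computation uses only a single instance of $\lim$, so $\Proj_K\leq_W\lim$.

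The main subtlety is that Reich's iteration converges to the sunny nonexpansive retraction $Q$ rather than to the metric projection $P_{\Fix(f)}$; outside Hilbert space these two generally disagree, so --- in contrast to the Hilbert-space branch of Proposition \ref{prop: initial upper bound on Proj}, where Halpern's iteration delivers the projection directly --- Reich's iteration cannot here be used to read off $\Proj_K(f,x)$ immediately. What saves the argument is that we only need Reich's iteration to produce \emph{some} dense sequence in $\Fix(f)$, which follows purely from continuity of $Q$ and $Q(K)=\Fix(f)$; the correct metric projection is then recovered via the quantitative uniqueness estimate of Theorem \ref{thm: modulus of uniqueness for projection} that underlies Corollary \ref{cor: computability of projection on subset}.
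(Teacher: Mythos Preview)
Your proof is correct and follows essentially the same approach as the paper: run Reich's iteration at every point of a dense sequence in $K$ and use one parallelized instance of $\lim$ to obtain a dense sequence in $\Fix(f)$ (upper semi-locatedness), while in parallel obtaining lower semi-locatedness via Lemma~\ref{lem: infimum lemma} and a discrete modulus $\mu$ for uniform convexity, then finish with Corollary~\ref{cor: computability of projection on subset}. Your closing remark that Reich's iteration yields the sunny nonexpansive retraction rather than the metric projection, and hence cannot be used to read off $\Proj_K(f,x)$ directly outside Hilbert space, is a point the paper leaves implicit.
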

\begin{proof}
We use similar ideas as in the proof of Proposition \ref{prop: initial upper bound on Proj}. Again we exploit the fact that we can actually compute countably many instances of $\lim$ in parallel. As in the proof of the general upper bound in Proposition \ref{prop: initial upper bound on Proj}, we use countably many instances of $\lim$ to obtain a function $\mu\colon \N \to \N$ satisfying $2^{-\mu(n)} \leq \eta_E(2^{-n})$, where $\eta_E$ is a modulus of uniform convexity for $E$, and another batch of countably many instances to obtain an approximation to the distance function to $\Fix(f)$ from below. Since $K$ is computably overt, it contains a computable dense sequence $(x_n)_n$. Let $x_n^0 = x_n$ and 
$x_n^{k + 1} = (1 - \alpha_k)x_n^0 + \alpha_k f(x_n^k)$ with $\alpha_k$ as in Theorem \ref{thm: Reich's theorem}. Using  another countable batch of instances of $\lim$, we obtain the sequence $(\lim_{k\to\infty} x^k_n)_n$, which is dense in $\Fix(f)$, since the iteration defines a retraction of $K$ onto $\Fix(f)$. Using this sequence we can compute the distance function to $\Fix(f)$ from above, so that we obtain the distance function to $\Fix(f)$ as an element of $\Co(K,\R)$. Together with Corollary \ref{cor: computability of projection on subset} this establishes the reduction. 
\end{proof}

\section*{Acknowledgements.}
The present work was motivated by a question by Ulrich Kohlenbach, whether the Krasnosel\-ski-Mann iteration has nonuniformly computable rates of convergence. He has also provided many valuable insights both concerning fixed point theory and computability theory. This work has greatly benefited from discussions with Vasco Brattka, Arno Pauly, Guido Gherardi, and Martin Ziegler. I would also like to thank the anonymous referees for pointing out many shortcomings in the original version of this paper.

\bibliographystyle{abbrv}
\bibliography{lmcs_final}
\nocite{*}
\end{document}